\newtheorem{thm}{Theorem}
\newtheorem{cor}[thm]{Corollary}
\newtheorem{lem}[thm]{Lemma}
\newtheorem{defn}[thm]{Definition}
\newtheorem*{rem}{Remark}
\begin{document}
\title{Moduli Spaces of Unordered $n\ge5$ Points on the Riemann Sphere and Their Singularities}
\author{Yue Wu$^\dagger$ and Bin Xu$^\ddagger$}
\date{}

\maketitle

\begin{abstract}
For $n\ge5$, it is well known that the moduli space $\mathfrak{M_{0,\:n}}$ of unordered $n$ points on the Riemann sphere is a quotient space of the Zariski open set $K_n$ of $\mathbb C^{n-3}$ by an $S_n$ action. The stabilizers of this $S_n$ action at certain points of this Zariski open set $K_n$ correspond to the groups fixing the sets of $n$ points on the Riemann sphere.
Let $\alpha$ be a subset of $n$ distinct points on the Riemann sphere. We call the group of all linear fractional transformations leaving $\alpha$ invariant the stabilizer of $\alpha$, which is finite by observation. For each non-trivial finite subgroup $G$ of the group ${\rm PSL}(2,{\Bbb C})$ of linear fractional transformations, we give the necessary and sufficient condition for finite subsets of the Riemann sphere under which the stabilizers of them are conjugate to $G$. We also prove that there does exist some finite subset of the Riemann sphere whose stabilizer coincides with $G$.
Next we obtain the irreducible decompositions of the representations of the stabilizers on the tangent spaces at the singularities of $\mathfrak{M_{0,\:n}}$.
At last, on $\mathfrak{M_{0,\:5}}$ and $\mathfrak{M_{0,\:6}}$, we work out explicitly the singularities and the representations of their stabilizers on the tangent spaces at them.
\end{abstract}

\footnote{
$\dagger$ The first author is supported in part by the National Natural Science Foundation of China (Grant No. 11671371).\\
$^\ddagger$
The second author is supported in part by the National Natural Science Foundation of China (Grant Nos. 11571330 and 11271343) and the Fundamental Research Funds for the Central
Universities (Grant No. WK3470000003).}
\tableofcontents

\section{Introduction}
This manuscript is originated from the bachelor's thesis of the first author. We study in great detail some algebraic properties of orbifold singularities of the moduli space $\mathfrak{M_{0,\:n}}$ of $n\geq 5$ unordered points on the Riemann sphere, such as the stabilizers of them and the corresponding linear representations over the tangent spaces at them. A lot of interesting results are observed and proved by hand computation, at least some of which, we do admit, may be well known to experts. This elementary and systematic exposition on $\mathfrak{M_{0,\:n}}$ might be still of some value to be open access to the community through arXiv.
We should mention some references for the widely known classical cases of $n=4,\,5$ and $6$ about the stabilzers. In particular, see \cite{Wiman} and \cite[Chapter 8 ]{Dol} for the case of $n=4$ and $5$, and
\cite{Bolza} for the case of $n=6$.

\subsection{The Moduli Space $\mathfrak{M_{0,\:n}}$}

Let $C$ and $C'$ be two compact Riemann surfaces of genus $g$, and
$$\{p_1,\:p_2,\:\cdots,\:p_n\}\subseteq C,\: \{p'_1,\:p'_2,\:\cdots,\:p'_n\}\subseteq C'.$$ $(C,\:\{p_1,\:p_2,\:\cdots,\:p_n\})$ and $(C',\:\{p'_1,\:p'_2,\:\cdots,\:p'_n\})$ are isomorphic if there exists some biholomorphic map $f:\:C\to C'$ such that
\begin{displaymath}
f(\{p_1,\:p_2,\:\cdots,\:p_n\})=\{p'_1,\:p'_2,\:\cdots,\:p'_n\}.
\end{displaymath}

\begin{defn}[\cite{Mu-Pe}]
The moduli space $\mathfrak{M_{g,\:n}}$ is the set of isomorphism classes of compact Riemann surfaces of genus $g$ with $n$ unordered marked points.
\end{defn}

It is well-known that
\begin{thm}[Deligne-Mumford\cite{De-Mu}]
$\mathfrak{M_{g,\:n}}$ is both a complex orbifold and an irreducible quasiprojective variety of dimension $3g-3+n$, where $n\ge3$ if $g=0$, $n\ge1$ if $g=1$ and $n\ge0$ if $g\ge2$.
\end{thm}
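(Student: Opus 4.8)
The plan is to split the statement into the genus-zero case, which admits a completely elementary proof matching the description in the abstract, and the higher-genus case, which I would reduce to Teichm\"uller theory (for the orbifold structure and the dimension count) together with Mumford's geometric invariant theory (for quasi-projectivity and irreducibility). Since only the $g=0$ part is actually used in the rest of this paper, I would write that part out in full and merely indicate the shape of the general argument.

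For $g=0$: every genus-zero compact Riemann surface is biholomorphic to $\mathbb P^1(\mathbb C)$, and its biholomorphisms are exactly the elements of ${\rm PSL}(2,\mathbb C)$, so $\mathfrak M_{0,\,n}$ is the set of $n$-element subsets of $\mathbb P^1(\mathbb C)$ modulo the diagonal ${\rm PSL}(2,\mathbb C)$-action. First I would introduce the ordered configuration space $F_n=\{(p_1,\dots,p_n)\in(\mathbb P^1)^n : p_i\neq p_j \text{ for } i\neq j\}$, a smooth irreducible quasi-projective variety of dimension $n$. For $n\ge3$ the ${\rm PSL}(2,\mathbb C)$-action on $F_n$ is free (a non-identity M\"obius transformation fixes at most two points), and since ${\rm PSL}(2,\mathbb C)$ is sharply $3$-transitive on $\mathbb P^1$ one has a global slice sending a configuration to its unique representative with $p_1=0,\,p_2=1,\,p_3=\infty$; this identifies the quotient $M_{0,\,n}:=F_n/{\rm PSL}(2,\mathbb C)$ with the Zariski open set $K_n=\{(x_4,\dots,x_n)\in(\mathbb C\setminus\{0,1\})^{n-3} : x_i\neq x_j\}$ of $\mathbb C^{n-3}$, a smooth irreducible quasi-projective variety of dimension $n-3=3\cdot0-3+n$. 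Finally $\mathfrak M_{0,\,n}=M_{0,\,n}/S_n=K_n/S_n$ for the permutation action: a finite group acting on a quasi-projective variety has a quasi-projective quotient, the quotient of an irreducible space is irreducible, and the quotient of a complex manifold by a finite group of biholomorphisms is a complex orbifold whose local groups are the point stabilizers. This is exactly the assertion in the case $g=0$, and it is this orbifold together with these stabilizers that the remainder of the paper studies.

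For $g\ge1$ (with the stated lower bounds on $n$) I would argue as follows. Construct the Teichm\"uller space $\mathcal T_{g,n}$ of marked genus-$g$ curves with $n$ punctures; by Kodaira--Spencer deformation theory its tangent space at $(C,p_1,\dots,p_n)$ is $H^1\!\big(C,T_C(-p_1-\cdots-p_n)\big)$, which by Serre duality and Riemann--Roch has dimension $3g-3+n$ whenever $2g-2+n>0$ --- and the triples excluded in the statement are precisely those with $2g-2+n\le0$ --- so $\mathcal T_{g,n}$ is a complex manifold of that dimension, in fact contractible. The mapping class group $\Gamma_{g,n}$ acts holomorphically and properly discontinuously on $\mathcal T_{g,n}$, with the stabilizer of $(C,\{p_i\})$ equal to the finite group ${\rm Aut}(C,\{p_1,\dots,p_n\})$; hence $\mathfrak M_{g,n}=\mathcal T_{g,n}/\Gamma_{g,n}$ is a complex orbifold of dimension $3g-3+n$ with these automorphism groups as local groups, and it is irreducible because $\mathcal T_{g,n}$ is connected. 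For the quasi-projective structure I would embed a curve-with-marked-points into projective space by a high pluricanonical linear system $|m(K_C+p_1+\cdots+p_n)|$, pass to the corresponding locally closed locus in a Hilbert scheme carrying a linearized ${\rm PGL}$-action, verify asymptotic Hilbert/Chow stability of such embeddings, and take the GIT quotient; this is a quasi-projective variety whose underlying set is $\mathfrak M_{g,n}$ and which is irreducible (Deligne--Mumford also supply a purely algebraic irreducibility argument via degeneration to stable curves).

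The main obstacle is quasi-projectivity in higher genus: unlike the $g=0$ case there is no elementary global slice, so one must either carry out Mumford's GIT construction in full --- with the stability of pluricanonically embedded curves as its technical core --- or invoke one of the later analytic (Bers) or stack-theoretic repackagings of the same fact. By contrast the orbifold structure, the dimension count, and irreducibility are comparatively routine once Teichm\"uller space and the deformation theory are available; it is the compatibility of the analytic orbifold with an honest quasi-projective scheme structure that requires the deep input, which is why the statement is attributed to Deligne--Mumford.
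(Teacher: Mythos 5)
The paper does not actually prove this theorem: it is stated twice and attributed to \cite{De-Mu}, and the only part the authors establish themselves is the genus-zero case, which they treat by elementary means in Section \ref{orbifold}. Your genus-zero argument is essentially identical to theirs: both use the sharp $3$-transitivity of ${\rm PSL}(2,\mathbb C)$ to normalize three of the points to $0,1,\infty$, identify the quotient of the ordered configuration space with the Zariski open set $K_n\subseteq\mathbb C^{n-3}$, and then divide by the residual $S_n$-action to get a complex orbifold of dimension $n-3$. One caution on phrasing: the residual $S_n$-action on $K_n$ is \emph{not} the naive permutation of the $n-3$ coordinates but the action induced by permuting all $n$ points of the configuration and renormalizing, which is exactly the group $G$ of birational transformations the paper computes explicitly in Section \ref{grou}; your words ``the permutation action'' should be read in that sense. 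Your higher-genus discussion goes beyond anything in the paper. As an outline it is the standard one (Teichm\"uller space with tangent space $H^1(C,T_C(-\sum p_i))$ of dimension $3g-3+n$, properly discontinuous mapping class group action for the orbifold and irreducibility, pluricanonical embedding plus GIT for quasi-projectivity), and you are right to identify GIT stability as the genuinely deep step; but as written it invokes rather than proves the properness of the $\Gamma_{g,n}$-action and the asymptotic stability of pluricanonically embedded curves, so it remains a proof sketch resting on the same external sources the paper cites. Relative to the paper there is no gap to close, since the paper itself offers no proof of the general statement; relative to a self-contained proof, the higher-genus quasi-projectivity is the missing content.
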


In Section \ref{orbif} using elementary methods we shall prove that
\begin{thm}
$\mathfrak{M_{0,\:n}}=K_n/G$ for $n\ge4$, where $K_n$ is the Zariski open set
$$
K_n=\{\boldsymbol{\lambda}=(\lambda_1,\:\lambda_2,\:\cdots,\:\lambda_{n-3})\in\mathbb C^{n-3}|\lambda_i\ne0,\:1,\:\lambda_i\ne\lambda_j,\:\forall i,\:j=1,\:2,\:\cdots,\:n-3,\:i\ne j\}
$$
and $G$ is a finite group of birational transformations of $\mathbb C^{n-3}$, whose restrictions to $K_n$ are isomorphisms on $K_n$.
\end{thm}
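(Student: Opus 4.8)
The plan is to realize $\mathfrak{M}_{0,n}$ as a double quotient and then resolve the ordering of the marked points into a finite symmetry group. First I would recall the standard parametrization of configurations of $n$ \emph{ordered} distinct points on $\mathbb{P}^1$: given $(q_1,\dots,q_n)$ with all $q_i$ distinct, the group $\mathrm{PSL}(2,\mathbb{C})$ acts freely (for $n\ge 3$) on such tuples, and there is a unique fractional linear transformation sending $(q_1,q_2,q_3)$ to $(0,1,\infty)$. Applying it, the tuple becomes $(0,1,\infty,\lambda_1,\dots,\lambda_{n-3})$ where $\boldsymbol{\lambda}=(\lambda_1,\dots,\lambda_{n-3})$ records the images of $q_4,\dots,q_n$. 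The constraints that these points be pairwise distinct and distinct from $0,1,\infty$ are exactly the defining conditions of $K_n$. Hence the configuration space $M_{0,n}$ of ordered $n$ points modulo $\mathrm{PSL}(2,\mathbb{C})$ is canonically identified with $K_n$.

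Next I would bring in the $S_n$ action. The symmetric group $S_n$ acts on ordered tuples by permuting the entries, and this commutes with the $\mathrm{PSL}(2,\mathbb{C})$ action, so it descends to an action on $M_{0,n}=K_n$. By definition, $\mathfrak{M}_{0,n}$ is the set of configurations of \emph{unordered} points modulo isomorphism, which is precisely $M_{0,n}/S_n = K_n/S_n$. The remaining work is to make the induced $S_n$ action on $K_n$ explicit and to check the two assertions about it: that each $\sigma\in S_n$ acts by a birational transformation of $\mathbb{C}^{n-3}$, and that these birational maps restrict to biregular automorphisms of $K_n$. For a permutation $\sigma$, the new tuple $(q_{\sigma(1)},\dots,q_{\sigma(n)})$ must be renormalized by the unique $\phi_\sigma\in\mathrm{PSL}(2,\mathbb{C})$ carrying its first three entries to $(0,1,\infty)$; since $\phi_\sigma$ is a cross-ratio-type rational expression in the $\lambda_i$ (its coefficients being rational in $0,1,\infty$ and three of the $\lambda_i$), the new coordinates $\phi_\sigma(\lambda_j)$ are rational functions of $\boldsymbol{\lambda}$, giving birationality; and because $\sigma$ sends distinct points to distinct points, the image again lies in $K_n$, with $\sigma^{-1}$ providing the inverse, so the restriction to $K_n$ is an isomorphism. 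Finally $G$ is the image of $S_n$ in $\mathrm{Bir}(\mathbb{C}^{n-3})$, a finite group (a quotient of $S_n$), completing the identification $\mathfrak{M}_{0,n}=K_n/G$.

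The one point requiring care — and the main obstacle — is the bookkeeping when a permutation $\sigma$ does not fix $\{1,2,3\}$: then one normalizes using $(q_{\sigma(1)},q_{\sigma(2)},q_{\sigma(3)})$, some of which are among $q_4,\dots,q_n$, i.e. among the $\lambda$'s, and one must verify that the resulting map is still well-defined and rational on all of $K_n$ (never sending a point of $K_n$ outside $\mathbb{C}^{n-3}$ after normalization, and never becoming indeterminate on $K_n$). It suffices to check this on the transpositions $(1\,2),(2\,3),(3\,4),\dots,(n-1\,n)$, which generate $S_n$; for each the normalizing transformation is an explicit elementary Möbius map ($\lambda\mapsto 1-\lambda$, $\lambda\mapsto 1/\lambda$, or a shift–scaling), so the verification is a short direct computation rather than a general argument. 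Assembling these generators shows the whole $S_n$ acts by birational transformations restricting to automorphisms of $K_n$, and passing to the image in $\mathrm{Bir}(\mathbb{C}^{n-3})$ yields the finite group $G$ of the statement.
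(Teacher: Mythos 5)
Your proposal is correct and follows essentially the same route as the paper: normalize three of the $n$ points to $0,1,\infty$ after permuting, obtain for each $\sigma\in S_n$ a rational map $g_\sigma$ of $\mathbb{C}^{n-3}$ restricting to an automorphism of $K_n$, and identify $\mathfrak{M}_{0,n}$ with the quotient by the resulting finite group. Your framing as a double quotient (ordered configurations mod $\mathrm{PSL}(2,\mathbb{C})$, then mod $S_n$) and your suggestion to verify regularity on generating transpositions are only presentational variants of the paper's direct construction of the maps $f^{\boldsymbol{\lambda}}_\sigma$ and its explicit coset computations.
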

\begin{cor}
$\mathfrak{M_{0,\:n}}=K_n/G$ is a complex orbifold of dimension $n-3$ for $n\ge4$.
\end{cor}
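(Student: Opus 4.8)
The plan is to derive this corollary directly from the preceding theorem together with the general principle that the quotient of a complex manifold by a finite group of biholomorphisms is a complex orbifold of the same dimension. First I would record that $K_n$, being a Zariski open (hence open in the classical topology) subset of $\mathbb C^{n-3}$, is a connected complex manifold of dimension $n-3$. By the theorem, $G$ is finite and each element of $G$ restricts to a biholomorphic automorphism of $K_n$; thus we have a holomorphic action of the finite group $G$ on the complex manifold $K_n$, and $\mathfrak{M_{0,\:n}}=K_n/G$ is the orbit space equipped with the quotient topology.

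Next I would build the orbifold atlas. Fix a point $[\boldsymbol\lambda]\in K_n/G$, choose a representative $\boldsymbol\lambda$, and let $G_{\boldsymbol\lambda}\le G$ be its (finite) stabilizer. Because $G$ is finite and acts by homeomorphisms, one can pick a $G_{\boldsymbol\lambda}$-invariant open neighbourhood $U$ of $\boldsymbol\lambda$ in $K_n$ so small that $gU\cap U=\varnothing$ for every $g\in G\setminus G_{\boldsymbol\lambda}$; then the natural map $U/G_{\boldsymbol\lambda}\to K_n/G$ is a homeomorphism onto an open set, and $\bigl(U,\,G_{\boldsymbol\lambda},\,U\to U/G_{\boldsymbol\lambda}\bigr)$ is an orbifold chart. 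By the Cartan (Bochner) linearization lemma, after shrinking $U$ and choosing suitable holomorphic coordinates centred at $\boldsymbol\lambda$ we may assume $U$ is a $G_{\boldsymbol\lambda}$-invariant polydisc in $\mathbb C^{n-3}$ on which $G_{\boldsymbol\lambda}$ acts linearly, so the chart takes the standard form $U\subseteq\mathbb C^{n-3}\to U/G_{\boldsymbol\lambda}$ with $G_{\boldsymbol\lambda}$ a finite linear group. Letting $[\boldsymbol\lambda]$ range over all points gives an atlas; the compatibility of overlapping charts is the routine verification for finite-group quotients (transition data are recorded by the finitely many group elements that move one slice into another), and Hausdorffness of $K_n/G$ follows from properness of the finite-group action.

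Finally, every chart has dimension $n-3$, so $\mathfrak{M_{0,\:n}}=K_n/G$ is a complex orbifold of dimension $n-3$; this is of course consistent with the Deligne--Mumford theorem quoted above, for which $g=0$ gives $3g-3+n=n-3$. The only points needing any care are standard: the existence of a slice neighbourhood with $gU\cap U=\varnothing$ for $g\notin G_{\boldsymbol\lambda}$ (immediate from finiteness of $G$) and the chart-compatibility check; the linearization step is what puts the charts into the conventional ``open set in $\mathbb C^{n-3}$ modulo a finite linear group'' form, but it is not logically required merely to obtain an orbifold structure. I expect no serious obstacle here — essentially all the content lies in the previous theorem's identification $\mathfrak{M_{0,\:n}}=K_n/G$.
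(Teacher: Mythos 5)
Your proposal is correct and is exactly the standard argument the paper implicitly relies on: the authors state this corollary without proof as an immediate consequence of the finite group $G$ acting biholomorphically on the $(n-3)$-dimensional complex manifold $K_n$. Your write-up simply supplies the routine slice-neighbourhood and linearization details that the paper omits.
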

\begin{rem}
$\mathfrak{M_{0,\:n}}$ is a single point if $n\le3$. Each point in $\mathfrak{M_{0,\:4}}$ can be viewed as an elliptic curve, and vice versa. Hence $\mathfrak{M_{0,\:4}}$ is isomorphic to the complex plane.
\end{rem}

In Section \ref{grou} we shall find that
\begin{thm}
$G$ is isomorphic to $S_n$ when $n\ge5$.
\end{thm}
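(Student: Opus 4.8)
The plan is to prove that the natural homomorphism $S_n\to G$ is injective for $n\ge5$. Recall from the construction of $\mathfrak{M}_{0,n}=K_n/G$ that $G$ is precisely the group of birational automorphisms of $\mathbb C^{n-3}$ induced by the $S_n$-action that permutes the $n$ marked points, so that there is a surjection $\pi\colon S_n\twoheadrightarrow G$; the theorem then amounts to showing that $N:=\ker\pi$ is trivial. Since $N\trianglelefteq S_n$ and the only normal subgroups of $S_n$ for $n\ge5$ are $\{1\}$, $A_n$ and $S_n$ (because $A_n$ is simple and $Z(S_n)=1$), it is enough to exhibit one non-identity element of $A_n$ acting nontrivially on $\mathfrak{M}_{0,n}$: then $A_n\not\subseteq N$, which forces $N=\{1\}$.

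For this I would use the $3$-cycle $\tau=(1\,2\,3)$. A point of $\mathfrak{M}_{0,n}$ is a class $[p_1,\dots,p_n]$ of $n$ distinct points of the Riemann sphere modulo $\mathrm{PSL}(2,\mathbb C)$, with $\tau$ acting by permuting the entries. Normalize $p_1=0$, $p_2=1$, $p_3=\infty$ and take $p_4,\dots,p_n$ to be any $n-3\ge2$ further distinct points avoiding $\{0,1,\infty\}$, with $p_4=2$ say. If $\tau$ fixed this class there would be a $g\in\mathrm{PSL}(2,\mathbb C)$ with $g(p_i)=p_{\tau(i)}$ for all $i$; the three conditions $g(0)=1$, $g(1)=\infty$, $g(\infty)=0$ pin $g$ down to $g(z)=1/(1-z)$, which has order $3$ and exactly two fixed points, $\tfrac{1\pm\sqrt{-3}}{2}$. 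The remaining conditions $g(p_j)=p_j$, $j\ge4$, would then force every such $p_j$ to be one of those two points --- impossible, since $p_4=2$ is neither. Hence $\tau$ acts nontrivially, which completes the reduction.

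The part I expect to require the most attention is not any single computation but the reduction itself: one must check that $G$ really is the image of the permutation action as built in the earlier sections, so that $G\cong S_n/N$, and then invoke the normal-subgroup classification, which is exactly where the hypothesis $n\ge5$ enters. Indeed, for $n=4$ the group $S_4$ has the extra normal subgroup $V_4$, and the kernel is in fact $V_4$ (the classical fact that $S_4/V_4\cong S_3$ acts as the anharmonic group on the cross-ratio), so there $G\cong S_3\not\cong S_4$; the same $3$-cycle computation would then only give $N\subseteq V_4$. One could instead avoid the classification and verify directly that every $\sigma\ne1$ moves some configuration, splitting into cases by the cycle type of $\sigma$ and using that a finite-order M\"obius transformation has at most two fixed points and orbits of size dividing its order --- but that route is considerably more case-heavy.
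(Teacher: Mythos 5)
Your argument is correct, and it takes a genuinely different route from the paper. The paper's proof is entirely computational: in Section \ref{grou} it writes down explicit rational formulas for $g_\sigma$ on a complete set of left coset representatives of $V=S_{\{1,2,3\}}\times S_{\{4,\dots,n\}}$, together with the description of $\{g_\sigma\mid\sigma\in V\}$ as the maps $\boldsymbol\lambda\mapsto(h(\lambda^{\tau(1)}),\dots,h(\lambda^{\tau(n-3)}))$ with $h$ in the anharmonic group and $\tau\in S_{n-3}$, and the isomorphism $G\simeq S_n$ is then read off from the fact that these $n!$ maps are pairwise distinct once $n-3\ge 2$. You instead use only the facts the paper has already established before that computation --- that $\sigma\mapsto g_\sigma$ is a surjective homomorphism $S_n\to G$ (from $g_\pi\circ g_\sigma=g_{\pi\cdot\sigma}$) --- and then invoke the normal subgroup structure of $S_n$ for $n\ge5$ to reduce injectivity to checking that a single $3$-cycle acts nontrivially, which you do correctly: $g_{(1\,2\,3)}(\boldsymbol\lambda)=\boldsymbol\lambda$ would force every $\lambda^i$ to be a fixed point of $z\mapsto 1/(1-z)$, i.e.\ one of $\tfrac{1\pm\sqrt{-3}}{2}$, and $\lambda^1=2$ rules this out. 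Your approach buys brevity and a conceptual explanation of where $n\ge5$ enters (simplicity of $A_n$, versus the exceptional normal subgroup $V_4\subset S_4$ giving $G\simeq S_3$ for $n=4$); the paper's approach buys the explicit formulas for all $g_\sigma$, which it needs anyway in later sections to compute Jacobians and characters, so the injectivity comes essentially for free there. Both are valid; yours would not substitute for Section \ref{grou} in the paper only because the explicit formulas are reused downstream, not because of any gap in the isomorphism argument itself.
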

\begin{cor}
The moduli space $\mathfrak{M_{0,\:n}}$ is the quotient space of a Zariski open set $K_n$ of $\mathbb C^{n-3}$ by an $S_n$ action  when $n\ge5$.
\end{cor}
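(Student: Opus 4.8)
\medskip

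\noindent\textbf{Proof proposal.} The plan is to show that the permutation action of $S_n$ on configurations descends to a \emph{faithful} action on $K_n$; since $G$ is by construction the image of this action inside the birational transformation group of $\mathbb{C}^{n-3}$, faithfulness is exactly the assertion $G\cong S_n$. Concretely, the identification $\mathfrak{M}_{0,n}=K_n/G$ comes from normalizing three of the $n$ marked points to $0,1,\infty$, so that $K_n$ is identified with the space of ordered $n$-tuples of distinct points on the Riemann sphere modulo $\mathrm{PSL}(2,\mathbb{C})$; the $S_n$-action on $K_n$ is then ``permute the tuple, then re-normalize'', and this gives a homomorphism $\rho\colon S_n\to\mathrm{Bir}(\mathbb{C}^{n-3})$ with image $G$. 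Thus $G\cong S_n/\ker\rho$, and it suffices to prove $\ker\rho=\{e\}$.

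First I would use normality: $\ker\rho$ is a normal subgroup of $S_n$, and for $n\ge 5$ the only normal subgroups of $S_n$ are $\{e\}$, $A_n$ and $S_n$ (the standard consequence of the simplicity of $A_n$ for $n\ge 5$). Since both $A_n$ and $S_n$ contain every $3$-cycle, it is enough to exhibit \emph{one} $3$-cycle that does not act trivially on $K_n$: any normal subgroup omitting that $3$-cycle must be $\{e\}$. This is precisely where $n\ge 5$ is used --- for $n=4$ the Klein four-group $V_4\le S_4$ does act trivially (it corresponds to the cross-ratio identities), which is why one only gets $G\cong S_4/V_4\cong S_3$ there.

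Next I would run the explicit computation. Let $\tau=(i_1\,i_2\,i_3)$ be the $3$-cycle on the three indices whose marked points are normalized to $0,1,\infty$. Unwinding the definition, $\tau$ acts trivially on $K_n$ iff for every admissible tuple $(q_1,\dots,q_n)$ of distinct points there is $g\in\mathrm{PSL}(2,\mathbb{C})$ with $g(q_k)=q_{\tau(k)}$ for all $k$ (the opposite convention replaces $\tau$ by $\tau^{-1}$, which is again a $3$-cycle, so this is immaterial). Taking $q_{i_1}=0$, $q_{i_2}=1$, $q_{i_3}=\infty$ forces $g$ to be the unique element of $\mathrm{PSL}(2,\mathbb{C})$ sending $(0,1,\infty)$ to $(1,\infty,0)$, namely $z\mapsto 1/(1-z)$. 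But then for any remaining index $k$ (one exists since $n-3\ge 1$) we would need $g(q_k)=q_k$, i.e. $q_k^2-q_k+1=0$; choosing for instance $q_k=2$ gives an admissible configuration violating this. Hence $\tau$ acts nontrivially, so by the previous paragraph $\ker\rho=\{e\}$ and $\rho\colon S_n\xrightarrow{\ \sim\ }G$.

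The only genuine content here is the non-triviality of the chosen $3$-cycle, which the explicit Möbius formula settles immediately; I do not expect a serious obstacle. The one place to be careful is matching the paper's normalization convention --- which merely dictates which three indices carry the $3$-cycle --- and, if one prefers to avoid the normal-subgroup shortcut, proving instead that for $n\ge 5$ a generic $n$-point set has trivial $\mathrm{PSL}(2,\mathbb{C})$-stabilizer. That alternative route (a dimension count over the classified finite subgroups of $\mathrm{PSL}(2,\mathbb{C})$ --- cyclic, dihedral, tetrahedral, octahedral, icosahedral --- each confining the configuration to a proper subvariety of $K_n$ once $n\ge 5$, after accounting for exceptional orbits and normalizers) is more robust but its bookkeeping is the fussiest part, and the $3$-cycle argument sidesteps it entirely.
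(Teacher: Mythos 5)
Your proposal is correct, and it takes a genuinely different route from the paper. The paper obtains $G\simeq S_n$ by brute force: it fixes the subgroup $V=S_{\{1,2,3\}}\times S_{\{4,\dots,n\}}$, writes down explicit rational formulas for $g_\sigma$ on a complete set of left coset representatives of $V$, and reads off from the resulting list (Theorem on $G=\tilde S\tilde V$) that distinct permutations give distinct birational maps once $n\ge5$, so that $|G|=n!$. You instead exploit the homomorphism property $g_\pi\circ g_\sigma=g_{\pi\cdot\sigma}$ (which the paper does establish) to view $\sigma\mapsto g_\sigma$ as a map $\rho\colon S_n\to\mathrm{Bir}(\mathbb C^{n-3})$ with image $G$, and reduce the whole question to $\ker\rho=\{e\}$ via the classification of normal subgroups of $S_n$ for $n\ge5$; the single computation that remains --- the $3$-cycle on the normalized indices acts by $z\mapsto 1/(1-z)$ on the remaining coordinates, whose fixed points satisfy $z^2-z+1=0$, violated by $\lambda^k=2$ --- is carried out correctly, and your remark that the inverse convention changes nothing (same fixed-point equation) is right. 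Your argument is shorter and cleanly isolates where $n\ge5$ enters (for $n=4$ the kernel is exactly the Klein four-group, matching the paper's cross-ratio identities in Section \ref{n=4}); what it does not deliver is the explicit list of formulas for the $g_\sigma$, which the paper needs anyway in Section \ref{rep} to compute Jacobians and characters, so the two approaches are complementary rather than competing.
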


For $\boldsymbol {\lambda}\in K_n$, set
$$
[\boldsymbol {\lambda}]=\{0,\:1,\:\infty,\:\lambda_1,\:\lambda_2,\:\cdots,\:\lambda_{n-3}\}
$$
and $G_{\boldsymbol \lambda}$ the stabilizer of $\boldsymbol {\lambda}$
$$
G_{\boldsymbol \lambda}=\{g_{\sigma}\in G|g_\sigma(\boldsymbol \lambda)=\boldsymbol \lambda\}.
$$
\begin{defn}
For $n\ge5$, we call $\overline{[\boldsymbol {\lambda}]}\in \mathfrak{M_{0,\:n}}$ an oribfold singularity of $\mathfrak{M_{0,\:n}}$ if $G_{\boldsymbol \lambda}$ is non-trivial.
\end{defn}

Given a finite subset $\alpha$ of the Riemann sphere, let $\mathcal{A}_{\alpha}$ denote the group of linear fractional transformations that fix $\alpha$. In Section \ref{sing} we shall prove that
\begin{thm}
For $n\ge5$, $G_{\boldsymbol \lambda}$ is isomorphic to $\mathcal{A}_{[\boldsymbol {\lambda}]}$.
\end{thm}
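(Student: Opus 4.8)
The plan is to exhibit a group isomorphism between $G_{\boldsymbol\lambda}$ and $\mathcal{A}_{[\boldsymbol\lambda]}$ by tracking how the $S_n$-action on $K_n$ encodes the reshuffling of the $n$ points $[\boldsymbol\lambda]=\{0,1,\infty,\lambda_1,\dots,\lambda_{n-3}\}$ on $\mathbb{P}^1$. Recall from the construction of $G\cong S_n$ acting on $K_n$ (the earlier theorems) that an element $g_\sigma\in G$ is obtained as follows: one permutes the $n$ marked points of the tuple $(0,1,\infty,\lambda_1,\dots,\lambda_{n-3})$ by $\sigma\in S_n$, then applies the unique linear fractional transformation $\varphi_\sigma$ that sends the new first three points back to $(0,1,\infty)$; the resulting last $n-3$ entries form $g_\sigma(\boldsymbol\lambda)$. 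The key observation is that $g_\sigma(\boldsymbol\lambda)=\boldsymbol\lambda$ as a point of $K_n$ precisely when $\varphi_\sigma$ maps the ordered tuple of $[\boldsymbol\lambda]$-points to a reordering of the same tuple, i.e. when $\varphi_\sigma$ leaves the \emph{set} $[\boldsymbol\lambda]$ invariant. Thus $\varphi_\sigma\in\mathcal{A}_{[\boldsymbol\lambda]}$ whenever $g_\sigma\in G_{\boldsymbol\lambda}$.

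Concretely, I would define a map $\Phi: G_{\boldsymbol\lambda}\to\mathcal{A}_{[\boldsymbol\lambda]}$ by $\Phi(g_\sigma)=\varphi_\sigma$, and a map $\Psi:\mathcal{A}_{[\boldsymbol\lambda]}\to G_{\boldsymbol\lambda}$ by sending $\psi\in\mathcal{A}_{[\boldsymbol\lambda]}$ to $g_\sigma$, where $\sigma\in S_n$ is the permutation induced by $\psi$ on the labelled points of $[\boldsymbol\lambda]$ (labels being $1\mapsto 0$, $2\mapsto 1$, $3\mapsto\infty$, $3+i\mapsto\lambda_i$); since $\psi$ is a bijection of the $n$-element set $[\boldsymbol\lambda]$, this $\sigma$ is well defined. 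I would then check: (i) $\Phi$ is a homomorphism — this follows because composition in $G$ is built to mirror composition of the associated linear fractional transformations, so $\varphi_{\sigma\tau}=\varphi_\sigma\circ\varphi_\tau$ up to the normalization that pins down $(0,1,\infty)$, which I should verify carefully from the group law on $G$; (ii) $\Psi$ is a homomorphism — because the permutation induced on labels by a composition $\psi_1\circ\psi_2$ is the composition of the induced permutations; (iii) $\Phi$ and $\Psi$ are mutually inverse — this is essentially the uniqueness of a linear fractional transformation prescribed on three points: $\psi$ and $\varphi_{\sigma(\psi)}$ agree on $\{0,1,\infty\}$ after accounting for the reordering, hence coincide, and conversely the permutation recovered from $\varphi_\sigma$ is $\sigma$ itself. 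Finiteness of $\mathcal{A}_{[\boldsymbol\lambda]}$ (already noted in the introduction, since $n\ge 5$ forces an injection into $S_n$) guarantees both sides are finite groups, so the bijective homomorphism is an isomorphism.

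The main obstacle I anticipate is bookkeeping the precise definition of the $G$-action and its group law as set up in Sections \ref{orbif} and \ref{grou}: the isomorphism $G\cong S_n$ may be realized with a specific convention (e.g. whether $g_\sigma$ corresponds to $\sigma$ or $\sigma^{-1}$, and whether the normalizing transformation $\varphi_\sigma$ is applied on the left or composed on the right), and getting $\Phi$ to be a genuine homomorphism rather than an anti-homomorphism requires matching that convention exactly. A secondary subtlety is the degenerate case where some points of $[\boldsymbol\lambda]$ can only be distinguished after applying $\varphi_\sigma$ — but since all $n$ points are distinct and $n\ge 5$, the labelling is unambiguous and the induced permutation $\sigma$ is uniquely determined, so this does not actually cause trouble; it merely needs to be stated. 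Once the conventions are fixed, steps (ii) and (iii) are routine, and the proof reduces to the three-point uniqueness lemma for $\mathrm{PSL}(2,\mathbb{C})$ together with the explicit group law on $G$.
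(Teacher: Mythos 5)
Your proposal is correct and follows essentially the same route as the paper: the paper defines $\Phi_{\boldsymbol\lambda}(g_\sigma)=f^{\boldsymbol\lambda}_\sigma$ (your $\varphi_\sigma$), verifies it lands in $\mathcal{A}_{[\boldsymbol\lambda]}$ and is a homomorphism via the composition law $f^{\boldsymbol\lambda}_\pi\circ f^{\boldsymbol\lambda}_\sigma=f^{\boldsymbol\lambda}_{\pi\cdot\sigma}$ on the stabilizer, and proves bijectivity exactly by your inverse construction (reading off the induced permutation and invoking three-point uniqueness). The convention issue you flag is resolved in the paper's favor by the identity $f^{\boldsymbol\lambda}_\sigma(z^{\boldsymbol\lambda}_{\sigma^{-1}(k)})=z^{g_\sigma(\boldsymbol\lambda)}_k$, which makes $\Phi_{\boldsymbol\lambda}$ a genuine homomorphism.
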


Thus the study of the orbifold singularities of $\mathfrak{M_{0,\:n}}$ for $n\geq 5$ can be reduced to that of $\mathcal{A_\alpha}$ for subsets $\alpha$ with $n$ elements on the Riemann sphere. Next we shall find a way to determine $\mathcal{A_\alpha}$ for any finite subset $\alpha$ of the Riemann sphere when $|\alpha|\ge4$.

\subsection{The Subset Fixed by a Specific Group}

Given a finite subset $\alpha$ of the Riemann sphere, $|\alpha|=n$, $n\ge4$. It is easy to see that $\mathcal{A}_{\alpha}$ has at most $n(n-1)(n-2)$ elements. Thus $\mathcal{A}_{\alpha}$ is finite. There are exactly five kinds of non-trivial finite linear fractional transformation groups: the icosahedral group $I$, the octahedral group $O$, the tetrahedral group $T$, the dihedral group $D_n$ and the finite cyclic group $\mathbb Z_n$, $n\ge2$.

Let $G$ be a finite group of linear fractional transformations. In Section \ref{orbit}, by discussing the orbits of the action of  $G$ on $S^2$ and $\widehat{\mathbb{C}}$, we shall find all finite subsets $\alpha$ of $\widehat{\mathbb{C}}$ such that $\mathcal{A}_{\alpha}\simeq G$.  Thus for any given finite $\alpha\subseteq \widehat{\mathbb{C}}$, $|\alpha|\ge4$, we can find $\mathcal{A}_{\alpha}$. The following five theorems constitute the whole result. Notice that we shall not distinguish a point on $S^2$ from its image on the extended complex plane under the stereographic projection.

Suppose that $G$ is the icosahedral group $I$ that fixes a regular dodecahedron whose center is the origin. Let $V_I,\:F_I,\:E_I$ denote the vertices, the projections of the central points of the faces on $S^2$ and the projections of the middle points of the edges on $S^2$ respectively, with the origin being the central of the projection.
For any $X\in S^2 \backslash (V_I\cup F_I\cup E_I)$, define $B_I(X)$ as the orbit of $X$.
\begin{thm}
For any finite subset $\alpha$ of $S^2$, $\mathcal{A}_{\alpha}=G(\:\simeq A_5)$ if and only if $\alpha$ is a union of certain elements in $\{V_I,\:F_I,\:E_I\}\cup\{B_I(X)|X\in S^2 \backslash (V_I\cup F_I\cup E_I)\}$.
\end{thm}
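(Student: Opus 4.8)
The plan is to reduce the theorem to two facts about the action of the icosahedral group $I$ on $S^2$: that a subset of $S^2$ is $I$-invariant if and only if it is a union of $I$-orbits, and that the $I$-orbits admit a complete explicit description. I would first pin down the orbit structure. Since $I$ is the group of rotational symmetries of the dodecahedron and $|I|=60$, the points of $S^2$ with nontrivial stabilizer fall into exactly three orbits: the $20$ vertices $V_I$, each with cyclic stabilizer of order $3$; the $12$ face-centres $F_I$, each with cyclic stabilizer of order $5$; and the $30$ edge-midpoints $E_I$, each with cyclic stabilizer of order $2$. Every remaining point $X\in S^2\setminus(V_I\cup F_I\cup E_I)$ has trivial stabilizer, so its orbit $B_I(X)$ has exactly $60$ points. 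Hence the $I$-invariant subsets of $S^2$ are precisely the unions of members of $\{V_I,F_I,E_I\}\cup\{B_I(X)\mid X\in S^2\setminus(V_I\cup F_I\cup E_I)\}$.

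For the ``if'' direction, suppose $\alpha$ is a nonempty union of such members. Each $B_I(X)$ is by definition an $I$-orbit, and $V_I,F_I,E_I$ are visibly invariant under $I$, so $\alpha$ is $I$-invariant; that is, $G=I\subseteq\mathcal A_\alpha$. Since $|\alpha|\ge12$, the group $\mathcal A_\alpha$ is finite, and $I\simeq A_5$ lies in no strictly larger finite subgroup of $\mathrm{PSL}(2,\mathbb C)$: among the groups listed earlier, $A_5$ (nonabelian, simple, of order $60$) embeds in no $\mathbb Z_n$ and no $D_n$, while $|T|=12$ and $|O|=24$ are too small. Therefore $\mathcal A_\alpha=I=G$.

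For the ``only if'' direction, assume $\mathcal A_\alpha=G=I$. Then $\alpha$ is $I$-invariant, hence a union of $I$-orbits, and by the classification this is exactly a union of certain members of $\{V_I,F_I,E_I\}\cup\{B_I(X)\mid X\in S^2\setminus(V_I\cup F_I\cup E_I)\}$, as claimed.

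The one genuinely nontrivial ingredient, and the step I expect to be the main obstacle, is the orbit classification itself: that $V_I$, $F_I$, $E_I$ are the \emph{only} exceptional orbits, with the stated sizes and stabilizer orders. I would establish this once, either by a direct count on the dodecahedron, or via the ramification identity $\sum_i\bigl(1-\tfrac{1}{e_i}\bigr)=2-\tfrac{2}{|G|}$ for the quotient $S^2\to S^2/G$ of a finite rotation group $G$, which for $|G|=60$ has the unique integer solution $(e_1,e_2,e_3)=(2,3,5)$, forcing exceptional orbit sizes $30$, $20$, $12$; these are then identified geometrically with $E_I$, $V_I$, $F_I$. The same bookkeeping, with different parameters, will drive the parallel theorems for $O$, $T$, $D_n$, and $\mathbb Z_n$.
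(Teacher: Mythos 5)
Your proposal is correct and follows essentially the same route as the paper: classify the orbits of $I$ on $S^2$ (the three exceptional orbits $V_I$, $F_I$, $E_I$ and the free orbits of size $60$), observe that $\mathcal A_\alpha=G$ forces $\alpha$ to be a union of orbits, and conversely that a union of orbits gives $G\subseteq\mathcal A_\alpha$ with equality because no other finite subgroup of $\mathrm{PSL}(2,\mathbb C)$ contains a copy of $A_5$. Your suggested use of the ramification identity to pin down the exceptional orbits is a slightly more systematic alternative to the paper's direct count on the dodecahedron, but the overall argument is the same.
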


Now suppose $G$ is the octahedral group $O$ that fixes a cube whose center is the origin. Let $V_O,\:F_O,\:E_O$ denote the vertices, the projections of the central points of the faces on $S^2$ and the projections of the middle points of the edges on $S^2$ respectively, with the origin being the center of the projection. For any $X\in S^2 \backslash (V_O\cup F_O\cup E_O)$, define $B_O(X)$ as the orbit of $X$.
\begin{thm}
For any finite subset $\alpha$ of $S^2$, $\mathcal{A}_{\alpha}=G(\:\simeq S_4)$ if and only if $\alpha$ is a union of certain elements in $\{V_O,\:F_O,\:E_O\}\cup\{B_O(X)|X\in S^2 \backslash (V_O\cup F_O\cup E_O)\}$.
\end{thm}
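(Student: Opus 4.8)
The plan is to deduce both implications from the orbit structure of $O$ acting on $S^2$, in exactly the same fashion as the preceding (icosahedral) theorem. Throughout, $O$ denotes the fixed octahedral group, realized as the rotation group of the cube centered at the origin.

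\textbf{The ``only if'' direction.} Suppose $\mathcal{A}_\alpha=O$. Then every element of $O$ maps $\alpha$ onto $\alpha$, so $\alpha$ is $O$-invariant and hence a disjoint union of $O$-orbits; it therefore suffices to enumerate those orbits. A nontrivial element of $O$ is a rotation about one of the symmetry axes of the cube, and its fixed-point set on $S^2$ is the pair of poles of that axis. Running through the four order-$3$ body-diagonal axes, the three order-$4$ face axes and the six order-$2$ edge axes, the union of all these fixed-point sets is exactly $V_O\cup F_O\cup E_O$ (a set of $8+6+12$ distinct points). Consequently a point $X\in S^2\backslash(V_O\cup F_O\cup E_O)$ has trivial stabilizer in $O$, so its orbit $B_O(X)$ has $24$ points; whereas the stabilizer of a point of $V_O$, $F_O$ or $E_O$ --- a finite subgroup of $O$ fixing a point of $S^2$, hence cyclic, namely the group of rotations about the relevant axis --- is $\mathbb{Z}_3$, $\mathbb{Z}_4$ or $\mathbb{Z}_2$ respectively, so by orbit counting (and transitivity of $O$ on the $8$ vertices, $6$ faces and $12$ edges of the cube) $V_O$, $F_O$, $E_O$ are each a single $O$-orbit. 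Hence the $O$-orbits on $S^2$ are precisely the members of $\{V_O,F_O,E_O\}\cup\{B_O(X)\mid X\in S^2\backslash(V_O\cup F_O\cup E_O)\}$, and $\alpha$ is a union of some of them.

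\textbf{The ``if'' direction.} Conversely, let $\alpha$ be a nonempty union of members of that collection. Each member is an $O$-orbit, so $\alpha$ is $O$-invariant, i.e. $O\subseteq\mathcal{A}_\alpha$. Since every member has at least $6$ points, $|\alpha|\ge6$, and by the observation recalled earlier $\mathcal{A}_\alpha$ is a \emph{finite} subgroup of ${\rm PSL}(2,\mathbb{C})$ containing $O\simeq S_4$; by Lagrange its order is divisible by $24$. Among the five types of finite subgroups of ${\rm PSL}(2,\mathbb{C})$ recalled above, the only one whose order is divisible by $24$ and which can contain a copy of $S_4$ is $O$ itself: $A_5$ has order $60$, a cyclic group is abelian, and every subgroup of a dihedral group is cyclic or dihedral, while $S_4$ --- already its subgroup $A_4$ --- is none of these. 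As $O\subseteq\mathcal{A}_\alpha$ and $|\mathcal{A}_\alpha|=|O|$, we conclude $\mathcal{A}_\alpha=O=G$.

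\textbf{Expected obstacle.} There is no deep difficulty here; the real content is the orbit bookkeeping in the first part --- identifying the poles of the rotation axes of the cube with its vertices, face centers and edge midpoints, checking that these three families are pairwise disjoint single orbits, and confirming that no fourth type of orbit occurs --- together with the elementary remark that $O$ is maximal among finite subgroups of ${\rm PSL}(2,\mathbb{C})$, which is immediate from the classification already quoted. I expect that verifying exhaustiveness of the three special families as the non-free orbits is the step demanding the most care. The preceding icosahedral theorem is proved by the identical argument, with stabilizers $\mathbb{Z}_3,\mathbb{Z}_5,\mathbb{Z}_2$ and orbit sizes $20,12,30$ and $60$, $A_5$ being maximal for the same reason (any proper finite overgroup would have order a multiple of $60$).
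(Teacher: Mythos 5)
Your proof is correct and follows essentially the same route as the paper: decompose $\alpha$ into $O$-orbits, identify the four orbit types $V_O,F_O,E_O,B_O(X)$, and use the classification of finite subgroups of ${\rm PSL}(2,\mathbb{C})$ to see that no finite group on the list other than $O$ itself contains a copy of $S_4$, so $O\subseteq\mathcal{A}_\alpha$ forces equality. The paper states this last step more tersely ("none of $A_5$, $A_4$, $D_n$, $\mathbb{Z}_n$ has a subgroup isomorphic to $S_4$") and simply asserts the orbit sizes, whereas you justify them via orbit--stabilizer; the substance is identical.
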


Now suppose $G$ is the tetrahedral group $T$ that fixes a regular tetrahedron whose center is the origin. Let $V_T,\:F_T,\:E_T$ denote the vertices, the projections of the central points of the faces on $S^2$ and the projections of the middle points of the edges on $S^2$ respectively, with the origin being the center of the projection. For any $X\in S^2 \backslash (V_T\cup F_T\cup E_T)$, define $B_T(X)$ as the orbit of $X$. Thus we have (see Figure \ref{iA_5, A_4})
$$
V_I=V_T\cup F_T\cup B_T(B);\:
F_I=B_T(N);\:
E_I=E_T\cup B_T(Q)\cup B_T(R);
$$$$
B_I(X)=B_T(X)\cup B_T(g(X))\cup B_T(g^2(X))\cup B_T(g^3(X))\cup B_T(g^4(X)),
$$
for $X\in S^2 \backslash (V_I\cup F_I\cup E_I)$.
\begin{figure}[!h]
\centering
\includegraphics[width=3in]{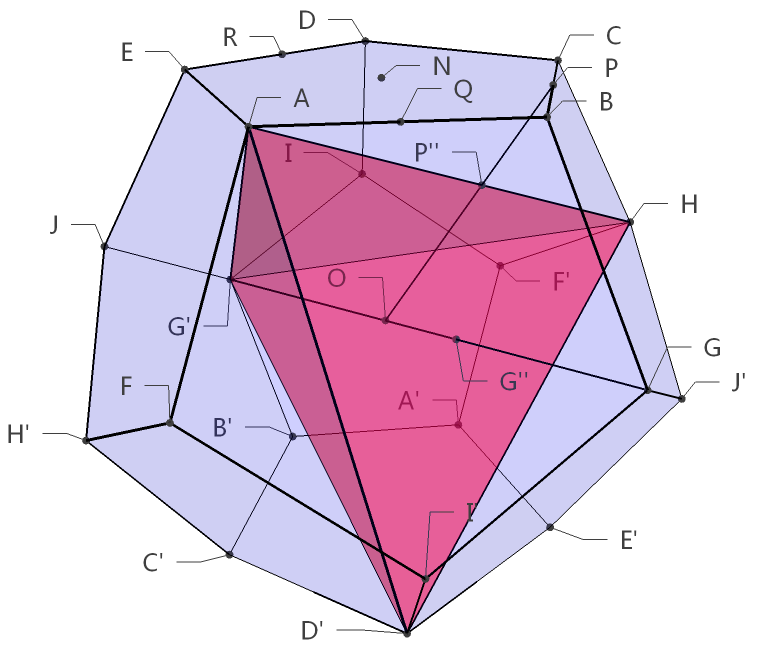}
\caption{The dodecahedron fixed by $\mathcal{A}_{\alpha}$ and the tetrahedron fixed by $G$.}
\label{iA_5, A_4}
\end{figure}

And (see Figure \ref{iS_4, A_4})
$$
V_O=V_T\cup F_T;
F_O=E_T;\:
E_O=B_T(P);\:
$$$$
B_O(X)=B_T(X)\cup B_T(g(X))
$$
for $X\in S^2 \backslash (V_O\cup F_O\cup E_O)$.
\begin{figure}[!h]
\centering
\includegraphics[width=2.5in]{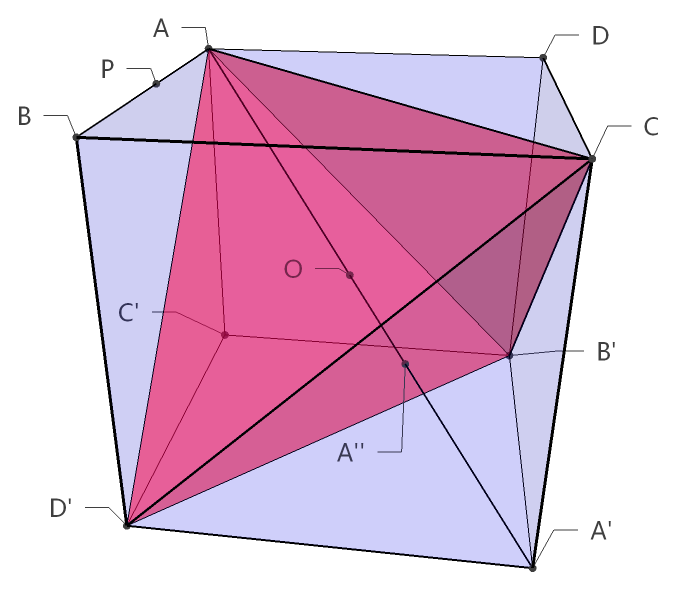}
\caption{The dodecahedron fixed by $\mathcal{A}_{\alpha}$ and the tetrahedron fixed by $G$.}
\label{iS_4, A_4}
\end{figure}

\begin{thm}
For any finite subset $\alpha$ of $S^2$, $\mathcal{A}_{\alpha}=G(\:\simeq A_4)$ if and only if all of the three claims are true:
\begin{enumerate}
\item $\alpha$ is a union of certain elements in
$$
\{V_T,\:F_T,\:E_T\}\cup\{B_T(X)|X\in S^2 \backslash (V_T\cup F_T\cup E_T)\};
$$
\item $\alpha$ is NOT a union of certain elements in
$$
\{V_I,\:F_I,\:E_I\}\cup\{B_I(X)|X\in S^2 \backslash (V_I\cup F_I\cup E_I)\};
$$
\item  $\alpha$ is NOT a union of certain elements in
$$
\{V_O,\:F_O,\:E_O\}\cup\{B_O(X)|X\in S^2 \backslash (V_O\cup F_O\cup E_O)\}.
$$
\end{enumerate}
\end{thm}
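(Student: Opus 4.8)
The plan is to deduce the statement from the two theorems already established for the octahedral group $O$ and the icosahedral group $I$, together with the classification of finite subgroups of $\mathrm{PSL}(2,\mathbb C)$. The one algebraic fact that drives everything is this: among the five types of finite Möbius groups, the only ones containing a subgroup isomorphic to $A_4$ are $A_4$, $S_4$ and $A_5$; indeed $A_4$ has no cyclic subgroup of index $\le 2$, so it embeds in neither $\mathbb Z_m$ nor $D_m$. Consequently, if a finite subgroup $H$ of $\mathrm{PSL}(2,\mathbb C)$ satisfies $T \subsetneq H$, then $H$ is itself an octahedral or an icosahedral group. Moreover there is exactly one octahedral group containing the fixed $T$, namely the $O$ of the statement: $T$ is the unique subgroup of order $12$, hence normal of index $2$, in any octahedral overgroup, so such an overgroup lies in $N_{\mathrm{PSL}(2,\mathbb C)}(T)$, and a short computation (the centralizer of $T$ is trivial, since anything commuting with $T$ fixes the $4$-point orbit $V_T$ pointwise, so $N(T)/C(T)$ embeds in $\mathrm{Aut}(A_4)=S_4$) yields $N_{\mathrm{PSL}(2,\mathbb C)}(T)=O$. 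Only finitely many icosahedral groups contain $T$, and the identities $V_I=V_T\cup F_T\cup B_T(B)$, $F_I=B_T(N)$, $E_I=E_T\cup B_T(Q)\cup B_T(R)$, $B_I(X)=B_T(X)\cup B_T(gX)\cup\cdots\cup B_T(g^4X)$ recorded before the statement describe exactly how the orbits of the distinguished $I$ break into $T$-orbits.

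For the forward implication: from $\mathcal A_\alpha=T$ one gets that $T$ fixes $\alpha$, so $\alpha$ is a disjoint union of $T$-orbits; since every orbit of $T$ on $S^2$ is one of $V_T$, $F_T$, $E_T$ or a generic orbit $B_T(X)$ of size $12$, Claim (1) holds. If Claim (2) failed, $\alpha$ would be a union of $I$-orbits, hence $I$-invariant, hence $I\subseteq\mathcal A_\alpha=T$, contradicting $|I|=60>12=|T|$; so Claim (2) holds, and Claim (3) holds by the identical argument with $|O|=24>12$.

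For the converse, assume Claims (1)--(3). By Claim (1) the set $\alpha$ is a union of $T$-orbits, so $T\subseteq\mathcal A_\alpha$. Suppose this inclusion were proper. Then, by the first paragraph, $\mathcal A_\alpha$ is an octahedral or icosahedral group $H$ with $T\subsetneq H$, and $\alpha$ is $H$-invariant. If $H$ is octahedral, then $H=O$ by the uniqueness above, so $\alpha$ is a union of $O$-orbits by the already-proven octahedral theorem, contradicting Claim (3). If $H$ is icosahedral, one identifies $H$ with the distinguished $I$ by means of the nesting relations above --- i.e.\ one checks that the way $\alpha$'s $T$-orbit decomposition assembles into $H$-orbits forces $H=I$ --- and then the icosahedral theorem gives that $\alpha$ is a union of $I$-orbits, contradicting Claim (2). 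Hence $\mathcal A_\alpha=T$.

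The main obstacle I anticipate is precisely this last step: matching an a priori arbitrary octahedral or icosahedral overgroup of $T$ to the specific $O$ and $I$ named in Claims (2)--(3). For $O$ this is handled cleanly by the normalizer computation. For $I$ it is exactly where the geometric dictionary of Figures \ref{iA_5, A_4} and \ref{iS_4, A_4} must be used: one has to verify that a subset of $S^2$ that is simultaneously a union of $T$-orbits and a union of orbits of some icosahedral group is in fact a union of orbits of the chosen $I$ (equivalently, that the coincidences $V_I=V_T\cup F_T\cup B_T(B)$, $E_I=E_T\cup B_T(Q)\cup B_T(R)$, etc., are forced rather than accidental). If instead one reads ``$\mathcal A_\alpha=G$'' as ``$\mathcal A_\alpha$ is conjugate to $G$'', then Claims (1)--(3) should be read with ``some tetrahedral/icosahedral/octahedral group'' in place of the distinguished ones, and the same argument goes through without any of this matching being needed.
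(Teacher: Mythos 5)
Your proof has the same skeleton as the paper's: condition (1) yields $T\subseteq\mathcal A_\alpha$, the classification of finite M\"obius groups (plus the fact that $A_4$ embeds in none of $\mathbb Z_m$, $D_m$) pins $\mathcal A_\alpha$ down to a tetrahedral, octahedral or icosahedral group, and conditions (2)--(3) are used to exclude the last two. Your handling of the octahedral case through $N_{\mathrm{PSL}(2,\mathbb C)}(T)=O$ is correct and is a genuinely different, cleaner route than the paper's, which instead argues geometrically that every vertex and every face-centre projection of the tetrahedron is a vertex of the cube, so that $V_O=V_T\cup F_T$ determines the cube.

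The icosahedral case, however, is a real gap, on two counts. First, the verification you defer to ``one checks'' is exactly the substance of the paper's proof: for an arbitrary icosahedral $H\supseteq T$ one must show that the fixed points of its order-three elements are vertices of its dodecahedron (so $V_T\subseteq V_H$), that $F_T$, being the antipodal set of $V_T$, therefore also lies in $V_H$, that $E_T\subseteq E_H$ via the order-two axes, and then count ($20=4+4+12$, $30=6+12+12$, $60=5\cdot 12$) to obtain the displayed orbit decompositions; none of this appears in your proposal. Second, the conclusion you aim for --- that the assembly ``forces $H=I$'' --- is unattainable: your own normalizer computation, pushed one step further, shows that $N(T)=O$ acts transitively by conjugation on the icosahedral overgroups of $T$ with stabilizer $O\cap H=T$, so there are exactly $|O:T|=2$ of them, say $I_1$ and $I_2$, and $V_{I_1}\neq V_{I_2}$ (a common $20$-point invariant set would be preserved by the infinite group $\langle I_1,I_2\rangle$). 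Consequently $\alpha=V_{I_2}$ is a union of $T$-orbits, is not a union of orbits of the distinguished $I=I_1$ nor of $O$, and yet has $\mathcal A_\alpha=I_2$. What the argument actually proves --- and how condition (2) has to be read --- is that $\alpha$ fails to be a union of orbits of every icosahedral overgroup of $T$; your closing remark about interpreting the theorem up to conjugacy is the right instinct, but it is a repair of the statement rather than a proof of the forcing step as you wrote it.
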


\begin{rem}
{\rm Theorem 11 does not claim the existence of $\alpha$ though it does give a necessary and sufficient condition for $\alpha$ under which
$\mathcal{A}_\alpha$ coincides with $A_4$. Theorems 12 and 13 have the same flavor as Theorem 11.
We overcome this shortcoming by giving a unified existence result for $\alpha$ in Theorem 24. }
\end{rem}

Now we go back to $\widehat{\mathbb C}$. Assume that
\begin{displaymath}
f(z)=e^{\frac{1}{n}2\pi i}z,\:g(z)=\frac{1}{z},\:n\ge2
\end{displaymath}
and
\begin{displaymath}
G=\langle f, g\rangle\simeq D_n.
\end{displaymath}
Set
\begin{displaymath}
V=\{0, \infty\},
\end{displaymath}
\begin{displaymath}
A_n=\{e^{\frac{k}{n}2\pi i}|k\in\mathbb{Z}\},
\end{displaymath}
\begin{displaymath}
B_n=\{e^{\frac{2k+1}{2n}2\pi i}|k\in\mathbb{Z}\}
\end{displaymath}
and
\begin{displaymath}
C_n(z)=\{ze^{\frac{k}{n}2\pi i}|k\in\mathbb{Z}\}\cup\{z^{-1}e^{\frac{k}{n}2\pi i}|k\in\mathbb{Z}\}
\end{displaymath}
for $z\in{\mathbb C}^*\backslash\{e^{\frac{l}{2n}2\pi i}|l\in\mathbb{Z}\}$.

\begin{thm}
For any finite subset $\alpha$ of $S^2$, $|\alpha|\ge4$, $\mathcal{A}_{\alpha}=G(\simeq D_n)$ if and only if all of the four claims are true:
\begin{enumerate}
\item $\alpha$ is a union of certain elements in
$$
\{V,\:A_n,\:B_n\}\cup\{C_n(z)|z\in{\mathbb C}^*\backslash\{e^{\frac{l}{2n}2\pi i}|l\in\mathbb{Z}\}\};
$$
\item $\alpha$ is NOT a union of certain elements in
$$
\{V,\:A_{pn},\:B_{pn}\}\cup\{C_{pn}(z)|z\in{\mathbb C}^*\backslash\{e^{\frac{l}{2n}2\pi i}|l\in\mathbb{Z}\}\},\:p\ge2;
$$
\item $\alpha$ is NOT in the icosahedral, the octahedral or the tetrahedral case;
\item when $n=2$, $\alpha$ is NOT a union of certain elements in
$$
\{A_2,\:\phi^{-1}(A_{2p}),\:\phi^{-1}(B_{2p})\}\cup\{\phi^{-1}(C_{2p}(z))|z\in\mathbb C^*\backslash\{e^{\frac{l}{4p}2\pi i}|l\in\mathbb{Z}\}\},\:p\ge2,
$$
where
$$\phi(z)=\frac{1+z}{1-z}.$$
\end{enumerate}
\end{thm}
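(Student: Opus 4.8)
Like the proofs of Theorems~9--11, this one rests on a description of the orbits of $G=\langle f,g\rangle$, with $f(z)=e^{2\pi i/n}z$ and $g(z)=1/z$, acting on $\widehat{\mathbb C}$. The first step is to show that every $G$-orbit is exactly one of $V=\{0,\infty\}$, $A_n$, $B_n$, or $C_n(z)$ for suitable $z\in\mathbb C^{*}$. This is read off from the stabilizers: the cyclic subgroup $\langle f\rangle$ fixes $0$ and $\infty$ and acts freely on $\widehat{\mathbb C}\setminus\{0,\infty\}$; the $n$ reflections $f^{k}g$ fix two points each, and their fixed-point sets together are exactly the $2n$ points $A_n\cup B_n$; and every other point has trivial $G$-stabilizer, hence an orbit of $2n=|G|$ points, which one identifies with some $C_n(z)$. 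The one point needing verification is that $A_n\cup B_n$ is precisely the exceptional locus removed in the definition of $C_n(z)$. It follows that a finite $G$-invariant subset of $\widehat{\mathbb C}$ is exactly a finite union of members of $\{V,A_n,B_n\}\cup\{C_n(z)\}$, which is the equivalence between claim~(1) and the condition that $\alpha$ be $G$-invariant. I also recall that $\mathcal A_\alpha$ is finite when $|\alpha|\ge4$, hence is cyclic, dihedral, or one of $T,O,I$.

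For the forward implication, suppose $\mathcal A_\alpha=G$. Then $\alpha$ is $G$-invariant, so claim~(1) holds, and the negative claims I handle by contradiction. If $\alpha$ were a union of members of $\{V,A_{pn},B_{pn}\}\cup\{C_{pn}(z)\}$ for some $p\ge2$, then $\alpha$ would be invariant under $D_{pn}^{\mathrm{std}}:=\langle z\mapsto e^{2\pi i/(pn)}z,\ z\mapsto1/z\rangle$; if $\alpha$ were in the icosahedral, octahedral, or tetrahedral case, then by Theorems~9--11 it would be invariant under a polyhedral group; and, when $n=2$, if $\alpha$ were a union of members of $\{A_2,\phi^{-1}(A_{2p}),\phi^{-1}(B_{2p})\}\cup\{\phi^{-1}(C_{2p}(z))\}$, then $\alpha$ would be invariant under $\phi^{-1}D_{2p}^{\mathrm{std}}\phi$, using that $\phi(V)=A_2$ so that $\phi^{-1}$ carries the standard family into the listed one. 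Each of these groups strictly contains $G$, so its presence in $\mathcal A_\alpha$ contradicts $\mathcal A_\alpha=G$ by an order count; hence (2)--(4) all hold.

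For the converse, claim~(1) gives $G\subseteq\mathcal A_\alpha$, and $\mathcal A_\alpha$ is finite, so it is cyclic, dihedral, or polyhedral. It is not cyclic, since it contains the noncyclic $G$; and if it were icosahedral, octahedral, or tetrahedral, Theorems~9--11 would put $\alpha$ in the corresponding case, against claim~(3). So $\mathcal A_\alpha=D_m$ with $D_m\supseteq D_n$. When $n\ge3$, the subgroup $\langle f\rangle\cong\mathbb Z_n$ has order $\ge3$ and therefore lies inside the characteristic index-two cyclic subgroup $\mathbb Z_m$ of $D_m$; since $\mathbb Z_m$ fixes $\{0,\infty\}$, this forces $n\mid m$ and $\mathbb Z_m=\langle z\mapsto e^{2\pi i/m}z\rangle$, so adjoining the reflection $g$ identifies $D_m$ with the standard dihedral group $\langle z\mapsto e^{2\pi i/m}z,\ z\mapsto1/z\rangle$. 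If $m>n$, the orbit description of this group exhibits $\alpha$ as a union forbidden by claim~(2) (with $m/n\ge2$); hence $m=n$ and $\mathcal A_\alpha=G$.

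The genuinely delicate case is $n=2$, where $G=D_2\cong\mathbb Z_2\times\mathbb Z_2$ has no distinguished ``rotation'' subgroup: its three involutions $z\mapsto-z$, $z\mapsto1/z$, $z\mapsto-1/z$ have fixed-point pairs $V$, $A_2$, $B_2$ --- three mutually perpendicular coordinate axes --- which are interchanged by an $S_3$ of outer automorphisms realized by M\"obius conjugations, the normalizer of $G$ being the octahedral group. Hence $D_2\subseteq D_m$ forces $m=2p$ even, but the cyclic subgroup $\mathbb Z_{2p}\trianglelefteq D_{2p}$ meets $G$ in a half-turn whose axis can be \emph{any} of $V$, $A_2$, $B_2$: the axis $V$ gives $D_{2p}^{\mathrm{std}}$; the axis $A_2$ gives $\phi^{-1}D_{2p}^{\mathrm{std}}\phi$ with $\phi(z)=\tfrac{1+z}{1-z}$; and the axis $B_2$ gives a further conjugate of $D_{2p}^{\mathrm{std}}$, by an order-three M\"obius map cyclically permuting the three axes. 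Enumerating exactly these embeddings $D_2\hookrightarrow D_{2p}$, writing down the orbit family attached to each, and checking that conditions (2)--(4) really do rule out all of them, is the step I expect to demand the most care; everything else reduces to the orbit bookkeeping of the first step or to routine order-and-containment arguments among the finite subgroups of $\mathrm{PSL}(2,\mathbb C)$.
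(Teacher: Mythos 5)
Your overall route is the same as the paper's: identify the $G$-orbits, observe that claim (1) is equivalent to $G$-invariance of $\alpha$, and then analyse how $G\simeq D_n$ can sit inside the larger finite subgroup $\mathcal A_\alpha$ of $\mathrm{PSL}(2,\mathbb C)$. For $n\ge3$ your argument is complete, and in fact slightly more careful than the paper's: the observation that an element of order $\ge3$ in a dihedral group must lie in its cyclic rotation subgroup is exactly what forces the containing $D_m$ to be the standard one with axis $\{0,\infty\}$, whence $n\mid m$ and claim (2) applies; the paper's ``Case 1'' is this argument with the justification left implicit.

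The genuine gap is the one you yourself flag and postpone: the $n=2$ case. You correctly note that $D_2$ embeds into a $D_{2p}$ in three essentially different ways, according to which of the three axes $V=\{0,\infty\}$, $A_2=\{\pm1\}$, $B_2=\{\pm i\}$ becomes the rotation axis of $D_{2p}$, and that one must check that conditions (2)--(4) exclude all three. That check fails: condition (2) excludes the axis-$V$ family and condition (4) excludes the axis-$A_2$ family, but nothing in the statement excludes the axis-$B_2$ family. Concretely, take $\alpha=\{0,\infty,1,-1\}=V\cup A_2$. It is a union of $D_2$-orbits, so claim (1) holds; it is not a union of members of the family in (2) or in (4) for any $p\ge2$, since apart from $V$ (resp.\ $A_2$) every member of those families has at least four elements; and it is not in a polyhedral case. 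Yet $\mathcal A_\alpha\simeq D_4$ (this is the paper's own Section 5 with $\lambda=-1$), generated over $D_2$ by $z\mapsto\frac{z-1}{z+1}$, an order-four rotation about the axis $\{i,-i\}$. So the step you defer is not merely laborious --- it cannot be completed from the stated hypotheses, and a further exclusion (the orbit family of the $D_{2p}$ with poles $B_2$, i.e.\ the image of the family in (4) under $z\mapsto iz$) is needed. The paper's own proof has the same lacuna: its ``Case 2'' silently assumes that the involution $g:z\mapsto 1/z$ is the one lying in the rotation subgroup $\langle\rho\rangle$ of $D_{2p}$, and never treats the possibility that $fg:z\mapsto-1/z$ is. You deserve credit for isolating the exact weak point, but as it stands neither your proposal nor the paper closes it.
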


Assume that
\begin{displaymath}
G=\langle z \mapsto e^{\frac{2\pi i}{n}}z\rangle\simeq\mathbb{Z}_n,\:n\ge 2.
\end{displaymath}
Set
$$
C_n(z)=\{e^{\frac{k}{n}2\pi i}z|k\in\mathbb{Z}\}
$$
for $z\in\mathbb C^*$.

\begin{thm}
For any finite subset $\alpha$ of $S^2$, $|\alpha|\ge4$, $\mathcal{A}_{\alpha}=G(\simeq \mathbb Z_n), n\ge2$ if and only if all of the three claims are true:
\begin{enumerate}
\item $\alpha$ is a union of certain elements in
$$
\{\{\infty\},\:\{0\}\}\cup\{C_n(z)|z\in{\mathbb C}^*\};
$$
\item $\alpha$ is NOT a union of certain elements in
$$
\{\{\infty\},\:\{0\}\}\cup\{C_{pn}(z)|z\in{\mathbb C}^*\},\:p\ge2;
$$
\item $\alpha$ is NOT in the icosahedral, the octahedral the tetrahedral or the dihedral case.
\end{enumerate}
\end{thm}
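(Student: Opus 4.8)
The plan is to prove the equivalence by separating the requirement $\mathcal A_\alpha=G$ into the two halves ``$G\subseteq\mathcal A_\alpha$'' and ``$\mathcal A_\alpha$ is no larger than $G$'', and then matching the first half with claim (1) and the second half with claims (2) and (3).

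First I would record the $G$-orbits on $\widehat{\mathbb C}$. The generator $z\mapsto e^{2\pi i/n}z$ is a Möbius transformation of finite order whose fixed-point set is $\{0,\infty\}$, and it acts freely on $\mathbb C^*$ with every orbit of cardinality $n$; hence the $G$-orbits are exactly $\{0\}$, $\{\infty\}$ and the sets $C_n(z)$ for $z\in\mathbb C^*$. Consequently $G\subseteq\mathcal A_\alpha$ -- equivalently, $\alpha$ is $G$-invariant -- if and only if $\alpha$ is a union of such orbits, which is precisely claim (1). In particular claim (1) is necessary for $\mathcal A_\alpha=G$, so for the remaining steps I may assume it. Since $|\alpha|\ge 4$, $\mathcal A_\alpha$ has at most $|\alpha|(|\alpha|-1)(|\alpha|-2)$ elements and is therefore finite; by the classification of finite linear fractional transformation groups recalled above, $\mathcal A_\alpha$ is cyclic, dihedral, tetrahedral, octahedral, or icosahedral, and it contains $G$.

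Next I would split according to whether $\mathcal A_\alpha$ is cyclic. If $\mathcal A_\alpha=\langle h\rangle$ contains $G$, then $h$ commutes with a generator of $G$; since that generator is a non-identity transformation of finite order with fixed-point set $\{0,\infty\}$, and since any Möbius transformation interchanging $0$ and $\infty$ has order $2$ and so cannot generate a group containing $G$, a short argument forces $h$ to be a rotation $z\mapsto\mu z$ about $\{0,\infty\}$. Thus $\mathcal A_\alpha=\langle z\mapsto e^{2\pi i/m}z\rangle$ with $n\mid m$; writing $m=pn$, we have $\mathcal A_\alpha\supsetneq G$ exactly when $p\ge 2$, and in that case $\alpha$, being $\mathcal A_\alpha$-invariant, is a union of the orbits $\{0\},\{\infty\},C_{pn}(z)$ of $\mathcal A_\alpha$, so claim (2) fails; conversely, if claim (2) fails then $\mathbb Z_{pn}\subseteq\mathcal A_\alpha$ for some $p\ge 2$ and $\mathcal A_\alpha\ne G$. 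If instead $\mathcal A_\alpha$ is not cyclic, it is some $D_m$ ($m\ge 2$), $A_4$, $S_4$, or $A_5$; since $\alpha$ is then invariant under that group, the earlier theorems of this section characterizing the icosahedral, octahedral, tetrahedral and dihedral cases show that $\alpha$ lies in one of those cases, i.e.\ claim (3) fails; conversely, if claim (3) fails, $\mathcal A_\alpha$ is one of these non-cyclic groups and hence is not $\mathbb Z_n$.

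Putting the cases together: when claims (1)--(3) all hold, $\mathcal A_\alpha$ is a finite group containing $G$ that is neither a strictly larger cyclic group (barred by (2)) nor non-cyclic (barred by (3)), so $\mathcal A_\alpha=G$; the reverse implications were observed along the way. The step I expect to be the main obstacle is the cyclic-overgroup analysis -- proving that every cyclic $\mathcal A_\alpha$ containing $G$ must share the rotation axis $\{0,\infty\}$ of $G$, so that such an overgroup is detected exactly by the orbit condition of claim (2) -- together with the bookkeeping needed to be certain that claims (2) and (3) jointly exclude every proper overgroup of $G$. This is precisely where the finiteness of $\mathcal A_\alpha$ coming from $|\alpha|\ge 4$ and the full classification of finite Möbius groups are both indispensable.
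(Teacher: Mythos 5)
Your proposal is correct and follows essentially the same route as the paper: claim (1) is identified with $G\subseteq\mathcal A_\alpha$ via the orbit decomposition $\{0\},\{\infty\},C_n(z)$, and then the classification of finite subgroups of ${\rm PSL}(2,\mathbb C)$ is used to show that any strictly larger stabilizer is either a cyclic overgroup $\mathbb Z_{pn}$ about the same axis (excluded by claim (2), using $C_{pn}(z)=\bigcup_k C_n(e^{2k\pi i/(pn)}z)$) or a non-cyclic group (excluded by claim (3)). The only difference is that you explicitly justify, via the commutation/fixed-point argument, why a cyclic overgroup of $G$ must be a rotation group about $\{0,\infty\}$ — a step the paper asserts without proof — so your write-up is if anything more complete than the original.
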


\subsection{Representations of the Stabilizers of Singularities}

For any $\boldsymbol {\lambda}\in K_n$, ${n\ge5}$ such that $\overline{[\boldsymbol {\lambda}]}$ is a singularity of $\mathfrak{M_{0,\:n}}$, each $g_\sigma \in G_{\boldsymbol {\lambda}}$ introduces a tangential mapping $g_\sigma^*$ of $\text{T}_{\boldsymbol {\lambda}}K_n$ such that
$$
g^*_\sigma(\frac{\partial}{\partial\lambda^i}\bigg|_{\boldsymbol{\lambda}})(\lambda^j)=\frac{\partial}{\partial\lambda^i}\bigg|_{\boldsymbol{\lambda}}(\lambda^j\circ g_\sigma)=\frac{\partial}{\partial\lambda^i}\bigg|_{\boldsymbol{\lambda}}(g^{(j)}_\sigma)=\frac{\partial g^{(j)}_\sigma}{\partial\lambda^i}\bigg|_{\boldsymbol{\lambda}}
$$
for $\:i,\:j=1,\:2,\:\cdots,\:n-3.$

\begin{defn}
Let $\boldsymbol{J_\sigma}$ denote the Jacobian matrix of $g_\sigma$. Define a representation of $G_{\boldsymbol{\lambda}}$
$$
X_{\boldsymbol{\lambda}}:\:G_{\boldsymbol{\lambda}}\to\text{GL}_{n-3},\:g_\sigma\mapsto \boldsymbol{J_\sigma}\bigg|_{\boldsymbol{\lambda}}.
$$
\end{defn}
It is obvious that $X_{\boldsymbol{\lambda}}$ is a representation of $G_{\boldsymbol {\lambda}}$ of degree $n-3$. 
Let $K^{(1)}_{\boldsymbol{\lambda}},\: K^{(2)}_{\boldsymbol{\lambda}},\:\cdots,\:K^{(k)}_{\boldsymbol{\lambda}}$ denote the conjugate classes of $G_{\boldsymbol{\lambda}}$, and $X^{(1)}_{\boldsymbol{\lambda}},\: X^{(2)}_{\boldsymbol{\lambda}},\:\cdots,\:X^{(k)}_{\boldsymbol{\lambda}}$ its irreducible representations (for the exact definition, please refer to Section \ref{rep}).

\begin{defn}
For any $\boldsymbol {\lambda}\in K_n$, ${n\ge5}$ such that $\overline{[\boldsymbol {\lambda}]}$ is a singularity of $\mathfrak{M_{0,\:n}}$, assume that  $X_{\boldsymbol{\lambda}}=p_1X^{(1)}_{\boldsymbol{\lambda}}\oplus\cdots\oplus p_kX^{(k)}_{\boldsymbol{\lambda}}$. Then call $(p_1,\:p_2,\:\cdots,\:p_k)$ the \textbf{multiplicity vector} of $\boldsymbol{\lambda}$.
\end{defn}

From the above discussion we know that if $G_{\boldsymbol{\lambda}}$ is isomorphic to the icosahedral group $I$, $[\boldsymbol{\lambda}]$ is equivalent to one of the following sets (which are categorized into eight types):
\begin{enumerate}
\item \textbf{F+mB}: $F_I\cup B_I(a_1)\cup B_I(a_2)\cup\cdots\cup B_I(a_m)$, where $B_I(a_1),\:B_I(a_2),\:\cdots,\:B_I(a_m)$ are different orbits of order 60, $m\in\mathbb {N}$;
\item \textbf{V+mB}: $V_I\cup B_I(a_1)\cup B_I(a_2)\cup\cdots\cup B_I(a_m)$, where $B_I(a_1),\:B_I(a_2),\:\cdots,\:B_I(a_m)$ are different orbits of order 60, $m\in\mathbb {N}$;
\item \textbf{E+mB}: $E_I\cup B_I(a_1)\cup B_I(a_2)\cup\cdots\cup B_I(a_m)$, where $B_I(a_1),\:B_I(a_2),\:\cdots,\:B_I(a_m)$ are different orbits of order 60, $m\in\mathbb {N}$;
\item \textbf{FV+mB}: $F_I\cup V_I\cup B_I(a_1)\cup B_I(a_2)\cup\cdots\cup B_I(a_m)$, where $B_I(a_1),\:B_I(a_2),\:\cdots,\:B_I(a_m)$ are different orbits of order 60, $m\in\mathbb {N}$;
\item \textbf{VE+mB}: $V_I\cup E_I\cup B_I(a_1)\cup B_I(a_2)\cup\cdots\cup B_I(a)_m)$, where $B_I(a_1),\:B_I(a_2),\:\cdots,\:B_I(a_m)$ are different orbits of order 60, $m\in\mathbb {N}$;
\item \textbf{EF+mB}: $E_I\cup F_I\cup B_I(a_1)\cup B_I(a_2)\cup\cdots\cup B_I(a_m)$, where $B_I(a_1),\:B_I(a_2),\:\cdots,\:B_I(a_m)$ are different orbits of order 60, $m\in\mathbb {N}$;
\item \textbf{FVE+mB}: $F_I\cup V_I\cup E_I\cup B_I(a_1)\cup B_I(a_2)\cup\cdots\cup B_I(a_m)$, where $B_I(a_1),\:B_I(a_2),\:\cdots,\:B_I(a_m)$ are different orbits of order 60, $m\in\mathbb {N}$;
\item \textbf{(1+m)B}: $B_I(a_0)\cup B_I(a_1)\cup\cdots\cup B_I(a_m)$, where $B_I(a_0),\:B_I(a_1),\:\cdots,\:B_I(a_m)$ are different orbits of order 60, $m\in\mathbb {N}$.
\end{enumerate}

\begin{thm}
For any $\boldsymbol{\lambda}\in K_n$ such that $G_{\boldsymbol{\lambda}}$ is isomorphic to the icosahedral group $I$, we have found its multiplicity vector
\begin{enumerate}
\item \textbf{F+mB}: $(m,\:1+4m,\:1+5m,\:3m,\:3m)$;
\item \textbf{V+mB}: $(m,\:1+4m,\:2+5m,\:3m,\:1+3m)$;
\item \textbf{E+mB}: $(m,\:2+4m,\:2+5m,\:1+3m,\:2+3m)$;
\item \textbf{FV+mB}: $(m,\:2+4m,\:3+5m,\:1+3m,\:1+3m)$;
\item \textbf{VE+mB}: $(m,\:3+4m,\:4+5m,\:2+3m,\:3+3m)$;
\item \textbf{EF+mB}: $(m,\:3+4m,\:3+5m,\:2+3m,\:2+3m)$;
\item \textbf{FVE+mB}: $(m,\:4+4m,\:5+5m,\:3+3m,\:3+3m)$;
\item \textbf{(1+m)B}: $(1+m,\:4+4m,\:5+5m,\:2+3m,\:3+3m)$.
\end{enumerate}
\end{thm}

If $G_{\boldsymbol{\lambda}}$ is isomorphic to the octahedral group $O$, $[\boldsymbol{\lambda}]$ is equivalent to one of the following sets (which are categorized into eight types):
\begin{enumerate}
\item \textbf{F+mB}: $F_O\cup B_O(a_1)\cup B_O(a_2)\cup\cdots\cup B_O(a_m)$, where $B_O(a_1),\:B_O(a_2),\:\cdots,\:B_O(a_m)$ are different orbits of order 24, $m\in\mathbb {N}$;
\item \textbf{V+mB}: $V_O\cup B_O(a_1)\cup B_O(a_2)\cup\cdots\cup B_O(a_m)$, where $B_O(a_1),\:B_O(a_2),\:\cdots,\:B_O(a_m)$ are different orbits of order 24, $m\in\mathbb {N}$;
\item \textbf{E+mB}: $E_O\cup B_O(a_1)\cup B_O(a_2)\cup\cdots\cup B_O(a_m)$, where $B_O(a_1),\:B_O(a_2),\:\cdots,\:B_O(a_m)$ are different orbits of order 24, $m\in\mathbb {N}$;
\item \textbf{FV+mB}: $F_O\cup V_O\cup B_O(a_1)\cup B_O(a_2)\cup\cdots\cup B_O(a_m)$, where $B_O(a_1),\:B_O(a_2),\:\cdots,\:B_O(a_m)$ are different orbits of order 24, $m\in\mathbb {N}$;
\item \textbf{VE+mB}: $V_O\cup E_O\cup B_O(a_1)\cup B_O(a_2)\cup\cdots\cup B_O(a_m)$, where $B_O(a_1),\:B_O(a_2),\:\cdots,\:B_O(a_m)$ are different orbits of order 24, $m\in\mathbb {N}$;
\item \textbf{EF+mB}: $E_O\cup F_O\cup B_O(a_1)\cup B_O(a_2)\cup\cdots\cup B_O(a_m)$, where $B_O(a_1),\:B_O(a_2),\:\cdots,\:B_O(a_m)$ are different orbits of order 24, $m\in\mathbb {N}$;
\item \textbf{FVE+mB}: $F_O\cup V_O\cup E_O\cup B_O(a_1)\cup B_O(a_2)\cup\cdots\cup B_O(a_m)$, where $B_O(a_1),\:B_O(a_2),\:\cdots,\:B_O(a_m)$ are different orbits of order 24, $m\in\mathbb {N}$;
\item \textbf{(1+m)B}: $B_O(a_0)\cup B_O(a_1)\cup B_O(a_2)\cup\cdots\cup B_O(a_m)$, where $B_O(a_1),\:B_O(a_2),\:\cdots,\:B_O(a_m)$ are different orbits of order 24, $m\in\mathbb {N}$.
\end{enumerate}

\begin{thm}
For any $\boldsymbol{\lambda}\in K_n$ such that $G_{\boldsymbol{\lambda}}$ is isomorphic to the octahedral group $O$, we have found its multiplicity vector
\begin{enumerate}
\item \textbf{F+mB}: $(m,\:m,\:1+3m,\:3m,\:2m)$;
\item \textbf{V+mB}: $(m,\:m,\:1+3m,\:3m,\:1+2m)$;
\item \textbf{E+mB}: $(m,\:1+m,\:1+3m,\:1+3m,\:1+2m)$;
\item \textbf{FV+mB}: $(m,\:m,\:2+3m,\:1+3m,\:1+2m)$;
\item \textbf{VE+mB}: $(m,\:1+m,\:2+3m,\:2+3m,\:2+2m)$;
\item \textbf{EF+mB}: $(m,\:1+m,\:2+3m,\:2+3m,\:1+2m)$;
\item \textbf{FVE+mB}: $(m,\:1+m,\:3+3m,\:3+3m,\:2+2m)$;
\item \textbf{(1+m)B}: $(1+m,\:1+m,\:3+3m,\:2+3m,\:2+2m)$.
\end{enumerate}
\end{thm}

If $G_{\boldsymbol{\lambda}}$ is isomorphic to the Tetrahedral Group $T$, then $[\boldsymbol{\lambda}]$ is equivalent to one of the following sets (which are categorized into six types):
\begin{enumerate}
\item \textbf{F+mB}: $F_T\cup B_T(a_1)\cup B_T(a_2)\cup\cdots\cup B_T(a_m)$, where $B_T(a_1),\:B_T(a_2),\:\cdots,\:B_T(a_m)$ are different orbits of order 12, $m\in\mathbb {N^*}$;
\item \textbf{E+mB}: $E_T\cup B_T(a_1)\cup B_T(a_2)\cup\cdots\cup B_T(a_m)$, where $B_T(a_1),\:B_T(a_2),\:\cdots,\:B_T(a_m)$ are different orbits of order 12, $m\in\mathbb {N^*}$;
\item \textbf{FV+mB}: $F_T\cup V_T\cup B_T(a_1)\cup B_T(a_2)\cup\cdots\cup B_T(a_m)$, where $B_T(a_1),\:B_T(a_2),\:\cdots,\:B_T(a_m)$ are different orbits of order 12, $m\in\mathbb {N^*}$;
\item \textbf{FE+mB}: $F_T\cup E_T\cup B_T(a_1)\cup B_T(a_2)\cup\cdots\cup B_T(a_m)$, where $B_T(a_1),\:B_T(a_2),\:\cdots,\:B_T(a_m)$ are different orbits of order 12, $m\in\mathbb {N}$;
\item \textbf{FVE+mB}: $F_T\cup V_T\cup E_T\cup B_I(a_1)\cup B_I(a_2)\cup\cdots\cup B_I(a_m)$, where $B_I(a_1),\:B_I(a_2),\:\cdots,\:B_I(a_m)$ are different orbits of order 12, $m\in\mathbb {N^*}$;
\item \textbf{(1+m)B}: $B_T(a_1)\cup B_T(a_2)\cup\cdots\cup B_T(a_m)$, where $B_T(a_1),\:B_T(a_2),\:\cdots,\:B_T(a_m)$ are different orbits of order 12, $m\in\mathbb {N}$.
\end{enumerate}

\begin{thm}
For each singularity $[\boldsymbol{\lambda}]$ such that $G_{\boldsymbol{\lambda}}$ is isomorphic to the tetrahedral group $T$, we have found its multiplicity vector
\begin{enumerate}
\item \textbf{F+mB}: $(m,\:1+m,\:m,\:3m)$;
\item \textbf{E+mB}: $(m,\:m,\:m,\:1+3m)$;
\item \textbf{FV+mB}: $(m,\:1+m,\:1+m,\:1+3m)$;
\item \textbf{FE+mB}: $(m,\:1+m,\:m,\:2+3m)$;
\item \textbf{FVE+mB}: $(m,\:1+m,\:1+m,\:3+3m)$;
\item \textbf{(1+m)B}: $(m,\:1+m,\:1+m,\:2+3m)$.
\end{enumerate}
\end{thm}

For any $\boldsymbol{\lambda}\in K_n$ such that $G_{\boldsymbol{\lambda}}$ is isomorphic to the dihedral group $D_p$, $[\boldsymbol{\lambda}]$ is equivalent to one of the following sets (which are categorized into six types):
\begin{enumerate}
\item \textbf{mC}: $C_p(a_1)\cup C_p(a_2)\cup\cdots\cup C_p(a_m)$, where $C_p(a_1),\:C_p(a_2),\:\cdots,\:C_p(a_m)$ are different orbits of order $2p$, $m\in\mathbb {N^*}$;
\item \textbf{A+mC}: $A_p\cup C_p(a_1)\cup C_p(a_2)\cup\cdots\cup C_p(a_m)$, where $C_p(a_1),\:C_p(a_2),\:\cdots,\:C_p(a_m)$ are different orbits of order $2p$, $m\in\mathbb {N}$;
\item \textbf{AB+mC}: $A_p\cup B_p\cup C_p(a_1)\cup C_p(a_2)\cup\cdots\cup C_p(a_m)$, where $C_p(a_1),\:C_p(a_2),\:\cdots,\:C_p(a_m)$ are different orbits of order $2p$, $m\in\mathbb {N^*}$;
\item \textbf{2+mC}: $\{0,\:\infty\}\cup C_p(a_1)\cup C_p(a_2)\cup\cdots\cup C_p(a_m)$, where $C_p(a_1),\:C_p(a_2),\:\cdots,\:C_p(a_m)$ are different orbits of order $2p$, $m\in\mathbb {N^*}$;
\item \textbf{A+2+mC}: $A_p\cup\{0,\:\infty\}\cup C_p(a_1)\cup C_p(a_2)\cup\cdots\cup C_p(a_m)$, where $C_p(a_1),\:C_p(a_2),\:\cdots,\:C_p(a_m)$ are different orbits of order $2p$, $m\in\mathbb {N}$;
\item \textbf{AB+2+mC}: $A_p\cup B_p\cup\{0,\:\infty\}\cup C_p(a_1)\cup C_p(a_2)\cup\cdots\cup C_p(a_m)$, where $C_p(a_1),\:C_p(a_2),\:\cdots,\:C_p(a_m)$ are different orbits of order $2p$, $m\in\mathbb {N^*}$.
\end{enumerate}

\begin{thm}
For any $\boldsymbol{\lambda}\in K_n$ such that $G_{\boldsymbol{\lambda}}$ is isomorphic to the dihedral group $D_p$ ($p$ is odd), we have found its multiplicity vector
\begin{enumerate}
  \item \textbf{mC}:
    \begin{itemize}
      \item $(2m-1,\:m,\:m-1)$ for $p=3$;
      \item $(2m-1,\:2m,\:\cdots,\:2m,\:m,\:m-1)$ for $p\ge5$;
    \end{itemize}
  \item \textbf{A+mC}:
    \begin{itemize}
      \item $(2m,\:m,\:m)$ for $p=3$;
      \item $(2m,\:2m+1,\:\cdots,\:2m+1,\:m,\:m)$ for $p\ge5$;
    \end{itemize}
  \item \textbf{AB+mC}:
    \begin{itemize}
      \item $(2m+1,\:m,\:m+1)$ for $p=3$;
      \item $(2m+1,\:2m+2,\:\cdots,\:2m+2,\:m,\:m+1)$ for $p\ge5$;
    \end{itemize}
  \item \textbf{2+mC}: $(2m,\:\cdots,\:2m,\:m,\:m-1)$;
  \item \textbf{A+2+mC}: $(2m+1,\:\cdots,\:2m+1,\:m,\:m)$;
  \item \textbf{AB+2+mC}: $(2m+2,\:\cdots,\:2m+2,\:m,\:m+1)$.
\end{enumerate}
\end{thm}

\begin{thm}
For any $\boldsymbol{\lambda}\in K_n$ such that $G_{\boldsymbol{\lambda}}$ is isomorphic to the dihedral group $D_p$ ($p$ is even), we have found its multiplicity vector
\begin{enumerate}
  \item \textbf{mC}:
    \begin{itemize}
      \item $(m,\:m-1,\:m-1,\:m-1)$ for $p=2$;
      \item $(2m-1,\:m,\:m-1,\:m,\:m)$ for $p=4$;
      \item $(2m-1,\:2m,\:\cdots,\:2m,\:m,\:m-1,\:m,\:m)$ for $p\ge6$;
    \end{itemize}
  \item \textbf{A+mC}:
    \begin{itemize}
      \item $(m,\:m,\:m-1,\:m)$ for $p=2$;
      \item $(2m,\:m,\:m,\:m,\:m+1)$ for $p=4$;
      \item $(2m,\:2m+1,\:\cdots,\:2m+1,\:m,\:m,\:m,\:m+1)$ for $p\ge6$;
    \end{itemize}
  \item \textbf{AB+mC}:
    \begin{itemize}
      \item $(m,\:m+1,\:m,\:m)$ for $p=2$;
      \item $(2m+1,\:m,\:m+1,\:m+1,\:m+1)$ for $p=4$;
      \item $(2m+1,\:2m+2,\:\cdots,\:2m+2,\:m,\:m+1,\:m+1,\:m+1)$ for $p\ge6$;
    \end{itemize}
  \item \textbf{2+mC}:
    \begin{itemize}
      \item $(m,\:m-1,\:m,\:m)$ for $p=2$;
      \item $(2m,\:\cdots,\:2m,\:m,\:m-1,\:m,\:m)$ for $p\ge4$;
    \end{itemize}
  \item \textbf{A+2+mC}:
    \begin{itemize}
      \item $(m,\:m,\:m,\:m+1)$ for $p=2$;
      \item $(2m+1,\:\cdots,\:2m+1,\:m,\:m,\:m,\:m+1)$ for $p\ge4$;
    \end{itemize}
  \item \textbf{AB+2+mC}:
    \begin{itemize}
      \item $(m,\:m+1,\:m+1,\:m+1)$ for $p=2$;
      \item $(2m+2,\:\cdots,\:2m+2,\:m,\:m+1,\:m+1,\:m+1)$ for $p\ge4$.
    \end{itemize}
\end{enumerate}
\end{thm}

For any $\boldsymbol{\lambda}\in K_n$ such that $G_{\boldsymbol{\lambda}}$ is isomorphic to the cyclic group $\mathbb{Z}_p$, $[\boldsymbol{\lambda}]$ is equivalent to one of the following sets (which are categorized into three types):
\begin{enumerate}
\item \textbf{mC}: $C_p(a_1)\cup C_p(a_2)\cup\cdots\cup C_p(a_m)$, where $C_p(a_1),\:C_p(a_2),\:\cdots,\:C_p(a_m)$ are different orbits of order $p$, $m\in\mathbb {N^*}$;
\item \textbf{1+mC}: $\{0\}\cup C_p(a_1)\cup C_p(a_2)\cup\cdots\cup C_p(a_m)$, where $C_p(a_1),\:C_p(a_2),\:\cdots,\:C_p(a_m)$ are different orbits of order $p$, $m\in\mathbb {N^*}$;
\item \textbf{2+mC}: $\{0,\:\infty\}\cup C_p(a_1)\cup C_p(a_2)\cup\cdots\cup C_p(a_m)$, where $C_p(a_1),\:C_p(a_2),\:\cdots,\:C_p(a_m)$ are different orbits of order $p$, $m\in\mathbb {N^*}$.
\end{enumerate}

\begin{thm}
For any $\boldsymbol{\lambda}\in K_n$ such that $G_{\boldsymbol{\lambda}}$ is isomorphic to the cyclic group $\mathbb{Z}_p$, we have found its multiplicity vector
\begin{enumerate}
\item \textbf{mC}:
\begin{itemize}
\item $(m-2,\:m-1)$ for $p=2$;
\item $(m-1,\:m-1,\:m-1)$ for $p=3$;
\item $(m-1,\:m,\:\cdots,\:m,\:m-1,\:m-1)$ for $p\ge4$;
\end{itemize}
\item \textbf{1+mC}:
\begin{itemize}
\item $(m-1,\:m-1)$ for $p=2$;
\item $(m,\:\cdots,\:m,\:m-1,\:m-1)$ for $p\ge3$.
\end{itemize}
\item \textbf{2+mC}: $(m,\:\cdots,\:m,\:m-1)$.
\end{enumerate}
\end{thm}

\subsection{The Group that Fixes Five of Six Points}

In Section \ref{n=5} and \ref{n=6} we will investigate $\mathcal A_\alpha$ when $|\alpha|=5$ or $6$, and find their classifications and multiplicity vectors.

From the above discussion we shall drive the following conclusion

\begin{thm}

Set $\alpha=\{z_1,\: z_2,\: z_3,\: z_4,\: z_5\}\subseteq\widehat {\mathbb{C}}$.

\begin{enumerate}

\item If there exists some linear fractional transformation $\psi$ such that
$$
\psi(\alpha)=\{1,\: w,\: w^2,\: w^3\:, w^4\},\:w=e^{\frac{2\pi}{5}i},
$$
then
$$
\mathcal{A_\alpha}={\psi}^{-1}\langle z \mapsto e^{\frac{2\pi}{5}i}z,\:z \mapsto \frac{1}{z}\rangle {\psi}\simeq D_5,
$$
and its multiplicity vector is $(0,\:1,\:0,\:0)$;

\item if there exists some linear fractional transformation $\psi$ such that
$$
\psi(\alpha) =\{0,\: 1,\: i,\: -1,\: -i\},
$$
then
$$
\mathcal{A_\alpha}={\psi}^{-1}\langle z \mapsto iz\rangle {\psi}\simeq \mathbb Z _4,
$$
and its multiplicity vector is $(1,\:1,\:0,\:0)$;

\item if there exists some linear fractional transformation $\psi$ such that
$$
\psi(\alpha)=\{0,\: \infty,\: 1, \:w,\: w^2\},\:w=e^{\frac{2\pi}{3}i},
$$
then
$$
\mathcal{A_\alpha}={\psi}^{-1}\langle z \mapsto e^{\frac{2\pi}{3}i}z,\:z \mapsto \frac{1}{z}\rangle\psi \simeq D_3,
$$
and its multiplicity vector is $(1,\:0,\:0)$;

\item if there exists some linear fractional transformation $\psi$ such that
$$
\psi(\alpha)=\{0,\: 1, \:-1,\: a,\: -a\},\:a\ne0,\:\pm 1,
$$
then

\begin{enumerate}

\item if $a=\pm(\sqrt{5}+2)\text{ or }\pm(\sqrt{5}-2)$, then there exists some linear fractional transformation $\phi$ such that
$$
\phi(\alpha)=\{1,\: w,\: w^2,\: w^3\:, w^4\},\:w=e^{\frac{2\pi}{5}i}
$$
and this is case 1;
\item if $a=\pm i$, then
$$
\alpha =\{0,\: 1,\: i,\: -1,\: -i\}
$$
and this is case 2;

\item if $a=\pm\sqrt{3}i\text{ or }\pm\frac{1}{\sqrt{3}}i$, then there exists some linear fractional transformation $\phi$ such that
$$
\phi(\alpha)=\{0,\: \infty,\: 1, \:w,\: w^2\},\:w=e^{\frac{2\pi}{3}i}
$$
and this is case 3;

\item otherwise,
$$
\mathcal{A_\alpha}={\psi}^{-1}\langle z \mapsto -z\rangle {\psi}\simeq \mathbb Z _2,
$$
and its multiplicity vector is $(1,\:1)$;

\end{enumerate}

\item otherwise,
$$\mathcal{A_\alpha}=\{\mathrm{Id}\}.$$

\end{enumerate}

\end{thm}

\begin{thm}

Set $\alpha=\{z_1,\: z_2,\: z_3,\: z_4,\: z_5,\: z_6\}\subseteq \widehat{\mathbb{C}}$.

\begin{enumerate}

\item If there exists some linear fractional transformation $\psi$ such that
$$
\psi(\alpha)=\{1,\: w,\: w^2,\: w^3,\: w^4, \:w^5\},\:w=e^{\frac{\pi}{3}i},
$$
then
$$
\mathcal{A_\alpha}={\psi}^{-1}\langle z \mapsto e^{\frac{\pi}{3}i}z,\:z \mapsto \frac{1}{z}\rangle {\psi}\simeq D_6,
$$
and its multiplicity vector is $(0,\:1,\:0,\:0,\:0,\:1)$;

\item if there exists some linear fractional transformation $\psi$ such that
$$
\psi(\alpha) =\{0,\: 1,\: w, \:w^2, \:w^3,\: w^4\},\:w=e^{\frac{2\pi}{5}i},
$$
then
$$
\mathcal{A_\alpha}={\psi}^{-1}\langle z \mapsto e^{\frac{2\pi}{5}i}z\rangle {\psi}\simeq \mathbb Z _5,
$$
and its multiplicity vector is $(1,\:1,\:1,\:0,\:0)$;

\item if there exists some linear fractional transformation $\psi$ such that
$$\psi(\alpha) =\{0,\: \infty, \:1, \:i, \:-1,\: -i\},$$
then
$$\mathcal{A_\alpha}={\psi}^{-1}\langle z\mapsto iz,\: z\mapsto \frac{iz+1}{z+i}\rangle {\psi}\simeq S_4,
$$
and its multiplicity vector is $(0,\:0,\:1,\:0,\:0)$;

\item if there exists some linear fractional transformation $\psi$ such that
$$
\psi(\alpha)=\{1,\:w,\:w^2,\:a,\:aw,\:aw^2\},\:|a|\ge1,\:a\ne1,\:w,\:w^2,\:w=e^{\frac{2\pi}{3}i},
$$
\begin{enumerate}

\item if $a=\sqrt w,\:w\sqrt w\text{ or }w^2\sqrt w$,
$$\alpha=\{1,\:\sqrt w,\: w,\:w\sqrt w,\: w^2,\:w^2\sqrt w\}$$
and this is case 1 ;

\item if $a=-(2+\sqrt3),\:-(2+\sqrt3)w\text{ or }-(2+\sqrt3)w^2$, then there exists some linear fractional transformation $\phi$ such that
$$\phi(\alpha) =\{0,\: \infty, \:1, \:i, \:-1,\: -i\}$$
and this is case 3;

\item otherwise
$$\mathcal{A_\alpha}={\psi}^{-1}\langle z \mapsto e^{\frac{2\pi}{3}i}z,\:z \mapsto \frac{a}{z}\rangle\psi \simeq D_3,
$$
and its multiplicity vector is $(1,\:1,\:0)$; \label{a}

\end{enumerate}

\item if there exists some linear fractional transformation $\psi$ such that
$$\psi(\alpha)=\{1,\: -1, \:a,\: -a,\: b, \:-b\},\:a\ne\pm b, \:a,\:b \in \mathbb{C} \backslash \{ 0, \pm 1 \},$$

\begin{enumerate}

\item if $\{\pm1,\:\pm a,\: \pm b\}=$
$$\{\pm(7-4\sqrt3), \:\pm(2-\sqrt3),\:\pm 1\},\:\{\pm(2-\sqrt3),\:\pm 1,\:\pm(2+\sqrt3)\},\:\{\pm 1,\: \pm(2+\sqrt3),\:\pm(7+4\sqrt3)\}$$
then there exists some linear fractional transformation $\phi$ such that
$$\phi(\alpha)=\{1,\: w,\: w^2,\: w^3,\: w^4, \:w^5\},\:w=e^{\frac{\pi}{3}i}$$
and this is case 1;

\item if $\{\pm1,\:\pm a,\: \pm b\}=$
$$\{\pm (\sqrt2-1)i,\:\pm(\sqrt2+1)i,\:\pm 1\},\:\{\pm(3-2\sqrt2), \:\pm 1,\pm(\sqrt2-1)i\},\:\{\pm 1, \:\pm(3+2\sqrt2),\:\pm(\sqrt2+1)i\},$$
then there exists some linear fractional transformation $\phi$ such that
$$\phi(\alpha) =\{0,\: \infty, \:1, \:i, \:-1,\: -i\}$$
and this is case 3;

\item if $a^2b+ab^2+a^2-6ab+b^2+a+b=0,$ then there exists some linear fractional transformation $\phi$ such that
$$\phi(\alpha)=\{1,\:w,\:w^2,\:c,\:cw,\:cw^2\},\:c\ne0,\:1,\:w,\:w^2,\:w=e^{\frac{2\pi}{3}i}$$
and this is case 4c;

\item otherwise,

\begin{enumerate}
 \item if $ab=\pm1$,
$$
\mathcal{A_\alpha}={\psi}^{-1}\langle z \mapsto -z,\:z \mapsto  \frac{1}{z}\rangle\psi \simeq K_4,
$$
and its multiplicity vector is $(1,\:0,\:1,\:1)$ (viewed as type 2+mC);
 \item if $ab\ne\pm1$,
$$
\mathcal{A_\alpha}={\psi}^{-1}\langle z \mapsto -z\rangle\psi \simeq \mathbb{Z}_2,
$$
and its multiplicity vector is $(1,\:2)$;
\end{enumerate}

\end{enumerate}

\item otherwise,
$$\mathcal{A_\alpha}=\{\mathrm{Id}\}.$$

\end{enumerate}

\end{thm}

Finally we will come to the conclusion in Section \ref{all} that every non-trivial finite group of linear fractional transformations is a stabilizer of certain finite subset of $\widehat{\mathbb{C}}$.
\begin{thm}
For any finite non-trivial group $G$ of linear fractional transformations, there exists a subset $\alpha=\{z_1, z_2, \cdots, z_n\}\subseteq \widehat{\mathbb{C}}$ such that $\mathcal{A}_{\alpha}\simeq G$.
\end{thm}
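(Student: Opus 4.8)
\emph{Proof strategy.} By the classification of finite subgroups of $\mathrm{PSL}(2,\mathbb{C})$ recalled in the Introduction, $G$ is conjugate to one of the five standard groups: the icosahedral $I\simeq A_5$, the octahedral $O\simeq S_4$, the tetrahedral $T\simeq A_4$, a dihedral $D_n$ ($n\ge 2$), or a cyclic $\mathbb{Z}_n$ ($n\ge 2$). Since $\mathcal{A}_{\psi(\alpha)}=\psi\,\mathcal{A}_\alpha\,\psi^{-1}$ for every $\psi\in\mathrm{PSL}(2,\mathbb{C})$, it suffices to produce, for each standard $H$, a finite set $\alpha_0\subseteq\widehat{\mathbb{C}}$ with $\mathcal{A}_{\alpha_0}=H$; conjugating back yields the theorem and indeed the stronger equality $\mathcal{A}_\alpha=G$ announced in the abstract. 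Throughout we use that $\mathcal{A}_{\alpha_0}$ is again a finite subgroup of $\mathrm{PSL}(2,\mathbb{C})$, hence of one of the five types, and that a linear fractional transformation fixing three distinct points is the identity.

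\emph{Polyhedral cases.} For $H=I,O,T$ take $\alpha_0=V_I,V_O,V_T$, the $20$, $8$, $4$ vertices of the dodecahedron, cube and regular tetrahedron, so that $\mathcal{A}_{\alpha_0}\supseteq H$. Among the five types of finite subgroups, only $A_5$ and $S_4$ are maximal: $A_5$, being non-abelian simple of order $60$, lies in no cyclic group, in no copy of $A_4$ or $S_4$ (orders $<60$), and in no $D_m$ (that would force a proper normal subgroup of $A_5$ of index $\le 2$); and $S_4$ lies in no copy of $A_5$ (as $24\nmid 60$), in no cyclic group, and in no $D_m$ (it has no cyclic subgroup of index $\le 2$). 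Hence the finite groups $\mathcal{A}_{V_I}\supseteq A_5$ and $\mathcal{A}_{V_O}\supseteq S_4$ equal $I$ and $O$. For $T$: the faithful action on the four points of $V_T$ embeds $\mathcal{A}_{V_T}$ into $S_4$, so $\mathcal{A}_{V_T}$ is $A_4$ or a copy of $S_4$; but a copy of $S_4$ in $\mathrm{PSL}(2,\mathbb{C})$ is an octahedral group, all of whose orbits on $\widehat{\mathbb{C}}$ have size $\ge 6$ and so cannot fix a four-point set — hence $\mathcal{A}_{V_T}=T$ (equivalently, $V_T$ meets the hypotheses of Theorem~11, since the smallest icosahedral and octahedral orbits have sizes $12$ and $6$).

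\emph{Cyclic case.} For $H=\mathbb{Z}_n=\langle z\mapsto\zeta_n z\rangle$ with $\zeta_n=e^{2\pi i/n}$, $n\ge 3$, set $\alpha_0=\{0\}\cup C_n(2)\cup C_n(5)$ with $C_n(r)=\{r\zeta_n^k:k\in\mathbb{Z}\}$; then $\mathcal{A}_{\alpha_0}\supseteq\mathbb{Z}_n$ and $|\alpha_0|=2n+1$ is odd. Every orbit of an icosahedral, octahedral or tetrahedral group on $\widehat{\mathbb{C}}$ has even cardinality, so no such group fixes $\alpha_0$; thus $\mathcal{A}_{\alpha_0}$ is cyclic or dihedral. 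A dihedral stabilizer containing $\mathbb{Z}_n$ with $n\ge 3$ has rotation axis $\{0,\infty\}$ (its order-$n$ rotations lie in the cyclic part), hence contains an inversion $z\mapsto c/z$ interchanging $0$ and $\infty$ — impossible, as $0\in\alpha_0$ while $\infty\notin\alpha_0$. A cyclic stabilizer of order $>n$ would contain $\mathbb{Z}_n$ as its unique subgroup of that order, so its generator commutes with $z\mapsto\zeta_n z$; for $n\ge 3$ the centralizer of the latter consists only of the maps $z\mapsto\lambda z$, and such a map preserves $\alpha_0$ only when $2\lambda\in C_n(2)$, i.e.\ $\lambda\in\langle\zeta_n\rangle$. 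Hence $\mathcal{A}_{\alpha_0}=\mathbb{Z}_n$. For $n=2$ use Theorem~22, case~4(d): if $a\in\mathbb{C}$ avoids the finitely many exceptional values, then $\mathcal{A}_{\{0,1,-1,a,-a\}}\simeq\mathbb{Z}_2$.

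\emph{Dihedral case, and the main difficulty.} For $H=D_n$, $n\ge 3$, take $\alpha_0$ a disjoint union of several free $D_n$-orbits $C_n(r_i)=\{r_i\zeta_n^k\}\cup\{r_i^{-1}\zeta_n^k\}$ with the $r_i$ pairwise distinct positive reals, none equal to a reciprocal of another and none equal to $\pm 1$; then $\mathcal{A}_{\alpha_0}\supseteq D_n$. Condition~(1) of Theorem~12 holds by construction. Condition~(2) holds since a $D_{pn}$-invariance ($p\ge 2$) would again have axis $\{0,\infty\}$, giving invariance under $z\mapsto\zeta_{pn}z$ and hence $\zeta_{pn}\in\langle\zeta_n\rangle$, which is absurd. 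For condition~(3): no polyhedral group contains $D_n$ once $n\ge 6$, while for $n\in\{3,4,5\}$ one excludes the at most two remaining candidates because $\alpha_0$ is not a union of their orbits — and condition~(4) is vacuous for $n\ge 3$. Theorem~12 then gives $\mathcal{A}_{\alpha_0}=D_n$. For $n=2$, since $D_2\cong K_4$ and every four-point set already carries a canonical $K_4$, take $\alpha_0=\{0,\infty,1,t\}$ with cross-ratio neither harmonic nor equianharmonic, so this $K_4$ is the whole stabilizer (this also appears as Theorem~23, case~5(d)(i)). The only non-routine ingredient in all of this is the recurring verification that a chosen low-rank configuration has no accidental extra symmetry. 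The systematic device is a genericity argument: for any fixed $g\notin H$ of bounded order — and only finitely many conjugacy classes of finite supergroups occur — the relation $g(\alpha_0)=\alpha_0$ is an algebraic condition on the moduli (or the cross-ratio) defining $\alpha_0$, so the bad parameter values form a proper closed set that a generic, and in particular our explicit, choice avoids; in the very smallest cases one may instead quote the complete lists of Theorems~22 and~23 directly. Assembling the five cases and conjugating back proves the theorem.
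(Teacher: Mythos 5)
Your proof is essentially correct, but it is organized quite differently from the paper's. The paper's proof is almost entirely a collation of results already established: the polyhedral cases come from Sections \ref{A_5}--\ref{A_4}, the cases $D_m$ ($m\ge5$), $\mathbb Z_n$ ($n\ge4$), $D_4$, $D_2$, $D_3$ and $\mathbb Z_2$ come from Corollaries \ref{D_n1}, \ref{D_n2}, \ref{cZ_n} and the $n=4,5$ case analyses, and the only new work is the single exceptional case $\mathbb Z_3$, which the paper settles with the explicit $7$-point set $\{0,1,w,w^2,2,2w,2w^2\}$ via orbit-cardinality and concyclicity arguments. You instead give fresh, largely self-contained constructions: maximality of $A_5$ and $S_4$ among the five types for the vertex sets $V_I$, $V_O$; the faithful permutation action on $V_T$ together with the fact that octahedral orbits have size $\ge6$ for $V_T$; and, most notably, the set $\{0\}\cup C_n(2)\cup C_n(5)$ of odd cardinality $2n+1$ for $\mathbb Z_n$, killed off by the parity of polyhedral orbits, the axis/inversion argument against dihedral overgroups, and the centralizer computation against larger cyclic groups. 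This last construction is the real gain: it treats $n=3$ uniformly with all $n\ge3$, absorbing the one case the paper must handle separately, and your tetrahedral argument is cleaner than invoking Theorem~11.

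One soft spot: in the dihedral case with $n\in\{3,4,5\}$ you assert that your explicit choice of positive real $r_i$ rules out an accidental polyhedral stabilizer, but it does not by itself. For instance, the octahedral group whose cube has a horizontal face-axis through $\pm1$ after a $\pi/4$ rotation has $V_O=C_4(R)$ for the specific positive real $R=\tfrac{\sqrt2(\sqrt3+1)}{2}$, which your conditions (distinct, non-reciprocal, $\ne\pm1$) do not exclude; similar coincidences occur for $n=3$ ($F_O$ and $E_T$ are single free $D_3$-orbits) and $n=5$ ($V_I$ is a union of two free $D_5$-orbits). Your fallback genericity argument does repair this --- only finitely many finite overgroups of the fixed $D_n$ can occur, each imposing a proper closed condition on the $r_i$ --- so existence follows from a generic choice; but the sentence claiming the explicit choice ``in particular'' avoids the bad set should be weakened or replaced by an actual exclusion of the finitely many bad radii.
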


\section{The Moduli Space $\mathfrak{M_{0,\:n}}$}
\label{orbifold}

It is well-known that
\begin{thm}[Deligne-Mumford]
The moduli space $\mathfrak{M_{g,\:n}}$ is both a complex orbifold and an irreducible quasiprojective variety of dimension $3g-3+n$, where $n\ge3$ if $g=0$, $n\ge1$ if $g=1$ and $n\ge0$ if $g\ge2$.\cite{De-Mu}
\end{thm}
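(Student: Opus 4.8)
This theorem is due to Deligne and Mumford \cite{De-Mu}, and for $g\ge1$ I would not reprove it: that part rests on the compactification by stable curves and on geometric invariant theory, which lie well outside the elementary scope of this note, so I would simply cite \cite{De-Mu}. What I would prove by hand is the case $g=0$ --- the only one used below --- and the natural route is to establish the sharper identification $\mathfrak{M_{0,n}}=K_n/G$ with $G$ finite (stated above), from which the orbifold structure, the irreducibility, the quasiprojectivity, and the dimension $3g-3+n=n-3$ all fall out. When $n=3$ there is nothing to prove: sharp $3$-transitivity of ${\rm PSL}(2,\mathbb C)$ makes $\mathfrak{M_{0,3}}$ a single point, of dimension $0=3\cdot0-3+3$. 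So assume $n\ge4$.

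The plan for $n\ge4$ runs as follows. Since ${\rm PSL}(2,\mathbb C)$ acts sharply $3$-transitively on $\widehat{\mathbb C}$, every compact genus-$0$ surface with $n$ marked points becomes, after the unique linear fractional transformation carrying a chosen ordered triple of its marked points to $(0,1,\infty)$, one of the form $[\boldsymbol\lambda]=\{0,1,\infty,\lambda_1,\dots,\lambda_{n-3}\}$ with $\boldsymbol\lambda\in K_n$; this gives a surjection $\Phi\colon K_n\to\mathfrak{M_{0,n}}$, $\boldsymbol\lambda\mapsto\overline{[\boldsymbol\lambda]}$. I would then determine the fibers of $\Phi$: one has $\Phi(\boldsymbol\lambda)=\Phi(\boldsymbol\mu)$ exactly when some element of ${\rm PSL}(2,\mathbb C)$ carries the \emph{unordered} set $[\boldsymbol\lambda]$ onto $[\boldsymbol\mu]$, and running through the ways of picking an ordered triple among the $n$ points of $[\boldsymbol\lambda]$, sending it to $(0,1,\infty)$, and re-listing the remaining $n-3$ images produces a finite family $G$ of birational self-maps of $\mathbb C^{n-3}$. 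The one real obstacle is then a bookkeeping check: one must verify, uniformly over the finitely many kinds of triple that can be normalized (one lying inside $\{0,1,\infty\}$, one with two such entries, one with a single such entry, one with none), that each renormalizing map is birational with rational inverse and that its restriction to $K_n$ is a biregular automorphism --- equivalently, that after renormalization the resulting values still avoid $0$ and $1$ and stay pairwise distinct. The defining inequalities of $K_n$ are tailored precisely to make this so, but one does have to locate the poles of these maps and confirm that $K_n$ is closed under composing them, case by case. Granting this, the maps form a finite group $G$, the $G$-orbits are exactly the fibers of $\Phi$, and $\mathfrak{M_{0,n}}=K_n/G$ as complex analytic spaces.

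Finally one reads off the asserted properties. The set $K_n$ is the complement in $\mathbb C^{n-3}$ of finitely many hyperplanes, hence a smooth connected affine variety of dimension $n-3$, while $G$ is finite; therefore $K_n/G={\rm Spec}\bigl(\mathcal{O}(K_n)^G\bigr)$ (Noether's finiteness theorem) is an irreducible affine --- in particular quasiprojective --- variety of dimension $n-3$, and, being the quotient of a complex manifold by a finite group acting holomorphically, it carries a canonical complex-orbifold structure in which a chart around $\overline{[\boldsymbol\lambda]}$ is a $G_{\boldsymbol\lambda}$-invariant neighborhood of $\boldsymbol\lambda$ modulo $G_{\boldsymbol\lambda}$; this is the $g=0$ case of the theorem. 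For $n=4$ the family $G$ is the classical six-element anharmonic group --- the Klein four-subgroup of the $S_4$ permuting the four marked points acts trivially on the cross-ratio --- so there $G\simeq S_3$ and $K_4/G\simeq\mathbb C$, whereas for $n\ge5$ one gets $G\simeq S_n$; neither refinement affects the conclusion. To repeat, the hard part is the uniform verification that renormalization never creates a coincidence among the $\lambda_i$ and never sends one of them to $0$ or $1$.
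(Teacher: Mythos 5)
Your proposal is correct and, for the only part that admits an elementary proof, follows essentially the same route as the paper: the theorem itself is quoted from \cite{De-Mu} without proof, and your $g=0$ argument (normalize a triple of marked points to $0,1,\infty$, show the fibers of $K_n\to\mathfrak{M_{0,\:n}}$ are orbits of the finite group $G$ of renormalization maps, then read off the orbifold structure and dimension $n-3$) is exactly the content of the paper's Section \ref{orbif}. The additional observation that $K_n/G$ is an irreducible affine, hence quasiprojective, variety via Noether's finiteness theorem is a correct supplement the paper leaves implicit.
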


In this section using elementary methods we shall prove that $\mathfrak{M_{0,\:n}}$ is a complex orbifold of dimension $n-3$ when $n\ge4$. Moreover, we shall show that it is the quotient space of a Zariski open set $K_n$ of $\mathbb C^{n-3}$ by an $S_n$ action when $n\ge5$. The stabilizers of this $S_n$ action at points of this Zariski open set correspond to the groups fixing the sets of $n$ points on the Riemann sphere.

\subsection{The Orbifold Structure of Moduli Space $\mathfrak{M_{0,\:n}}$}
\label{orbif}

For $n\in\mathbb Z_{\ge4}$, set
$$
K_n=\{\boldsymbol{\lambda}=(\lambda^1,\:\lambda^2,\:\cdots,\:\lambda^{n-3})\in\mathbb C^{n-3}|\lambda^i\ne0,\:1,\:\lambda^i\ne\lambda^j,\:\forall i,\:j=1,\:2,\:\cdots,\:n-3,\:i\ne j\}.
$$
For any $\boldsymbol{\lambda}\in K_n$, set
$$
z^{\boldsymbol{\lambda}}_1=0,\:z^{\boldsymbol{\lambda}}_2=1,\:z^{\boldsymbol{\lambda}}_3=\infty,\:z^{\boldsymbol{\lambda}}_{i+3}=\lambda^{i},\:i=1,\:2,\:\cdots,\:n-3.
$$
For any $\boldsymbol{\lambda}\in K_n$ and $\sigma\in S_n$, define $f^{\boldsymbol {\lambda}}_\sigma$ as the unique linear fractional transformation such that
$$
f^{\boldsymbol {\lambda}}_\sigma (z^{\boldsymbol{\lambda}}_{\sigma^{-1}(1)})=0,\:f^{\boldsymbol {\lambda}}_\sigma (z^{\boldsymbol{\lambda}}_{\sigma^{-1}(2)})=1,\:f^{\boldsymbol {\lambda}}_\sigma (z^{\boldsymbol{\lambda}}_{\sigma^{-1}(3)})=\infty.
$$
For $i=1,\:2,\:\cdots,\:n-3$ and $\sigma\in S_n$ define a mapping $g^{(i)}_\sigma$
$$
g^{(i)}_\sigma:\:K_n\to\mathbb{C},\:\boldsymbol{\lambda}\mapsto f^{\boldsymbol {\lambda}}_\sigma (z^{\boldsymbol{\lambda}}_{\sigma^{-1}(i+3)}),
$$
and $g_\sigma$
$$
g_\sigma :\:K_n\to K_n,\:\boldsymbol{\lambda}\mapsto(f^{\boldsymbol {\lambda}}_\sigma (z^{\boldsymbol{\lambda}}_{\sigma^{-1}(4)}),\:f^{\boldsymbol {\lambda}}_\sigma (z^{\boldsymbol{\lambda}}_{\sigma^{-1}(5)}),\:\cdots,\:f^{\boldsymbol {\lambda}}_\sigma (z^{\boldsymbol{\lambda}}_{\sigma^{-1}(n)})).
$$
By definition $g_\sigma$ is a rational map from $\mathbb C^{n-3}$ to $\mathbb C^{n-3}$ which restricts to an isomorphism of $K_n$. Here is a useful observation:
\begin{rem}
For any $\boldsymbol{\lambda}\in K_n$ and $\sigma\in S_n$, we have
$$
f^{\boldsymbol {\lambda}}_\sigma (z^{\boldsymbol{\lambda}}_k)=z^{g_\sigma(\boldsymbol{\lambda})}_{\sigma(k)}
$$
holds for $k=1,\:2,\:\cdots,\:n$.
\end{rem}

Define $G$
$$
G=\{g_\sigma|\sigma\in S_n\}.
$$
Notice that $|G|\le n!$ and
\begin{thm}
$G$ is a finite group acting on $K_n$.
\end{thm}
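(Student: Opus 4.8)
The plan is to show directly that $G = \{g_\sigma \mid \sigma \in S_n\}$ is closed under composition and taking inverses, with $g_{\mathrm{id}}$ serving as the identity element, so that $G$ is a subgroup of the group of birational self-maps of $\mathbb{C}^{n-3}$ restricting to bijections of $K_n$. Since $|G| \le n!$ is already noted, finiteness is immediate once $G$ is shown to be a group; the genuine content is the group law, and the cleanest route is to prove the composition formula
\begin{equation*}
g_\tau \circ g_\sigma = g_{\tau\sigma} \qquad \text{for all } \sigma, \tau \in S_n.
\end{equation*}
Granting this, $g_{\mathrm{id}}$ is the identity (it is the identity map on $K_n$ since $f^{\boldsymbol\lambda}_{\mathrm{id}} = \mathrm{id}$), each $g_\sigma$ has inverse $g_{\sigma^{-1}}$, and associativity in $G$ follows from associativity in $S_n$; thus $\sigma \mapsto g_\sigma$ is a homomorphism onto $G$, which makes $G$ a finite group acting on $K_n$.

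\emph{First} I would set up notation carefully: fix $\boldsymbol\lambda \in K_n$, write $\boldsymbol\mu = g_\sigma(\boldsymbol\lambda)$ and $\boldsymbol\nu = g_\tau(\boldsymbol\mu) = g_\tau(g_\sigma(\boldsymbol\lambda))$. The key tool is the Remark immediately preceding the statement: $f^{\boldsymbol\lambda}_\sigma(z^{\boldsymbol\lambda}_k) = z^{g_\sigma(\boldsymbol\lambda)}_{\sigma(k)} = z^{\boldsymbol\mu}_{\sigma(k)}$ for all $k = 1,\dots,n$, and likewise $f^{\boldsymbol\mu}_\tau(z^{\boldsymbol\mu}_j) = z^{\boldsymbol\nu}_{\tau(j)}$. \emph{Then} I would consider the composite linear fractional transformation $h := f^{\boldsymbol\mu}_\tau \circ f^{\boldsymbol\lambda}_\sigma$ and evaluate it on the marked points: for each $k$,
\begin{equation*}
h(z^{\boldsymbol\lambda}_k) = f^{\boldsymbol\mu}_\tau\bigl(z^{\boldsymbol\mu}_{\sigma(k)}\bigr) = z^{\boldsymbol\nu}_{\tau\sigma(k)}.
\end{equation*}
In particular $h$ sends $z^{\boldsymbol\lambda}_{(\tau\sigma)^{-1}(1)}, z^{\boldsymbol\lambda}_{(\tau\sigma)^{-1}(2)}, z^{\boldsymbol\lambda}_{(\tau\sigma)^{-1}(3)}$ to $0, 1, \infty$ respectively, which is exactly the defining property of $f^{\boldsymbol\lambda}_{\tau\sigma}$; since a linear fractional transformation is determined by its values at three points, $h = f^{\boldsymbol\lambda}_{\tau\sigma}$. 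Applying this to the remaining marked points $k$ with $\tau\sigma(k) \in \{4,\dots,n\}$ gives $g^{(i)}_{\tau\sigma}(\boldsymbol\lambda) = f^{\boldsymbol\lambda}_{\tau\sigma}(z^{\boldsymbol\lambda}_{(\tau\sigma)^{-1}(i+3)}) = z^{\boldsymbol\nu}_{i+3} = \mu^{i}_{\boldsymbol\nu}$-coordinate, i.e.\ precisely the $i$-th coordinate of $g_\tau(g_\sigma(\boldsymbol\lambda))$, establishing $g_{\tau\sigma} = g_\tau \circ g_\sigma$ on $K_n$.

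\emph{Finally} I would note the bookkeeping points: that $g_{\mathrm{id}} = \mathrm{id}_{K_n}$ because $f^{\boldsymbol\lambda}_{\mathrm{id}}$ fixes $0,1,\infty$ and hence is the identity; that each $g_\sigma$ is therefore invertible with $g_\sigma^{-1} = g_{\sigma^{-1}}$; and that closure under composition is exactly the formula just proved. Hence $G$ is a group, it is the image of $S_n$ under a homomorphism so $|G| \le n!$ confirms finiteness, and it acts on $K_n$ by construction. \emph{The main obstacle} I anticipate is purely a matter of index discipline: keeping straight the bookkeeping of $\sigma^{-1}$ in the definitions of $f^{\boldsymbol\lambda}_\sigma$ and $g_\sigma$ versus the $\sigma$ appearing in the Remark, and verifying that the composite really lands in the correct slot — there is genuine potential to get the order $\tau\sigma$ versus $\sigma\tau$ backwards, so I would double-check the composition law on a small explicit case (say $n = 5$, with $\sigma, \tau$ transpositions) before committing to the general argument. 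No analytic difficulty arises; it is a clean verification once the Remark is in hand.
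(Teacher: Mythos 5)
Your proposal is correct and follows essentially the same route as the paper: both prove the composition law $f^{g_\sigma(\boldsymbol\lambda)}_\tau\circ f^{\boldsymbol\lambda}_\sigma=f^{\boldsymbol\lambda}_{\tau\sigma}$ by evaluating the composite on the marked points and invoking uniqueness of a linear fractional transformation given its values at three points, then deduce $g_\tau\circ g_\sigma=g_{\tau\sigma}$. Your additional remarks on the identity and inverses are sound bookkeeping that the paper leaves implicit.
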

\begin{proof}
To prove the theorem first notice that
$$
f^{\boldsymbol {\lambda}}_\sigma (z^{\boldsymbol{\lambda}}_{\sigma^{-1}(k)})=z^{g_\sigma(\boldsymbol{\lambda})}_k,\:k=1,\:2,\:\cdots,\:n.
$$
So we have
$$
f^{g_\sigma(\boldsymbol {\lambda})}_\pi\circ f^{\boldsymbol {\lambda}}_\sigma (z^{\boldsymbol{\lambda}}_{(\pi\cdot\sigma)^{-1}(k)})=f^{g_\sigma(\boldsymbol {\lambda})}_\pi(z^{g_\sigma(\boldsymbol{\lambda})}_{\pi^{-1}(k)})=z^{g_\pi\circ g_\sigma(\boldsymbol {\lambda})}_k,\:k=1,\:2,\:\cdots,\:n.
$$
Thus we conclude that
$$
f^{g_\sigma(\boldsymbol {\lambda})}_\pi\circ f^{\boldsymbol {\lambda}}_\sigma =f^{\boldsymbol {\lambda}}_{\pi\cdot\sigma},\:z^{g_\pi\circ g_\sigma(\boldsymbol {\lambda})}_k=z_k^{g_{\pi\cdot\sigma}(\boldsymbol {\lambda})},\:k=1,\:2,\:\cdots,\:n.
$$
So $g_\pi\circ g_\sigma=g_{\pi\cdot\sigma}\in G$. Thus $G$ is a group.
\end{proof}

As $G$ is a finite group acting on $K_n$, we have another conclusion
\begin{thm}
$\mathfrak{M_{0,\:n}}=K_n/G.$
\end{thm}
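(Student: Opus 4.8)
The plan is to realize the classical identification of $\mathfrak{M_{0,\:n}}$ with the quotient of an ordered‑configuration space by $S_n$, written out in the concrete coordinates introduced above. Define $\Phi\colon K_n\to\mathfrak{M_{0,\:n}}$ by letting $\Phi(\boldsymbol{\lambda})$ be the isomorphism class of $\bigl(\widehat{\mathbb C},\:\{z^{\boldsymbol{\lambda}}_1,\:z^{\boldsymbol{\lambda}}_2,\:\cdots,\:z^{\boldsymbol{\lambda}}_n\}\bigr)$; this is legitimate because the defining inequalities of $K_n$ say precisely that $z^{\boldsymbol{\lambda}}_1,\:\cdots,\:z^{\boldsymbol{\lambda}}_n$ are pairwise distinct. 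First I would check surjectivity: a class in $\mathfrak{M_{0,\:n}}$ is represented by some $(C,\:\{p_1,\:\cdots,\:p_n\})$ with $C$ of genus $0$, hence biholomorphic to $\widehat{\mathbb C}$; transporting the points and then applying the unique linear fractional transformation that sends the images of $p_1,\:p_2,\:p_3$ to $0,\:1,\:\infty$ produces a tuple $(\lambda^1,\:\cdots,\:\lambda^{n-3})\in K_n$ with $\Phi(\boldsymbol{\lambda})$ equal to the given class.

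The core step is to show that the fibres of $\Phi$ are exactly the $G$-orbits. If $\boldsymbol{\mu}=g_\sigma(\boldsymbol{\lambda})$, the Remark recorded just before the definition of $G$ gives $f^{\boldsymbol{\lambda}}_\sigma(z^{\boldsymbol{\lambda}}_k)=z^{\boldsymbol{\mu}}_{\sigma(k)}$ for every $k$, so the linear fractional transformation $f^{\boldsymbol{\lambda}}_\sigma$ carries $\{z^{\boldsymbol{\lambda}}_k\}$ bijectively onto $\{z^{\boldsymbol{\mu}}_k\}$ and therefore $\Phi(\boldsymbol{\lambda})=\Phi(\boldsymbol{\mu})$. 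Conversely, if $\Phi(\boldsymbol{\lambda})=\Phi(\boldsymbol{\mu})$ there is a biholomorphism $f$ of $\widehat{\mathbb C}$ --- necessarily a linear fractional transformation --- with $f(\{z^{\boldsymbol{\lambda}}_k\})=\{z^{\boldsymbol{\mu}}_k\}$. Choosing $\sigma\in S_n$ so that $f(z^{\boldsymbol{\lambda}}_k)=z^{\boldsymbol{\mu}}_{\sigma(k)}$, one sees that $f$ sends $z^{\boldsymbol{\lambda}}_{\sigma^{-1}(1)},\:z^{\boldsymbol{\lambda}}_{\sigma^{-1}(2)},\:z^{\boldsymbol{\lambda}}_{\sigma^{-1}(3)}$ to $0,\:1,\:\infty$, whence $f=f^{\boldsymbol{\lambda}}_\sigma$ by the uniqueness that defines the latter; evaluating at $z^{\boldsymbol{\lambda}}_{\sigma^{-1}(i+3)}$ then yields $g^{(i)}_\sigma(\boldsymbol{\lambda})=z^{\boldsymbol{\mu}}_{i+3}=\mu^i$, i.e. $g_\sigma(\boldsymbol{\lambda})=\boldsymbol{\mu}$. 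Consequently $\Phi$ induces a bijection $\overline{\Phi}\colon K_n/G\xrightarrow{\ \sim\ }\mathfrak{M_{0,\:n}}$, which is the assertion.

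I do not expect a genuine obstacle: the inputs are all classical (every genus‑$0$ compact Riemann surface is $\widehat{\mathbb C}$; its biholomorphisms are exactly the linear fractional transformations; such a transformation exists and is unique once the images of three distinct points are prescribed) and the group law $g_\pi\circ g_\sigma=g_{\pi\cdot\sigma}$ has already been established. The only place requiring care is the converse direction, where one must select the index $\sigma$ so that $f$ agrees with $f^{\boldsymbol{\lambda}}_\sigma$ on the normalising triple before invoking uniqueness; and if one wants the statement at the level of complex orbifolds rather than of sets, one adds the observation that $K_n$ is a complex manifold on which $G$ acts by the restrictions of birational maps, so that $K_n/G$ carries the orbifold structure asserted in the Corollary.
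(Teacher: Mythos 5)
Your proposal is correct and follows essentially the same route as the paper: both directions of the fibre computation use the identity $f^{\boldsymbol{\lambda}}_\sigma(z^{\boldsymbol{\lambda}}_k)=z^{g_\sigma(\boldsymbol{\lambda})}_{\sigma(k)}$ in one direction and the uniqueness of the linear fractional transformation normalising three points in the other, exactly as in the paper's proof. The only difference is that you make the surjectivity of $\boldsymbol{\lambda}\mapsto\overline{[\boldsymbol{\lambda}]}$ explicit, which the paper leaves implicit; this is a welcome addition rather than a divergence.
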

\begin{proof}
Given two elements
$$
\overline{\{0,\:1,\:\infty,\:\lambda^1,\:\lambda^2,\:\cdots,\:\lambda^{n-3}\}},\:\overline{\{0,\:1,\:\infty,\:\mu^1,\:\mu^2,\:\cdots,\:\mu^{n-3}\}}\in \mathfrak{M_{0,\:n}},
$$
set
$$
\boldsymbol{\lambda}=(\lambda^1,\:\lambda^2,\:\cdots,\:\lambda^{n-3}),\:\boldsymbol{\mu}=(\mu^1,\:\mu^2,\:\cdots,\:\mu^{n-3}).
$$

If $\boldsymbol{\mu}\in G(\boldsymbol{\lambda})$, then there exists some $\sigma\in S_n$ s.t.~$\boldsymbol{\mu}=g_\sigma(\boldsymbol{\lambda}).$ So we have
$$
f^{\boldsymbol {\lambda}}_\sigma (z^{\boldsymbol{\lambda}}_{\sigma^{-1}(k)})=z^{g_\sigma(\boldsymbol{\lambda})}_k=z^{\boldsymbol{\mu}}_k,\:k=1,\:2,\:\cdots,\:n,
$$
which means
$$
f^{\boldsymbol {\lambda}}_\sigma(\{0,\:1,\:\infty,\:\lambda^1,\:\lambda^2,\:\cdots,\:\lambda^{n-3}\})=\{0,\:1,\:\infty,\:\mu^1,\:\mu^2,\:\cdots,\:\mu^{n-3}\}.
$$
So
$$
\overline{\{0,\:1,\:\infty,\:\lambda^1,\:\lambda^2,\:\cdots,\:\lambda^{n-3}\}}=\overline{\{0,\:1,\:\infty,\:\mu^1,\:\mu^2,\:\cdots,\:\mu^{n-3}\}}.
$$

On the other hand, if
$$
\overline{\{0,\:1,\:\infty,\:\lambda^1,\:\lambda^2,\:\cdots,\:\lambda^{n-3}\}}=\overline{\{0,\:1,\:\infty,\:\mu^1,\:\mu^2,\:\cdots,\:\mu^{n-3}\}},
$$
then there exists some linear fractional transformation $h$ s.t.
$$
h(\{0,\:1,\:\infty,\:\lambda^1,\:\lambda^2,\:\cdots,\:\lambda^{n-3}\})=\{0,\:1,\:\infty,\:\mu^1,\:\mu^2,\:\cdots,\:\mu^{n-3}\}
$$
and some $\sigma\in S_n$ s.t.~
$$
h(z^{\boldsymbol{\lambda}}_{\sigma^{-1}(k)})=z^{\boldsymbol{\mu}}_k,\:k=1,\:2,\:\cdots,\:n.
$$
So $h=f^{\boldsymbol {\lambda}}_\sigma$ and $\boldsymbol{\mu}=g_\sigma(\boldsymbol{\lambda})$.

Thus we conclude that
$$
\overline{\{0,\:1,\:\infty,\:\lambda^1,\:\lambda^2,\:\cdots,\:\lambda^{n-3}\}}=\overline{\{0,\:1,\:\infty,\:\mu^1,\:\mu^2,\:\cdots,\:\mu^{n-3}\}}
$$
if and only if
$$
\boldsymbol{\mu}\in G(\boldsymbol{\lambda}),
$$
and therefore
$$
\mathfrak{M_{0,\:n}}=K_n/G.
$$
\end{proof}

\begin{cor}
$\mathfrak{M_{0,\:n}}$ is a complex orbifold of dimension $n-3$.
\end{cor}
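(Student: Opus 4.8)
The plan is to deduce the corollary directly from the two preceding theorems — that $G$ is a finite group acting on $K_n$, and that $\mathfrak{M_{0,\:n}}=K_n/G$ — together with the standard principle that the quotient of a connected complex manifold by an effective holomorphic action of a finite group is a complex orbifold of the same complex dimension. Thus the task reduces to checking the hypotheses of that principle and assembling the orbifold atlas; no new idea beyond the previous theorems is needed.

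First I would record the ingredients. The set $K_n$ is a nonempty, connected complex manifold of dimension $n-3$: it is the complement in $\mathbb{C}^{n-3}$ of the finitely many affine hyperplanes $\{\lambda^i=0\}$, $\{\lambda^i=1\}$ ($1\le i\le n-3$) and $\{\lambda^i=\lambda^j\}$ ($i\ne j$), hence a dense Zariski-open subset and, in particular, an open submanifold of $\mathbb{C}^{n-3}$. By the preceding theorem $G$ is finite, and each $g_\sigma\in G$ is by construction the restriction to $K_n$ of a birational self-map of $\mathbb{C}^{n-3}$ which restricts to a biholomorphism of $K_n$; hence $G$ acts on $K_n$ by biholomorphisms. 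The action is effective because the elements of $G$ are, by definition, pairwise distinct self-maps of $K_n$; consequently each isotropy group $G_{\boldsymbol{\lambda}}$ also acts effectively on any invariant open set (a holomorphic self-map of the connected manifold $K_n$ that is the identity on a nonempty open set is the identity).

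Then I would build the atlas. Fix $\boldsymbol{\lambda}\in K_n$ with stabilizer $G_{\boldsymbol{\lambda}}$. Since $G$ is finite and $K_n$ is Hausdorff, one can choose an open $W\ni\boldsymbol{\lambda}$ with $g(W)\cap W=\varnothing$ for all $g\in G\setminus G_{\boldsymbol{\lambda}}$, and then set $U=\bigcap_{g\in G_{\boldsymbol{\lambda}}}g(W)$; this $U$ is $G_{\boldsymbol{\lambda}}$-invariant and still satisfies $g(U)\cap U=\varnothing$ for $g\notin G_{\boldsymbol{\lambda}}$. The quotient map $\pi\colon K_n\to K_n/G$ then induces a homeomorphism $U/G_{\boldsymbol{\lambda}}\xrightarrow{\ \sim\ }\pi(U)$ onto an open neighborhood of $\overline{[\boldsymbol{\lambda}]}$, so $(U,\,G_{\boldsymbol{\lambda}},\,\pi|_U)$ is an orbifold chart modeled on an open subset of $\mathbb{C}^{n-3}$. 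On overlaps the change of charts is locally induced by an element of $G$, hence holomorphic, so the charts are holomorphically compatible; moreover $K_n/G$ is Hausdorff and second countable since $K_n$ is and $G$ is finite (a finite group acts properly). Combined with $\mathfrak{M_{0,\:n}}=K_n/G$, this shows $\mathfrak{M_{0,\:n}}$ is a complex orbifold of dimension $n-3$. If one wants charts on which the stabilizer acts linearly, one further invokes Bochner's linearization theorem to adjust the holomorphic coordinates on $U$ so that $G_{\boldsymbol{\lambda}}$ acts through the representation $X_{\boldsymbol{\lambda}}$, but this refinement is not needed for the statement as given.

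I do not expect a genuine obstacle here: granted the previous two theorems, the corollary is essentially bookkeeping. The one point meriting a careful write-up is the slice argument producing the $G_{\boldsymbol{\lambda}}$-invariant neighborhood $U$ whose $G$-translates are disjoint outside the stabilizer — the familiar mechanism by which a finite (a fortiori properly discontinuous) action yields an orbifold quotient — together with the routine verification that the induced transition maps are holomorphic and that the quotient topology is Hausdorff.
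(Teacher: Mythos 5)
Your proposal is correct and is exactly the argument the paper leaves implicit: the corollary is stated without proof, being treated as an immediate consequence of the two preceding theorems ($G$ finite acting on $K_n$ by biholomorphisms, and $\mathfrak{M_{0,\:n}}=K_n/G$) via the standard fact that a finite group of biholomorphisms of a complex manifold yields a complex orbifold quotient of the same dimension. Your careful write-up of the slice construction and the holomorphic compatibility of charts supplies precisely the bookkeeping the authors omitted, so there is nothing to correct.
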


\subsection{The Explicit Structure of the Group $G$}
\label{grou}

Now let us explore $G$. 
Define subgroup $V$ of $S_n$
\begin{displaymath}
V=S_{\{1,\:2,\:3\}}\times S_{\{4,\:5,\:\cdots,\:n\}}
\end{displaymath}
and subset $S$ of $S_n$
\[
\begin{split}
S=\{&e,\:(p,\:1),\:(p,\:2),\:(p,\:3),\:(p,\:1)(q,\:2),\:(p,\:2)(q,\:3),\:(p,\:3)(q,\:1),\:(p,\:1)(q,\:2)(r,\:3)|\\
    &p,\:q,\:r=4,\:5,\:\cdots,\:n,\:p\ne q,\:q\ne r,\:r\ne p\}.
\end{split}
\]
Thus $S$ forms a complete list of left coset representatives of $V\subset S_n$.

Define group $H$ of linear fractional transformations
\begin{displaymath}
H=\{\mathrm{Id},\:\lambda\mapsto 1-\lambda,\: \lambda\mapsto \frac{1}{\lambda},\:\lambda\mapsto\frac{\lambda}{\lambda-1} ,\:\lambda\mapsto \frac{\lambda-1}{\lambda},\:\lambda\mapsto \frac{-1}{\lambda-1}\},
\end{displaymath}
then we have
\begin{displaymath}
\{g_\sigma|\sigma\in V\}=\{\boldsymbol{\lambda}\mapsto(h(\lambda^{\tau(1)}),\:h(\lambda^{\tau(2)}),\:\cdots,\:h(\lambda^{\tau(n-3)}))|h\in H,\:\tau\in S_{n-3}\}.
\end{displaymath}

For $p=4,\:5,\:\cdots,\:n$, set $\sigma=(1,\:p)$ and we have
\begin{displaymath}
f^{\boldsymbol {\lambda}}_{(1,\:p)}(\lambda^{p-3})=
f^{\boldsymbol {\lambda}}_{(1,\:p)} (z^{\boldsymbol{\lambda}}_{\sigma^{-1}(1)})=0,
\end{displaymath}
\begin{displaymath}
f^{\boldsymbol {\lambda}}_{(1,\:p)}(1)=
f^{\boldsymbol {\lambda}}_{(1,\:p)}(z^{\boldsymbol{\lambda}}_{\sigma^{-1}(2)})=1,
\end{displaymath}
\begin{displaymath}
f^{\boldsymbol {\lambda}}_{(1,\:p)}(\infty)=
f^{\boldsymbol {\lambda}}_{(1,\:p)}(z^{\boldsymbol{\lambda}}_{\sigma^{-1}(3)})=\infty.
\end{displaymath}
So we have
\begin{displaymath}
f^{\boldsymbol {\lambda}}_{(1,\:p)}(z)=\frac{z-\lambda^{p-3}}{1-\lambda^{p-3}},
\end{displaymath}
and
$$
g_{(1,\:p)}^{(k)}(\boldsymbol {\lambda})=
\begin{dcases}
\frac{\lambda^k-\lambda^{p-3}}{1-\lambda^{p-3}}, &k\ne p-3; \\
\frac{-\lambda^{p-3}}{1-\lambda^{p-3}},          &k=p-3.
\end{dcases}
$$
Set $\sigma=(2,\:p)$ and we have
\begin{displaymath}
f^{\boldsymbol {\lambda}}_{(2,\:p)}(0)
=f^{\boldsymbol {\lambda}}_{(2,\:p)}(z^{\boldsymbol{\lambda}}_{\sigma^{-1}(1)})=0,
\end{displaymath}
\begin{displaymath}
f^{\boldsymbol {\lambda}}_{(2,\:p)}(\lambda^{p-3})
=f^{\boldsymbol {\lambda}}_{(2,\:p)} (z^{\boldsymbol{\lambda}}_{\sigma^{-1}(2)})=1,
\end{displaymath}
\begin{displaymath}
f^{\boldsymbol {\lambda}}_{(2,\:p)}(\infty)
=f^{\boldsymbol {\lambda}}_{(2,\:p)}(z^{\boldsymbol{\lambda}}_{\sigma^{-1}(3)})=\infty.
\end{displaymath}
So we have
\begin{displaymath}
f^{\boldsymbol {\lambda}}_{(2,\:p)}(z)=\frac{z}{\lambda^{p-3}},
\end{displaymath}
and
$$
g_{(2,\:p)}^{(k)}(\boldsymbol {\lambda})=
\begin{dcases}
\frac{\lambda^k}{\lambda^{p-3}}, &k\ne p-3; \\
\frac{1}{\lambda^{p-3}},         &k=p-3.
\end{dcases}
$$
Set $\sigma=(3,\:p)$ and we have
\begin{displaymath}
f^{\boldsymbol {\lambda}}_{(3,\:p)}(0)=
f^{\boldsymbol {\lambda}}_{(3,\:p)} (z^{\boldsymbol{\lambda}}_{\sigma^{-1}(1)})=0,
\end{displaymath}
\begin{displaymath}
f^{\boldsymbol {\lambda}}_{(3,\:p)}(1)=
f^{\boldsymbol {\lambda}}_{(3,\:p)}(z^{\boldsymbol{\lambda}}_{\sigma^{-1}(2)})=1,
\end{displaymath}
\begin{displaymath}
f^{\boldsymbol {\lambda}}_{(3,\:p)}(\lambda^{p-3})=
f^{\boldsymbol {\lambda}}_{(3,\:p)}(z^{\boldsymbol{\lambda}}_{\sigma^{-1}(3)})=\infty.
\end{displaymath}
So we have
\begin{displaymath}
f^{\boldsymbol {\lambda}}_{(3,\:p)}(z)=\frac{z(1-\lambda^{p-3})}{z-\lambda^{p-3}},
\end{displaymath}
and
$$
g_{(3,\:p)}^{(k)}(\boldsymbol {\lambda})=
\begin{dcases}
\frac{\lambda^k(1-\lambda^{p-3})}{\lambda^k-\lambda^{p-3}}, &k\ne p-3; \\
\frac{\lambda^{p-3}-1}{\lambda^{p-3}},                      &k=p-3.
\end{dcases}
$$

For $p,\:q=4,\:5,\:\cdots,\:n$ and $p\ne q$ set $\sigma=(1,\:p)(2\:,q)$ and we have
\begin{displaymath}
f^{\boldsymbol {\lambda}}_{(1,\:p)(2\:,q)}(\lambda^{p-3})=
f^{\boldsymbol {\lambda}}_{(1,\:p)(2\:,q)} (z^{\boldsymbol{\lambda}}_{\sigma^{-1}(1)})=0,
\end{displaymath}
\begin{displaymath}
f^{\boldsymbol {\lambda}}_{(1,\:p)(2\:,q)}(\lambda^{q-3})=
f^{\boldsymbol {\lambda}}_{(1,\:p)(2\:,q)}(z^{\boldsymbol{\lambda}}_{\sigma^{-1}(2)})=1,
\end{displaymath}
\begin{displaymath}
f^{\boldsymbol {\lambda}}_{(1,\:p)(2\:,q)}(\infty)=
f^{\boldsymbol {\lambda}}_{(1,\:p)(2\:,q)}(z^{\boldsymbol{\lambda}}_{\sigma^{-1}(3)})=\infty.
\end{displaymath}
So we have
\begin{displaymath}
f^{\boldsymbol {\lambda}}_{(1,\:p)(2\:,q)}(z)=\frac{z-\lambda^{p-3}}{\lambda^{q-3}-\lambda^{p-3}},
\end{displaymath}
and
$$
g_{(1,\:p)(2\:,q)}^{(k)}(\boldsymbol {\lambda})=
\begin{dcases}
\frac{\lambda^k-\lambda^{p-3}}{\lambda^{q-3}-\lambda^{p-3}}, &k\ne p-3,\:q-3;\\
\frac{-\lambda^{p-3}}{\lambda^{q-3}-\lambda^{p-3}},          &k=p-3;\\
\frac{1-\lambda^{p-3}}{\lambda^{q-3}-\lambda^{p-3}},         &k=q-3.
\end{dcases}
$$
Set $\sigma=(2,\:p)(3,\:q)$ and we have
\begin{displaymath}
f^{\boldsymbol {\lambda}}_{(2,\:p)(3,\:q)}(0)
=f^{\boldsymbol {\lambda}}_{(2,\:p)(3,\:q)}(z^{\boldsymbol{\lambda}}_{\sigma^{-1}(1)})=0,
\end{displaymath}
\begin{displaymath}
f^{\boldsymbol {\lambda}}_{(2,\:p)(3,\:q)}(\lambda^{p-3})
=f^{\boldsymbol {\lambda}}_{(2,\:p)(3,\:q)} (z^{\boldsymbol{\lambda}}_{\sigma^{-1}(2)})=1,
\end{displaymath}
\begin{displaymath}
f^{\boldsymbol {\lambda}}_{(2,\:p)(3,\:q)}(\lambda^{q-3})
=f^{\boldsymbol {\lambda}}_{(2,\:p)(3,\:q)}(z^{\boldsymbol{\lambda}}_{\sigma^{-1}(3)})=\infty.
\end{displaymath}
So we have
\begin{displaymath}
f^{\boldsymbol {\lambda}}_{(2,\:p)(3,\:q)}(z)=\frac{(\lambda^{p-3}-\lambda^{q-3})z}{\lambda^{p-3}(z-\lambda^{q-3})},
\end{displaymath}
and
$$
g_{(2,\:p)(3\:,q)}^{(k)}(\boldsymbol {\lambda})=
\begin{dcases}
\frac{(\lambda^{p-3}-\lambda^{q-3})\lambda^k}{\lambda^{p-3}(\lambda^k-\lambda^{q-3})},&k\ne p-3,\:q-3;\\
\frac{\lambda^{p-3}-\lambda^{q-3}}{\lambda^{p-3}(1-\lambda^{q-3})},&k=p-3;\\
\frac{\lambda^{p-3}-\lambda^{q-3}}{\lambda^{p-3}},&k=q-3.
\end{dcases}
$$
Set $\sigma=(3,\:p)(1,\:q)$ and we have
\begin{displaymath}
f^{\boldsymbol {\lambda}}_{(3,\:p)(1,\:q)}(\lambda^{q-3})=
f^{\boldsymbol {\lambda}}_{(3,\:p)(1,\:q)} (z^{\boldsymbol{\lambda}}_{\sigma^{-1}(1)})=0,
\end{displaymath}
\begin{displaymath}
f^{\boldsymbol {\lambda}}_{(3,\:p)(1,\:q)}(1)=
f^{\boldsymbol {\lambda}}_{(3,\:p)(1,\:q)}(z^{\boldsymbol{\lambda}}_{\sigma^{-1}(2)})=1,
\end{displaymath}
\begin{displaymath}
f^{\boldsymbol {\lambda}}_{(3,\:p)(1,\:q)}(\lambda^{p-3})=
f^{\boldsymbol {\lambda}}_{(3,\:p)(1,\:q)}(z^{\boldsymbol{\lambda}}_{\sigma^{-1}(3)})=\infty.
\end{displaymath}
So we have
\begin{displaymath}
f^{\boldsymbol {\lambda}}_{(3,\:p)(1,\:q)}(z)=\frac{(\lambda^{p-3}-1)(z-\lambda^{q-3})}{(\lambda^{q-3}-1)(z-\lambda^{p-3})},
\end{displaymath}
and
$$
g_{(3,\:p)(1\:,q)}^{(k)}(\boldsymbol {\lambda})=
\begin{dcases}
\frac{(\lambda^{p-3}-1)(\lambda^k-\lambda^{q-3})}{(\lambda^{q-3}-1)(\lambda^k-\lambda^{p-3})},&k\ne p-3,\:q-3;\\
\frac{\lambda^{p-3}-1}{\lambda^{q-3}-1},&k=p-3;\\
\frac{(\lambda^{p-3}-1)\lambda^{q-3}}{(\lambda^{q-3}-1)\lambda^{p-3}},&k=q-3.
\end{dcases}
$$

For $p,\:q,\:r=4,\:5,\:\cdots,\:n$, and $p\ne q,\:q\ne r,\:r\ne p$, set $\sigma=(1,\:p)(2\:,q)(3,\:r)$ and we have
\begin{displaymath}
f^{\boldsymbol {\lambda}}_{(1,\:p)(2\:,q)(3,\:r)}(\lambda^{p-3})=
f^{\boldsymbol {\lambda}}_{(1,\:p)(2\:,q)(3,\:r)} (z^{\boldsymbol{\lambda}}_{\sigma^{-1}(1)})=0,
\end{displaymath}
\begin{displaymath}
f^{\boldsymbol {\lambda}}_{(1,\:p)(2\:,q)(3,\:r)}(\lambda^{q-3})=
f^{\boldsymbol {\lambda}}_{(1,\:p)(2\:,q)(3,\:r)}(z^{\boldsymbol{\lambda}}_{\sigma^{-1}(2)})=1,
\end{displaymath}
\begin{displaymath}
f^{\boldsymbol {\lambda}}_{(1,\:p)(2\:,q)(3,\:r)}(\lambda^{r-3})=
f^{\boldsymbol {\lambda}}_{(1,\:p)(2\:,q)(3,\:r)}(z^{\boldsymbol{\lambda}}_{\sigma^{-1}(3)})=\infty.
\end{displaymath}
So we have
\begin{displaymath}
f^{\boldsymbol {\lambda}}_{(1,\:p)(2\:,q)(3,\:r)}(z)=\frac{(\lambda^{q-3}-\lambda^{r-3})(z-\lambda^{p-3})}{(\lambda^{q-3}-\lambda^{p-3})(z-\lambda^{r-3})},
\end{displaymath}
and
$$
g_{(1,\:p)(2\:,q)(3,\:r)}^{(k)}(\boldsymbol {\lambda})=
\begin{dcases}
\frac{(\lambda^{q-3}-\lambda^{r-3})(\lambda^k-\lambda^{p-3})}{(\lambda^{q-3}-\lambda^{p-3})(\lambda^k-\lambda^{r-3})}, &k\ne p-3,\:q-3,\:r-3;\\
\frac{(\lambda^{q-3}-\lambda^{r-3})\lambda^{p-3}}{(\lambda^{q-3}-\lambda^{p-3})\lambda^{r-3}},&k=p-3;\\
\frac{(\lambda^{q-3}-\lambda^{r-3})(1-\lambda^{p-3})}{(\lambda-{q-3}-\lambda^{p-3})(1-\lambda^{r-3})},&k=q-3;\\
\frac{\lambda^{q-3}-\lambda^{r-3}}{\lambda^{q-3}-\lambda^{p-3}},&k=r-3.
\end{dcases}
$$

So we have the conclusions
\begin{thm}
$G=\tilde S\tilde V$, where
$$
\tilde V=\{\boldsymbol{\lambda}\mapsto(h(\lambda^{\tau(1)}),\:h(\lambda^{\tau(2)}),\:\cdots,\:h(\lambda^{\tau(n-3)}))|h\in H,\:\tau\in S_{n-3}\};
$$
$$
H=\{\mathrm{Id},\:\lambda\mapsto 1-\lambda,\: \lambda\mapsto \frac{1}{\lambda},\:\lambda\mapsto\frac{\lambda}{\lambda-1} ,\:\lambda\mapsto \frac{\lambda-1}{\lambda},\:\lambda\mapsto \frac{-1}{\lambda-1}\};
$$
\[
\begin{split}
\tilde S=\{g_\sigma|&\sigma=e,\:(p,\:1),\:(p,\:2),\:(p,\:3),\:(p,\:1)(q,\:2),\:(p,\:2)(q,\:3),\:(p,\:3)(q,\:1),\:(p,\:1)(q,\:2)(r,\:3)\\
&p,\:q,\:r=4,\:5,\:\cdots,\:n,\:p\ne q,\:q\ne r,\:r\ne p\};
\end{split}
\]
and
$$
g_{(1,\:p)}^{(k)}(\boldsymbol {\lambda})=
\begin{dcases}
\frac{\lambda^k-\lambda^{p-3}}{1-\lambda^{p-3}}, &k\ne p-3; \\
\frac{-\lambda^{p-3}}{1-\lambda^{p-3}},          &k=p-3;
\end{dcases}
$$
$$
g_{(2,\:p)}^{(k)}(\boldsymbol {\lambda})=
\begin{dcases}
\frac{\lambda^k}{\lambda^{p-3}}, &k\ne p-3; \\
\frac{1}{\lambda^{p-3}},         &k=p-3;
\end{dcases}
$$
$$
g_{(3,\:p)}^{(k)}(\boldsymbol {\lambda})=
\begin{dcases}
\frac{\lambda^k(1-\lambda^{p-3})}{\lambda^k-\lambda^{p-3}}, &k\ne p-3; \\
\frac{\lambda^{p-3}-1}{\lambda^{p-3}},                      &k=p-3;
\end{dcases}
$$
$$
g_{(1,\:p)(2\:,q)}^{(k)}(\boldsymbol {\lambda})=
\begin{dcases}
\frac{\lambda^k-\lambda^{p-3}}{\lambda^{q-3}-\lambda^{p-3}}, &k\ne p-3,\:q-3;\\
\frac{-\lambda^{p-3}}{\lambda^{q-3}-\lambda^{p-3}},          &k=p-3;\\
\frac{1-\lambda^{p-3}}{\lambda^{q-3}-\lambda^{p-3}},         &k=q-3;
\end{dcases}
$$
$$
g_{(2,\:p)(3\:,q)}^{(k)}(\boldsymbol {\lambda})=
\begin{dcases}
\frac{(\lambda^{p-3}-\lambda^{q-3})\lambda^k}{\lambda^{p-3}(\lambda^k-\lambda^{q-3})},&k\ne p-3,\:q-3;\\
\frac{\lambda^{p-3}-\lambda^{q-3}}{\lambda^{p-3}(1-\lambda^{q-3})},&k=p-3;\\
\frac{\lambda^{p-3}-\lambda^{q-3}}{\lambda^{p-3}},&k=q-3;
\end{dcases}
$$
$$
g_{(3,\:p)(1\:,q)}^{(k)}(\boldsymbol {\lambda})=
\begin{dcases}
\frac{(\lambda^{p-3}-1)(\lambda^k-\lambda^{q-3})}{(\lambda^{q-3}-1)(\lambda^k-\lambda^{p-3})},&k\ne p-3,\:q-3;\\
\frac{\lambda^{p-3}-1}{\lambda^{q-3}-1},&k=p-3;\\
\frac{(\lambda^{p-3}-1)\lambda^{q-3}}{(\lambda^{q-3}-1)\lambda^{p-3}},&k=q-3;
\end{dcases}
$$
$$
g_{(1,\:p)(2\:,q)(3,\:r)}^{(k)}(\boldsymbol {\lambda})=
\begin{dcases}
\frac{(\lambda^{q-3}-\lambda^{r-3})(\lambda^k-\lambda^{p-3})}{(\lambda^{q-3}-\lambda^{p-3})(\lambda^k-\lambda^{r-3})}, &k\ne p-3,\:q-3,\:r-3;\\
\frac{(\lambda^{q-3}-\lambda^{r-3})\lambda^{p-3}}{(\lambda^{q-3}-\lambda^{p-3})\lambda^{r-3}},&k=p-3;\\
\frac{(\lambda^{q-3}-\lambda^{r-3})(1-\lambda^{p-3})}{(\lambda^{q-3}-\lambda^{p-3})(1-\lambda^{r-3})},&k=q-3;\\
\frac{\lambda^{q-3}-\lambda^{r-3}}{\lambda^{q-3}-\lambda^{p-3}},&k=r-3.
\end{dcases}
$$
\end{thm}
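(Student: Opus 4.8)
\emph{Proof proposal.} The plan is to assemble the statement from three essentially independent pieces, in this order: the left-coset decomposition of $S_n$ that yields the factorisation $G=\tilde S\tilde V$; the identification of $\{g_\sigma\mid\sigma\in V\}$ with the set $\tilde V$; and the eight explicit computations recording $g_\sigma$ for the representatives listed in $S$. All three are quick once the preceding material is granted.

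For the factorisation I would first check that $S$ meets every left coset of $V$ in $S_n$. Since $V=S_{\{1,2,3\}}\times S_{\{4,\dots,n\}}$, two permutations lie in the same left coset of $V$ precisely when they carry $\{1,2,3\}$ to the same three-element subset of $\{1,\dots,n\}$; such a subset contains $3$, $2$, $1$ or $0$ of the elements $1,2,3$, and the four families making up $S$ (the identity; the transpositions $(p,j)$; the products $(p,i)(q,j)$; and the products $(p,1)(q,2)(r,3)$) realise exactly these four patterns as $p,q,r$ run over $\{4,\dots,n\}$. Hence any $\sigma\in S_n$ can be written $\sigma=sv$ with $s\in S$ and $v\in V$, and then the composition rule $g_\pi\circ g_\sigma=g_{\pi\sigma}$ proved above gives $g_\sigma=g_s\circ g_v\in\tilde S\tilde V$; the reverse inclusion $\tilde S\tilde V\subseteq G$ is immediate from closure of $G$ under composition. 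I should point out that $S$ as written represents several cosets more than once (for instance $(4,1)(5,2)$ and $(5,1)(4,2)$ both send $\{1,2,3\}$ to $\{3,4,5\}$), so the factorisation $G=\tilde S\tilde V$ is surjective but not bijective; this is harmless for the statement.

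To identify $\tilde V$ I would invoke the Remark $f^{\boldsymbol\lambda}_\sigma(z^{\boldsymbol\lambda}_k)=z^{g_\sigma(\boldsymbol\lambda)}_{\sigma(k)}$. When $\sigma\in V$, its restriction $\rho$ to $\{1,2,3\}$ already determines $f^{\boldsymbol\lambda}_\sigma$: the latter is the unique linear fractional transformation taking the permuted triple $z^{\boldsymbol\lambda}_{\rho^{-1}(1)},z^{\boldsymbol\lambda}_{\rho^{-1}(2)},z^{\boldsymbol\lambda}_{\rho^{-1}(3)}$ — a rearrangement of $0,1,\infty$ — back to $0,1,\infty$, hence it is one of the six maps in $H$ (these form the ``anharmonic group'', isomorphic to $S_3$, i.e.\ the full group of linear fractional transformations stabilising $\{0,1,\infty\}$). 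Writing $\tau\in S_{n-3}$ for the permutation $i\mapsto\sigma^{-1}(i+3)-3$ induced by $\sigma$ on the indices $4,\dots,n$, the definition of $g_\sigma$ then reads off $g_\sigma(\boldsymbol\lambda)=(h(\lambda^{\tau(1)}),\dots,h(\lambda^{\tau(n-3)}))$; as $\sigma$ ranges over $V$ every pair $(h,\tau)\in H\times S_{n-3}$ occurs exactly once, which is the asserted description of $\tilde V$.

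Finally, for the formulas defining $\tilde S$: for each representative $\sigma$ the map $f^{\boldsymbol\lambda}_\sigma$ is the unique linear fractional transformation carrying $z^{\boldsymbol\lambda}_{\sigma^{-1}(1)},z^{\boldsymbol\lambda}_{\sigma^{-1}(2)},z^{\boldsymbol\lambda}_{\sigma^{-1}(3)}$ to $0,1,\infty$, so it is written down at once as a cross-ratio; substituting the remaining points $z^{\boldsymbol\lambda}_{\sigma^{-1}(i+3)}$ then produces the coordinates of $g_\sigma(\boldsymbol\lambda)$. This is exactly the content of the displayed calculations preceding the theorem, and the proof consists of collecting those eight cases. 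The only genuine work — and it is a matter of care rather than ideas — lies in the index bookkeeping forced by the shift $z^{\boldsymbol\lambda}_{i+3}=\lambda^i$: each $g_\sigma$ must be recorded by subcases according to whether the coordinate index $k$ equals $p-3$, $q-3$ or $r-3$, and one must keep the three interpolation conditions matched to the targets $0,1,\infty$ in the right order. I expect that bookkeeping, rather than any conceptual difficulty, to be the main thing to get right.
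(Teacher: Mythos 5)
Your proposal is correct and follows essentially the same route as the paper: decompose $S_n$ into left cosets of $V=S_{\{1,2,3\}}\times S_{\{4,\dots,n\}}$ with representatives from $S$, identify $\{g_\sigma\mid\sigma\in V\}$ with $\tilde V$ via the anharmonic group $H$, and read off the eight explicit formulas from the three-point interpolation defining $f^{\boldsymbol\lambda}_\sigma$. Your added justification that cosets are classified by the image of $\{1,2,3\}$, and your observation that $S$ lists some cosets more than once (harmlessly), are details the paper leaves implicit but do not change the argument.
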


\begin{cor}
$G$ is isomorphic to $S_n$ when $n\ge5$.
\end{cor}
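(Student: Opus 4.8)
The plan is to realize $G$ as a homomorphic image of $S_n$ and then show the homomorphism is injective exactly when $n\ge5$. By the proof of the theorem that $G$ is a finite group we already have the identity $g_\pi\circ g_\sigma=g_{\pi\cdot\sigma}$ for all $\pi,\sigma\in S_n$, so the map
$$
\phi\colon S_n\to G,\qquad \sigma\mapsto g_\sigma,
$$
is a group homomorphism, and it is surjective by the very definition $G=\{g_\sigma\mid\sigma\in S_n\}$. Thus it remains only to prove that $\ker\phi$ is trivial when $n\ge5$, i.e.\ that $g_\sigma$ is not the identity self-map of $K_n$ unless $\sigma=e$. (Here ``$g_\sigma=\mathrm{Id}$'' means the tuple of rational functions $(g_\sigma^{(1)},\dots,g_\sigma^{(n-3)})$ equals $(\lambda^1,\dots,\lambda^{n-3})$ identically on the Zariski-dense set $K_n$.)

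First I would use the structure of $S_n$. Since $\ker\phi\trianglelefteq S_n$ and $A_n$ is simple for $n\ge5$, the only normal subgroups of $S_n$ are $\{e\}$, $A_n$ and $S_n$; hence $\ker\phi\in\{\{e\},A_n,S_n\}$. Consequently it suffices to produce a single \emph{even} non-identity permutation $\sigma$ with $g_\sigma\ne\mathrm{Id}$, since such a $\sigma$ lies in $A_n\subseteq S_n$ and therefore rules out both $\ker\phi=A_n$ and $\ker\phi=S_n$, forcing $\ker\phi=\{e\}$.

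To this end I would take $\sigma=(1\,2\,3)\in V\subseteq S_n$, which is even. Reading off the interpolation conditions defining $f^{\boldsymbol\lambda}_{(1\,2\,3)}$ (it sends $\infty\mapsto0$, $0\mapsto1$, $1\mapsto\infty$) gives $f^{\boldsymbol\lambda}_{(1\,2\,3)}(z)=\frac{1}{1-z}$, and since $(1\,2\,3)$ fixes $4,\dots,n$ we get $g_{(1\,2\,3)}^{(k)}(\boldsymbol\lambda)=f^{\boldsymbol\lambda}_{(1\,2\,3)}(\lambda^k)=\frac{1}{1-\lambda^k}$ for each $k=1,\dots,n-3$. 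This is consistent with the description $\{g_\sigma\mid\sigma\in V\}=\{\boldsymbol\lambda\mapsto(h(\lambda^{\tau(1)}),\dots,h(\lambda^{\tau(n-3)}))\mid h\in H,\ \tau\in S_{n-3}\}$, here with $\tau=\mathrm{id}$ and $h=\bigl(\lambda\mapsto\frac{-1}{\lambda-1}\bigr)\in H$. As the rational function $\frac{1}{1-\lambda^1}$ is not identically $\lambda^1$ on $K_n$, we conclude $g_{(1\,2\,3)}\ne\mathrm{Id}$, whence $\ker\phi=\{e\}$ and $\phi$ is an isomorphism, i.e.\ $G\cong S_n$.

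I do not expect a genuine obstacle: the argument merely repackages facts already in hand, and the hypothesis $n\ge5$ enters solely through the classification of normal subgroups of $S_n$ (for $n=4$ one has instead $\ker\phi=V_4\trianglelefteq S_4$, consistent with $|G|=6$). If one prefers to avoid appealing to the simplicity of $A_n$, an alternative is to prove injectivity directly from the coset decomposition $G=\tilde S\tilde V$ of the preceding theorem: a nontrivial $\sigma$ either moves one of $1,2,3$ or permutes the marked points $4,\dots,n$ nontrivially, and in each of the eight coset-representative shapes the explicit formulas for $g_\sigma^{(k)}$ exhibit a rational self-map of $K_n$ that is visibly not the identity; this route is longer but entirely routine.
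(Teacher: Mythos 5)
Your proposal is correct, and it takes a genuinely different route from the paper. The paper states the corollary immediately after Theorem 32, whose content is the exhaustive explicit computation of $g_\sigma^{(k)}$ for a full set of coset representatives of $V=S_{\{1,2,3\}}\times S_{\{4,\dots,n\}}$ together with the description of $\{g_\sigma\mid\sigma\in V\}$; the intended (unwritten) justification is that these formulas let one check directly that distinct $\sigma$ yield distinct rational self-maps of $K_n$, so that $|G|=n!$. You instead exploit the group theory of $S_n$: since $\sigma\mapsto g_\sigma$ is a surjective homomorphism (by the identity $g_\pi\circ g_\sigma=g_{\pi\cdot\sigma}$ already established in the paper's proof that $G$ is a group), its kernel is normal, and for $n\ge5$ the only candidates are $\{e\}$, $A_n$, $S_n$; exhibiting the single even permutation $(1\,2\,3)$ with $g_{(1\,2\,3)}(\boldsymbol\lambda)=\bigl(\tfrac{1}{1-\lambda^1},\dots,\tfrac{1}{1-\lambda^{n-3}}\bigr)\ne\boldsymbol\lambda$ then forces the kernel to be trivial. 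Your computation of $f^{\boldsymbol\lambda}_{(1\,2\,3)}(z)=\tfrac{1}{1-z}$ checks out against the definition ($\infty\mapsto0$, $0\mapsto1$, $1\mapsto\infty$), and your identification of where $n\ge5$ enters (simplicity of $A_n$, versus $\ker\phi=V_4$ when $n=4$) is exactly right. What each approach buys: yours is far shorter and isolates the role of the hypothesis $n\ge5$, but leans on the classification of normal subgroups of $S_n$; the paper's explicit formulas are needed anyway for the later character computations, so its direct-verification route comes essentially for free in context.
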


\subsection{The Singularities of Moduli Space $\mathfrak{M_{0,\:n}}$ }
\label{sing}

For $\boldsymbol {\lambda}\in K_n$, $n\ge5$, set
$$
[\boldsymbol {\lambda}]=\{0,\:1,\:\infty,\:\lambda^1,\:\lambda^2,\:\cdots,\:\lambda^{n-3}\}=\{z^{\boldsymbol {\lambda}}_k|k=1,\:2,\:\cdots,\:n\}
$$
and $G_{\boldsymbol \lambda}$ the stabilizer of $\boldsymbol {\lambda}$
$$
G_{\boldsymbol \lambda}=\{g_{\sigma}\in G|g_\sigma(\boldsymbol \lambda)=\boldsymbol \lambda\}.
$$
\begin{defn}
\label{def}
When $n\ge5$, call $\overline{[\boldsymbol {\lambda}]}\in \mathfrak{M_{0,\:n}}$ an oribfold singularity of $\mathfrak{M_{0,\:n}}$ if $G_{\boldsymbol \lambda}$ is non-trivial.
\end{defn}

Set
$$
\alpha=\{z_1, z_2, \cdots, z_n\}\subseteq \widehat{\mathbb{C}},
$$
and $\mathcal{A}_{\alpha}$ the group of linear fractional transformations fixing $\alpha$.

\begin{defn}
For any $\boldsymbol {\lambda}\in K_{n}$, $n\ge5$, define $\Phi_{\boldsymbol \lambda}$ as the mapping
$$
\Phi_{\boldsymbol \lambda}:\:G_{\boldsymbol \lambda}\to\mathcal{A}_{[\boldsymbol {\lambda}]},\: g_\sigma\mapsto f^{\boldsymbol {\lambda}}_\sigma.
$$
\end{defn}

For each $g_\sigma\in G_{\boldsymbol \lambda}$, we have
$$
f^{\boldsymbol {\lambda}}_\sigma (z^{\boldsymbol{\lambda}}_{\sigma^{-1} (k)})=z^{g_\sigma(\boldsymbol{\lambda})}_k=z^{\boldsymbol{\lambda}}_k,\:k=1,\:2,\:\cdots,\:n.
$$
Thus $f^{\boldsymbol {\lambda}}_\sigma\in\mathcal{A}_{[\boldsymbol {\lambda}]}$, and $\Phi_{\boldsymbol \lambda}$ is well-defined.

\begin{thm}
The map $\Phi_{\boldsymbol \lambda}$ is a group isomorphism between $G_{\boldsymbol \lambda}$ and $\mathcal{A}_{[\boldsymbol {\lambda}]}$.
\end{thm}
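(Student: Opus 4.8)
The plan is to check, in order, that $\Phi_{\boldsymbol\lambda}$ is a group homomorphism, that it is injective, and that it is surjective; well-definedness has already been established above. For the homomorphism property, take $g_\sigma,\,g_\pi\in G_{\boldsymbol\lambda}$, so that $g_\sigma(\boldsymbol\lambda)=g_\pi(\boldsymbol\lambda)=\boldsymbol\lambda$. The cocycle identity $f^{g_\sigma(\boldsymbol{\lambda})}_\pi\circ f^{\boldsymbol{\lambda}}_\sigma =f^{\boldsymbol{\lambda}}_{\pi\cdot\sigma}$, established in the proof that $G$ is a group, specializes (using $g_\sigma(\boldsymbol\lambda)=\boldsymbol\lambda$) to $f^{\boldsymbol{\lambda}}_\pi\circ f^{\boldsymbol{\lambda}}_\sigma =f^{\boldsymbol{\lambda}}_{\pi\cdot\sigma}$. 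Combining this with $g_\pi\circ g_\sigma=g_{\pi\cdot\sigma}$ gives $\Phi_{\boldsymbol\lambda}(g_\pi\circ g_\sigma)=f^{\boldsymbol{\lambda}}_{\pi\cdot\sigma}=f^{\boldsymbol{\lambda}}_\pi\circ f^{\boldsymbol{\lambda}}_\sigma=\Phi_{\boldsymbol\lambda}(g_\pi)\circ\Phi_{\boldsymbol\lambda}(g_\sigma)$, as desired.

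For injectivity, suppose $\Phi_{\boldsymbol\lambda}(g_\sigma)=f^{\boldsymbol{\lambda}}_\sigma=\mathrm{Id}$. By the Remark in Section \ref{orbif} we have $f^{\boldsymbol{\lambda}}_\sigma(z^{\boldsymbol{\lambda}}_k)=z^{g_\sigma(\boldsymbol{\lambda})}_{\sigma(k)}=z^{\boldsymbol{\lambda}}_{\sigma(k)}$ for each $k$, so $z^{\boldsymbol{\lambda}}_{\sigma(k)}=z^{\boldsymbol{\lambda}}_k$ for all $k=1,\dots,n$. Since $\boldsymbol\lambda\in K_n$, the $n$ points $z^{\boldsymbol{\lambda}}_1,\dots,z^{\boldsymbol{\lambda}}_n$ are pairwise distinct, which forces $\sigma=e$ and hence $g_\sigma=g_e=\mathrm{id}_{K_n}$; thus $\Phi_{\boldsymbol\lambda}$ has trivial kernel.

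For surjectivity, take any $h\in\mathcal{A}_{[\boldsymbol{\lambda}]}$. Then $h$ permutes the set $\{z^{\boldsymbol{\lambda}}_1,\dots,z^{\boldsymbol{\lambda}}_n\}$, so there is $\sigma\in S_n$ with $h(z^{\boldsymbol{\lambda}}_{\sigma^{-1}(k)})=z^{\boldsymbol{\lambda}}_k$ for $k=1,\dots,n$. In particular $h$ sends $z^{\boldsymbol{\lambda}}_{\sigma^{-1}(1)},\,z^{\boldsymbol{\lambda}}_{\sigma^{-1}(2)},\,z^{\boldsymbol{\lambda}}_{\sigma^{-1}(3)}$ to $0,\,1,\,\infty$, and by the uniqueness of the linear fractional transformation determined by its values at three points, $h=f^{\boldsymbol{\lambda}}_\sigma$. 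It remains to verify $g_\sigma\in G_{\boldsymbol\lambda}$: for $i=1,\dots,n-3$ one computes $g^{(i)}_\sigma(\boldsymbol{\lambda})=f^{\boldsymbol{\lambda}}_\sigma(z^{\boldsymbol{\lambda}}_{\sigma^{-1}(i+3)})=z^{\boldsymbol{\lambda}}_{i+3}=\lambda^i$, so $g_\sigma(\boldsymbol{\lambda})=\boldsymbol{\lambda}$, i.e. $g_\sigma\in G_{\boldsymbol\lambda}$, and then $\Phi_{\boldsymbol\lambda}(g_\sigma)=f^{\boldsymbol{\lambda}}_\sigma=h$.

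The only point requiring genuine care — the main obstacle, modest as it is — is the last step of surjectivity: one must notice that the permutation $\sigma$ extracted from an arbitrary $h\in\mathcal{A}_{[\boldsymbol{\lambda}]}$ automatically belongs to the stabilizer $G_{\boldsymbol\lambda}$. This is precisely where the normalization $z^{\boldsymbol{\lambda}}_1=0$, $z^{\boldsymbol{\lambda}}_2=1$, $z^{\boldsymbol{\lambda}}_3=\infty$ and the definition of $g_\sigma$ through the last $n-3$ marked points are used. Everything else follows directly from the Remark and the cocycle relation already in hand, so the argument is short.
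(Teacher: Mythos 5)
Your proof is correct and follows essentially the same route as the paper's: homomorphism via the relation $f^{\boldsymbol{\lambda}}_\pi\circ f^{\boldsymbol{\lambda}}_\sigma=f^{\boldsymbol{\lambda}}_{\pi\cdot\sigma}$, injectivity from the distinctness of the marked points, and surjectivity by extracting $\sigma$ from the permutation that $h$ induces on $[\boldsymbol{\lambda}]$ and using uniqueness of a linear fractional transformation given its values at three points. The only cosmetic difference is that you reuse the cocycle identity from the proof that $G$ is a group instead of recomputing it pointwise, and you write out the verification $g_\sigma(\boldsymbol{\lambda})=\boldsymbol{\lambda}$ explicitly; both are fine.
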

\begin{proof}
First notice that for $g_\pi,\:g_\sigma\in G_{\boldsymbol \lambda}$
$$
f^{\boldsymbol {\lambda}}_\pi\circ f^{\boldsymbol {\lambda}}_\sigma (z^{\boldsymbol{\lambda}}_{(\pi\cdot\sigma)^{-1}(k)}) =f^{\boldsymbol {\lambda}}_\pi(z^{\boldsymbol{\lambda}}_{\pi^{-1}(k)})=z^{\boldsymbol {\lambda}}_k,\:k=1,\:2,\:\cdots,\:n.
$$
So $f^{\boldsymbol {\lambda}}_\pi\circ f^{\boldsymbol {\lambda}}_\sigma =f^{\boldsymbol {\lambda}}_{\pi\cdot\sigma}$, which means that $\Phi_{\boldsymbol \lambda}$ is a group homomorphism.

For any $h\in\mathcal A_{[\boldsymbol {\lambda}]}$, there exists a unique $\sigma\in S_n$ such that
$$
h(z^{\boldsymbol {\lambda}}_{\sigma^{-1}(k)})=z^{\boldsymbol {\lambda}}_k,\:k=1,\:2,\:\cdots,\:n.
$$
Thus we have $h=f^{\boldsymbol \lambda}_{\sigma}$ and $g_\sigma(\boldsymbol \lambda)=\boldsymbol \lambda$, and
$$
\Phi_{\boldsymbol \lambda}(g_\sigma)=f^{\boldsymbol \lambda}_{\sigma}=h.
$$
So $\Phi_{\boldsymbol \lambda}$ is a group epimorphism.

For any $g_\sigma\in G_{\boldsymbol \lambda}$ such that
$$
\Phi_{\boldsymbol \lambda}(g_\sigma)=f^{\boldsymbol {\lambda}}_\sigma=\mathrm{Id},
$$
we have
$$
z^{\boldsymbol{\lambda}}_k
=f^{\boldsymbol {\lambda}}_\sigma (z^{\boldsymbol{\lambda}}_k)
=z^{\boldsymbol{\lambda}}_{\sigma(k)}
$$
holds for $k=1,\:2,\:\cdots,\:n$. Thus $\sigma$ is the identity element.

Hence $\Phi_{\boldsymbol \lambda}$ is a group isomorphism.
\end{proof}

Now the study of the orbifold singularities of $\mathfrak{M_{0,\:n}}$ for $n\geq 5$ can be reduced to that of $\mathcal{A_\alpha}$ for subset $\alpha$ with $n$ elements on the Riemann sphere. In Section \ref{orbit} I shall develope a method to find $\mathcal{A_\alpha}$ for an arbitrary finite subset $\alpha$ of the Riemann sphere when $|\alpha|\ge4$.

\section{The Subset Fixed by a Specific Group}
\label{orbit}

Given a finite subset $\alpha=\{z_1, z_2, \cdots, z_n\}\subseteq \widehat{\mathbb{C}}$, $n\ge4$, let $\mathcal{A}_{\alpha}$ be the group of linear fractional transformations that fix $\alpha$. For any $f$ in $\mathcal{A}_{\alpha}$, since $f(z_1),\:f(z_2),\:f(z_3)$ are all in $\alpha$, there are at most $n(n-1)(n-2)$ ways to choose the images of $z_1, z_2, z_3$. Thus $\arrowvert \mathcal{A}_{\alpha} \arrowvert \le n(n-1)(n-2)$.

Let $G$ be a finite group of linear fractional transformations. There are only five kinds of non-trivial finite linear fractional transformation groups: the icosahedral group $I$, the octahedral group $O$, the tetrahedral group $T$, the dihedral group $D_n$ and the finite cyclic group $\mathbb Z_n$, $n\ge2$. In the rest of this section, by discussing the orbits of the action of $G$ on $S^2$ and $\widehat{\mathbb{C}}$, we shall find all finite subsets $\alpha$ of $\widehat{\mathbb{C}}$ such that $\mathcal{A}_{\alpha}\simeq G$. Thus given a finite $\alpha\subseteq \widehat{\mathbb{C}}$, $|\alpha|\ge4$, we can find $\mathcal{A}_{\alpha}$. Notice that we shall not distinguish a point on $S^2$ from its image on the extended complex plane under the stereographic projection.

\subsection{$G$ is the Icosahedral Group $I$}
\label{A_5}

Suppose that $G$ is the icosahedral group $I$ that fixes a regular dodecahedron whose center is the origin.
\begin{figure}[!h]
\centering
\includegraphics[width=3in]{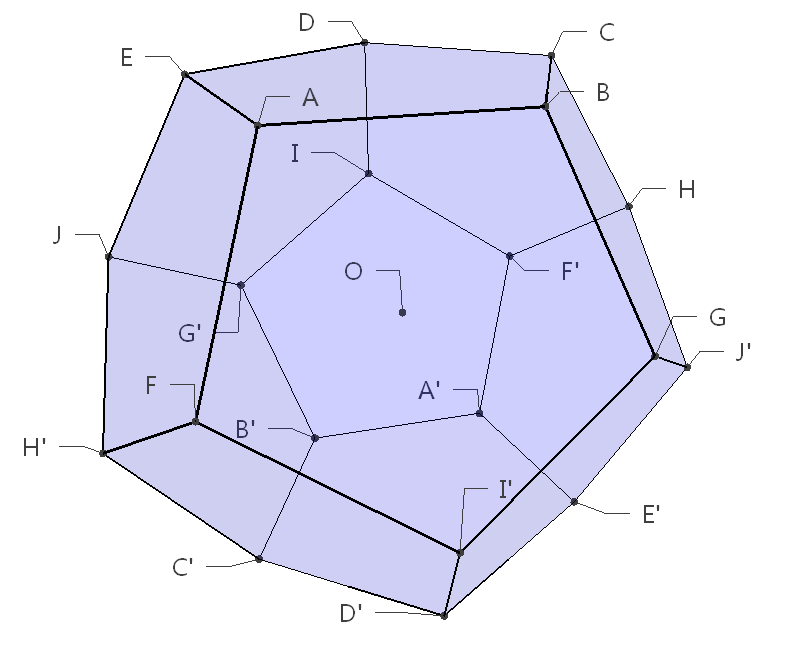}
\caption{The dodecahedron fixed by $G$.}
\end{figure}

There are $10$ axes joining opposite vertices of the dodecahedron. Rotations about each axis of angles $2\pi/3$ and $4\pi/3$ carry the dodecahedron into itself. There are, thus, $2\times10=20$ such rotations in $G$.

There are $6$ axes joining central points of opposite faces of the dodecahedron. Rotations about each axis of angles $2\pi/5$, $4\pi/5$, $6\pi/5$ and $8\pi/5$ carry the dodecahedron into itself. There are, thus, $4\times6=24$ such rotations in $G$.

There are $15$ axes joining middle points of opposite edges of the dodecahedron. Rotations about each axis of angle $\pi$ carry the dodecahedron into itself. There are, thus, $1\times15=15$ such rotations in $G$.

We have already got $20+24+15=59$ rotations. To these add the identity transformation, and we get the whole group.

There are four kinds of orbits of $G$ on $S^2$. Let $V_I,\:F_I,\:E_I$ denote the vertices, the projections of the central points of the faces on $S^2$ and the projections of the middle points of the edges on $S^2$ respectively, with the origin being the center of the projection. Then $V_I,\:F_I,\:E_I$ are three different orbits of $G$ and
\begin{displaymath}
|V_I|=20,\:|F_I|=12,\:|E_I|=30.
\end{displaymath}
For any $X\in S^2 \backslash (V_I\cup F_I\cup E_I)$, define $B_I(X)$ as the orbit of $X$. As $X$ is not fixed by any non-trivial element in $G$, it is obvious that
\begin{displaymath}
\arrowvert B_I(X) \arrowvert =\arrowvert G \arrowvert=60.
\end{displaymath}

For any finite subset $\alpha$ of $S^2$ such that $\mathcal{A}_{\alpha} = G$, $\alpha$ is a finite union of the orbits of $G$. On the other hand, if $\alpha$ is a finite union of the orbits of $G$, we have $G\subseteq\mathcal{A}_{\alpha}$. Since none of $S_4,\:A_4,\:D_n,\:Z_n,\:n\ge2$ has a subgroup isomorphic to $A_5$, we conclude that $G=\mathcal{A}_{\alpha}$. Thus we conclude that
\begin{thm}\label{tA_5}
For any finite subset $\alpha$ of $S^2$, $\mathcal{A}_{\alpha}=G\simeq A_5$ if and only if $\alpha$ is a union of certain elements in $\{V_I,\:F_I,\:E_I\}\cup\{B_I(X)|X\in S^2 \backslash (V_I\cup F_I\cup E_I)\}$.
\end{thm}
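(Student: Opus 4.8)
The plan is to prove the two implications separately, in both cases leaning on the orbit picture of $G$ on $S^2$ set up just above and on the classification of finite groups of linear fractional transformations recalled at the start of this section.

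For the forward implication, I would assume $\mathcal{A}_\alpha = G$. Then $g(\alpha)=\alpha$ for every $g\in G$, so the finite set $\alpha$ is $G$-invariant and hence a disjoint union of $G$-orbits; it remains to list all $G$-orbits on $S^2$. A point $X\in S^2$ has non-trivial stabilizer in $G$ precisely when it lies on a rotation axis of the dodecahedron, and the only points of $S^2$ on a given axis are its two endpoints. Running through the $10$ vertex-axes, the $6$ face-axes and the $15$ edge-axes shows that the set of points with non-trivial stabilizer is exactly $V_I\cup F_I\cup E_I$ (the $62=2\cdot 31$ axis endpoints, matching $20+12+30$), and that this set decomposes into the three orbits $V_I$, $F_I$, $E_I$ of sizes $20$, $12$, $30$. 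Every other point $X$ has trivial stabilizer, so its orbit $B_I(X)$ has $|G|=60$ elements. Hence $\alpha$ is a union of members of $\{V_I,F_I,E_I\}\cup\{B_I(X)\mid X\in S^2\setminus(V_I\cup F_I\cup E_I)\}$, necessarily finitely many of them since $\alpha$ is finite.

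For the converse, suppose $\alpha$ is such a union. Each of $V_I$, $F_I$, $E_I$ and each $B_I(X)$ is a $G$-orbit, hence $G$-invariant, so $g(\alpha)=\alpha$ for all $g\in G$ and $G\subseteq\mathcal{A}_\alpha$. The group $\mathcal{A}_\alpha$ is finite (indeed $|\mathcal{A}_\alpha|\le|\alpha|(|\alpha|-1)(|\alpha|-2)$ as recorded above), so by the classification it is, up to conjugacy, one of $\mathbb{Z}_m$, $D_m$, $A_4$, $S_4$, $A_5$. Since it contains a copy of $A_5\simeq G$, a non-abelian simple group of order $60$, the cyclic and dihedral possibilities are excluded (cyclic groups have no non-abelian subgroup; dihedral groups are solvable), and $A_4$, $S_4$ are excluded by order. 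Thus $|\mathcal{A}_\alpha|=60=|G|$, which together with $G\subseteq\mathcal{A}_\alpha$ forces $\mathcal{A}_\alpha=G$.

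The only steps carrying genuine content are the enumeration of the $G$-orbits in the forward direction — verifying that the points of $S^2$ with non-trivial $G$-stabilizer are exactly those in $V_I\cup F_I\cup E_I$ — and, in the converse, the appeal to the classification to rule out a proper finite overgroup of $A_5$. Since the preparatory discussion of this section already supplies the axis counts and the list of finite groups, I expect neither step to be long, the orbit enumeration being the main (and very mild) obstacle.
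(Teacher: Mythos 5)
Your proposal is correct and follows essentially the same route as the paper: the forward direction decomposes the $G$-invariant set $\alpha$ into the orbits $V_I,F_I,E_I$ and the free orbits $B_I(X)$ established in the preparatory discussion, and the converse notes $G\subseteq\mathcal{A}_\alpha$ and rules out every other finite group of linear fractional transformations as a possible overgroup of $A_5$. Your version merely spells out the exclusions (abelian/solvable for $\mathbb{Z}_m$ and $D_m$, order for $A_4$ and $S_4$) a little more explicitly than the paper's one-line remark.
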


\subsection{$G$ is the Octahedral Group $O$}
\label{S_4}

Now suppose $G$ is the octahedral group $O$ that fixes a cube whose center is the origin.

There are $4$ axes joining opposite vertices of the cube. Rotations about each axis of angles $2\pi/3$ and $4\pi/3$ carry the cube into itself. There are, thus, $2\times4=8$ such rotations in $G$.

There are $3$ axes joining central points of opposite faces of the cube. Rotations about each axis of angles $\pi/2$, $\pi$ and $3\pi/2$ carry the cube into itself. There are, thus, $3\times3=9$ such rotations in $G$.

There are $6$ axes joining middle points of opposite edges of the cube. Rotations about each axis of angle $\pi$ carry the cube into itself. There are, thus, $1\times6=6$ such rotations in $G$.

We have already got $8+9+6=23$ rotations. To these add the identity transformation, and we get the whole group.

There are four kinds of orbits on $S^2$. Let $V_O,\:F_O,\:E_O$ denote the vertices, the projections of the central points of the faces on $S^2$ and the projections of the middle points of the edges on $S^2$ respectively, with the origin being the center of the projection. Then $V_O,\:F_O,\:E_O$ are three different orbits of $G$ and
\begin{displaymath}
|V_O|=8,\:|F_O|=6,\:|E_O|=12.
\end{displaymath}
For any $X\in S^2 \backslash (V_O\cup F_O\cup E_O)$, define $B_O(X)$ as the orbit of $X$. As $X$ is not fixed by any non-trivial element in $G$, it is obvious that
\begin{displaymath}
|B_O(X)|=|G|=24.
\end{displaymath}

For any finite subset $\alpha$ of $S^2$ such that $\mathcal{A}_{\alpha} = G$, $\alpha$ is a finite union of the orbits of $G$. On the other hand, if $\alpha$ is a finite union of the orbits of $G$, we have $G\subseteq\mathcal{A}_{\alpha}$. Since none of $A_5,\:A_4,\:D_n,\:Z_n,\:n\ge2$ has a subgroup isomorphic to $S_4$, we conclude that $G=\mathcal{A}_{\alpha}$. Thus we conclude that
\begin{thm}\label{tS_4}
For any finite subset $\alpha$ of $S^2$, $\mathcal{A}_{\alpha}=G\simeq S_4$ if and only if $\alpha$ is a union of certain elements in $\{V_O,\:F_O,\:E_O\}\cup\{B_O(X)|X\in S^2 \backslash (V_O\cup F_O\cup E_O)\}$.
\end{thm}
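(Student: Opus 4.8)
The plan is to mirror the proof of Theorem~\ref{tA_5} line by line, replacing the icosahedral data by the octahedral data just computed. The key preliminary step is to pin down all orbits of $G=O$ acting on $S^2$: since $G$ is a group of rotations, a point of $S^2$ has non-trivial stabilizer exactly when it lies on one of the rotation axes, and these axes meet $S^2$ in the $8$ vertices, the $6$ face-centres and the $12$ edge-midpoints of the cube. One checks that $G$ acts transitively on each of these three sets, so they are the three special orbits $V_O$, $F_O$, $E_O$ with $|V_O|=24/3=8$, $|F_O|=24/4=6$, $|E_O|=24/2=12$ by orbit--stabilizer, while every remaining point of $S^2$ has trivial stabilizer and hence lies in a free orbit $B_O(X)$ of cardinality $|G|=24$. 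Consequently the $G$-orbits on $S^2$ are precisely the members of $\{V_O,F_O,E_O\}\cup\{B_O(X)\mid X\in S^2\setminus(V_O\cup F_O\cup E_O)\}$.

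For the ``only if'' direction, if $\mathcal{A}_\alpha=G$ then $\alpha$ is $G$-invariant, so whenever $x\in\alpha$ it contains the whole orbit $Gx$; since $\alpha$ is finite it is therefore a finite union of members of the above list. For the ``if'' direction, suppose $\alpha$ is such a union (necessarily non-empty, so that $\mathcal{A}_\alpha$ remains finite). Then every element of $G$ maps $\alpha$ onto itself, i.e.\ $G\subseteq\mathcal{A}_\alpha$. By the classification of finite subgroups of $\mathrm{PSL}(2,\mathbb{C})$ recalled at the start of this section, $\mathcal{A}_\alpha$ is one of $I\simeq A_5$, $O\simeq S_4$, $T\simeq A_4$, $D_m$ or $\mathbb{Z}_m$. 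Since $\mathcal{A}_\alpha$ contains a copy of $S_4$ (of order $24$), and since none of $A_5$ (order $60$, with $24\nmid 60$), $A_4$, the dihedral groups $D_m$ (whose subgroups are all cyclic or dihedral) or the cyclic groups $\mathbb{Z}_m$ admits a subgroup isomorphic to $S_4$, we must have $\mathcal{A}_\alpha\simeq S_4$; comparing orders with the inclusion $G\subseteq\mathcal{A}_\alpha$ then forces $\mathcal{A}_\alpha=G$.

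I do not expect a genuine obstacle, since the argument is a verbatim analogue of the icosahedral case. The only points requiring a little care are bookkeeping: verifying that $V_O,F_O,E_O$ really exhaust the non-free orbits and have the stated cardinalities, and recording the elementary fact that $S_4$ embeds into none of the other four families of finite M\"obius groups. Both are short and routine.
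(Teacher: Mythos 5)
Your proposal is correct and follows essentially the same route as the paper: identify the three non-free orbits $V_O$, $F_O$, $E_O$ and the free orbits $B_O(X)$ of size $24$, observe that $\mathcal{A}_\alpha=G$ forces $\alpha$ to be a union of orbits, and conversely use the classification of finite subgroups of $\mathrm{PSL}(2,\mathbb{C})$ together with the fact that none of $A_5$, $A_4$, $D_m$, $\mathbb{Z}_m$ contains a copy of $S_4$ to conclude $\mathcal{A}_\alpha=G$. Your explicit order-comparison at the end and the Lagrange remark $24\nmid 60$ are just slightly more detailed versions of the paper's terse final step.
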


\subsection{$G$ is the Tetrahedral Group $T$}
\label{A_4}

\subsubsection{The Orbits of $T$}

Now suppose $G$ is the tetrahedral group $T$ that fixes a regular tetrahedron whose center is the origin.

There are $4$ axes, each joining a central point of a face to the opposite vertex. Rotations about each axis of angles $2\pi/3$ and $4\pi/3$ carry the tetrahedral into itself. There are, thus, $2\times4=8$ such rotations in $G$.

There are $3$ axes joining middle points of opposite edges of the tetrahedral. Rotations about each axis of angle $\pi$ carry the tetrahedral into itself. There are, thus, $1\times3=3$ such rotations in $G$.

We have already got $8+3=11$ rotations. To these add the identity transformation, and we get the whole group.

There are three kinds of orbits on $S^2$, too. Let $V_T,\:F_T,\:E_T$ denote the vertices, the projections of the central points of the faces on $S^2$ and the projections of the middle points of the edges on $S^2$ respectively, with the origin being the center of the projection. Then $V_T,\:F_T,\:E_T$ are three different orbits of $G$ and
\begin{displaymath}
|V_T|=|F_T|=4,\:|E_T|=6.
\end{displaymath}
For any $X\in S^2 \backslash (V_T\cup F_T\cup E_T)$, define $B_T(X)$ as the orbit of $X$. As $D$ is not fixed by any non-trivial element in $G$, it is obvious that
\begin{displaymath}
|B_T(X)|=|G|=12.
\end{displaymath}

For any finite subset $\alpha$ of $S^2$ such that $\mathcal{A}_{\alpha} = G$, $\alpha$ is a finite union of the orbits of $G$.

On the other hand, if $\alpha$ is a finite union of the orbits of $G$, we have $G\subseteq\mathcal{A}_{\alpha}$. Since neither of  $D_n, Z_n, n\ge2$ has a subgroup isomorphic to $S_4$, we conclude that $\mathcal{A}_{\alpha}$ is isomorphic to $A_5, S_4$ or $A_4$. In the following sections we shall investigate the two cases when $\mathcal{A}_{\alpha}$ is isomorphic to $A_5$ and $S_4$.

\subsubsection[$\mathcal{A}_{\alpha}$ is Isomorphic to $A_5$]{Case 1: $\mathcal{A}_{\alpha}$ is Isomorphic to $A_5$}

If $\mathcal{A}_{\alpha}$ is isomorphic to $A_5$, we know that $\mathcal{A}_{\alpha}$ fixes some regular dodecahedron whose center is the origin. For every vertex of the tetrahedron, there exists some element $f\in G\subseteq\mathcal{A}_{\alpha}$ of order three that fixes it. But for any element $g\in\mathcal{A}_{\alpha}$, if $g$ is of order three, both of the fixed points of $g$ are vretices of the dodecahedron. Thus we conclude that every vertex of the tetrahedron is also a vertex of the regular dodecahedron. Their relative positions are shown in Figure \ref{A_5, A_4}.

\begin{figure}[!h]
\centering
\includegraphics[width=5in]{a_5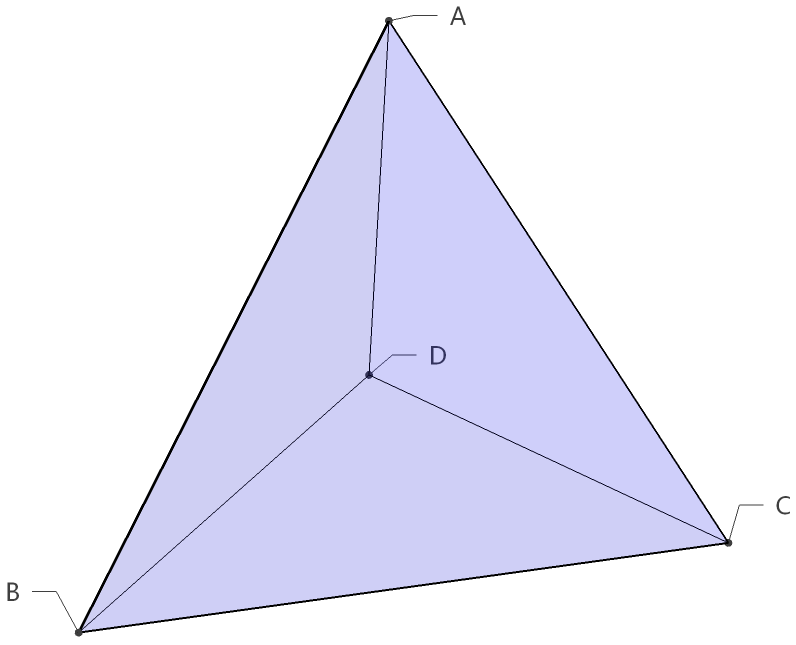}
\caption{The dodecahedron fixed by $\mathcal{A}_{\alpha}$ and the tetrahedron fixed by $G$.}
\label{A_5, A_4}
\end{figure}

It is obvious that
\begin{displaymath}
V_T\subseteq V_I.
\end{displaymath}

Connect the two points $G$ and $G'$. As they are opposite points of the dodecahedron, the segment $GG'$ goes through the origin $O$. Assume that $GG'$ intersects the triangle $AD'H$ at $G''$, then $G''$ must be the center of $AD'H$, and thus $G\in F_T$. As the four points in $F_T$ are congruent, we have
\begin{displaymath}
F_T\subseteq V_I.
\end{displaymath}

Let $P$ be the midpoint of $BC$. Connect the origin $O$ and $P$. Let $f$ be the rotation of $\pi$ with $OP$ as its axis. By observation we have $f\in\mathcal {A}_\alpha$ and
\begin{displaymath}
f(A)=H,\:f(H)=A.
\end{displaymath}
So $OP$ intersects the segment $AH$ at its midpoint $P''$. As the six points in $E_T$ are congruent, we have
\begin{displaymath}
E_T\subseteq E_I.
\end{displaymath}

From the above discussion we see that
\begin{displaymath}
B\notin V_T\cup F_T\cup E_T.
\end{displaymath}
So we have
\begin{displaymath}
|B_T(B)|=12.
\end{displaymath}
Since
\begin{displaymath}
V_T\subseteq V_I,\:F_T\subseteq V_I,\:B_T(B)\subseteq V_I,
\end{displaymath}
and
\begin{displaymath}
|V_T|=|F_T|=4,\:|B_T(B)|=12,\:|V_I|=20,
\end{displaymath}
we have
\begin{displaymath}
V_I=V_T\cup F_T\cup B_T(B).
\end{displaymath}

Let $N$ denote the center of the pentagon $ABCDE$. It is obvious that
\begin{displaymath}
N\notin V_T\cup F_T\cup E_T.
\end{displaymath}
So we have
\begin{displaymath}
|B_T(N)|=12.
\end{displaymath}
Since
\begin{displaymath}
B_T(N)\subseteq F_I,
\end{displaymath}
and
\begin{displaymath}
|B_T(N)|=|F_I|=12,
\end{displaymath}
we have
\begin{displaymath}
F_I=B_T(N).
\end{displaymath}

Let $Q$, $R$ denote the midpoints of $AB$, $DE$. It is obvious that
\begin{displaymath}
Q, R\notin V_T\cup F_T\cup E_T.
\end{displaymath}
Note that for any $f\in G$, $f(Q)\ne R$.
So we have
\begin{displaymath}
B_T(Q)\ne B_T(R).
\end{displaymath}
Since
\begin{displaymath}
E_T\subseteq E_I,\:B_T(Q)\subseteq E_I,\:B_T(R)\subseteq E_I,
\end{displaymath}
and
\begin{displaymath}
|E_T|=6,\:|B_T(Q)|=|B_T(R)|=12,\:|E_I|=30,
\end{displaymath}
we have
\begin{displaymath}
E_I=E_T\cup B_T(Q)\cup B_T(R).
\end{displaymath}

For any $X\in S^2 \backslash (V_I\cup F_I\cup E_I)$, $f(X)\notin V_I\cup F_I\cup E_I$, $\forall f\in \mathcal A_\alpha$. Now let $g$ denote the rotation which fixes the pentagon $ABCDE$ and transforms $A$ to $B$. We have
\begin{displaymath}
X,\:g(X),\:g^2(X),\:g^3(X),\:g^4(X)\in S^2 \backslash (V_T\cup F_T\cup E_T).
\end{displaymath}
Next we shall prove that
\begin{displaymath}
B_T(X),\:B_T(g(X)),\:B_T(g^2(X)),\:B_T(g^3(X)),\:B_T(g^4(X))
\end{displaymath}
are five different orbits of $G$.

Suppose that
\begin{displaymath}
B_T(g^i(X))=B_T(g^j(X))
\end{displaymath}
for some $i,j=0,1,2,3,4,\:i\ne j$.
Then there exists some $h\in G\simeq A_4$ such that
\begin{displaymath}
g^i(X)=h(g^j(X))=h(g^{j-i}(g^i(X))).
\end{displaymath}
Since $g^i(X)\notin V_I\cup F_I\cup E_I$, it can not be fixed by any non-trivial element in $\mathcal A_\alpha$.
Thus we have
\begin{displaymath}
h\circ g^{j-i}=\mathrm{I},
\end{displaymath}
which contradicts the assumption that
\begin{displaymath}
h\in G\simeq A_4.
\end{displaymath}
Now we see that
\begin{displaymath}
B_T(X),\:B_T(g(X)),\:B_T(g^2(X)),\:B_T(g^3(X)),\:B_T(g^4(X))
\end{displaymath}
are five different orbits of $G$.

As
\begin{displaymath}
B_T(X),\:B_T(g(X)),\:B_T(g^2(X)),\:B_T(g^3(X)),\:B_T(g^4(X))\subseteq B_I(X)
\end{displaymath}
and
\begin{displaymath}
|B_T(X)|=|B_T(g(X))|=|B_T(g^2(X))|=|B_T(g^3(X))|=|B_T(g^4(X))|=12,
\end{displaymath}
\begin{displaymath}
|B_I(X)|=60,
\end{displaymath}
we have
\begin{displaymath}
B_I(X)=B_T(X)\cup B_T(g(X))\cup B_T(g^2(X))\cup B_T(g^3(X))\cup B_T(g^4(X)).
\end{displaymath}

In summary,
\begin{displaymath}
V_I=V_T\cup F_T\cup B_T(B),
\end{displaymath}
\begin{displaymath}
F_I=B_T(N),
\end{displaymath}
\begin{displaymath}
E_I=E_T\cup B_T(Q)\cup B_T(R),
\end{displaymath}
\begin{displaymath}
B_I(X)=B_T(X)\cup B_T(g(X))\cup B_T(g^2(X))\cup B_T(g^3(X))\cup B_T(g^4(X)),
\end{displaymath}
$X\in S^2 \backslash (V_I\cup F_I\cup E_I)$.

As $\mathcal{A}_{\alpha}$ is isomorphic to $A_5$, $\alpha$ is a finite union of these above orbits.

\subsubsection[$\mathcal{A}_{\alpha}$ is Isomorphic to $S_4$]{Case 2: $\mathcal{A}_{\alpha}$ Is isomorphic to $S_4$}

If $\mathcal{A}_{\alpha}$ is isomorphic to $S_4$, we know that $\mathcal{A}_{\alpha}$ fixes some cube whose center is the origin. For every vertex of the tetrahedron, there exists some element $f\in G\subseteq \mathcal{A}_{\alpha}$ of order three that fixes it. But for any element $g\in\mathcal{A}_{\alpha}$, if $g$ is of order three, both of the fixed points are vretices of the cube. Thus we conclude that every vertex of the tetrahedron is also a vertex of the cube. Their relative positions are shown in Figure \ref{S_4, A_4}.

\begin{figure}[!h]
\centering
\includegraphics[width=3in]{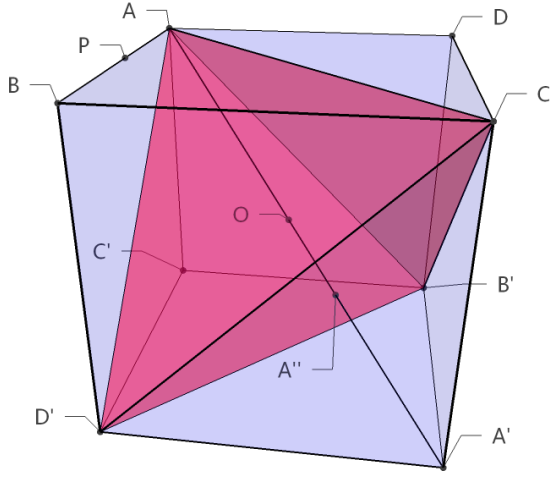}
\caption{The cube fixed by $\mathcal{A}_{\alpha}$ and the tetrahedron fixed by $G$.}
\label{S_4, A_4}
\end{figure}

It is obvious that
\begin{displaymath}
V_T\subseteq V_O.
\end{displaymath}

Connect the two points $A$ and $A'$. As they are opposite points of the cube, the segment $AA'$ goes through the origin $O$. Assume that $AA'$ intersects the triangle $B'CD'$ at $A''$, then $A''$ must be the center of $B'CD'$, and $A'\in F_T$. As the four points in $F_T$ are congruent, we have
\begin{displaymath}
F_T\subseteq V_I.
\end{displaymath}

From the above discussion we see that
\begin{displaymath}
V_T\subseteq V_O,\:F_T\subseteq V_O,
\end{displaymath}
and
\begin{displaymath}
|V_T|=|F_T|=4,\:|V_O|=8,
\end{displaymath}
so we have
\begin{displaymath}
V_O=V_T\cup F_T.
\end{displaymath}

It is obvious that
\begin{displaymath}
F_O=E_T.
\end{displaymath}

Let $P$ denote the midpoint of $AB$. It is obvious that
\begin{displaymath}
P\notin V_T\cup F_T\cup E_T.
\end{displaymath}
Since
\begin{displaymath}
B_T(P)\subseteq E_O,
\end{displaymath}
and
\begin{displaymath}
|B_T(P)|=|E_O|=12,
\end{displaymath}
we have
\begin{displaymath}
E_O=B_T(P).
\end{displaymath}

For any $X\in S^2 \backslash (V_O\cup F_O\cup E_O)$, $f(X)\notin V_O\cup F_O\cup E_O$, $\forall f\in \mathcal A_\alpha$. Now let $g$ denote the rotation which fixes the square $ABCD$ and transforms $A$ to $B$. We have
\begin{displaymath}
X,\:g(X)\in S^2 \backslash (V_T\cup F_T\cup E_T).
\end{displaymath}
Next we shall prove that
\begin{displaymath}
B_T(X),\:B_T(g(X))
\end{displaymath}
are two different orbits of $A_4$.

Suppose that
\begin{displaymath}
B_T(X)=B_T(g(X)).
\end{displaymath}
Then there exists some $h\in G\simeq A_4$ such that
\begin{displaymath}
X=h(g(X)).
\end{displaymath}
Since $X\notin V_O\cup F_O\cup E_O$, it can not be fixed by any non-trivial element in $\mathcal A_\alpha$.
Thus we have
\begin{displaymath}
h\circ g=\mathrm{I},
\end{displaymath}
which contradicts the assumption that
\begin{displaymath}
h\in G\simeq A_4.
\end{displaymath}
Now we see that
\begin{displaymath}
B_T(X),\:B_T(g(X))
\end{displaymath}
are two different orbits of $A_4$.

As
\begin{displaymath}
B_T(X),\:B_T(g(X))\subseteq B_O(X)
\end{displaymath}
and
\begin{displaymath}
|B_T(X)|=|B_T(g(X))|=12,\:|B_O(X)|=24,
\end{displaymath}
we have
\begin{displaymath}
B_O(X)=B_T(X)\cup B_T(g(X)).
\end{displaymath}

In summary,
\begin{displaymath}
V_O=V_T\cup F_T,
\end{displaymath}
\begin{displaymath}
F_O=E_T,
\end{displaymath}
\begin{displaymath}
E_O=B_T(P),
\end{displaymath}
\begin{displaymath}
B_O(X)=B_T(X)\cup B_T(g(X)),
\end{displaymath}
$X\in S^2 \backslash (V_O\cup F_O\cup E_O)$.

As $\mathcal{A}_{\alpha}$ is $S_4$, $\alpha$ is a finite union of these above orbits.

\subsubsection{Conclusion}

From the discussion above we conclude that
\begin{thm}\label{tA_4}
For any finite subset $\alpha$ of $S^2$, $\mathcal{A}_{\alpha}=G\simeq A_4$ if and only if all of the three claims are true:
\begin{enumerate}
\item $\alpha$ is a union of certain elements in
$$
\{V_T,\:F_T,\:E_T\}\cup\{B_T(X)|X\in S^2 \backslash (V_T\cup F_T\cup E_T)\};
$$
\item $\alpha$ is NOT a union of certain elements in
$$
\{V_I,\:F_I,\:E_I\}\cup\{B_I(X)|X\in S^2 \backslash (V_I\cup F_I\cup E_I)\};
$$
\item  $\alpha$ is NOT a union of certain elements in
$$
\{V_O,\:F_O,\:E_O\}\cup\{B_O(X)|X\in S^2 \backslash (V_O\cup F_O\cup E_O)\};
$$
\end{enumerate}
where (Figure \ref{A_5, A_4})
\begin{displaymath}
V_I=V_T\cup F_T\cup B_T(B);
\end{displaymath}
\begin{displaymath}
F_I=B_T(N);
\end{displaymath}
\begin{displaymath}
E_I=E_T\cup B_T(Q)\cup B_T(R);
\end{displaymath}
\begin{displaymath}
B_I(X)=B_T(X)\cup B_T(g(X))\cup B_T(g^2(X))\cup B_T(g^3(X))\cup B_T(g^4(X))
\end{displaymath}
and (Figure \ref{S_4, A_4})
\begin{displaymath}
V_O=V_T\cup F_T;
\end{displaymath}
\begin{displaymath}
F_O=E_T;
\end{displaymath}
\begin{displaymath}
E_O=B_T(P);
\end{displaymath}
\begin{displaymath}
B_O(X)=B_T(X)\cup B_T(g(X)).
\end{displaymath}
\end{thm}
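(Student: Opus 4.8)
The plan is to read off both implications from the orbit computations of the two previous subsections, the classification of finite subgroups of ${\rm PSL}(2,\mathbb{C})$, and a short subgroup count; no fresh computation should be required.

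I would first treat the forward implication. Suppose $\mathcal{A}_\alpha=G$. Then $G$ preserves $\alpha$, so $\alpha$ is a disjoint union of $G$-orbits; since the points of $S^2$ with non-trivial $G$-stabilizer are exactly those in $V_T\cup F_T\cup E_T$, every $G$-orbit is one of $V_T$, $F_T$, $E_T$ or $B_T(X)$ with $X\in S^2\backslash(V_T\cup F_T\cup E_T)$, which is claim 1. Claims 2 and 3 then follow by cardinality: were $\alpha$ a union of the orbits of some icosahedral group $I$ (resp.\ octahedral group $O$), we would have $I\subseteq\mathcal{A}_\alpha=G$ (resp.\ $O\subseteq\mathcal{A}_\alpha=G$), contradicting $|I|=60>12=|G|$ (resp.\ $|O|=24>12=|G|$).

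For the reverse implication, claim 1 gives $G\subseteq\mathcal{A}_\alpha$, and $\mathcal{A}_\alpha$ is finite, of order at most $|\alpha|(|\alpha|-1)(|\alpha|-2)$, hence isomorphic to one of $\mathbb{Z}_n,\,D_n,\,T,\,O,\,I$. A cyclic group is abelian and a dihedral group has only cyclic and dihedral subgroups, so neither contains a copy of $A_4$; therefore $\mathcal{A}_\alpha$ is isomorphic to $A_4$, $S_4$ or $A_5$. If $\mathcal{A}_\alpha\simeq S_4$, then $\mathcal{A}_\alpha$ is the rotation group of a cube centred at the origin, $G$ is the unique subgroup of $\mathcal{A}_\alpha$ isomorphic to $A_4$, and by the argument of Case 2 this $G$ is the rotation group of the tetrahedron inscribed in that cube; Case 2 then shows $\alpha$ is a union of the orbits of the octahedral group of that cube — precisely the configuration $V_O=V_T\cup F_T,\ F_O=E_T,\ \dots$ recorded in the statement — contradicting claim 3. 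Symmetrically, if $\mathcal{A}_\alpha\simeq A_5$, Case 1 shows $\alpha$ is a union of the orbits of the icosahedral group of the dodecahedron circumscribing $G$'s tetrahedron, the configuration $V_I=V_T\cup F_T\cup B_T(B),\ F_I=B_T(N),\ \dots$ of the statement, contradicting claim 2. Hence $\mathcal{A}_\alpha\simeq A_4$, and since $G\subseteq\mathcal{A}_\alpha$ with $|G|=|\mathcal{A}_\alpha|=12$ we get $\mathcal{A}_\alpha=G$.

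The genuinely substantive input — already supplied by the two "Case" subsections — is the geometric fact that a finite group strictly between $G$ and ${\rm PSL}(2,\mathbb{C})$ and containing $G$ must realize $G$'s tetrahedron as inscribed in the standard way in its associated cube or dodecahedron, which then forces the listed $T$-orbit decompositions of $V_I,F_I,E_I,B_I(X)$ and $V_O,F_O,E_O,B_O(X)$. I would treat that as done; the one point worth double-checking is that $S_4$ and $A_5$ each contain, up to conjugacy, a unique subgroup isomorphic to $A_4$, so that the mere inclusion $G\subseteq\mathcal{A}_\alpha$ really does place us in the setting of the Case analyses and the decompositions in the "where" clause apply verbatim.
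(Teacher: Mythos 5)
Your proposal is correct and follows essentially the same route as the paper: establish that $\alpha$ must be a union of $T$-orbits, note that $G\subseteq\mathcal{A}_\alpha$ forces $\mathcal{A}_\alpha$ to be isomorphic to $A_4$, $S_4$ or $A_5$, and then use the two Case analyses (which pin down the tetrahedron's position inside the cube or dodecahedron via the fixed points of the order-three elements) to rule out the larger groups exactly when claims 2 and 3 hold. Your explicit cardinality argument for the forward direction's claims 2 and 3 is the same one the paper leaves implicit, so there is nothing substantively different to compare.
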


\subsection{$G$ is the Dihedral Group $D_n$}
\label{D_n}

In this section, we go back to $\widehat{\mathbb C}$. Assume that
\begin{displaymath}
f(z)=e^{\frac{1}{n}2\pi i}z,\:g(z)=\frac{1}{z},\:n\ge2
\end{displaymath}
and
\begin{displaymath}
G=\langle f, g\rangle\simeq D_n.
\end{displaymath}

There are three kinds of orbits. Set
\begin{displaymath}
V=\{0, \infty\},
\end{displaymath}
\begin{displaymath}
A_n=\{e^{\frac{k}{n}2\pi i}|k\in\mathbb{Z}\},
\end{displaymath}
\begin{displaymath}
B_n=\{e^{\frac{2k+1}{2n}2\pi i}|k\in\mathbb{Z}\}
\end{displaymath}
and
\begin{displaymath}
C_n(z)=\{ze^{\frac{k}{n}2\pi i}|k\in\mathbb{Z}\}\cup\{z^{-1}e^{\frac{k}{n}2\pi i}|k\in\mathbb{Z}\}
\end{displaymath}
if $z\in{\mathbb C}^*\backslash\{e^{\frac{l}{2n}2\pi i}|l\in\mathbb{Z}\}$.

If $\mathcal A_\alpha=G$, $\alpha$ is a finite union of the above orbits.

If $\alpha$ is a finite union of the above orbits, we have $G\subseteq\mathcal{A}_{\alpha}$. So we conclude that $\mathcal{A}_{\alpha}$ is isomorphic to $A_5,\:S_4,\:A_4$ or $D_{pn}$ for $p\in\mathbb Z^+$. The $A_5,\:S_4,\:A_4$ case have already been explored. Now we shall discuss the case that $\mathcal{A}_{\alpha}$ is isomorphic to $D_{pn}$, $p\in\mathbb Z,\:p\ge2$.

Set that
\begin{displaymath}
\mathcal{A}_{\alpha}=\langle \rho,\pi|(\rho)^{pn}=(\pi)^2=(\rho\pi)^2=e\rangle\simeq D_{pn}.
\end{displaymath}

\emph{Case 1: $f=\rho^q$ for some $q\in\mathbb Z$.}

As $f=\rho^q$, $\rho$ fixes $0$ and $\infty$ as $f$. As $\rho$ is of order $pn$, we see that
\begin{displaymath}
\rho(z)=e^{\frac{m}{pn}2\pi i}z
\end{displaymath}
for some $m$ prime to $pn$. Thus
\begin{displaymath}
\mathcal{A}_{\alpha}=\langle z\mapsto e^{\frac{1}{pn}2\pi i}z,\:z\mapsto\frac{1}{z} \rangle.
\end{displaymath}

If $p$ is odd, we have
\begin{displaymath}
A_{pn}=A_n\cup C_n({e^{\frac{1}{pn}2\pi i}})\cup C_n({e^{\frac{2}{pn}2\pi i}})\cup\cdots\cup C_n({e^{\frac{p-1}{2pn}2\pi i}}),
\end{displaymath}
\begin{displaymath}
B_{pn}=B_n\cup C_n({e^{\frac{1}{2pn}2\pi i}})\cup C_n({e^{\frac{3}{2pn}2\pi i}})\cup\cdots\cup C_n({e^{\frac{p-2}{2pn}2\pi i}}),
\end{displaymath}
\begin{displaymath}
C_{pn}(z)=C_n(z)\cup C_n({e^{\frac{1}{pn}2\pi i}}z)\cup C_n({e^{\frac{2}{pn}2\pi i}}z)\cup\cdots\cup C_n({e^{\frac{p-1}{pn}2\pi i}}z).
\end{displaymath}

If $p$ is even, we have
\begin{displaymath}
A_{pn}=A_n\cup B_n\cup C_n({e^{\frac{1}{pn}2\pi i}})\cup C_n({e^{\frac{2}{pn}2\pi i}})\cup\cdots\cup C_n({e^{\frac{p-2}{2pn}2\pi i}}),
\end{displaymath}
\begin{displaymath}
B_{pn}=C_n({e^{\frac{1}{2pn}2\pi i}})\cup C_n({e^{\frac{3}{2pn}2\pi i}})\cup\cdots\cup C_n({e^{\frac{p-1}{2pn}2\pi i}}),
\end{displaymath}
\begin{displaymath}
C_{pn}(z)=C_n(z)\cup C_n({e^{\frac{1}{pn}2\pi i}}z)\cup C_n({e^{\frac{2}{pn}2\pi i}}z)\cup\cdots\cup C_n({e^{\frac{p-1}{pn}2\pi i}}z).
\end{displaymath}

\emph{Case 2: $f=\pi$.}

Thus we see that $n=2$ and $g=\rho^p$. Define the linear fractional transformation $\phi$
\begin{displaymath}
\phi(z)=\frac{1+z}{1-z}.
\end{displaymath}
Now we have
\begin{displaymath}
\phi^{-1}\circ\rho^p\circ\phi(z)=-z,\:\phi^{-1}\circ\pi\circ\phi(z)=\frac{1}{z},
\end{displaymath}
and thus
\begin{displaymath}
\phi^{-1}\circ\rho\circ\phi(z)=e^{\frac{m}{2p}2\pi i}z,
\end{displaymath}
for some $m$ prime to $2p$.
So we have
\begin{displaymath}
\phi^{-1}\mathcal A_\alpha\phi=\langle z\mapsto e^{\frac{1}{2p}2\pi i}z,\:z\mapsto \frac{1}{z}\rangle\simeq D_{2p}.
\end{displaymath}

So the three kinds of orbits of $\mathcal A_\alpha$ are $\phi^{-1}(V)$, $\phi^{-1}(A_{2p})$,  $\phi^{-1}(B_{2p})$ and  $\phi^{-1}(C_{2p}(z))$ for $z\in\mathbb C^*\backslash\{e^{\frac{l}{4p}2\pi i}|l\in\mathbb{Z}\}$.

If $n$ is odd, we have
\begin{displaymath}
\phi^{-1}(V)=A_2,
\end{displaymath}
\begin{displaymath}
\phi^{-1}(A_{2p})=V\cup C_2(i\tan(\frac{1}{4p}2\pi))\cup C_2(i\tan(\frac{2}{4p}2\pi))\cup\cdots\cup C_2(i\tan(\frac{\frac{p-1}{2}}{4p}2\pi)),
\end{displaymath}
\begin{displaymath}
\phi^{-1}(B_{2p})=B_2\cup C_2(i\tan(\frac{1}{8p}2\pi))\cup C_2(i\tan(\frac{3}{8p}2\pi))\cup\cdots\cup C_2(i\tan(\frac{p-2}{8p}2\pi)),
\end{displaymath}
\begin{displaymath}
\phi^{-1}(C_{2p}(z))=C_2(\phi^{-1}(z))\cup C_2(\phi^{-1}(e^{\frac{1}{2p}2\pi i}z))\cup \cdots\cup C_2(\phi^{-1}(e^{\frac{p-1}{2p}2\pi i}z))
\end{displaymath}
for $z\in\mathbb C^*\backslash\{e^{\frac{l}{4p}2\pi i}|l\in\mathbb{Z}\}$.

If $n$ is even, we have
\begin{displaymath}
\phi^{-1}(V)=A_2,
\end{displaymath}
\begin{displaymath}
\phi^{-1}(A_{2p})=V\cup B_2\cup C_2(i\tan(\frac{1}{4p}2\pi))\cup C_2(i\tan(\frac{2}{4p}2\pi))\cup\cdots\cup C_2(i\tan(\frac{\frac{p-2}{2}}{4p}2\pi)),
\end{displaymath}
\begin{displaymath}
\phi^{-1}(B_{2p})=C_2(i\tan(\frac{1}{8p}2\pi))\cup C_2(i\tan(\frac{3}{8p}2\pi))\cup\cdots\cup C_2(i\tan(\frac{p-1}{8p}2\pi)),
\end{displaymath}
\begin{displaymath}
\phi^{-1}(C_{2p}(z))=C_2(\phi^{-1}(z))\cup C_2(\phi^{-1}(e^{\frac{1}{2p}2\pi i}z))\cup \cdots\cup C_2(\phi^{-1}(e^{\frac{p-1}{2p}2\pi i}z))
\end{displaymath}
for $z\in\mathbb C^*\backslash\{e^{\frac{l}{4p}2\pi i}|l\in\mathbb{Z}\}$.

In conclusion, we have
\begin{thm}\label{tD_n}
For any finite subset $\alpha$ of $S^2$, $|\alpha|\ge4$, $\mathcal{A}_{\alpha}=G\simeq D_n,\: n\ge2$ if and only if all of the four claims are true:
\begin{enumerate}
\item $\alpha$ is a union of certain elements in
$$
\{V,\:A_n,\:B_n\}\cup\{C_n(z)|z\in{\mathbb C}^*\backslash\{e^{\frac{l}{2n}2\pi i}|l\in\mathbb{Z}\}\};
$$
\item $\alpha$ is NOT a union of certain elements in
$$
\{V,\:A_{pn},\:B_{pn}\}\cup\{C_{pn}(z)|z\in{\mathbb C}^*\backslash\{e^{\frac{l}{2n}2\pi i}|l\in\mathbb{Z}\}\},\:p\ge2;
$$
\item when $n=2$, $\alpha$ is NOT a union of certain elements in
$$
\{A_2,\:\phi^{-1}(A_{2p}),\:\phi^{-1}(B_{2p})\}\cup\{\phi^{-1}(C_{2p}(z))|z\in\mathbb C^*\backslash\{e^{\frac{l}{4p}2\pi i}|l\in\mathbb{Z}\}\},\:p\ge2;
$$
\item $\alpha$ is NOT in the icosahedral, the octahedral or the tetrahedral case,
\end{enumerate}
where
\begin{displaymath}
A_{pn}=A_n\cup C_n({e^{\frac{1}{pn}2\pi i}})\cup C_n({e^{\frac{2}{pn}2\pi i}})\cup\cdots\cup C_n({e^{\frac{p-1}{2pn}2\pi i}}),
\end{displaymath}
\begin{displaymath}
B_{pn}=B_n\cup C_n({e^{\frac{1}{2pn}2\pi i}})\cup C_n({e^{\frac{3}{2pn}2\pi i}})\cup\cdots\cup C_n({e^{\frac{p-2}{2pn}2\pi i}}),
\end{displaymath}
\begin{displaymath}
C_{pn}(z)=C_n(z)\cup C_n({e^{\frac{1}{pn}2\pi i}}z)\cup C_n({e^{\frac{2}{pn}2\pi i}}z)\cup\cdots\cup C_n({e^{\frac{p-1}{pn}2\pi i}}z),
\end{displaymath}
\begin{displaymath}
\phi^{-1}(V)=A_2,
\end{displaymath}
\begin{displaymath}
\phi^{-1}(A_{2p})=V\cup C_2(i\tan(\frac{1}{4p}2\pi))\cup C_2(i\tan(\frac{2}{4p}2\pi))\cup\cdots\cup C_2(i\tan(\frac{\frac{p-1}{2}}{4p}2\pi)),
\end{displaymath}
\begin{displaymath}
\phi^{-1}(B_{2p})=B_2\cup C_2(i\tan(\frac{1}{8p}2\pi))\cup C_2(i\tan(\frac{3}{8p}2\pi))\cup\cdots\cup C_2(i\tan(\frac{p-2}{8p}2\pi)),
\end{displaymath}
\begin{displaymath}
\phi^{-1}(C_{2p}(z))=C_2(\phi^{-1}(z))\cup C_2(\phi^{-1}(e^{\frac{1}{2p}2\pi i}z))\cup \cdots\cup C_2(\phi^{-1}(e^{\frac{p-1}{2p}2\pi i}z))
\end{displaymath}
for $z\in\mathbb C^*\backslash\{e^{\frac{l}{4p}2\pi i}|l\in\mathbb{Z}\}$ if $p$ is odd, and
\begin{displaymath}
A_{pn}=A_n\cup B_n\cup C_n({e^{\frac{1}{pn}2\pi i}})\cup C_n({e^{\frac{2}{pn}2\pi i}})\cup\cdots\cup C_n({e^{\frac{p-2}{2pn}2\pi i}}),
\end{displaymath}
\begin{displaymath}
B_{pn}=C_n({e^{\frac{1}{2pn}2\pi i}})\cup C_n({e^{\frac{3}{2pn}2\pi i}})\cup\cdots\cup C_n({e^{\frac{p-1}{2pn}2\pi i}}),
\end{displaymath}
\begin{displaymath}
C_{pn}(z)=C_n(z)\cup C_n({e^{\frac{1}{pn}2\pi i}}z)\cup C_n({e^{\frac{2}{pn}2\pi i}}z)\cup\cdots\cup C_n({e^{\frac{p-1}{pn}2\pi i}}z),
\end{displaymath}
\begin{displaymath}
\phi^{-1}(V)=A_2,
\end{displaymath}
\begin{displaymath}
\phi^{-1}(A_{2p})=V\cup B_2\cup C_2(i\tan(\frac{1}{4p}2\pi))\cup C_2(i\tan(\frac{2}{4p}2\pi))\cup\cdots\cup C_2(i\tan(\frac{\frac{p-2}{2}}{4p}2\pi)),
\end{displaymath}
\begin{displaymath}
\phi^{-1}(B_{2p})=C_2(i\tan(\frac{1}{8p}2\pi))\cup C_2(i\tan(\frac{3}{8p}2\pi))\cup\cdots\cup C_2(i\tan(\frac{p-1}{8p}2\pi)),
\end{displaymath}
\begin{displaymath}
\phi^{-1}(C_{2p}(z))=C_2(\phi^{-1}(z))\cup C_2(\phi^{-1}(e^{\frac{1}{2p}2\pi i}z))\cup \cdots\cup C_2(\phi^{-1}(e^{\frac{p-1}{2p}2\pi i}z))
\end{displaymath}
for $z\in\mathbb C^*\backslash\{e^{\frac{l}{4p}2\pi i}|l\in\mathbb{Z}\}$ if $p$ is even.
\end{thm}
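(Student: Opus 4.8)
The plan is to derive the theorem by combining the orbit decompositions established in this section with the classification of finite subgroups of $\mathrm{PSL}(2,\mathbb{C})$, treating the two implications separately.

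For the forward direction I would argue as follows. If $\mathcal{A}_\alpha = G \simeq D_n$, then $\alpha$ is a $G$-invariant finite set, and since the $G$-orbits of points of $\widehat{\mathbb{C}}$ are precisely $V$, $A_n$, $B_n$ and the sets $C_n(z)$ with $z \in \mathbb{C}^*\setminus\{e^{\frac{l}{2n}2\pi i}\}$, claim (1) follows immediately. The remaining claims are negative statements: were $\alpha$ a finite union of orbits of one of the groups named in them — the standard $D_{pn}$ with $p \ge 2$, the $\phi$-conjugate of $D_{2p}$ in the case $n = 2$, or one of $I$, $O$, $T$ — that group would fix $\alpha$ and hence be contained in $\mathcal{A}_\alpha = D_n$. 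This is impossible: every subgroup of a dihedral group is itself cyclic or dihedral, which excludes $A_4 \simeq T$, $S_4 \simeq O$, $A_5 \simeq I$; and $|D_{pn}| = 2pn > 2n = |D_n|$ for $p \ge 2$ excludes the dihedral possibilities. Hence (2), (3) and (4) must all hold.

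For the converse, suppose (1)--(4) hold. By (1) the group $G$ fixes $\alpha$, so $G \subseteq \mathcal{A}_\alpha$, and $\mathcal{A}_\alpha$ is finite since $|\alpha| \ge 4$. By the classification of finite subgroups of $\mathrm{PSL}(2,\mathbb{C})$ and the fact that $\mathcal{A}_\alpha$ contains the non-cyclic group $D_n$, we conclude — exactly as in the discussion preceding this theorem — that $\mathcal{A}_\alpha$ is conjugate to $A_5$, $S_4$, $A_4$, or to $D_m$ with $D_n \hookrightarrow D_m$. In the dihedral case I would split according to whether the rotation generator $f$ of $G$ lands in the cyclic subgroup of $D_m$: if so then $n \mid m$, $m = pn$, and after the normalisation already carried out $\mathcal{A}_\alpha = \langle z\mapsto e^{\frac{1}{pn}2\pi i}z,\ z\mapsto\frac{1}{z}\rangle$, whose orbits $V$, $A_{pn}$, $B_{pn}$, $C_{pn}(z)$ decompose into $G$-orbits by the displayed formulae; if instead $f$ maps to a reflection, then $\mathrm{ord}(f) = n = 2$, and conjugating by $\phi(z) = \frac{1+z}{1-z}$ identifies $\mathcal{A}_\alpha$ with the $\phi$-conjugate of $\langle z\mapsto e^{\frac{1}{2p}2\pi i}z,\ z\mapsto\frac{1}{z}\rangle$, whose orbits decompose as listed. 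In each of these exceptional cases ($\mathcal{A}_\alpha$ conjugate to $A_5$, $S_4$, $A_4$, the standard $D_{pn}$ with $p\ge2$, or the $\phi$-conjugate $D_{2p}$ with $p\ge2$) the $\mathcal{A}_\alpha$-invariant set $\alpha$ would then be a finite union of exactly the orbits listed in (4), (2), or (3), contradicting the relevant hypothesis. Therefore $p = 1$, and since $G \subseteq \mathcal{A}_\alpha$ with $|\mathcal{A}_\alpha| = 2n = |G|$ we get $\mathcal{A}_\alpha = G$, as claimed.

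Two ingredients remain. The routine one is verifying the displayed orbit-decomposition formulae, which I would do by a direct calculation tracking how the $\langle z\mapsto e^{\frac{1}{pn}2\pi i}z\rangle$-orbit of a point splits under the index-$p$ subgroup $\langle z\mapsto e^{\frac{1}{n}2\pi i}z\rangle$ together with $z\mapsto 1/z$, the parity of $p$ dictating whether the extra roots of unity fall into $A_n$ or into $B_n$. The genuinely delicate step — and the main obstacle — is the structural analysis of how $D_n$ can sit inside a larger dihedral group: for $n \ge 3$ the generator $f$ has order $\ge 3$ and is forced into the cyclic part, so only the first sub-case occurs, but for $n = 2$ the group $D_2 \cong K_4$ is abelian with all three involutions interchangeable, so both embeddings genuinely arise. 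This is precisely why, and only why, condition (3) must be imposed in addition to (2) exactly when $n = 2$.
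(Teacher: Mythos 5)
Your proposal is correct and follows essentially the same route as the paper: claim (1) from the $G$-orbit decomposition, the observation that a union of orbits forces $G\subseteq\mathcal{A}_\alpha$ and hence $\mathcal{A}_\alpha\simeq A_5$, $S_4$, $A_4$ or a larger dihedral group, and then the same two-case analysis of how $D_n$ sits inside $D_{pn}$ (rotation generator landing in the cyclic part versus, only when $n=2$, a reflection, handled by conjugating with $\phi(z)=\frac{1+z}{1-z}$) together with the displayed orbit-refinement formulae. If anything, you are more explicit than the paper about the logical bookkeeping of the biconditional (in particular the forward direction of conditions (2)--(4) via the subgroup structure of dihedral groups), but the substance is the same.
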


We also have two corollaries
\begin{cor}\label{D_n1}
For $n\ge5$, set $\alpha=\{e^{\frac{k}{n}2\pi i}|k\in\mathbb Z\}$, we have
$$
\mathcal A_\alpha=\langle z\mapsto e^{\frac{1}{n}2\pi i}z,\:z\mapsto \frac{1}{z} \rangle\simeq D_n.
$$
\end{cor}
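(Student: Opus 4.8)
The plan is to obtain the corollary as a special case of Theorem~\ref{tD_n}, taking $G=\langle z\mapsto e^{\frac{1}{n}2\pi i}z,\ z\mapsto \frac1z\rangle\simeq D_n$, so that $\alpha=\{e^{\frac{k}{n}2\pi i}\mid k\in\mathbb Z\}=A_n$ is exactly one of the three exceptional orbits appearing there. Thus it suffices to verify, for this particular $\alpha$, the four conditions listed in Theorem~\ref{tD_n}. Condition~1 is immediate, since $\alpha=A_n$ is itself a member of $\{V,A_n,B_n\}\cup\{C_n(z)\}$. Condition~3 is vacuous because $n\ge 5>2$.

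For condition~2 I would argue by counting. All $n$ points of $A_n$ lie on the unit circle, so $V=\{0,\infty\}$ is disjoint from $A_n$; hence if $A_n$ were a union of sets drawn from $\{V,A_{pn},B_{pn}\}\cup\{C_{pn}(z)\}$ with $p\ge 2$, the set $V$ could not occur, and every remaining candidate has at least $pn\ge 2n$ elements, forcing $|A_n|\ge 2n>n$, a contradiction. So condition~2 holds for every $p\ge 2$.

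The crux is condition~4: $\alpha=A_n$ must not be in the icosahedral, octahedral, or tetrahedral case. Here I would use that $A_n$ is a set of $n\ge 3$ concyclic points, so the unit circle $C$ is the \emph{unique} circle (or line) in $\widehat{\mathbb C}$ passing through $\alpha$; therefore every $f\in\mathcal A_\alpha$ sends $C$ to the unique circle through $f(\alpha)=\alpha$, i.e.\ $f(C)=C$, and $\mathcal A_\alpha$ preserves $C$. On the other hand, after conjugating $C$ to $\widehat{\mathbb R}$, a finite group of Möbius transformations preserving it is a finite subgroup of $\mathrm{PGL}(2,\mathbb R)$, hence cyclic or dihedral; since $A_4$, $S_4$, $A_5$ possess no cyclic subgroup of index $\le 2$, none of $I$, $O$, $T$ can preserve a circle, and in particular $\mathcal A_\alpha$ is none of them. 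Equivalently, and more in keeping with Theorems~\ref{tA_5}--\ref{tA_4}, one checks directly that no orbit of $I$, $O$ or $T$ on $S^2$ is a concyclic set: each such orbit, of size $12,20,30,60$, resp.\ $6,8,12,24$, resp.\ $4,4,6,12$, consists of the vertices, face-centers or edge-midpoints of a regular solid, which are never concyclic, so $A_n$ cannot be written as a union of such orbits. Either way condition~4 holds, and Theorem~\ref{tD_n} yields $\mathcal A_\alpha=G\simeq D_n$, which is the assertion. I expect condition~4 to be the only step requiring more than bookkeeping; once the concyclicity obstruction (or the orbit-shape check) is in hand, the rest is routine.
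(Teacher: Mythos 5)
Your proposal is correct and follows the same route the paper intends: the corollary is stated immediately after Theorem~\ref{tD_n} with the verification of its hypotheses left implicit, and you have filled in exactly that verification (condition~1 trivially, condition~2 by the counting/disjointness argument, the $n=2$ condition vacuously, and the Platonic cases excluded via concyclicity). Both of your two justifications for the last step are sound, so nothing is missing.
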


\begin{cor}\label{D_n2}
For $n\ge5$, set $\alpha=\{0,\:\infty\}\cup\{e^{\frac{k}{n}2\pi i}|k\in\mathbb Z\}$, we have
$$
\mathcal A_\alpha=\langle z\mapsto e^{\frac{1}{n}2\pi i}z,\:z\mapsto \frac{1}{z} \rangle\simeq D_{n}.
$$
\end{cor}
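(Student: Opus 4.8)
\emph{Setup and the easy inclusion.} Write $\zeta=e^{2\pi i/n}$, $A_n=\{\zeta^k\}$, $V=\{0,\infty\}$, so that $\alpha=V\cup A_n$ and $|\alpha|=n+2\ge 7$, and put $D_n=\langle z\mapsto\zeta z,\ z\mapsto 1/z\rangle$. The plan is to prove the two inclusions $D_n\subseteq\mathcal A_\alpha$ and $\mathcal A_\alpha\subseteq D_n$ directly. The first is immediate: $z\mapsto\zeta z$ fixes $\{0,\infty\}$ and permutes $A_n$ cyclically, while $z\mapsto 1/z$ interchanges $0$ and $\infty$ and sends $\zeta^k$ to $\zeta^{-k}\in A_n$; equivalently, $V$ and $A_n$ are exactly two of the $D_n$-orbits listed in Section~\ref{D_n}, so $\alpha$ is a union of $D_n$-orbits and hence $D_n\subseteq\mathcal A_\alpha$. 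All the work is in the reverse inclusion.

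\emph{The heart: every $h\in\mathcal A_\alpha$ preserves the unit circle.} I would single out the unit circle $S^1$ by the number of points of $\alpha$ it carries. On one hand $|S^1\cap\alpha|=|A_n|=n\ge 5$. On the other hand, any generalized circle $C$ (circle or line of $\widehat{\mathbb C}$) with $C\ne S^1$ satisfies $|C\cap\alpha|\le 4$: three distinct elements of $A_n$ already determine $S^1$ uniquely, so $C$ meets $A_n$ in at most two points, and adding the only remaining points $0,\infty$ of $\alpha$ gives the bound (one should record the subcases — $C$ through both of $0,\infty$, so a line through the origin meeting $S^1$ in $\le 2$ points; $C$ through exactly one of them, so $\le 1+2$ points; $C$ through neither, so $\le 2$ points). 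Thus $S^1$ is the \emph{unique} generalized circle maximizing $C\mapsto|C\cap\alpha|$, with maximum value $n>4$. Since any $h\in\mathcal A_\alpha$ is a Möbius transformation it sends generalized circles bijectively to generalized circles, and $|h(C)\cap\alpha|=|h(C)\cap h(\alpha)|=|C\cap\alpha|$; therefore $h$ preserves the set of maximizers, i.e.\ $h(S^1)=S^1$.

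\emph{Finishing.} From $h(S^1)=S^1$ and $h(\alpha)=\alpha$ we get $h(A_n)=h(\alpha\cap S^1)=\alpha\cap S^1=A_n$, hence $h(\{0,\infty\})=\{0,\infty\}$. A Möbius transformation fixing $\{0,\infty\}$ setwise has the form $z\mapsto az$ or $z\mapsto a/z$; evaluating at $1\in A_n$ forces $a\in A_n$, i.e.\ $a=\zeta^k$, and in either case $h\in D_n$. Hence $\mathcal A_\alpha\subseteq D_n$, so $\mathcal A_\alpha=D_n$, and since $|D_n|=2n$ this is the dihedral group $D_n$ as stated. (As an alternative one could instead verify that $\alpha=V\cup A_n$ meets the four conditions of Theorem~\ref{tD_n}: condition~(1) holds as above; (2) holds because every $D_{pn}$-orbit meeting $S^1$ has at least $pn\ge 2n>n$ points, so $A_n$ cannot be a union of them; (3) is vacuous since $n\ge5\ne2$; and (4) holds because for $n\ge5$ the group $D_n$ does not embed into $A_4$, $S_4$ or $A_5$, the only borderline case $n=5$, $D_5\hookrightarrow A_5$, being ruled out because $|\alpha|=7$ is not a sum of the icosahedral orbit sizes $12,20,30,60$.)

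\emph{Expected main obstacle.} The delicate point is the circle-counting step of the second paragraph: one must handle lines (generalized circles through $\infty$) on the same footing as ordinary circles and run through all the incidence subcases, and it is exactly here that $n\ge5$ — not merely $n\ge4$ — is essential, since for $n=4$ a line through the origin can also contain four points of $\alpha$ (for example $\{0,\infty,1,-1\}$), so $S^1$ is no longer the unique maximizer and the argument breaks.
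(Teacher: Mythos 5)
Your proof is correct, but it takes a different route from the paper. The paper obtains this corollary as a consequence of its classification machinery (Theorem~\ref{tD_n}): one observes that $\alpha=V\cup A_n$ is a union of $D_n$-orbits, so $D_n\subseteq\mathcal A_\alpha$, and then rules out the possible larger stabilizers $D_{pn}$ ($p\ge2$), $A_4$, $S_4$, $A_5$ by checking the conditions of that theorem — essentially the alternative you sketch in your final parenthetical, and which rests on the classification of finite subgroups of $\mathrm{PSL}(2,\mathbb C)$ and the orbit analysis of Section~\ref{D_n}. Your main argument is instead a self-contained incidence count: the unit circle is the unique generalized circle meeting $\alpha$ in more than four points, so it is preserved by every element of $\mathcal A_\alpha$, which then forces $h(A_n)=A_n$, $h(\{0,\infty\})=\{0,\infty\}$, and hence $h(z)=\zeta^k z$ or $\zeta^k/z$. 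This is more elementary (it uses nothing beyond the fact that Möbius maps permute generalized circles and that three points determine one), and it isolates exactly where $n\ge5$ is needed — for $n=4$ the real axis ties with $S^1$ and indeed $\mathcal A_\alpha\simeq S_4$. What the paper's route buys in exchange is uniformity: the same theorem handles arbitrary unions of dihedral orbits, not just this particular $\alpha$. One tiny caveat on your alternative route: your justification of condition (4) for $n=5$ via orbit sizes is the right idea, but it is the non-expressibility of $\alpha$ as a union of icosahedral orbits (sizes $12,20,30,60$ versus $|\alpha|=7$) that does the work, not the subgroup embedding $D_5\hookrightarrow A_5$ itself; you do say this, so the argument stands.
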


\subsection{$G$ is the Cyclic Group $\mathbb{Z}_n$}
\label{Z_n}

Things are the same as in the section above. We still work in $\widehat{\mathbb C}$. We may assume that
\begin{displaymath}
G=\langle z \mapsto e^{\frac{2\pi i}{n}}z\rangle\simeq\mathbb{Z}_n,\:n\ge 2.
\end{displaymath}
There are three kinds of orbits. Let $N$ be the orbit $\{\infty\}$, $S$ the orbit $\{0\}$ and $C_n(z)$ the orbit $\{e^{\frac{k}{n}2\pi i}z|k\in\mathbb{Z}\}$ for $z\in\mathbb C^*$.

If $\mathcal A_\alpha=G$, $\alpha$ is a finite union of the above orbits.

If $\alpha$ is a finite union of the above orbits, we have $G\subseteq\mathcal{A}_{\alpha}$. So we conclude that $\mathcal{A}_{\alpha}$ is isomorphic to $A_5,\:S_4,\:A_4,\:D_{pn}$ or $\mathbb{Z}_{pn}$ for $p\in{\mathbb Z}^+$. The $A_5,\:S_4,\:A_4,\:D_{pn}$ case have already been explored. Now we shall discuss the case that $\mathcal{A}_{\alpha}$ is isomorphic to ${\mathbb Z}_{pn}$, $p\in\mathbb Z,\:p\ge2$.

We have
\begin{displaymath}
C_{pn}(z)=C_n(z)\cup C_n(e^{\frac{1}{pn}2\pi i}z)\cup C_n(e^{\frac{2}{pn}2\pi i}z)\cup\cdots\cup C_n(e^{\frac{p-1}{pn}2\pi i}z)
\end{displaymath}
for $z\in\mathbb C^*$.

In conclusion, we have
\begin{thm}\label{tZ_n}
For any finite subset $\alpha$ of $S^2$, $|\alpha|\ge4$, $\mathcal{A}_{\alpha}=G\simeq \mathbb Z_n, n\ge2$ if and only if all of the three claims are true:
\begin{enumerate}
\item $\alpha$ is a union of certain elements in
$$
\{N,\:S\}\cup\{C_n(z)|z\in{\mathbb C}^*\};
$$
\item $\alpha$ is NOT a union of certain elements in
$$
\{N,\:S\}\cup\{C_{pn}(z)|z\in{\mathbb C}^*\},\:p\ge2;
$$
\item $\alpha$ is NOT in the icosahedral, the octahedral the tetrahedral or the dihedral case,
\end{enumerate}
where
\begin{displaymath}
C_{pn}(z)=C_n(z)\cup C_n(e^{\frac{1}{pn}2\pi i}z)\cup C_n(e^{\frac{2}{pn}2\pi i}z)\cup\cdots\cup C_n(e^{\frac{p-1}{pn}2\pi i}z)
\end{displaymath}
for $z\in\mathbb C^*$.
\end{thm}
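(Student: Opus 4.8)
The plan is to prove the biconditional directly in both directions, using the following earlier facts: the bound $|\mathcal{A}_\alpha|\le|\alpha|(|\alpha|-1)(|\alpha|-2)$, the classification of finite subgroups of $\mathrm{PSL}(2,\mathbb C)$ into the types $I,O,T,D_m,\mathbb Z_m$, and Theorems \ref{tA_5}, \ref{tS_4}, \ref{tA_4} and \ref{tD_n}, according to which ``$\alpha$ is in the icosahedral (resp.\ octahedral, tetrahedral, dihedral) case'' amounts to $\mathcal{A}_\alpha$ being isomorphic to $A_5$ (resp.\ $S_4$, $A_4$, some $D_m$ with $m\ge2$). Fix the generator $f_0\colon z\mapsto e^{2\pi i/n}z$ of $G\simeq\mathbb Z_n$; then $G$ fixes $0$ and $\infty$, every non-trivial element of $G$ has exactly $\{0,\infty\}$ as its fixed-point set, and the $G$-orbits on $\widehat{\mathbb C}$ are $N=\{\infty\}$, $S=\{0\}$ and the $n$-point sets $C_n(z)$, $z\in\mathbb C^*$. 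I would first dispatch the ``where'' clause: writing each exponent $k\in\{0,\dots,pn-1\}$ as $k=l+pm$ with $0\le l<p$ and $0\le m<n$ gives $e^{2\pi ik/(pn)}=e^{2\pi il/(pn)}e^{2\pi im/n}$, hence $C_{pn}(z)=\bigcup_{l=0}^{p-1}C_n(e^{2\pi il/(pn)}z)$ for every $z\in\mathbb C^*$ and $p\ge2$.

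For the forward implication, assume $\mathcal{A}_\alpha=G$. Since $\alpha$ is $G$-invariant it is a union of $G$-orbits, which is (1). If $\alpha$ were a union of elements of $\{N,S\}\cup\{C_{pn}(z)\mid z\in\mathbb C^*\}$ for some $p\ge2$, it would be invariant under $\langle z\mapsto e^{2\pi i/(pn)}z\rangle$, a cyclic group of order $pn>|G|$, contradicting $\mathcal{A}_\alpha=G$; this is (2). Finally, if $\alpha$ were in the icosahedral, octahedral, tetrahedral, or dihedral case, then $\mathcal{A}_\alpha$ would be isomorphic to $A_5$, $S_4$, $A_4$, or some $D_m$ with $m\ge2$, and none of these is cyclic of order $n$; this is (3).

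For the converse, assume (1), (2), (3). By (1), $G\subseteq\mathcal{A}_\alpha$, and by $|\alpha|\ge4$ the group $\mathcal{A}_\alpha$ is finite, hence conjugate to one of $I,O,T,D_m,\mathbb Z_m$; by (3) and the cited theorems it is not isomorphic to $A_5$, $S_4$, $A_4$, or any $D_m$, so $\mathcal{A}_\alpha=\langle h\rangle$ is cyclic of some order $m$. Since $f_0\in\langle h\rangle$ we have $f_0=h^k$ for some $k$; as $h$ commutes with the non-trivial map $f_0$, whose fixed points are the distinct points $0$ and $\infty$, $h$ must preserve $\{0,\infty\}$, and it cannot interchange them (that would force $h$ to have order $2$, whence $f_0=h$, which is impossible). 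Therefore $h\colon z\mapsto\zeta z$ for a primitive $m$-th root of unity $\zeta$, i.e.\ $\mathcal{A}_\alpha=\langle z\mapsto e^{2\pi i/m}z\rangle$; and $f_0\in\mathcal{A}_\alpha$ forces $n\mid m$, say $m=pn$. If $p\ge2$, the orbits of $\mathcal{A}_\alpha$ are exactly $N$, $S$ and the sets $C_{pn}(z)$, so $\alpha$ would be a union of such orbits, contradicting (2). Hence $p=1$ and $\mathcal{A}_\alpha=G$.

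The only step that is more than bookkeeping is the identification in the converse of a finite cyclic $\mathcal{A}_\alpha\supseteq G$ with the concrete group $\langle z\mapsto e^{2\pi i/m}z\rangle$ with $n\mid m$ --- that is, the fact that such an $\mathcal{A}_\alpha$ shares the fixed-point pair $\{0,\infty\}$ with $G$; once that normalization is in place, the orbit-decomposition formula converts ``$\mathcal{A}_\alpha$ strictly larger than $G$'' into ``$\alpha$ is a union of $\mathbb Z_{pn}$-orbits'', and everything else is routine. I would also be careful with the smallest case $n=2$, and note explicitly that the standing hypothesis $|\alpha|\ge4$ is what guarantees $\mathcal{A}_\alpha$ is finite (for $|\alpha|\le2$ it can be infinite) and hence licenses the appeal to the classification.
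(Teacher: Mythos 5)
Your proof is correct and follows essentially the same route as the paper: the forward direction via the orbit decomposition of $\alpha$ under $G$, and the converse via the classification of finite subgroups of $\mathrm{PSL}(2,\mathbb C)$ together with conditions (2) and (3) to rule out every proper overgroup of $G$. You in fact supply a detail the paper leaves implicit, namely that a cyclic $\mathcal{A}_\alpha\supseteq G$ must preserve, and then fix pointwise, the fixed-point pair $\{0,\infty\}$ of $G$, hence equals $\langle z\mapsto e^{2\pi i/(pn)}z\rangle$ with $n\mid pn$, which is exactly what licenses the appeal to condition (2).
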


We also have one corollary
\begin{cor}\label{cZ_n}
For $n\ge4$, set $\alpha=\{0\}\cup\{e^{\frac{k}{n}2\pi i}|k\in\mathbb Z\}$, we have
$$
\mathcal A_\alpha=\langle z\mapsto e^{\frac{1}{n}2\pi i}z \rangle\simeq \mathbb Z_n.
$$
\end{cor}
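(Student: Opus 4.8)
The plan is to prove the two inclusions separately. One direction is immediate: the rotation $z\mapsto e^{\frac{1}{n}2\pi i}z$ fixes $0$ and cyclically permutes the set $A_n:=\{e^{\frac{k}{n}2\pi i}\mid k\in\mathbb{Z}\}$ of $n$-th roots of unity, so it preserves $\alpha=\{0\}\cup A_n$, giving $\langle z\mapsto e^{\frac{1}{n}2\pi i}z\rangle\subseteq\mathcal{A}_\alpha$. The content is the reverse inclusion. Throughout, let $\Gamma$ denote the unit circle $\{|z|=1\}$; since $n\ge3$, any three points of $A_n$ already pin down $\Gamma$ as the unique circle through them, and $0\notin\Gamma$.

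The first key step is to show that every $f\in\mathcal{A}_\alpha$ fixes $0$. Suppose not. Then $f(0)$ is a root of unity, say $f(0)=a_0\in A_n$, and since $f$ is a bijection of $\alpha$ onto itself, $f(A_n)=\alpha\setminus\{a_0\}=\{0\}\cup(A_n\setminus\{a_0\})$. As a linear fractional transformation maps circles to circles and $A_n\subseteq\Gamma$, the image $f(\Gamma)$ is a circle containing $f(A_n)$, hence containing the $n-1$ points of $A_n\setminus\{a_0\}$. Here the hypothesis $n\ge4$ enters: since $n-1\ge3$ and these points lie on $\Gamma$, they determine $\Gamma$ uniquely, so $f(\Gamma)=\Gamma$; but then $0\in f(A_n)\subseteq f(\Gamma)=\Gamma$, contradicting $0\notin\Gamma$. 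Therefore $f(0)=0$, and consequently $f(A_n)=A_n$ and $f(\Gamma)=\Gamma$.

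To finish, $f$ fixes $0$ and preserves $\Gamma$; as $f$ is a homeomorphism of $\widehat{\mathbb{C}}$ permuting the two components of $\widehat{\mathbb{C}}\setminus\Gamma$ and $0$ lies in the bounded one, $f$ maps the open unit disk onto itself. Thus $f$ is an automorphism of the disk fixing $0$, i.e.\ $f(z)=cz$ with $|c|=1$ (equivalently, a general LFT fixing $0$ has the form $z\mapsto z/(\beta z+\delta)$, and $|f|\equiv1$ on $\Gamma$ forces $\beta=0$, $|\delta|=1$). Finally $cA_n=f(A_n)=A_n$ yields $c=c\cdot1\in A_n$, so $c=e^{\frac{k}{n}2\pi i}$ for some $k$ and $f\in\langle z\mapsto e^{\frac{1}{n}2\pi i}z\rangle$. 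Hence $\mathcal{A}_\alpha=\langle z\mapsto e^{\frac{1}{n}2\pi i}z\rangle\simeq\mathbb{Z}_n$.

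The main obstacle is the step forcing $f(0)=0$, and it is exactly there that $n\ge4$ is indispensable: for $n=3$ the four points $\{0,1,\omega,\omega^2\}$ with $\omega=e^{\frac{2\pi}{3}i}$ form, after stereographic projection, a regular tetrahedron, whose stabilizer is the strictly larger group $A_4$, so the analogous statement fails. Alternatively, the corollary follows from Theorem~\ref{tZ_n}: condition~(1) holds since $\alpha=S\cup C_n(1)$ is a union of $\mathbb{Z}_n$-orbits; condition~(2) holds since $A_n$ is not invariant under $z\mapsto e^{\frac{1}{pn}2\pi i}z$ for $p\ge2$; and condition~(3) holds because a stabilizer of icosahedral, octahedral, tetrahedral or dihedral type containing $\langle z\mapsto e^{\frac{1}{n}2\pi i}z\rangle$ with $n\ge4$ is impossible—the tetrahedral group has no cyclic subgroup of order $\ge4$; in the octahedral and icosahedral cases the cyclic-subgroup bound gives $n\le4$ (resp.\ $n\le5$), so $|\alpha|=n+1$ would be below the minimal orbit size $6$ (resp.\ $12$), absurd for a nonempty union of orbits; and a dihedral stabilizer would contain an element swapping $0$ and $\infty$, forcing the excluded point $\infty$ into $\alpha$.
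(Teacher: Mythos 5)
Your proposal is correct. The paper itself gives no written proof of this corollary --- it is stated immediately after Theorem \ref{tZ_n} as an implicit application of that classification --- so the closest match to the paper's intended route is your second argument, and even there you supply details the paper omits: the verification that a union of $\mathbb{Z}_n$-orbits of total size $n+1$ cannot fall into the icosahedral, octahedral, tetrahedral or dihedral cases (via the cyclic-subgroup order bounds, the minimal orbit sizes $12$ and $6$, and the observation that a dihedral overgroup would force $\infty\in\alpha$). Your first, self-contained argument is a genuinely different and more elementary route: forcing $f(0)=0$ by counting at least three roots of unity on $f(\Gamma)$, then reducing to rotations of the disk. It has the advantage of not depending on the classification machinery of Section \ref{orbit} at all, and it isolates exactly where $n\ge4$ is used; your remark that for $n=3$ the set $\{0,1,\omega,\omega^2\}$ has cross-ratio $\frac{1+\sqrt3 i}{2}$ and hence stabilizer $A_4$ is consistent with the paper's Section \ref{n=4} and is a worthwhile sanity check. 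Both arguments are sound; either would serve as a complete proof of the corollary.
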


\section{Representations of the Stabilizers of Singularities}
\label{rep}

\subsection{Definition and Some Prior Work}
For any $\boldsymbol {\lambda}\in K_n$ such that $\overline{[\boldsymbol {\lambda}]}$ is a singularity of $\mathfrak{M_{0,\:n}}$, each $g_\sigma \in G_{\boldsymbol {\lambda}}$ introduces a tangential mapping $g_\sigma^*$ of $\text{T}_{\boldsymbol {\lambda}}K_n$ such that
$$
g^*_\sigma(\frac{\partial}{\partial\lambda^i}\bigg|_{\boldsymbol{\lambda}})(\lambda^j)=\frac{\partial}{\partial\lambda^i}\bigg|_{\boldsymbol{\lambda}}(\lambda^j\circ g_\sigma)=\frac{\partial}{\partial\lambda^i}\bigg|_{\boldsymbol{\lambda}}(g^{(j)}_\sigma)=\frac{\partial g^{(j)}_\sigma}{\partial\lambda^i}\bigg|_{\boldsymbol{\lambda}}
$$
for $\:i,\:j=1,\:2,\:\cdots,\:n-3.$

\begin{defn}
Let $\boldsymbol{J_\sigma}$ denote the Jacobian matrix of $g_\sigma$. Define a representation of $G_{\boldsymbol{\lambda}}$
$$
X_{\boldsymbol{\lambda}}:\:G_{\boldsymbol{\lambda}}\to\text{GL}_{n-3},\:g_\sigma\mapsto \boldsymbol{J_\sigma}\bigg|_{\boldsymbol{\lambda}}.
$$
\end{defn}
It is obvious that $X_{\boldsymbol{\lambda}}$ is an representation of $G_{\boldsymbol {\lambda}}$ of degree $n-3$. In the rest of this section by calculating its character $\chi_{\boldsymbol{\lambda}}$, we shall explore the representation $X_{\boldsymbol{\lambda}}$ for each $\boldsymbol {\lambda}\in K_n$ such that $\overline{[\boldsymbol {\lambda}]}$ is a singularity of $\mathfrak{M_{0,\:n}}$. To make the calculation easier, we shall introduce two lemmas first.
\begin{lem}
\label{tec}
For any $\sigma\in S_n$, let $\boldsymbol{J_\sigma}$ denote the Jacobian matrix of $g_\sigma$. Then
$$
\mathbf{tr}\boldsymbol{J_\sigma}=\sum_{\substack{\sigma(k+3)\\=1,\:2,\:3\:\mathrm{or}\:k+3}}\frac{\partial g^{(k)}_\sigma(\boldsymbol {\lambda})}{\partial \lambda^{k}}.
$$
\end{lem}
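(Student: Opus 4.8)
The plan is to read off $\mathbf{tr}\,\boldsymbol{J_\sigma}=\sum_{k=1}^{n-3}\partial g^{(k)}_\sigma/\partial\lambda^k$ from the definition of the Jacobian and then to show that, for a fixed $\sigma$, the diagonal entry $\partial g^{(k)}_\sigma/\partial\lambda^k$ vanishes identically on $K_n$ unless $\sigma(k+3)\in\{1,2,3,k+3\}$, so that only the indicated terms survive in the trace.

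The first step is to isolate the dependence of $g^{(k)}_\sigma$ on the coordinates $\lambda^1,\dots,\lambda^{n-3}$. By definition $g^{(k)}_\sigma(\boldsymbol{\lambda})=f^{\boldsymbol{\lambda}}_\sigma\bigl(z^{\boldsymbol{\lambda}}_{\sigma^{-1}(k+3)}\bigr)$, and $f^{\boldsymbol{\lambda}}_\sigma$ is the unique linear fractional transformation carrying the ordered triple $\bigl(z^{\boldsymbol{\lambda}}_{\sigma^{-1}(1)},z^{\boldsymbol{\lambda}}_{\sigma^{-1}(2)},z^{\boldsymbol{\lambda}}_{\sigma^{-1}(3)}\bigr)$ to $(0,1,\infty)$. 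Writing both this transformation and its value at a fourth point by the standard cross-ratio formula, $g^{(k)}_\sigma(\boldsymbol{\lambda})$ is seen to be a rational function of the four quantities $z^{\boldsymbol{\lambda}}_{\sigma^{-1}(1)},\,z^{\boldsymbol{\lambda}}_{\sigma^{-1}(2)},\,z^{\boldsymbol{\lambda}}_{\sigma^{-1}(3)},\,z^{\boldsymbol{\lambda}}_{\sigma^{-1}(k+3)}$ alone (the four indices being pairwise distinct, since $k+3\ge4$), and of no other $z^{\boldsymbol{\lambda}}_j$. The one point that needs care is that one of these four points may equal $\infty$, namely when $3$ lies among the four indices; the cross-ratio then degenerates but remains a well-defined rational function of the remaining three points, which one checks directly for the handful of positions of $\infty$, or simply confirms against the explicit formulas for $g^{(k)}_\sigma$ over $\tilde S$ and $\tilde V$ already recorded above. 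I expect this bookkeeping to be the only mildly delicate part of the argument.

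The second step is the chain rule. Since $z^{\boldsymbol{\lambda}}_j$ is the constant $0$, $1$, or $\infty$ for $j\in\{1,2,3\}$ and equals $\lambda^{j-3}$ for $j\ge4$, we have $\partial z^{\boldsymbol{\lambda}}_j/\partial\lambda^k=0$ whenever $j\neq k+3$. Differentiating the rational expression from the first step, $\partial g^{(k)}_\sigma/\partial\lambda^k$ is a sum of terms each carrying a factor $\partial z^{\boldsymbol{\lambda}}_{\sigma^{-1}(m)}/\partial\lambda^k$ for some $m\in\{1,2,3,k+3\}$; such a factor is nonzero only if $\sigma^{-1}(m)=k+3$, i.e. only if $\sigma(k+3)=m$. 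Hence $\partial g^{(k)}_\sigma/\partial\lambda^k$ is identically zero on $K_n$ unless $\sigma(k+3)\in\{1,2,3,k+3\}$.

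Summing over $k=1,\dots,n-3$ then gives $\mathbf{tr}\,\boldsymbol{J_\sigma}=\sum_{k}\partial g^{(k)}_\sigma/\partial\lambda^k=\sum_{\sigma(k+3)=1,\,2,\,3\ \mathrm{or}\ k+3}\partial g^{(k)}_\sigma/\partial\lambda^k$, which is exactly the asserted identity. No further ingredients are required; the real content is the structural observation that $f^{\boldsymbol{\lambda}}_\sigma$, and hence $g^{(k)}_\sigma$, depends on the $n$ marked points through only four of them.
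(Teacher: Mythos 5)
Your proposal is correct and follows essentially the same route as the paper: the key point in both is that $f^{\boldsymbol{\lambda}}_\sigma$ is determined by the three marked points $z^{\boldsymbol{\lambda}}_{\sigma^{-1}(1)},z^{\boldsymbol{\lambda}}_{\sigma^{-1}(2)},z^{\boldsymbol{\lambda}}_{\sigma^{-1}(3)}$, so $g^{(k)}_\sigma$ depends on $\boldsymbol{\lambda}$ only through these and the evaluation point $z^{\boldsymbol{\lambda}}_{\sigma^{-1}(k+3)}$, whence $\partial g^{(k)}_\sigma/\partial\lambda^k$ vanishes unless $\sigma(k+3)\in\{1,2,3,k+3\}$. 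The paper packages this via the auxiliary map $\Psi_\sigma(\boldsymbol{\lambda},z)=f^{\boldsymbol{\lambda}}_\sigma(z)$ rather than your explicit chain-rule expansion of the cross-ratio, but the content is identical.
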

\begin{proof}
Set
$$
\Psi_\sigma:\:\:K_n\times\widehat{\mathbb C}\to\widehat{\mathbb C},\:\:(\boldsymbol {\lambda},\:z)\mapsto f^{\boldsymbol {\lambda}}_\sigma(z).
$$
By definition $f^{\boldsymbol {\lambda}}_\sigma$ is the linear fractional transformation such that
$$
f^{\boldsymbol {\lambda}}_\sigma (z^{\boldsymbol{\lambda}}_{\sigma^{-1}(1)})=0,\:f^{\boldsymbol {\lambda}}_\sigma (z^{\boldsymbol{\lambda}}_{\sigma^{-1}(2)})=1,\:f^{\boldsymbol {\lambda}}_\sigma (z^{\boldsymbol{\lambda}}_{\sigma^{-1}(3)})=\infty.
$$
Thus $\Psi_\sigma(\boldsymbol {\lambda},\:z)$ is a fraction of $z$ and $\lambda^{j}$, where $j$ satisfies the condition $\sigma(j+3)=1,\:2$ or $3$. So we have
$$
\frac{\partial \Psi_\sigma(\boldsymbol {\lambda},\:z)}{\partial \lambda^{l}}=0
$$
if $\sigma(l+3)\ne1,\:2$ or $3$. As
$$
g^{(k)}_\sigma(\boldsymbol {\lambda})
=f^{\boldsymbol {\lambda}}_\sigma (z^{\boldsymbol{\lambda}}_{\sigma^{-1}(k+3)})
=\Psi_\sigma(\boldsymbol {\lambda},\:z^{\boldsymbol{\lambda}}_{\sigma^{-1}(k+3)})
$$
we have
$$
\frac{\partial g^{(k)}_\sigma(\boldsymbol {\lambda})}{\partial \lambda^{l}}=0
$$
if $\sigma(l+3)\ne1,\:2,\:3$ or $k+3$. Thus
$$
\mathbf{tr}\boldsymbol{J_\sigma}
=\sum_{k=1}^{n-3}\frac{\partial g^{(k)}_\sigma(\boldsymbol {\lambda})}{\partial \lambda^{k}}
=\sum_{\substack{\sigma(k+3)\\=1,\:2,\:3\:\mathrm{or}\:k+3}}\frac{\partial g^{(k)}_\sigma(\boldsymbol {\lambda})}{\partial \lambda^{k}}.
$$
\end{proof}

\begin{lem}
\label{relation}
If $\boldsymbol {\lambda},\:\boldsymbol {\mu}\in K_n$, and $[\boldsymbol {\lambda}]$, $[\boldsymbol {\mu}]$ are equivalent, then for any linear fractional transformation $\varphi$ such that
$$
\varphi ([\boldsymbol {\lambda}])=[\boldsymbol {\mu}],
$$
there exists a unique element $\pi\in S_n$ satisfying
$$
\varphi=f^{\boldsymbol {\lambda}}_\pi,\:
\boldsymbol {\mu}=g_\pi(\boldsymbol{\lambda}),\:
G_{\boldsymbol{\mu}}=g_\pi G_{\boldsymbol{\lambda}}g_{\pi^{-1}}
$$
and
$$
\Phi_{\boldsymbol\mu}(g_{\pi\cdot\sigma\cdot\pi^{-1}})
=f^{\boldsymbol {\lambda}}_\pi\circ\Phi_{\boldsymbol\lambda}(g_\sigma)\circ({f^{\boldsymbol {\lambda}}_\pi})^{-1},
\:\:\chi_{\boldsymbol{\mu}}(g_{\pi\cdot\sigma\cdot\pi^{-1}})=\chi_{\boldsymbol{\lambda}}(g_{\sigma})
$$
for each $g_\sigma\in G_{\boldsymbol{\lambda}}$, with $\Phi_{\boldsymbol\lambda}$, $\Phi_{\boldsymbol\mu}$ defined in \ref{def} and $\chi_{\boldsymbol{\lambda}}$, $\chi_{\boldsymbol{\mu}}$ denoting the characters for $X_{\boldsymbol{\lambda}}$ and $X_{\boldsymbol{\mu}}$ respectively.
\end{lem}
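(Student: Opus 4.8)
The plan is to read off the permutation $\pi$ directly from $\varphi$ and then obtain every listed identity by feeding it through the cocycle relations for the $f^{\boldsymbol\lambda}_\sigma$ and the homomorphism property of $\sigma\mapsto g_\sigma$ already established in Section~\ref{orbifold}.

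\emph{Construction and uniqueness of $\pi$.} Since $\varphi$ maps the $n$-point set $[\boldsymbol\lambda]=\{z^{\boldsymbol\lambda}_1,\dots,z^{\boldsymbol\lambda}_n\}$ bijectively onto $[\boldsymbol\mu]=\{z^{\boldsymbol\mu}_1,\dots,z^{\boldsymbol\mu}_n\}$, there is exactly one $\pi\in S_n$ with $\varphi(z^{\boldsymbol\lambda}_{\pi^{-1}(k)})=z^{\boldsymbol\mu}_k$ for $k=1,\dots,n$; uniqueness is forced by injectivity of $\varphi$ and the fact that the $z^{\boldsymbol\lambda}_j$ are pairwise distinct, and the same distinctness shows no other element of $S_n$ can simultaneously satisfy $\varphi=f^{\boldsymbol\lambda}_{\pi'}$ and $\boldsymbol\mu=g_{\pi'}(\boldsymbol\lambda)$. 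The cases $k=1,2,3$ say $\varphi$ sends $z^{\boldsymbol\lambda}_{\pi^{-1}(1)},z^{\boldsymbol\lambda}_{\pi^{-1}(2)},z^{\boldsymbol\lambda}_{\pi^{-1}(3)}$ to $0,1,\infty$, which is exactly the defining property of $f^{\boldsymbol\lambda}_\pi$; since a linear fractional transformation is determined by three values, $\varphi=f^{\boldsymbol\lambda}_\pi$. Comparing the remaining cases $k\ge4$ with the Remark identity $f^{\boldsymbol\lambda}_\pi(z^{\boldsymbol\lambda}_{\pi^{-1}(k)})=z^{g_\pi(\boldsymbol\lambda)}_k$ gives $z^{\boldsymbol\mu}_k=z^{g_\pi(\boldsymbol\lambda)}_k$ for $k\ge4$, i.e. $\boldsymbol\mu=g_\pi(\boldsymbol\lambda)$.

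\emph{The stabilizer and the map $\Phi$.} Because $\sigma\mapsto g_\sigma$ is a homomorphism, $g_{\pi\cdot\sigma\cdot\pi^{-1}}=g_\pi\circ g_\sigma\circ g_{\pi^{-1}}$ and $g_{\pi^{-1}}=(g_\pi)^{-1}$ as self-maps of $K_n$; in particular $g_{\pi^{-1}}(\boldsymbol\mu)=g_{\pi^{-1}}(g_\pi(\boldsymbol\lambda))=\boldsymbol\lambda$. For $g_\sigma\in G_{\boldsymbol\lambda}$ this gives $g_{\pi\cdot\sigma\cdot\pi^{-1}}(\boldsymbol\mu)=g_\pi(g_\sigma(\boldsymbol\lambda))=g_\pi(\boldsymbol\lambda)=\boldsymbol\mu$, so $g_\pi G_{\boldsymbol\lambda}g_{\pi^{-1}}\subseteq G_{\boldsymbol\mu}$; the reverse inclusion follows by running the same argument with $\varphi^{-1}$, $\boldsymbol\mu$, $\boldsymbol\lambda$, $\pi^{-1}$ in place of $\varphi$, $\boldsymbol\lambda$, $\boldsymbol\mu$, $\pi$. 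For the conjugation formula I would apply the composition law $f^{g_\sigma(\boldsymbol\nu)}_\pi\circ f^{\boldsymbol\nu}_\sigma=f^{\boldsymbol\nu}_{\pi\cdot\sigma}$ from the proof that $G$ is a group, twice: once with base point $\boldsymbol\lambda$, outer index $\pi\sigma\pi^{-1}$ and inner index $\pi$, which (using $\boldsymbol\mu=g_\pi(\boldsymbol\lambda)$ and $(\pi\sigma\pi^{-1})\cdot\pi=\pi\sigma$) yields $f^{\boldsymbol\mu}_{\pi\sigma\pi^{-1}}\circ f^{\boldsymbol\lambda}_\pi=f^{\boldsymbol\lambda}_{\pi\sigma}$; and once with base point $\boldsymbol\lambda$, outer index $\pi$, inner index $\sigma$, which (using $g_\sigma(\boldsymbol\lambda)=\boldsymbol\lambda$) yields $f^{\boldsymbol\lambda}_{\pi\sigma}=f^{\boldsymbol\lambda}_\pi\circ f^{\boldsymbol\lambda}_\sigma$. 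Combining and cancelling $f^{\boldsymbol\lambda}_\pi$ gives $f^{\boldsymbol\mu}_{\pi\sigma\pi^{-1}}=f^{\boldsymbol\lambda}_\pi\circ f^{\boldsymbol\lambda}_\sigma\circ(f^{\boldsymbol\lambda}_\pi)^{-1}$, which is precisely $\Phi_{\boldsymbol\mu}(g_{\pi\cdot\sigma\cdot\pi^{-1}})=f^{\boldsymbol\lambda}_\pi\circ\Phi_{\boldsymbol\lambda}(g_\sigma)\circ(f^{\boldsymbol\lambda}_\pi)^{-1}$ by the definitions of $\Phi_{\boldsymbol\lambda},\Phi_{\boldsymbol\mu}$.

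\emph{The character.} Differentiating $g_{\pi\cdot\sigma\cdot\pi^{-1}}=g_\pi\circ g_\sigma\circ g_{\pi^{-1}}$ at $\boldsymbol\mu$ by the chain rule, using $g_{\pi^{-1}}(\boldsymbol\mu)=\boldsymbol\lambda$, $g_\sigma(\boldsymbol\lambda)=\boldsymbol\lambda$, and the relation $\boldsymbol{J_\pi}|_{\boldsymbol\lambda}\cdot\boldsymbol{J_{\pi^{-1}}}|_{\boldsymbol\mu}=I$ obtained by differentiating $g_\pi\circ g_{\pi^{-1}}=\mathrm{Id}$ at $\boldsymbol\mu$, one finds $\boldsymbol{J_{\pi\sigma\pi^{-1}}}|_{\boldsymbol\mu}=\bigl(\boldsymbol{J_\pi}|_{\boldsymbol\lambda}\bigr)\bigl(\boldsymbol{J_\sigma}|_{\boldsymbol\lambda}\bigr)\bigl(\boldsymbol{J_\pi}|_{\boldsymbol\lambda}\bigr)^{-1}$, so $X_{\boldsymbol\mu}(g_{\pi\cdot\sigma\cdot\pi^{-1}})$ is conjugate to $X_{\boldsymbol\lambda}(g_\sigma)$ and in particular $\chi_{\boldsymbol\mu}(g_{\pi\cdot\sigma\cdot\pi^{-1}})=\chi_{\boldsymbol\lambda}(g_\sigma)$; this moreover shows the isomorphism $g_\sigma\mapsto g_{\pi\cdot\sigma\cdot\pi^{-1}}$ of $G_{\boldsymbol\lambda}$ onto $G_{\boldsymbol\mu}$ intertwines $X_{\boldsymbol\lambda}$ with $X_{\boldsymbol\mu}$. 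None of the steps is genuinely hard; the only point demanding care is the double use of the composition law in the $\Phi$ step, where one must track exactly which point of $K_n$ is the base point of each operator $f^{\bullet}_{\bullet}$, since a careless choice produces the wrong conjugating transformation.
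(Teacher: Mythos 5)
Your proposal is correct and follows essentially the same route as the paper: the unique $\pi$ is read off from $\varphi$ exactly as in the paper's proof, the conjugation identity $f^{\boldsymbol\mu}_{\pi\sigma\pi^{-1}}=f^{\boldsymbol\lambda}_\pi\circ f^{\boldsymbol\lambda}_\sigma\circ(f^{\boldsymbol\lambda}_\pi)^{-1}$ is obtained from the same cocycle relation (you reuse the already-proved composition law where the paper re-evaluates on the $n$ marked points, a cosmetic difference), and the character equality comes from the identical chain-rule computation on Jacobians. Your explicit symmetry argument for the reverse inclusion $G_{\boldsymbol\mu}\subseteq g_\pi G_{\boldsymbol\lambda}g_{\pi^{-1}}$ is in fact slightly more careful than the paper, which only exhibits one containment before asserting equality.
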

\begin{proof}
For any linear fractional transformation $\varphi$ such that
$$
\varphi ([\boldsymbol {\lambda}])=[\boldsymbol {\mu}],
$$
there exists a unique element $\pi\in S_n$ such that
$$
\varphi(z^{\boldsymbol{\lambda}}_{\pi^{-1}(k)})=z^{\boldsymbol{\mu}}_k,\:k=1,\:2,\:\cdots,\:n.
$$
Thus we have
$$
\varphi=f^{\boldsymbol {\lambda}}_\pi,\:\boldsymbol {\mu}=g_\pi(\boldsymbol{\lambda}).
$$

For any $g_\sigma\in G_{\boldsymbol{\lambda}}$,
$$
f^{\boldsymbol {\lambda}}_\pi\circ f^{\boldsymbol {\lambda}}_\sigma\circ (f^{\boldsymbol {\lambda}}_\pi)^{-1}(z^{\boldsymbol{\mu}}_{\pi\cdot\sigma^{-1}\cdot\pi^{-1}(k)})
=f^{\boldsymbol {\lambda}}_\pi\circ f^{\boldsymbol {\lambda}}_\sigma(z^{\boldsymbol{\lambda}}_{\sigma^{-1}\cdot\pi^{-1}(k)})
=f^{\boldsymbol {\lambda}}_\pi(z^{\boldsymbol{\lambda}}_{\pi^{-1}(k)})
=z^{\boldsymbol{\mu}}_k
$$
for $k=1,\:2,\:\cdots,\:n$. Thus
$$
f^{\boldsymbol {\lambda}}_\pi\circ f^{\boldsymbol {\lambda}}_\sigma\circ(f^{\boldsymbol {\lambda}}_\pi)^{-1}=f^{\boldsymbol {\mu}}_{\pi\cdot\sigma\cdot\pi^{-1}},\:
g_{\pi\cdot\sigma\cdot\pi^{-1}}\in G_{\boldsymbol{\mu}}.
$$
We conclude that
$$
g_\pi G_{\boldsymbol{\lambda}}g_{\pi^{-1}}=G_{\boldsymbol{\mu}}
$$
and
$$
\Phi_{\boldsymbol\mu}(g_{\pi\cdot\sigma\cdot\pi^{-1}})
=f^{\boldsymbol {\mu}}_{\pi\cdot\sigma\cdot\pi^{-1}}
=f^{\boldsymbol {\lambda}}_\pi\circ f^{\boldsymbol {\lambda}}_\sigma\circ(f^{\boldsymbol {\lambda}}_\pi)^{-1}
=f^{\boldsymbol {\lambda}}_\pi\circ \Phi_{\boldsymbol\lambda}(g_\sigma)\circ(f^{\boldsymbol {\lambda}}_\pi)^{-1}.
$$

Notice that
$$
X_{\boldsymbol{\mu}}(g_{\pi\cdot\sigma\cdot\pi^{-1}})
=\boldsymbol{J_{\pi\cdot\sigma\cdot\pi^{-1}}}\bigg|_{\boldsymbol{\mu}}
=\boldsymbol{J_\pi}\bigg|_{g_\sigma(g_{\pi^{-1}}({\boldsymbol{\mu}}))}
\boldsymbol{J_\sigma}\bigg|_{g_{\pi^{-1}}({\boldsymbol{\mu}})}
\boldsymbol{J_{\pi^{-1}}}\bigg|_{\boldsymbol{\mu}}
=\boldsymbol{J_\pi}\bigg|_{\boldsymbol{\lambda}}
\boldsymbol{J_\sigma}\bigg|_{\boldsymbol{\lambda}}
\boldsymbol{J_{\pi^{-1}}}\bigg|_{\boldsymbol{\mu}}.
$$
As $g_\pi\circ g_{\pi^{-1}}=g_\varepsilon$, we have
$$
\mathrm{I}_{n-3}
=X_{\boldsymbol{\mu}}(g_\varepsilon)
=\boldsymbol{J_{\varepsilon}}\bigg|_{\boldsymbol{\mu}}
=\boldsymbol{J_\pi}\bigg|_{g_{\pi^{-1}}({\boldsymbol{\mu}})}
\boldsymbol{J_{\pi^{-1}}}\bigg|_{\boldsymbol{\mu}}
=\boldsymbol{J_\pi}\bigg|_{\boldsymbol{\lambda}}
\boldsymbol{J_{\pi^{-1}}}\bigg|_{\boldsymbol{\mu}}.
$$
Thus
$$
X_{\boldsymbol{\mu}}(g_{\pi\cdot\sigma\cdot\pi^{-1}})
=\boldsymbol{J_\pi}\bigg|_{\boldsymbol{\lambda}}
X_{\boldsymbol{\lambda}}(g_\sigma)(\boldsymbol{J_{\pi}}\bigg|_{\boldsymbol{\lambda}})^{-1}
$$
and
$$
\chi_{\boldsymbol{\mu}}(g_{\pi\cdot\sigma\cdot\pi^{-1}})=\chi_{\boldsymbol{\lambda}}(g_{\sigma}).
$$
\end{proof}

\subsection{Calculation of the Characters}
\label{cha}
Before the calculation, notice that
\begin{rem}
For each $g_\sigma\in G_{\boldsymbol{\lambda}}$,
$$
f^{\boldsymbol{\lambda}}_\sigma(z^{\boldsymbol{\lambda}}_k)=z^{\boldsymbol{\lambda}}_{\sigma(k)}
$$
for $k=1,\:2,\:\cdots,\:n$. So if $\sigma$ is of order $m$, $f^{\boldsymbol{\lambda}}_\sigma$ is of order $m$, too. Also note that $\sigma$ has at most two fixed points if $g_\sigma$ is not the identity element.
\end{rem}

\subsubsection{Characters of Elements with Two Fixed Points}
\begin{thm}
For any $\boldsymbol{\lambda_0}\in K_n$ and $g_\sigma\in G_{\boldsymbol{\lambda_0}}$ ($g_\sigma$ is not the identity element), if there are two fixed points of $\sigma$, then $\chi_{\boldsymbol{\lambda_0}}(g_{\sigma})=-1$.
\end{thm}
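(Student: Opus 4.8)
The plan is to put $g_\sigma$ into a convenient normal form via Lemma~\ref{relation}, and then read off $\mathbf{tr}\,\boldsymbol{J_\sigma}$ from Lemma~\ref{tec}, where after the normalization the sum collapses to a single term.

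First I would normalize. The Möbius transformation $f^{\boldsymbol{\lambda_0}}_\sigma$ has exactly two fixed points, and these are two of the points $z^{\boldsymbol{\lambda_0}}_k$ (namely $z^{\boldsymbol{\lambda_0}}_a,z^{\boldsymbol{\lambda_0}}_b$, where $a,b$ are the two fixed indices of $\sigma$). Choose a linear fractional transformation $\varphi$ sending these two points to $0$ and $\infty$ and some third point of $[\boldsymbol{\lambda_0}]$ to $1$; then $\varphi([\boldsymbol{\lambda_0}])=[\boldsymbol{\mu}]$ for a suitable $\boldsymbol{\mu}\in K_n$, and Lemma~\ref{relation} supplies $\pi\in S_n$ with $\varphi=f^{\boldsymbol{\lambda_0}}_\pi$, $g_{\pi\sigma\pi^{-1}}\in G_{\boldsymbol{\mu}}$ and $\chi_{\boldsymbol{\mu}}(g_{\pi\sigma\pi^{-1}})=\chi_{\boldsymbol{\lambda_0}}(g_\sigma)$. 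Since $f^{\boldsymbol{\mu}}_{\pi\sigma\pi^{-1}}=\varphi\circ f^{\boldsymbol{\lambda_0}}_\sigma\circ\varphi^{-1}$ fixes $0$ and $\infty$ and $\pi\sigma\pi^{-1}$ still has exactly two fixed points, after renaming I may assume $f^{\boldsymbol{\lambda_0}}_\sigma$ fixes $z^{\boldsymbol{\lambda_0}}_1=0$ and $z^{\boldsymbol{\lambda_0}}_3=\infty$, that $1,3$ are the only fixed indices of $\sigma$, and hence that $f^{\boldsymbol{\lambda_0}}_\sigma(z)=\zeta z$ for some primitive $m$-th root of unity $\zeta$, where $m\ge2$ is the order of $\sigma$. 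In particular $z^{\boldsymbol{\lambda_0}}_{\sigma^{-1}(k)}=\zeta^{-1}z^{\boldsymbol{\lambda_0}}_k$ for all $k$, and $\sigma(2)\ne2$.

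Next I would extract the trace. By Lemma~\ref{tec}, $\chi_{\boldsymbol{\lambda_0}}(g_\sigma)=\sum_{\sigma(k+3)\in\{1,2,3,k+3\}}\partial g^{(k)}_\sigma/\partial\lambda^k|_{\boldsymbol{\lambda_0}}$. Since $\sigma$ fixes only $1$ and $3$, for every $k$ one has $\sigma(k+3)\ne1,3,k+3$, so the only surviving index is $k_0:=\sigma^{-1}(2)-3$ (and $\sigma^{-1}(2)\ge4$), i.e.\ $\chi_{\boldsymbol{\lambda_0}}(g_\sigma)=\partial g^{(k_0)}_\sigma/\partial\lambda^{k_0}|_{\boldsymbol{\lambda_0}}$. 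For $\boldsymbol{\lambda}$ near $\boldsymbol{\lambda_0}$ the transformation $f^{\boldsymbol{\lambda}}_\sigma$ still fixes $0=z^{\boldsymbol{\lambda}}_1$ and $\infty=z^{\boldsymbol{\lambda}}_3$ and sends $z^{\boldsymbol{\lambda}}_{\sigma^{-1}(2)}=\lambda^{k_0}$ to $1$, so $f^{\boldsymbol{\lambda}}_\sigma(z)=z/\lambda^{k_0}$ and therefore $g^{(k_0)}_\sigma(\boldsymbol{\lambda})=z^{\boldsymbol{\lambda}}_{c_2}/\lambda^{k_0}$ with $c_2:=\sigma^{-2}(2)$. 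One checks $c_2\notin\{1,3\}$ and $c_2\ne k_0+3$ (here $\sigma(2)\ne2$ is used), so $z^{\boldsymbol{\lambda}}_{c_2}$ does not involve $\lambda^{k_0}$ and $\partial g^{(k_0)}_\sigma/\partial\lambda^{k_0}=-z^{\boldsymbol{\lambda}}_{c_2}/(\lambda^{k_0})^2$. Evaluating at $\boldsymbol{\lambda_0}$, where $\lambda_0^{k_0}=z^{\boldsymbol{\lambda_0}}_{\sigma^{-1}(2)}=\zeta^{-1}$ and $z^{\boldsymbol{\lambda_0}}_{c_2}=z^{\boldsymbol{\lambda_0}}_{\sigma^{-2}(2)}=\zeta^{-2}$, gives $-\zeta^{-2}/\zeta^{-2}=-1$.

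I expect no serious difficulty; the only delicate point is the index bookkeeping after the normalization — confirming that exactly one term of Lemma~\ref{tec} survives, identifying $z^{\boldsymbol{\lambda_0}}_{\sigma^{-1}(2)}=\zeta^{-1}$ and $z^{\boldsymbol{\lambda_0}}_{\sigma^{-2}(2)}=\zeta^{-2}$, and excluding the degenerate case $c_2=k_0+3$. As a conceptual sanity check, $-1$ equals the difference $(\zeta+\zeta^{-1})-(\zeta+1+\zeta^{-1})$ between the trace of $g_\sigma$ on $\bigoplus_k\mathrm{T}_{z^{\boldsymbol{\lambda_0}}_k}\widehat{\mathbb C}$, where only the two fixed points contribute with multipliers $\zeta^{\pm1}$, and its trace on the three-dimensional space of global holomorphic vector fields on $\widehat{\mathbb C}$; but the explicit computation above is self-contained and is the one I would write out.
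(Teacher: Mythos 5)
Your proof is correct and follows essentially the same route as the paper's: both use Lemma \ref{relation} to normalize so that the two fixed points sit at $0$ and $\infty$ (whence $f^{\boldsymbol{\lambda}}_\sigma(z)=z/\lambda^{k_0}$ with $\lambda_0^{k_0}$ a primitive root of unity), then apply Lemma \ref{tec} to see that exactly the one diagonal term indexed by $\sigma^{-1}(2)-3$ survives, and evaluate it to get $-\zeta^{-2}/\zeta^{-2}=-1$. The only cosmetic difference is that the paper fixes the third normalization point to be the image $z_b=f^{\boldsymbol{\lambda_0}}_\sigma(z_d)$ of a specific non-fixed point, whereas you allow an arbitrary third point; your index bookkeeping (including the check $\sigma^{-2}(2)\ne\sigma^{-1}(2)$) is the same verification the paper performs.
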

\begin{proof}
Suppose that
$$
\sigma(a)=a,\:\sigma(d)=b,\:\sigma(c)=c
$$
where $a,\:b,\:c$ and $d$ are four different integers. Let $\varphi$ be the linear fractional transformation such that
$$
\varphi(z^{\boldsymbol{\lambda_0}}_a)=0,\:\varphi(z^{\boldsymbol{\lambda_0}}_b)=1,\:\varphi(z^{\boldsymbol{\lambda_0}}_c)=\infty
$$
and $\boldsymbol{\mu_0}$ an element of $K_n$ such that
$$
[\boldsymbol{\mu_0}]=\varphi([\boldsymbol{\lambda_0}]).
$$
From Lemma \ref{relation} we know that there exists a unique element $\pi\in S_n$ such that $\varphi=f^{\boldsymbol {\lambda_0}}_\pi$ and
$$
G_{\boldsymbol{\mu_0}}=g_\pi G_{\boldsymbol{\lambda_0}}g_{\pi^{-1}},
\:\:\chi_{\boldsymbol{\mu_0}}(g_{\pi\cdot\sigma\cdot\pi^{-1}})=\chi_{\boldsymbol{\lambda_0}}(g_{\sigma}).
$$

Note that
$$
\pi(a)=1,\:\pi(b)=2,\:\pi(c)=3,\:\pi(d)>3.
$$
Thus
$$
\pi\cdot\sigma^{-1}\cdot\pi^{-1}(1)=1,\:
\pi\cdot\sigma^{-1}\cdot\pi^{-1}(2)=\pi(d),\:
\pi\cdot\sigma^{-1}\cdot\pi^{-1}(3)=3
$$
and
$$
\{k\in\mathbb{N}^*\big|\pi\cdot\sigma\cdot\pi^{-1}(k+3)=1,\:2,\:3\:\:\mathrm{or}\:\:k+3\}=\{\pi(d)-3\}.
$$

For any $\boldsymbol{\mu}\in K_n$, by the definition of $f^{\boldsymbol{\mu}}_{\pi\cdot\sigma\cdot\pi^{-1}}$ we have
$$
0=f^{\boldsymbol{\mu}}_{\pi\cdot\sigma\cdot\pi^{-1}}(z^{\boldsymbol{\mu}}_{\pi\cdot\sigma^{-1}\cdot\pi^{-1}(1)})
=f^{\boldsymbol{\mu}}_{\pi\cdot\sigma\cdot\pi^{-1}}(z^{\boldsymbol{\mu}}_1)
=f^{\boldsymbol{\mu}}_{\pi\cdot\sigma\cdot\pi^{-1}}(0),
$$
$$
1=f^{\boldsymbol{\mu}}_{\pi\cdot\sigma\cdot\pi^{-1}}(z^{\boldsymbol{\mu}}_{\pi\cdot\sigma^{-1}\cdot\pi^{-1}(2)})
=f^{\boldsymbol{\mu}}_{\pi\cdot\sigma\cdot\pi^{-1}}(z^{\boldsymbol{\mu}}_{\pi(d)})
=f^{\boldsymbol{\mu}}_{\pi\cdot\sigma\cdot\pi^{-1}}(\mu^{\pi(d)-3}),
$$
$$
\infty=f^{\boldsymbol{\mu}}_{\pi\cdot\sigma\cdot\pi^{-1}}(z^{\boldsymbol{\mu}}_{\pi\cdot\sigma^{-1}\cdot\pi^{-1}(3)})
=f^{\boldsymbol{\mu}}_{\pi\cdot\sigma\cdot\pi^{-1}}(z^{\boldsymbol{\mu}}_3)
=f^{\boldsymbol{\mu}}_{\pi\cdot\sigma\cdot\pi^{-1}}(\infty).
$$
Thus
$$
f^{\boldsymbol{\mu}}_{\pi\cdot\sigma\cdot\pi^{-1}}(z)
=\frac{z}{\mu^{\pi(d)-3}}.
$$
Particularly
$$
f^{\boldsymbol{\mu_0}}_{\pi\cdot\sigma\cdot\pi^{-1}}(z)
=\frac{z}{\mu_0^{\pi(d)-3}}.
$$

Suppose $\sigma$ is of order $m$ ($m\ge2$). As $g_{\pi\cdot\sigma\cdot\pi^{-1}}\in G_{\boldsymbol{\mu_0}}$, $f^{\boldsymbol{\mu_0}}_{\pi\cdot\sigma\cdot\pi^{-1}}$ is of order $m$, too. Thus $\mu_0^{\pi(d)-3}$ is the primitive $m$th root of unity. As
$$
\mu_0^{\pi(d)-3}
=z^{\boldsymbol{\mu_0}}_{\pi(d)}
=f^{\boldsymbol{\mu_0}}_{\pi\cdot\sigma\cdot\pi^{-1}}(z^{\boldsymbol{\mu_0}}_{\pi\cdot\sigma^{-1}(d)})
=\frac{z^{\boldsymbol{\mu_0}}_{\pi\cdot\sigma^{-1}(d)}}{\mu_0^{\pi(d)-3}}
$$
we have
$$
z^{\boldsymbol{\mu_0}}_{\pi\cdot\sigma^{-1}(d)}=(\mu_0^{\pi(d)-3})^2.
$$

\begin{align*}
\chi_{\boldsymbol{\lambda_0}}(g_\sigma)
=&\chi_{\boldsymbol{\mu_0}}(g_{\pi\cdot\sigma\cdot\pi^{-1}})
=\mathbf{tr}\boldsymbol{J_{\pi\cdot\sigma\cdot\pi^{-1}}}\bigg|_{\boldsymbol{\mu_0}}\\
=&\sum_{\substack{\pi\cdot\sigma\cdot\pi^{-1}(k+3)\\=1,\:2,\:3\:\mathrm{or}\:k+3}}
\frac{\partial g^{(k)}_{\pi\cdot\sigma\cdot\pi^{-1}}({\boldsymbol{\mu}})}{\partial \mu^{k}}
\bigg|_{\boldsymbol{\mu_0}}
=\frac{\partial g^{(\pi(d)-3)}_{\pi\cdot\sigma\cdot\pi^{-1}}({\boldsymbol{\mu}})}
{\partial \mu^{\pi(d)-3}}\bigg|_{\boldsymbol{\mu_0}}\\
=&\frac{\partial f^{\boldsymbol{\mu}}_{\pi\cdot\sigma\cdot\pi^{-1}}(z^{\boldsymbol{\mu}}_{\pi\cdot\sigma^{-1}\cdot\pi^{-1}(\pi(d))})}
{\partial \mu^{\pi(d)-3}}\bigg|_{\boldsymbol{\mu_0}}
=\frac{\partial f^{\boldsymbol{\mu}}_{\pi\cdot\sigma\cdot\pi^{-1}}(z^{\boldsymbol{\mu}}_{\pi\cdot\sigma^{-1}(d)})}
{\partial \mu^{\pi(d)-3}}\bigg|_{\boldsymbol{\mu_0}}\\
=&\bigg(\frac{\partial}{\partial \mu^{\pi(d)-3}}
\bigg(\frac{z^{\boldsymbol{\mu}}_{\pi\cdot\sigma^{-1}(d)}}{\mu^{\pi(d)-3}}\bigg)\bigg)\bigg|_{\boldsymbol{\mu_0}}.
\end{align*}
As $\pi\cdot\sigma^{-1}(d)-3\ne\pi(d)-3$, we know that
$$
\frac{\partial z^{\boldsymbol{\mu}}_{\pi\cdot\sigma^{-1}(d)}}{\partial \mu^{\pi(d)-3}}=0.
$$
Thus
$$
\chi_{\boldsymbol{\lambda_0}}(g_\sigma)
=\bigg(\frac{\partial}{\partial \mu^{\pi(d)-3}}
\big(\frac{z^{\boldsymbol{\mu}}_{\pi\cdot\sigma^{-1}(d)}}{\mu^{\pi(d)-3}}\big)\bigg)\bigg|_{\boldsymbol{\mu_0}}
=\frac{-z^{\boldsymbol{\mu}}_{\pi\cdot\sigma^{-1}(d)}}{(\mu^{\pi(d)-3})^2}\bigg|_{\boldsymbol{\mu_0}}
=\frac{-z^{\boldsymbol{\mu_0}}_{\pi\cdot\sigma^{-1}(d)}}{(\mu_0^{\pi(d)-3})^2}
=-1.
$$
\end{proof}

\subsubsection{Characters of Elements with One Fixed Point}
\begin{lem}
For any $\boldsymbol{\lambda_0}\in K_n$ and $g_\sigma\in G_{\boldsymbol{\lambda_0}}$, if $\sigma$ has only one fixed point and is of order two, then $\chi_{\boldsymbol{\lambda_0}}(g_{\sigma})=0$.
\end{lem}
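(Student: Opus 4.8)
The plan is to use Lemma~\ref{relation} to replace $[\boldsymbol{\lambda_0}]$ by an equivalent configuration for which $\sigma$ is conjugated into a permutation $\tau$ that fixes $3$, has $(1\,2)$ among its transpositions, and acts on $\{4,\:5,\:\cdots,\:n\}$ as a fixed-point-free involution. For such a $\tau$, Lemma~\ref{tec} immediately yields $\mathbf{tr}\boldsymbol{J_\tau}=0$, because every index that could contribute to the trace is excluded by the summation condition.

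In detail, since $g_\sigma$ has order two, $\sigma$ is a nontrivial product of disjoint transpositions; fix one such transposition $(b\,c)$. By hypothesis $\sigma$ has a single fixed point $a$, and $a\notin\{b,\:c\}$, so $a,\:b,\:c$ are three distinct indices. Let $\varphi$ be the unique linear fractional transformation with
\[
\varphi(z^{\boldsymbol{\lambda_0}}_b)=0,\qquad \varphi(z^{\boldsymbol{\lambda_0}}_c)=1,\qquad \varphi(z^{\boldsymbol{\lambda_0}}_a)=\infty,
\]
and choose $\boldsymbol{\mu_0}\in K_n$ with $[\boldsymbol{\mu_0}]=\varphi([\boldsymbol{\lambda_0}])$. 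By Lemma~\ref{relation} there is a unique $\pi\in S_n$ with $\varphi=f^{\boldsymbol{\lambda_0}}_\pi$, $\boldsymbol{\mu_0}=g_\pi(\boldsymbol{\lambda_0})$, $G_{\boldsymbol{\mu_0}}=g_\pi G_{\boldsymbol{\lambda_0}}g_{\pi^{-1}}$, and $\chi_{\boldsymbol{\mu_0}}(g_{\pi\cdot\sigma\cdot\pi^{-1}})=\chi_{\boldsymbol{\lambda_0}}(g_\sigma)$; moreover $\pi(b)=1$, $\pi(c)=2$, $\pi(a)=3$. Set $\tau=\pi\cdot\sigma\cdot\pi^{-1}$. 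Then $\tau(3)=3$, $\tau(1)=2$, $\tau(2)=1$, and every other transposition of $\tau$ is the conjugate by $\pi$ of a transposition of $\sigma$ disjoint from $\{a,\:b,\:c\}$, hence involves only indices in $\{4,\:\cdots,\:n\}$. Since $a$ was the only fixed point of $\sigma$, the index $3$ is the only fixed point of $\tau$, so these transpositions exhaust $\{4,\:\cdots,\:n\}$; that is, $\tau$ restricts to a fixed-point-free involution of $\{4,\:\cdots,\:n\}$.

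Finally I would apply Lemma~\ref{tec} to $\tau$: for each $k=1,\:\cdots,\:n-3$ we have $\tau(k+3)\in\{4,\:\cdots,\:n\}\setminus\{k+3\}$, so $k$ never satisfies $\tau(k+3)\in\{1,\:2,\:3,\:k+3\}$ and the sum defining $\mathbf{tr}\boldsymbol{J_\tau}$ is empty. Hence $\chi_{\boldsymbol{\mu_0}}(g_\tau)=\mathbf{tr}\boldsymbol{J_\tau}\big|_{\boldsymbol{\mu_0}}=0$, and therefore $\chi_{\boldsymbol{\lambda_0}}(g_\sigma)=0$. (Equivalently, one checks directly that $f^{\boldsymbol{\mu}}_\tau(z)=1-z$ for all $\boldsymbol{\mu}$, whence $g^{(k)}_\tau(\boldsymbol{\mu})=1-\mu^{\,\tau^{-1}(k+3)-3}$ and each diagonal entry $\partial g^{(k)}_\tau/\partial\mu^k$ vanishes.) The only genuine point is the normalization step: the Möbius coordinate change used to bring $\sigma$ into this shape must be realized by an element of $G$ acting on $K_n$, which is exactly the content of Lemma~\ref{relation}; once the single fixed point together with one transposition is made to exhaust $\{1,\:2,\:3\}$, Lemma~\ref{tec} finishes the argument with no computation, so there is no serious obstacle here.
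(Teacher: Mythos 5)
Your proof is correct and follows essentially the same route as the paper: normalize via Lemma \ref{relation} so that the fixed point and one transposition of $\sigma$ occupy the indices $\{1,2,3\}$, then observe that the index set in Lemma \ref{tec} is empty. The only difference is cosmetic — the paper sends the fixed point to $0$ and the swapped pair to $1,\infty$ (so $\tau=(2\,3)\cdot(\cdots)$), while you send the swapped pair to $0,1$ and the fixed point to $\infty$ (so $\tau=(1\,2)\cdot(\cdots)$) — and both choices make the trace sum vacuous.
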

\begin{proof}
Suppose that
$$
\sigma(a)=a,\:\sigma(b)=c,\:\sigma(c)=b
$$
where $a,\:b$ and $c$ are three different integers. Let $\varphi$ be the linear fractional transformation such that
$$
\varphi(z^{\boldsymbol{\lambda_0}}_a)=0,\:\varphi(z^{\boldsymbol{\lambda_0}}_b)=1,\:\varphi(z^{\boldsymbol{\lambda_0}}_c)=\infty
$$
and $\boldsymbol{\mu_0}$ an element of $K_n$ such that
$$
[\boldsymbol{\mu_0}]=\varphi([\boldsymbol{\lambda_0}]).
$$
From Lemma \ref{relation} we know that there exists a unique element $\pi\in S_n$ such that $\varphi=f^{\boldsymbol {\lambda_0}}_\pi$ and
$$
\chi_{\boldsymbol{\mu_0}}(g_{\pi\cdot\sigma\cdot\pi^{-1}})=\chi_{\boldsymbol{\lambda_0}}(g_{\sigma}).
$$

Note that
$$
\pi(a)=1,\:\pi(b)=2,\:\pi(c)=3
$$
and
$$
\pi\cdot\sigma^{-1}\cdot\pi^{-1}(1)=1,\:
\pi\cdot\sigma^{-1}\cdot\pi^{-1}(2)=3,\:
\pi\cdot\sigma^{-1}\cdot\pi^{-1}(3)=2.
$$
Thus
$$
\{k\in\mathbb{N}^*\big|\pi\cdot\sigma\cdot\pi^{-1}(k+3)=1,\:2,\:3\:\:\mathrm{or}\:\:k+3\}=\emptyset
$$
and
$$
\chi_{\boldsymbol{\lambda_0}}(g_\sigma)
=\chi_{\boldsymbol{\mu_0}}(g_{\pi\cdot\sigma\cdot\pi^{-1}})
=\mathbf{tr}\boldsymbol{J_{\pi\cdot\sigma\cdot\pi^{-1}}}\bigg|_{\boldsymbol{\mu_0}}
=\sum_{\substack{\pi\cdot\sigma\cdot\pi^{-1}(k+3)\\=1,\:2,\:3\:\mathrm{or}\:k+3}}
\frac{\partial g^{(k)}_{\pi\cdot\sigma\cdot\pi^{-1}}({\boldsymbol{\mu}})}{\partial \mu^{k}}
\bigg|_{\boldsymbol{\mu_0}}
=0.
$$
\end{proof}

\begin{thm}
For any $\boldsymbol{\lambda_0}\in K_n$ and $g_\sigma\in G_{\boldsymbol{\lambda_0}}$, if $\sigma$ has only one fixed point $a_1$, and there exists some linear fractional transformation $\psi$ such that
$$
\psi\circ f^{\boldsymbol{\lambda_0}}_\sigma\circ\psi^{-1}(z)
=e^{\frac{2q\pi}{p}i}z
$$
where $p,\:q$ are co-prime positive integers, and
$$
\psi(z^{\boldsymbol{\lambda_0}}_{a_1})=0,
$$
then
$$
\chi_{\boldsymbol{\lambda_0}}(g_{\sigma})=-1-e^{\frac{-2q\pi}{p}i}.
$$
\end{thm}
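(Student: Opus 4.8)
The plan is to reduce, via Lemma \ref{relation}, to a configuration in which $f^{\boldsymbol{\lambda}}_\sigma$ becomes an explicit affine map, and then apply Lemma \ref{tec}. Write $\zeta=e^{\frac{2q\pi}{p}i}$, so $\sigma$ has order $p$, with $z^{\boldsymbol{\lambda_0}}_{a_1}$ its unique marked fixed point and all other marked points lying in $\sigma$-orbits of size $p$. If $p=2$ then $\zeta=-1$ and $\sigma$ is of order two with one fixed point, so the preceding lemma gives $\chi_{\boldsymbol{\lambda_0}}(g_\sigma)=0=-1-e^{-q\pi i}$; hence assume $p\ge3$. Fix an index $b\ne a_1$ and set $c=\sigma(b)$; the three indices $a_1,b,c$ are distinct. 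Choose the linear fractional transformation $\varphi$ with $\varphi(z^{\boldsymbol{\lambda_0}}_b)=0$, $\varphi(z^{\boldsymbol{\lambda_0}}_c)=1$, $\varphi(z^{\boldsymbol{\lambda_0}}_{a_1})=\infty$, and pick $\boldsymbol{\mu_0}\in K_n$ with $[\boldsymbol{\mu_0}]=\varphi([\boldsymbol{\lambda_0}])$. By Lemma \ref{relation} there is a unique $\pi\in S_n$ with $\varphi=f^{\boldsymbol{\lambda_0}}_\pi$, and $\chi_{\boldsymbol{\lambda_0}}(g_\sigma)=\chi_{\boldsymbol{\mu_0}}(g_\tau)$ where $\tau=\pi\sigma\pi^{-1}$; note $\pi(b)=1$, $\pi(c)=2$, $\pi(a_1)=3$.

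Next I would read off $\tau$ as a permutation and apply Lemma \ref{tec}. Since $a_1$ is the only fixed point of $\sigma$, $3$ is the only fixed point of $\tau$, and $\tau(1)=2$, $\tau^{-1}(1)=\pi\sigma^{-1}(b)$, $\tau^{-1}(2)=1$, $\tau^{-1}(3)=3$. For $p\ge3$ one checks $\sigma^{-1}(b)\notin\{a_1,b,c\}$, so $k_1:=\pi\sigma^{-1}(b)-3\ge1$, and the sum in Lemma \ref{tec} has the single surviving term $k=k_1$, i.e. $\chi_{\boldsymbol{\mu_0}}(g_\tau)=\dfrac{\partial g^{(k_1)}_\tau(\boldsymbol{\mu})}{\partial\mu^{k_1}}\Big|_{\boldsymbol{\mu_0}}$. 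From the defining conditions of $f^{\boldsymbol{\mu}}_\tau$ (it sends $z^{\boldsymbol{\mu}}_{\tau^{-1}(1)}=\mu^{k_1}\mapsto0$, $z^{\boldsymbol{\mu}}_{\tau^{-1}(2)}=0\mapsto1$, $z^{\boldsymbol{\mu}}_{\tau^{-1}(3)}=\infty\mapsto\infty$) one obtains $f^{\boldsymbol{\mu}}_\tau(z)=1-z/\mu^{k_1}$, and since $\tau^{-1}(k_1+3)=\pi\sigma^{-2}(b)$,
\[
g^{(k_1)}_\tau(\boldsymbol{\mu})=f^{\boldsymbol{\mu}}_\tau\big(z^{\boldsymbol{\mu}}_{\pi\sigma^{-2}(b)}\big)=1-\frac{z^{\boldsymbol{\mu}}_{\pi\sigma^{-2}(b)}}{\mu^{k_1}}.
\]
Because $\sigma^{-2}(b)\ne\sigma^{-1}(b)$, the factor $z^{\boldsymbol{\mu}}_{\pi\sigma^{-2}(b)}$ does not involve $\mu^{k_1}$, so differentiating in $\mu^{k_1}$ and evaluating at $\boldsymbol{\mu_0}$ gives $\chi_{\boldsymbol{\lambda_0}}(g_\sigma)=z^{\boldsymbol{\mu_0}}_{\pi\sigma^{-2}(b)}\big/\big(z^{\boldsymbol{\mu_0}}_{\pi\sigma^{-1}(b)}\big)^{2}$.

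It then remains to compute the two coordinates $z^{\boldsymbol{\mu_0}}_{\pi\sigma^{k}(b)}=\varphi\big(z^{\boldsymbol{\lambda_0}}_{\sigma^k(b)}\big)$ for $k=-1,-2$. Writing $f^{\boldsymbol{\lambda_0}}_\sigma=\psi^{-1}\circ(z\mapsto\zeta z)\circ\psi$ and $u=\psi(z^{\boldsymbol{\lambda_0}}_b)$, one has $z^{\boldsymbol{\lambda_0}}_{\sigma^k(b)}=\psi^{-1}(\zeta^k u)$, hence $z^{\boldsymbol{\mu_0}}_{\pi\sigma^k(b)}=\eta(\zeta^k u)$ with $\eta:=\varphi\circ\psi^{-1}$. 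The map $\eta$ is explicit, since $\eta(0)=\varphi(z^{\boldsymbol{\lambda_0}}_{a_1})=\infty$ (using the hypothesis $\psi(z^{\boldsymbol{\lambda_0}}_{a_1})=0$), $\eta(u)=\varphi(z^{\boldsymbol{\lambda_0}}_b)=0$, $\eta(\zeta u)=\varphi(z^{\boldsymbol{\lambda_0}}_c)=1$, which forces $\eta(z)=\dfrac{\zeta(z-u)}{(\zeta-1)z}$ and therefore $z^{\boldsymbol{\mu_0}}_{\pi\sigma^k(b)}=\dfrac{\zeta(\zeta^{k}-1)}{(\zeta-1)\zeta^{k}}$. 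Using $\zeta^{p}=1$ (so $\zeta^{-1}=\zeta^{p-1}$, $\zeta^{-2}=\zeta^{p-2}$) this yields $z^{\boldsymbol{\mu_0}}_{\pi\sigma^{-1}(b)}=-\zeta$ and $z^{\boldsymbol{\mu_0}}_{\pi\sigma^{-2}(b)}=-\zeta(1+\zeta)$, whence $\chi_{\boldsymbol{\lambda_0}}(g_\sigma)=\dfrac{-\zeta(1+\zeta)}{\zeta^{2}}=-1-\zeta^{-1}=-1-e^{-\frac{2q\pi}{p}i}$. (When $p=3$ one has $\sigma^{-2}(b)=c$, so $z^{\boldsymbol{\mu_0}}_{\pi\sigma^{-2}(b)}=z^{\boldsymbol{\mu_0}}_2=1$; this matches the formula, since $1+\zeta+\zeta^2=0$.)

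The main obstacle I anticipate is the combinatorial bookkeeping: checking that for $p\ge3$ the indices $a_1,b,\sigma(b),\sigma^{-1}(b),\sigma^{-2}(b)$ are genuinely distinct, that $\pi\sigma^{-1}(b),\pi\sigma^{-2}(b)>3$, and hence that Lemma \ref{tec} contributes exactly one term and that $\tau^{-1}(k_1+3)=\pi\sigma^{-2}(b)$; together with handling the degenerate small-$p$ cases ($p=2$ by the previous lemma, $p=3$ as above). Once the normalization is fixed, the determination of $\eta$ and of the two coordinates is a routine Möbius/root-of-unity computation.
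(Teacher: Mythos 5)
Your proof is correct and follows the same strategy as the paper's: reduce to a normalized configuration via Lemma \ref{relation}, identify the single surviving diagonal term via Lemma \ref{tec}, and evaluate that partial derivative explicitly. The only difference is your choice of normalization — you send the fixed point $z^{\boldsymbol{\lambda_0}}_{a_1}$ to $\infty$ and a non-fixed orbit point to $0$, whereas the paper sends the fixed point to $0$; your choice makes the conjugated map $f^{\boldsymbol{\mu}}_\tau(z)=1-z/\mu^{k_1}$ affine and yields a uniform computation of the two needed coordinates through $\eta=\varphi\circ\psi^{-1}$, avoiding the paper's cross-ratio evaluations and its separate $p=3$ versus $p\ge4$ cases.
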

\begin{proof}
Notice that $\sigma$ is of order $p$ as $f^{\boldsymbol{\lambda_0}}_\sigma$ is of order $p$. Thus from the previous lemma it is obvious that the result holds when $p=2$. Next assume that $p\ge3$. Suppose that
$$
\sigma(a_2)=a_3,\:\sigma(a_3)=a_4,\:\cdots,\:\sigma(a_{p+1})=a_2
$$
where $a_1,\:a_2,\:\cdots,\:a_{p+1}$ are $p+1$ different integers. Let $\varphi$ be the linear fractional transformation such that
$$
\varphi\circ\psi(z^{\boldsymbol{\lambda_0}}_{a_1})=0,\:\varphi\circ\psi(z^{\boldsymbol{\lambda_0}}_{a_2})=1,\:\varphi\circ\psi(z^{\boldsymbol{\lambda_0}}_{a_3})=\infty
$$
and $\boldsymbol{\mu_0}$ an element of $K_n$ such that
$$
[\boldsymbol{\mu_0}]=\varphi\circ\psi([\boldsymbol{\lambda_0}]).
$$
From Lemma \ref{relation} we know that there exists a unique element $\pi\in S_n$ such that $\varphi\circ\psi=f^{\boldsymbol {\lambda_0}}_\pi$ and
$$
\boldsymbol {\mu_0}=g_\pi(\boldsymbol{\lambda_0}),
\:\:G_{\boldsymbol{\mu_0}}=g_\pi G_{\boldsymbol{\lambda_0}}g_{\pi^{-1}},
\:\:\chi_{\boldsymbol{\mu_0}}(g_{\pi\cdot\sigma\cdot\pi^{-1}})=\chi_{\boldsymbol{\lambda_0}}(g_{\sigma}).
$$

Note that
$$
\pi(a_1)=1,\:\pi(a_2)=2,\:\pi(a_3)=3,\:\pi(a_{p+1})>3.
$$
Thus
$$
\pi\cdot\sigma^{-1}\cdot\pi^{-1}(1)=1,\:
\pi\cdot\sigma^{-1}\cdot\pi^{-1}(2)=\pi(a_{p+1}),\:
\pi\cdot\sigma^{-1}\cdot\pi^{-1}(3)=2
$$
and
$$
\{k\in\mathbb{N}^*\big|\pi\cdot\sigma\cdot\pi^{-1}(k+3)=1,\:2,\:3\:\:\mathrm{or}\:\:k+3\}=\{\pi(a_{p+1})-3\}.
$$

For any $\boldsymbol{\mu}\in K_n$, by the definition of $f^{\boldsymbol{\mu}}_{\pi\cdot\sigma\cdot\pi^{-1}}$ we have
$$
0=f^{\boldsymbol{\mu}}_{\pi\cdot\sigma\cdot\pi^{-1}}(z^{\boldsymbol{\mu}}_{\pi\cdot\sigma^{-1}\cdot\pi^{-1}(1)})
=f^{\boldsymbol{\mu}}_{\pi\cdot\sigma\cdot\pi^{-1}}(z^{\boldsymbol{\mu}}_1)
=f^{\boldsymbol{\mu}}_{\pi\cdot\sigma\cdot\pi^{-1}}(0),
$$
$$
1=f^{\boldsymbol{\mu}}_{\pi\cdot\sigma\cdot\pi^{-1}}(z^{\boldsymbol{\mu}}_{\pi\cdot\sigma^{-1}\cdot\pi^{-1}(2)})
=f^{\boldsymbol{\mu}}_{\pi\cdot\sigma\cdot\pi^{-1}}(z^{\boldsymbol{\mu}}_{\pi(a_{p+1})})
=f^{\boldsymbol{\mu}}_{\pi\cdot\sigma\cdot\pi^{-1}}(\mu^{\pi(a_{p+1})-3}),
$$
$$
\infty=f^{\boldsymbol{\mu}}_{\pi\cdot\sigma\cdot\pi^{-1}}(z^{\boldsymbol{\mu}}_{\pi\cdot\sigma^{-1}\cdot\pi^{-1}(3)})
=f^{\boldsymbol{\mu}}_{\pi\cdot\sigma\cdot\pi^{-1}}(z^{\boldsymbol{\mu}}_2)
=f^{\boldsymbol{\mu}}_{\pi\cdot\sigma\cdot\pi^{-1}}(1).
$$
Thus
$$
f^{\boldsymbol{\mu}}_{\pi\cdot\sigma\cdot\pi^{-1}}(z)
=\frac{(\mu^{\pi(a_{p+1})-3}-1)z}{\mu^{\pi(a_{p+1})-3}(z-1)}.
$$

Now we have
\begin{align*}
\chi_{\boldsymbol{\lambda_0}}(g_\sigma)
=&\chi_{\boldsymbol{\mu_0}}(g_{\pi\cdot\sigma\cdot\pi^{-1}})
=\mathbf{tr}\boldsymbol{J_{\pi\cdot\sigma\cdot\pi^{-1}}}\bigg|_{\boldsymbol{\mu_0}}\\
=&\sum_{\substack{\pi\cdot\sigma\cdot\pi^{-1}(k+3)\\=1,\:2,\:3\:\mathrm{or}\:k+3}}
\frac{\partial g^{(k)}_{\pi\cdot\sigma\cdot\pi^{-1}}({\boldsymbol{\mu}})}{\partial \mu^{k}}
\bigg|_{\boldsymbol{\mu_0}}
=\frac{\partial g^{(\pi(a_{p+1})-3)}_{\pi\cdot\sigma\cdot\pi^{-1}}({\boldsymbol{\mu}})}
{\partial \mu^{\pi(a_{p+1})-3}}\bigg|_{\boldsymbol{\mu_0}}\\
=&\frac{\partial f^{\boldsymbol{\mu}}_{\pi\cdot\sigma\cdot\pi^{-1}}(z^{\boldsymbol{\mu}}_{\pi\cdot\sigma^{-1}\cdot\pi^{-1}(\pi(a_{p+1}))})}
{\partial \mu^{\pi(a_{p+1})-3}}\bigg|_{\boldsymbol{\mu_0}}
=\frac{\partial f^{\boldsymbol{\mu}}_{\pi\cdot\sigma\cdot\pi^{-1}}(z^{\boldsymbol{\mu}}_{\pi(a_p)})}
{\partial \mu^{\pi(a_{p+1})-3}}\bigg|_{\boldsymbol{\mu_0}}.
\end{align*}

Set $w=e^{\frac{2\pi}{p}i}$. Thus $\psi\circ f^{\boldsymbol{\lambda_0}}_\sigma\circ\psi^{-1}(z)=w^qz$.
\begin{enumerate}
\item If $p=3$, we have
$$
\psi(z^{\boldsymbol{\lambda_0}}_{a_1})=0,
$$
$$
\psi(z^{\boldsymbol{\lambda_0}}_{a_3})
=\psi\circ f^{\boldsymbol{\lambda_0}}_\sigma\circ\psi^{-1}
(\psi(z^{\boldsymbol{\lambda_0}}_{a_2}))
=w^q\psi(z^{\boldsymbol{\lambda_0}}_{a_2}),
$$
$$
\psi(z^{\boldsymbol{\lambda_0}}_{a_4})
=\psi\circ f^{\boldsymbol{\lambda_0}}_\sigma\circ\psi^{-1}
(\psi(z^{\boldsymbol{\lambda_0}}_{a_3}))
=w^{2q}\psi(z^{\boldsymbol{\lambda_0}}_{a_2}).
$$
Thus
\begin{align*}
\mu_0^{\pi(a_4)-3}
=&[0,\:1,\:\mu_0^{\pi(a_4)-3},\:\infty]\\
=&[\varphi\circ\psi(z^{\boldsymbol{\lambda_0}}_{a_1}),\:\varphi\circ\psi(z^{\boldsymbol{\lambda_0}}_{a_2}),\:\varphi\circ\psi(z^{\boldsymbol{\lambda_0}}_{a_4}),\:\varphi\circ\psi(z^{\boldsymbol{\lambda_0}}_{a_3})]\\
=&[\psi(z^{\boldsymbol{\lambda_0}}_{a_1}),\:\psi(z^{\boldsymbol{\lambda_0}}_{a_2}),\:\psi(z^{\boldsymbol{\lambda_0}}_{a_4}),\:\psi(z^{\boldsymbol{\lambda_0}}_{a_3})]\\
=&[0,\:1,\:w^{2q},\:w^{q}]\\
=&-w^q.
\end{align*}
\begin{align*}
\chi_{\boldsymbol{\lambda_0}}(g_\sigma)
=&\frac{\partial f^{\boldsymbol{\mu}}_{\pi\cdot\sigma\cdot\pi^{-1}}(z^{\boldsymbol{\mu}}_{\pi(a_3)})}
{\partial \mu^{\pi(a_4)-3}}\bigg|_{\boldsymbol{\mu_0}}
=\frac{\partial f^{\boldsymbol{\mu}}_{\pi\cdot\sigma\cdot\pi^{-1}}(z^{\boldsymbol{\mu}}_3)}
{\partial \mu^{\pi(a_4)-3}}\bigg|_{\boldsymbol{\mu_0}}\\
=&\bigg(\frac{\partial}{\partial \mu^{\pi(a_4)-3}}\bigg(
\frac{\mu^{\pi(a_4)-3}-1}{\mu_0^{\pi(a_4)-3}}
\bigg)\bigg)\bigg|_{\boldsymbol{\mu_0}}
=\frac{1}{(\mu^{\pi(a_4)-3})^2}\bigg|_{\boldsymbol{\mu_0}}
=w^{-2q}.
\end{align*}
As $p=3$, we have
$$
w^{-2q}=-1-w^{-q}
$$
for $q=1$ and $2$.
\item If $p\ge4$, we have
$$
\psi(z^{\boldsymbol{\lambda_0}}_{a_1})=0,
$$
$$
\psi(z^{\boldsymbol{\lambda_0}}_{a_{p+1}})
=\psi\circ f^{\boldsymbol{\lambda_0}}_\sigma\circ\psi^{-1}
(\psi(z^{\boldsymbol{\lambda_0}}_{a_p}))
=w^q\psi(z^{\boldsymbol{\lambda_0}}_{a_p}),
$$
$$
\psi(z^{\boldsymbol{\lambda_0}}_{a_2})
=\psi\circ f^{\boldsymbol{\lambda_0}}_\sigma\circ\psi^{-1}
(\psi(z^{\boldsymbol{\lambda_0}}_{a_{p+1}}))
=w^{2q}\psi(z^{\boldsymbol{\lambda_0}}_{a_p}),
$$
$$
\psi(z^{\boldsymbol{\lambda_0}}_{a_3})
=\psi\circ f^{\boldsymbol{\lambda_0}}_\sigma\circ\psi^{-1}
(\psi(z^{\boldsymbol{\lambda_0}}_{a_2}))
=w^{3q}\psi(z^{\boldsymbol{\lambda_0}}_{a_p}).
$$
Thus
\begin{align*}
\mu_0^{\pi(a_{p+1})-3}
=&[0,\:1,\:\mu_0^{\pi(a_{p+1})-3},\:\infty]\\
=&[\varphi\circ\psi(z^{\boldsymbol{\lambda_0}}_{a_1}),\:\varphi\circ\psi(z^{\boldsymbol{\lambda_0}}_{a_2}),\:\varphi\circ\psi(z^{\boldsymbol{\lambda_0}}_{a_{p+1}}),\:\varphi\circ\psi(z^{\boldsymbol{\lambda_0}}_{a_3})]\\
=&[\psi(z^{\boldsymbol{\lambda_0}}_{a_1}),\:\psi(z^{\boldsymbol{\lambda_0}}_{a_2}),\:\psi(z^{\boldsymbol{\lambda_0}}_{a_{p+1}}),\:\psi(z^{\boldsymbol{\lambda_0}}_{a_3})]\\
=&[0,\:w^{2q},\:w^q,\:w^{3q}]\\
=&\frac{1}{1+w^q}
\end{align*}
and
\begin{align*}
\mu_0^{\pi(a_p)-3}
=&[0,\:1,\:\mu_0^{\pi(a_p)-3},\:\infty]\\
=&[\varphi\circ\psi(z^{\boldsymbol{\lambda_0}}_{a_1}),\:\varphi\circ\psi(z^{\boldsymbol{\lambda_0}}_{a_2}),\:\varphi\circ\psi(z^{\boldsymbol{\lambda_0}}_{a_p}),\:\varphi\circ\psi(z^{\boldsymbol{\lambda_0}}_{a_3})]\\
=&[\psi(z^{\boldsymbol{\lambda_0}}_{a_1}),\:\psi(z^{\boldsymbol{\lambda_0}}_{a_2}),\:\psi(z^{\boldsymbol{\lambda_0}}_{a_p}),\:\psi(z^{\boldsymbol{\lambda_0}}_{a_3})]\\
=&[0,\:w^{2q},\:1,\:w^{3q}]\\
=&\frac{1}{1+w^q+w^{2q}}.
\end{align*}

\begin{align*}
\chi_{\boldsymbol{\lambda_0}}(g_\sigma)
=&\frac{\partial f^{\boldsymbol{\mu}}_{\pi\cdot\sigma\cdot\pi^{-1}}(z^{\boldsymbol{\mu}}_{\pi(a_p)})}
{\partial \mu^{\pi(a_{p+1})-3}}\bigg|_{\boldsymbol{\mu_0}}
=\frac{\partial f^{\boldsymbol{\mu}}_{\pi\cdot\sigma\cdot\pi^{-1}}(\mu^{\pi(a_p)-3})}
{\partial \mu^{\pi(a_{p+1})-3}}\bigg|_{\boldsymbol{\mu_0}}\\
=&\bigg(\frac{\partial}{\partial \mu^{\pi(a_{p+1})-3}}\bigg(
\frac{(\mu^{\pi(a_{p+1})-3}-1)\mu^{\pi(a_p)-3}}
{\mu^{\pi(a_{p+1})-3}(\mu^{\pi(a_p)-3}-1)}
\bigg)\bigg)\bigg|_{\boldsymbol{\mu_0}}\\
=&\frac{\mu^{\pi(a_p)-3}}
{(\mu^{\pi(a_{p+1})-3})^2(\mu^{\pi(a_p)-3}-1)}
\bigg|_{\boldsymbol{\mu_0}}
=-1-w^{-q}.
\end{align*}
\end{enumerate}
\end{proof}

\subsubsection{Characters of Elements with No Fixed Point}
\begin{lem}
For any $\boldsymbol{\lambda_0}\in K_n$ and $g_\sigma\in G_{\boldsymbol{\lambda_0}}$, if $\sigma$ has no fixed point and is of order two, then $\chi_{\boldsymbol{\lambda_0}}(g_{\sigma})=1$.
\end{lem}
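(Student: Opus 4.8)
The plan is to follow exactly the normalization-and-differentiate recipe used for the previous three character computations, feeding Lemma~\ref{relation} and Lemma~\ref{tec}. The one point requiring thought is the choice of normalization: since $\sigma$ has no fixed point, $f^{\boldsymbol{\lambda_0}}_\sigma$ has neither of its two fixed points in $[\boldsymbol{\lambda_0}]$, so we must \emph{not} conjugate $f^{\boldsymbol{\lambda_0}}_\sigma$ into the form $z\mapsto -z$ or $z\mapsto 1/z$, because that would force one of the standard base points $0,1,\infty$ to be a fixed point of the normalized transformation, hence to lie outside the normalized point set, which is impossible. Instead the base points will be drawn from two \emph{different} transpositions of $\sigma$.

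First observe that $\sigma$, being fixed-point-free of order two on $n$ letters, is a product of $n/2$ disjoint transpositions; in particular $n$ is even, so (as $n\ge5$) $n\ge6$ and there are at least two transpositions, say $(a\ b)$ and $(c\ d)$ with $a,b,c,d$ pairwise distinct. Let $\varphi$ be the linear fractional transformation with $\varphi(z^{\boldsymbol{\lambda_0}}_a)=0$, $\varphi(z^{\boldsymbol{\lambda_0}}_c)=1$, $\varphi(z^{\boldsymbol{\lambda_0}}_b)=\infty$, and pick $\boldsymbol{\mu_0}\in K_n$ with $[\boldsymbol{\mu_0}]=\varphi([\boldsymbol{\lambda_0}])$. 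By Lemma~\ref{relation} there is a unique $\pi\in S_n$ with $\varphi=f^{\boldsymbol{\lambda_0}}_\pi$ and $\chi_{\boldsymbol{\mu_0}}(g_{\pi\cdot\sigma\cdot\pi^{-1}})=\chi_{\boldsymbol{\lambda_0}}(g_\sigma)$; here $\pi(a)=1$, $\pi(c)=2$, $\pi(b)=3$ and $\pi(d)>3$. Writing $\tau=\pi\cdot\sigma\cdot\pi^{-1}$, one has $\tau(1)=3$, $\tau(3)=1$, $\tau(2)=\pi(d)$, $\tau(\pi(d))=2$, and the three defining conditions for $f^{\boldsymbol{\mu}}_\tau$ give $f^{\boldsymbol{\mu}}_\tau(\infty)=0$, $f^{\boldsymbol{\mu}}_\tau(\mu^{\pi(d)-3})=1$, $f^{\boldsymbol{\mu}}_\tau(0)=\infty$, hence $f^{\boldsymbol{\mu}}_\tau(z)=\mu^{\pi(d)-3}/z$.

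Now I would apply Lemma~\ref{tec} to $\tau$. Since $\tau$ is fixed-point-free and $\tau^{-1}(\{1,2,3\})=\{1,3,\pi(d)\}$, the only $k\ge1$ with $\tau(k+3)\in\{1,2,3\}$ or $\tau(k+3)=k+3$ is $k=\pi(d)-3$, so $\chi_{\boldsymbol{\lambda_0}}(g_\sigma)=\dfrac{\partial g^{(\pi(d)-3)}_\tau(\boldsymbol{\mu})}{\partial\mu^{\pi(d)-3}}\Big|_{\boldsymbol{\mu_0}}$. Finally $g^{(\pi(d)-3)}_\tau(\boldsymbol{\mu})=f^{\boldsymbol{\mu}}_\tau(z^{\boldsymbol{\mu}}_{\tau^{-1}(\pi(d))})=f^{\boldsymbol{\mu}}_\tau(z^{\boldsymbol{\mu}}_2)=f^{\boldsymbol{\mu}}_\tau(1)=\mu^{\pi(d)-3}$, which is the coordinate function $\mu^{\pi(d)-3}$ itself, so the partial derivative is identically $1$ and $\chi_{\boldsymbol{\lambda_0}}(g_\sigma)=1$. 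I expect no serious obstacle here: the only delicate steps are the choice of normalization just explained and the routine $\tau^{-1}$ bookkeeping when reading off the single relevant index and the value of $g^{(\pi(d)-3)}_\tau$, both of which parallel the corresponding steps in the two-fixed-point and one-fixed-point lemmas.
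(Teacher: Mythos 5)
Your proposal is correct and follows essentially the same route as the paper: both normalize so that $0$ and $\infty$ are the two points of one transposition of $\sigma$ and $1$ is a point of a second transposition, deduce $f^{\boldsymbol{\mu}}_{\pi\cdot\sigma\cdot\pi^{-1}}(z)=\mu^{\pi(d)-3}/z$, and read off the single surviving diagonal entry $\partial\mu^{\pi(d)-3}/\partial\mu^{\pi(d)-3}=1$ via Lemma \ref{tec}. The only difference is cosmetic labelling of the transpositions, plus your (correct, and slightly more careful than the paper's) remark that fixed-point-freeness forces $n$ even, hence $n\ge 6$, guaranteeing the four distinct indices exist.
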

\begin{proof}
Suppose that
$$
\sigma(a)=c,\:\sigma(c)=a,\:\sigma(d)=b,\:\sigma(b)=d
$$
where $a,\:b,\:c$ and $d$ are four different integers. Let $\varphi$ be the linear fractional transformation such that
$$
\varphi(z^{\boldsymbol{\lambda_0}}_a)=0,\:\varphi(z^{\boldsymbol{\lambda_0}}_b)=1,\:\varphi(z^{\boldsymbol{\lambda_0}}_c)=\infty
$$
and $\boldsymbol{\mu_0}$ an element of $K_n$ such that
$$
[\boldsymbol{\mu_0}]=\varphi([\boldsymbol{\lambda_0}]).
$$
From Lemma \ref{relation} we know that there exists a unique element $\pi\in S_n$ such that $\varphi=f^{\boldsymbol {\lambda_0}}_\pi$ and
$$
G_{\boldsymbol{\mu_0}}=g_\pi G_{\boldsymbol{\lambda_0}}g_{\pi^{-1}},
\:\:\chi_{\boldsymbol{\mu_0}}(g_{\pi\cdot\sigma\cdot\pi^{-1}})=\chi_{\boldsymbol{\lambda_0}}(g_{\sigma}).
$$

Note that
$$
\pi(a)=1,\:\pi(b)=2,\:\pi(c)=3,\:\pi(d)>3.
$$
Thus
$$
\pi\cdot\sigma^{-1}\cdot\pi^{-1}(1)=3,\:
\pi\cdot\sigma^{-1}\cdot\pi^{-1}(2)=\pi(d),\:
\pi\cdot\sigma^{-1}\cdot\pi^{-1}(3)=1
$$
and
$$
\{k\in\mathbb{N}^*\big|\pi\cdot\sigma\cdot\pi^{-1}(k+3)=1,\:2,\:3\:\:\mathrm{or}\:\:k+3\}=\{\pi(d)-3\}.
$$

For any $\boldsymbol{\mu}\in K_n$, by the definition of $f^{\boldsymbol{\mu}}_{\pi\cdot\sigma\cdot\pi^{-1}}$ we have
$$
0=f^{\boldsymbol{\mu}}_{\pi\cdot\sigma\cdot\pi^{-1}}(z^{\boldsymbol{\mu}}_{\pi\cdot\sigma^{-1}\cdot\pi^{-1}(1)})
=f^{\boldsymbol{\mu}}_{\pi\cdot\sigma\cdot\pi^{-1}}(z^{\boldsymbol{\mu}}_3)
=f^{\boldsymbol{\mu}}_{\pi\cdot\sigma\cdot\pi^{-1}}(\infty),
$$
$$
1=f^{\boldsymbol{\mu}}_{\pi\cdot\sigma\cdot\pi^{-1}}(z^{\boldsymbol{\mu}}_{\pi\cdot\sigma^{-1}\cdot\pi^{-1}(2)})
=f^{\boldsymbol{\mu}}_{\pi\cdot\sigma\cdot\pi^{-1}}(z^{\boldsymbol{\mu}}_{\pi(d)})
=f^{\boldsymbol{\mu}}_{\pi\cdot\sigma\cdot\pi^{-1}}(\mu^{\pi(d)-3}),
$$
$$
\infty=f^{\boldsymbol{\mu}}_{\pi\cdot\sigma\cdot\pi^{-1}}(z^{\boldsymbol{\mu}}_{\pi\cdot\sigma^{-1}\cdot\pi^{-1}(3)})
=f^{\boldsymbol{\mu}}_{\pi\cdot\sigma\cdot\pi^{-1}}(z^{\boldsymbol{\mu}}_1)
=f^{\boldsymbol{\mu}}_{\pi\cdot\sigma\cdot\pi^{-1}}(0).
$$
Thus
$$
f^{\boldsymbol{\mu}}_{\pi\cdot\sigma\cdot\pi^{-1}}(z)
=\frac{\mu^{\pi(d)-3}}{z}.
$$

\begin{align*}
\chi_{\boldsymbol{\lambda_0}}(g_\sigma)
=&\chi_{\boldsymbol{\mu_0}}(g_{\pi\cdot\sigma\cdot\pi^{-1}})
=\mathbf{tr}\boldsymbol{J_{\pi\cdot\sigma\cdot\pi^{-1}}}\bigg|_{\boldsymbol{\mu_0}}\\
=&\sum_{\substack{\pi\cdot\sigma\cdot\pi^{-1}(k+3)\\=1,\:2,\:3\:\mathrm{or}\:k+3}}
\frac{\partial g^{(k)}_{\pi\cdot\sigma\cdot\pi^{-1}}({\boldsymbol{\mu}})}{\partial \mu^{k}}
\bigg|_{\boldsymbol{\mu_0}}
=\frac{\partial g^{(\pi(d)-3)}_{\pi\cdot\sigma\cdot\pi^{-1}}({\boldsymbol{\mu}})}
{\partial \mu^{\pi(d)-3}}\bigg|_{\boldsymbol{\mu_0}}\\
=&\frac{\partial f^{\boldsymbol{\mu}}_{\pi\cdot\sigma\cdot\pi^{-1}}(z^{\boldsymbol{\mu}}_{\pi\cdot\sigma^{-1}\cdot\pi^{-1}(\pi(d))})}
{\partial \mu^{\pi(d)-3}}\bigg|_{\boldsymbol{\mu_0}}
=\frac{\partial f^{\boldsymbol{\mu}}_{\pi\cdot\sigma\cdot\pi^{-1}}(z^{\boldsymbol{\mu}}_2)}
{\partial \mu^{\pi(d)-3}}\bigg|_{\boldsymbol{\mu_0}}\\
=&\frac{\partial \mu^{\pi(d)-3}}{\partial \mu^{\pi(d)-3}}\bigg|_{\boldsymbol{\mu_0}}
=1.
\end{align*}
\end{proof}

\begin{lem}
For any $\boldsymbol{\lambda_0}\in K_n$ and $g_\sigma\in G_{\boldsymbol{\lambda_0}}$, if $\sigma$ has no fixed point and is of order three, then $\chi_{\boldsymbol{\lambda_0}}(g_{\sigma})=0$.
\end{lem}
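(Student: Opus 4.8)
The plan is to reuse the conjugation device of Lemma \ref{relation} together with the trace formula of Lemma \ref{tec}, exactly as in the preceding lemmas, but to take advantage of the fact that a fixed-point-free permutation of order three is a product of disjoint $3$-cycles. The decisive point is that a single $3$-cycle of $\sigma$ has exactly three entries and can therefore be sent bijectively onto the three distinguished indices $1,2,3$; this is in contrast with the order-two case, where a $2$-cycle can only fill two of the three slots, and that is why the previous lemma produced $\chi=1$ rather than $0$.

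First I would pick a $3$-cycle $(a,\,b,\,c)$ of $\sigma$, so that $\sigma(a)=b$, $\sigma(b)=c$, $\sigma(c)=a$ for three distinct indices, and let $\varphi$ be the unique linear fractional transformation with $\varphi(z^{\boldsymbol{\lambda_0}}_a)=0$, $\varphi(z^{\boldsymbol{\lambda_0}}_b)=1$, $\varphi(z^{\boldsymbol{\lambda_0}}_c)=\infty$. Since $\varphi([\boldsymbol{\lambda_0}])$ is a set of $n$ distinct points containing $0,1,\infty$, I may choose $\boldsymbol{\mu_0}\in K_n$ with $[\boldsymbol{\mu_0}]=\varphi([\boldsymbol{\lambda_0}])$, and Lemma \ref{relation} then supplies a unique $\pi\in S_n$ with $\varphi=f^{\boldsymbol{\lambda_0}}_\pi$, $G_{\boldsymbol{\mu_0}}=g_\pi G_{\boldsymbol{\lambda_0}}g_{\pi^{-1}}$ and $\chi_{\boldsymbol{\mu_0}}(g_{\pi\cdot\sigma\cdot\pi^{-1}})=\chi_{\boldsymbol{\lambda_0}}(g_\sigma)$. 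Thus it suffices to compute $\chi_{\boldsymbol{\mu_0}}(g_\tau)$ with $\tau:=\pi\cdot\sigma\cdot\pi^{-1}$.

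Next I would observe that injectivity of $\varphi$ on $[\boldsymbol{\lambda_0}]$ forces $\pi(a)=1$, $\pi(b)=2$, $\pi(c)=3$, hence $\tau(1)=2$, $\tau(2)=3$, $\tau(3)=1$; in particular $\tau$ maps $\{1,2,3\}$ onto itself, and therefore also $\{4,\dots,n\}$ onto itself. Because $\tau$ is conjugate to $\sigma$ it has the same cycle type, so it is still fixed-point-free. Consequently, for every $k\in\{1,\dots,n-3\}$ the index $\tau(k+3)$ lies in $\{4,\dots,n\}\setminus\{k+3\}$, so it is never equal to $1$, $2$, $3$ or $k+3$. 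Hence the summation index set in Lemma \ref{tec} for $\boldsymbol{J_\tau}$ is empty, $\mathbf{tr}\,\boldsymbol{J_\tau}\big|_{\boldsymbol{\mu_0}}=0$, and therefore $\chi_{\boldsymbol{\lambda_0}}(g_\sigma)=\chi_{\boldsymbol{\mu_0}}(g_\tau)=0$.

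There is essentially no computational obstacle here; the only thing that needs a moment's care — and it is really the heart of the matter — is the bookkeeping showing that routing a complete $3$-cycle onto $\{0,1,\infty\}$ forces $\tau$ to preserve the block $\{1,2,3\}$, which together with fixed-point-freeness makes the trace formula collapse to an empty sum. Alternatively one could diagonalize $f^{\boldsymbol{\lambda_0}}_\sigma$ to $z\mapsto e^{2\pi i/3}z$ and compute via cross-ratios as in the one-fixed-point theorem, but the direct choice of $\varphi$ above bypasses that calculation entirely.
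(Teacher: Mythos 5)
Your proposal is correct and follows essentially the same route as the paper: conjugate via the Möbius map sending one $3$-cycle of $\sigma$ onto $(0,1,\infty)$, note that the resulting $\pi\cdot\sigma\cdot\pi^{-1}$ permutes $\{1,2,3\}$ cyclically and hence stabilizes $\{4,\dots,n\}$ without fixed points, and conclude from Lemma \ref{tec} that the trace is an empty sum. The added remark explaining \emph{why} the full $3$-cycle can be routed onto all three distinguished slots (unlike the order-two case) is a nice piece of motivation the paper leaves implicit, but the underlying argument is identical.
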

\begin{proof}
Suppose that
$$
\sigma(a)=b,\:\sigma(b)=c,\:\sigma(c)=a
$$
where $a,\:b$ and $c$ are three different integers. Let $\varphi$ be the linear fractional transformation such that
$$
\varphi(z^{\boldsymbol{\lambda_0}}_a)=0,\:\varphi(z^{\boldsymbol{\lambda_0}}_b)=1,\:\varphi(z^{\boldsymbol{\lambda_0}}_c)=\infty
$$
and $\boldsymbol{\mu_0}$ an element of $K_n$ such that
$$
[\boldsymbol{\mu_0}]=\varphi([\boldsymbol{\lambda_0}]).
$$
From Lemma \ref{relation} we know that there exists a unique element $\pi\in S_n$ such that $\varphi=f^{\boldsymbol {\lambda_0}}_\pi$ and
$$
\chi_{\boldsymbol{\mu_0}}(g_{\pi\cdot\sigma\cdot\pi^{-1}})=\chi_{\boldsymbol{\lambda_0}}(g_{\sigma}).
$$

Note that
$$
\pi(a)=1,\:\pi(b)=2,\:\pi(c)=3
$$
and
$$
\pi\cdot\sigma^{-1}\cdot\pi^{-1}(1)=3,\:
\pi\cdot\sigma^{-1}\cdot\pi^{-1}(2)=1,\:
\pi\cdot\sigma^{-1}\cdot\pi^{-1}(3)=2.
$$
Thus
$$
\{k\in\mathbb{N}^*\big|\pi\cdot\sigma\cdot\pi^{-1}(k+3)=1,\:2,\:3\:\:\mathrm{or}\:\:k+3\}=\emptyset
$$
and
$$
\chi_{\boldsymbol{\lambda_0}}(g_\sigma)
=\chi_{\boldsymbol{\mu_0}}(g_{\pi\cdot\sigma\cdot\pi^{-1}})
=\mathbf{tr}\boldsymbol{J_{\pi\cdot\sigma\cdot\pi^{-1}}}\bigg|_{\boldsymbol{\mu_0}}
=\sum_{\substack{\pi\cdot\sigma\cdot\pi^{-1}(k+3)\\=1,\:2,\:3\:\mathrm{or}\:k+3}}
\frac{\partial g^{(k)}_{\pi\cdot\sigma\cdot\pi^{-1}}({\boldsymbol{\mu}})}{\partial \mu^{k}}
\bigg|_{\boldsymbol{\mu_0}}
=0.
$$
\end{proof}

\begin{thm}
For any $\boldsymbol{\lambda_0}\in K_n$ and $g_\sigma\in G_{\boldsymbol{\lambda_0}}$, if $\sigma$ has no fixed point and $f^{\boldsymbol{\lambda_0}}_\sigma$ is conjugate to a rotation of $\frac{2q\pi}{p}$ ($p,\:q$ are co-prime positive integers), then
$$
\chi_{\boldsymbol{\lambda_0}}(g_{\sigma})=-1-2\cos{\frac{2q\pi}{p}}.
$$
\end{thm}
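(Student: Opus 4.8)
The plan is to follow the template of the preceding three lemmas and the previous theorem, reducing the computation of $\chi_{\boldsymbol{\lambda_0}}(g_\sigma)$ to a one-variable derivative evaluated at a conveniently normalized point $\boldsymbol{\mu_0}$.

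First I would dispose of the small orders. Since $f^{\boldsymbol{\lambda_0}}_\sigma$ has the same order as $\sigma$ by the Remark, write $p$ for this common order; then $p\ge2$ because $g_\sigma\ne\mathrm{Id}$. If $p=2$ this is the lemma on fixed-point-free involutions, and $-1-2\cos\pi=1$. If $p=3$, note that after conjugating $f^{\boldsymbol{\lambda_0}}_\sigma$ to the rotation $z\mapsto e^{2q\pi i/p}z$ every orbit of a non-fixed point has exactly $p$ elements, so $\sigma$ acts on $[\boldsymbol{\lambda_0}]$ as a product of disjoint $3$-cycles; the lemma on fixed-point-free order-three elements gives $\chi_{\boldsymbol{\lambda_0}}(g_\sigma)=0=-1-2\cos\frac{2q\pi}{3}$ for $q=1,2$. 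So from now on assume $p\ge4$; the same orbit-counting shows $\sigma$ restricted to $[\boldsymbol{\lambda_0}]$ is a product of disjoint $p$-cycles, and I fix one of them, say $(a_1\ a_2\ \cdots\ a_p)$ with $\sigma(a_i)=a_{i+1}$ (indices modulo $p$).

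Next I set up the conjugation. Let $\psi$ be a linear fractional transformation with $\psi\circ f^{\boldsymbol{\lambda_0}}_\sigma\circ\psi^{-1}(z)=w^qz$, where $w=e^{\frac{2\pi}{p}i}$; since $\sigma$ has no fixed point, none of the $z^{\boldsymbol{\lambda_0}}_{a_i}$ is fixed by $f^{\boldsymbol{\lambda_0}}_\sigma$, so $\psi(z^{\boldsymbol{\lambda_0}}_{a_1})\ne0,\infty$ and $\psi(z^{\boldsymbol{\lambda_0}}_{a_i})=w^{(i-1)q}\psi(z^{\boldsymbol{\lambda_0}}_{a_1})$ for all $i$. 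Choose $\varphi$ with $\varphi\circ\psi(z^{\boldsymbol{\lambda_0}}_{a_1})=0$, $\varphi\circ\psi(z^{\boldsymbol{\lambda_0}}_{a_2})=1$, $\varphi\circ\psi(z^{\boldsymbol{\lambda_0}}_{a_3})=\infty$, and pick $\boldsymbol{\mu_0}\in K_n$ with $[\boldsymbol{\mu_0}]=\varphi\circ\psi([\boldsymbol{\lambda_0}])$. By Lemma \ref{relation} there is a unique $\pi\in S_n$ with $\varphi\circ\psi=f^{\boldsymbol{\lambda_0}}_\pi$, $\boldsymbol{\mu_0}=g_\pi(\boldsymbol{\lambda_0})$, $G_{\boldsymbol{\mu_0}}=g_\pi G_{\boldsymbol{\lambda_0}}g_{\pi^{-1}}$, and $\chi_{\boldsymbol{\mu_0}}(g_{\pi\cdot\sigma\cdot\pi^{-1}})=\chi_{\boldsymbol{\lambda_0}}(g_\sigma)$; moreover $\pi(a_1)=1$, $\pi(a_2)=2$, $\pi(a_3)=3$, while $\pi(a_p)>3$ and $\pi(a_{p-1})\ge3$ with equality exactly when $p=4$. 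Since conjugation preserves cycle type, $\pi\cdot\sigma\cdot\pi^{-1}$ is fixed-point-free and its only preimages of $\{1,2,3\}$ are $\pi(a_p),1,2$; hence by Lemma \ref{tec} the only index $k$ contributing to $\mathbf{tr}\,\boldsymbol{J_{\pi\cdot\sigma\cdot\pi^{-1}}}$ is $k=\pi(a_p)-3$, so $\chi_{\boldsymbol{\lambda_0}}(g_\sigma)=\dfrac{\partial g^{(\pi(a_p)-3)}_{\pi\cdot\sigma\cdot\pi^{-1}}(\boldsymbol{\mu})}{\partial\mu^{\pi(a_p)-3}}\Big|_{\boldsymbol{\mu_0}}$.

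Finally I would make this explicit. Interpolating the normalizing values $f^{\boldsymbol{\mu}}_{\pi\cdot\sigma\cdot\pi^{-1}}(\mu^{\pi(a_p)-3})=0$, $f^{\boldsymbol{\mu}}_{\pi\cdot\sigma\cdot\pi^{-1}}(0)=1$, $f^{\boldsymbol{\mu}}_{\pi\cdot\sigma\cdot\pi^{-1}}(1)=\infty$ gives $f^{\boldsymbol{\mu}}_{\pi\cdot\sigma\cdot\pi^{-1}}(z)=\dfrac{z-\mu^{\pi(a_p)-3}}{\mu^{\pi(a_p)-3}(z-1)}$, and $g^{(\pi(a_p)-3)}_{\pi\cdot\sigma\cdot\pi^{-1}}(\boldsymbol{\mu})=f^{\boldsymbol{\mu}}_{\pi\cdot\sigma\cdot\pi^{-1}}(z^{\boldsymbol{\mu}}_{\pi(a_{p-1})})$. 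Using cross-ratios and the scaling invariance of the cross-ratio one computes $z^{\boldsymbol{\mu_0}}_{\pi(a_i)}=[1,w^q,w^{(i-1)q},w^{2q}]$, which simplifies (via $w^{pq}=1$ and $1+w^q+\cdots+w^{(p-1)q}=0$) to $\mu_0^{\pi(a_p)-3}=\dfrac{-w^q}{1+w^q+w^{2q}}$ and $\mu_0^{\pi(a_{p-1})-3}=\dfrac{-w^q}{1+w^{2q}}$, the latter to be read as $\infty$ when $p=4$. Differentiating with respect to $\mu^{\pi(a_p)-3}$ (treating $\mu^{\pi(a_{p-1})-3}$ as independent, which is legitimate since $\pi(a_{p-1})\ne\pi(a_p)$) and substituting yields $\chi_{\boldsymbol{\lambda_0}}(g_\sigma)=-1-w^q-w^{-q}=-1-2\cos\frac{2q\pi}{p}$.

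I expect the main obstacle to be organizational rather than conceptual: keeping the labels $a_i$ and $\pi(a_i)$ straight, and proving rigorously that no term other than $k=\pi(a_p)-3$ survives — this genuinely uses that $\pi\cdot\sigma\cdot\pi^{-1}$ is fixed-point-free to kill the potential $\pi\cdot\sigma\cdot\pi^{-1}(k+3)=k+3$ contributions. One also has to treat the degenerate case $p=4$, where $\pi(a_{p-1})=3$ so the relevant argument is the constant $\infty$ and the derivative is of $1/\mu^{\pi(a_p)-3}$; there one checks directly that the value is still $-1=-1-2\cos\frac{2q\pi}{4}$. The cross-ratio simplifications themselves are routine but must be carried out with care using the cyclotomic identities for $w^q$.
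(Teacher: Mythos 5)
Your proposal is correct and follows essentially the same route as the paper's proof: reduce $p=2,3$ to the preceding lemmas, normalize a chosen $p$-cycle so that $z_{a_1},z_{a_2},z_{a_3}\mapsto 0,1,\infty$, apply Lemmas \ref{tec} and \ref{relation} to isolate the single surviving diagonal entry $k=\pi(a_p)-3$, and evaluate it via the cross-ratios $[1,w^q,w^{-q},w^{2q}]$ and $[1,w^q,w^{-2q},w^{2q}]$, with the degenerate case $p=4$ (where $a_{p-1}=a_3$) handled separately. The only cosmetic difference is that you factor the normalizing map as $\varphi\circ\psi$ while the paper introduces $\varphi$ and $\psi$ independently and equates the cross-ratios by Möbius invariance; the computations are identical.
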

\begin{proof}
Notice that $\sigma$ is of order $p$ as $f^{\boldsymbol{\lambda_0}}_\sigma$ is of order $p$. Thus from the previous lemmas it is obvious that the result holds when $p=2$ and $3$. Next assume that $p\ge4$. Suppose that
$$
\sigma(a_1)=a_2,\:\sigma(a_2)=a_3,\:\cdots,\:\sigma(a_p)=a_1
$$
where $a_1,\:a_2,\:\cdots,\:a_p$ are $p$ different integers. Let $\varphi$ be the linear fractional transformation such that
$$
\varphi(z^{\boldsymbol{\lambda_0}}_{a_1})=0,\:\varphi(z^{\boldsymbol{\lambda_0}}_{a_2})=1,\:\varphi(z^{\boldsymbol{\lambda_0}}_{a_3})=\infty
$$
and $\boldsymbol{\mu_0}$ an element of $K_n$ such that
$$
[\boldsymbol{\mu_0}]=\varphi([\boldsymbol{\lambda_0}].
$$
From Lemma \ref{relation} we know that there exists a unique element $\pi\in S_n$ such that $\varphi=f^{\boldsymbol {\lambda_0}}_\pi$ and
$$
\boldsymbol {\mu_0}=g_\pi(\boldsymbol{\lambda_0}),
\:\:G_{\boldsymbol{\mu_0}}=g_\pi G_{\boldsymbol{\lambda_0}}g_{\pi^{-1}},
\:\:\chi_{\boldsymbol{\mu_0}}(g_{\pi\cdot\sigma\cdot\pi^{-1}})=\chi_{\boldsymbol{\lambda_0}}(g_{\sigma}).
$$

Note that
$$
\pi(a_1)=1,\:\pi(a_2)=2,\:\pi(a_3)=3,\:\pi(a_p)>3.
$$
Thus
$$
\pi\cdot\sigma^{-1}\cdot\pi^{-1}(1)=\pi(a_p),\:
\pi\cdot\sigma^{-1}\cdot\pi^{-1}(2)=1,\:
\pi\cdot\sigma^{-1}\cdot\pi^{-1}(3)=2
$$
and
$$
\{k\in\mathbb{N}^*\big|\pi\cdot\sigma\cdot\pi^{-1}(k+3)=1,\:2,\:3\:\:\mathrm{or}\:\:k+3\}=\{\pi(a_p)-3\}.
$$

For any $\boldsymbol{\mu}\in K_n$, by the definition of $f^{\boldsymbol{\mu}}_{\pi\cdot\sigma\cdot\pi^{-1}}$ we have
$$
0=f^{\boldsymbol{\mu}}_{\pi\cdot\sigma\cdot\pi^{-1}}(z^{\boldsymbol{\mu}}_{\pi\cdot\sigma^{-1}\cdot\pi^{-1}(1)})
=f^{\boldsymbol{\mu}}_{\pi\cdot\sigma\cdot\pi^{-1}}(z^{\boldsymbol{\mu}}_{\pi(a_p)})
=f^{\boldsymbol{\mu}}_{\pi\cdot\sigma\cdot\pi^{-1}}(\mu^{\pi(a_p)-3}),
$$
$$
1=f^{\boldsymbol{\mu}}_{\pi\cdot\sigma\cdot\pi^{-1}}(z^{\boldsymbol{\mu}}_{\pi\cdot\sigma^{-1}\cdot\pi^{-1}(2)})
=f^{\boldsymbol{\mu}}_{\pi\cdot\sigma\cdot\pi^{-1}}(z^{\boldsymbol{\mu}}_1)
=f^{\boldsymbol{\mu}}_{\pi\cdot\sigma\cdot\pi^{-1}}(0),
$$
$$
\infty=f^{\boldsymbol{\mu}}_{\pi\cdot\sigma\cdot\pi^{-1}}(z^{\boldsymbol{\mu}}_{\pi\cdot\sigma^{-1}\cdot\pi^{-1}(3)})
=f^{\boldsymbol{\mu}}_{\pi\cdot\sigma\cdot\pi^{-1}}(z^{\boldsymbol{\mu}}_2)
=f^{\boldsymbol{\mu}}_{\pi\cdot\sigma\cdot\pi^{-1}}(1).
$$
Thus
$$
f^{\boldsymbol{\mu}}_{\pi\cdot\sigma\cdot\pi^{-1}}(z)
=\frac{z-\mu^{\pi(a_p)-3}}{\mu^{\pi(a_p)-3}(z-1)}.
$$

\begin{align*}
\chi_{\boldsymbol{\lambda_0}}(g_\sigma)
=&\chi_{\boldsymbol{\mu_0}}(g_{\pi\cdot\sigma\cdot\pi^{-1}})
=\mathbf{tr}\boldsymbol{J_{\pi\cdot\sigma\cdot\pi^{-1}}}\bigg|_{\boldsymbol{\mu_0}}\\
=&\sum_{\substack{\pi\cdot\sigma\cdot\pi^{-1}(k+3)\\=1,\:2,\:3\:\mathrm{or}\:k+3}}
\frac{\partial g^{(k)}_{\pi\cdot\sigma\cdot\pi^{-1}}({\boldsymbol{\mu}})}{\partial \mu^{k}}
\bigg|_{\boldsymbol{\mu_0}}
=\frac{\partial g^{(\pi(a_p)-3)}_{\pi\cdot\sigma\cdot\pi^{-1}}({\boldsymbol{\mu}})}
{\partial \mu^{\pi(a_p)-3}}\bigg|_{\boldsymbol{\mu_0}}\\
=&\frac{\partial f^{\boldsymbol{\mu}}_{\pi\cdot\sigma\cdot\pi^{-1}}(z^{\boldsymbol{\mu}}_{\pi\cdot\sigma^{-1}\cdot\pi^{-1}(\pi(a_p))})}
{\partial \mu^{\pi(a_p)-3}}\bigg|_{\boldsymbol{\mu_0}}
=\frac{\partial f^{\boldsymbol{\mu}}_{\pi\cdot\sigma\cdot\pi^{-1}}(z^{\boldsymbol{\mu}}_{\pi(a_{p-1})})}
{\partial \mu^{\pi(a_p)-3}}\bigg|_{\boldsymbol{\mu_0}}.
\end{align*}

Set
$$
w=e^{\frac{2\pi}{p}i}.
$$
As $f^{\boldsymbol{\lambda_0}}_\sigma$ is conjugate to a rotation of $\frac{2q\pi}{p}$, there exists a linear fractional transformation $\psi$ such that
$$
\psi(z^{\boldsymbol{\lambda_0}}_{a_k})=w^{kq}
$$
for $k=1,\:2,\:\cdots,\:p$.

\begin{enumerate}
\item If $p=4$, we have
\begin{align*}
\mu_0^{\pi(a_4)-3}
=&[0,\:1,\:\mu_0^{\pi(a_4)-3},\:\infty]\\
=&[\varphi(z^{\boldsymbol{\lambda_0}}_{a_1}),\:\varphi(z^{\boldsymbol{\lambda_0}}_{a_2}),\:\varphi(z^{\boldsymbol{\lambda_0}}_{a_4}),\:\varphi(z^{\boldsymbol{\lambda_0}}_{a_3})]\\
=&[\psi(z^{\boldsymbol{\lambda_0}}_{a_1}),\:\psi(z^{\boldsymbol{\lambda_0}}_{a_2}),\:\psi(z^{\boldsymbol{\lambda_0}}_{a_4}),\:\psi(z^{\boldsymbol{\lambda_0}}_{a_3})]\\
=&[1,\:w^q,\:w^{-q},\:w^{2q}]\\
=&\frac{-w^q}{1+w^q+w^{2q}}\\
=&-1.
\end{align*}

\begin{align*}
\chi_{\boldsymbol{\lambda_0}}(g_\sigma)
=&\frac{\partial f^{\boldsymbol{\mu}}_{\pi\cdot\sigma\cdot\pi^{-1}}(z^{\boldsymbol{\mu}}_{\pi(a_3)})}
{\partial \mu^{\pi(a_4)-3}}\bigg|_{\boldsymbol{\mu_0}}
=\frac{\partial f^{\boldsymbol{\mu}}_{\pi\cdot\sigma\cdot\pi^{-1}}(z^{\boldsymbol{\mu}}_3)}
{\partial \mu^{\pi(a_4)-3}}\bigg|_{\boldsymbol{\mu_0}}\\
=&\bigg(\frac{\partial}{\partial \mu^{\pi(a_4)-3}}\bigg(
\frac{1}{\mu_0^{\pi(a_4)-3}}
\bigg)\bigg)\bigg|_{\boldsymbol{\mu_0}}
=\frac{-1}{(\mu_0^{\pi(a_4)-3})^2}\bigg|_{\boldsymbol{\mu_0}}=-1.
\end{align*}
As $p=4$, we have
$$
-1=-1-2\cos{\frac{2q\pi}{p}}
$$
when $p,\:q$ are co-prime positive integers.

\item If $p\ge4$, we have
\begin{align*}
\mu_0^{\pi(a_p)-3}
=&[0,\:1,\:\mu_0^{\pi(a_p)-3},\:\infty]\\
=&[\varphi(z^{\boldsymbol{\lambda_0}}_{a_1}),\:\varphi(z^{\boldsymbol{\lambda_0}}_{a_2}),\:\varphi(z^{\boldsymbol{\lambda_0}}_{a_p}),\:\varphi(z^{\boldsymbol{\lambda_0}}_{a_3})]\\
=&[\psi(z^{\boldsymbol{\lambda_0}}_{a_1}),\:\psi(z^{\boldsymbol{\lambda_0}}_{a_2}),\:\psi(z^{\boldsymbol{\lambda_0}}_{a_p}),\:\psi(z^{\boldsymbol{\lambda_0}}_{a_3})]\\
=&[1,\:w^q,\:w^{-q},\:w^{2q}]\\
=&\frac{-w^q}{1+w^q+w^{2q}}.
\end{align*}
and
\begin{align*}
\mu_0^{\pi(a_{p-1})-3}
=&[0,\:1,\:\mu_0^{\pi(a_{p-1})-3},\:\infty]\\
=&[\varphi(z^{\boldsymbol{\lambda_0}}_{a_1}),\:\varphi(z^{\boldsymbol{\lambda_0}}_{a_2}),\:\varphi(z^{\boldsymbol{\lambda_0}}_{a_{p-1}}),\:\varphi(z^{\boldsymbol{\lambda_0}}_{a_3})]\\
=&[\psi(z^{\boldsymbol{\lambda_0}}_{a_1}),\:\psi(z^{\boldsymbol{\lambda_0}}_{a_2}),\:\psi(z^{\boldsymbol{\lambda_0}}_{a_{p-1}}),\:\psi(z^{\boldsymbol{\lambda_0}}_{a_3})]\\
=&[1,\:w^q,\:w^{-2q},\:w^{2q}]\\
=&\frac{-w^q}{1+w^{2q}}.
\end{align*}
\begin{align*}
\chi_{\boldsymbol{\lambda_0}}(g_\sigma)
=&\frac{\partial f^{\boldsymbol{\mu}}_{\pi\cdot\sigma\cdot\pi^{-1}}(z^{\boldsymbol{\mu}}_{\pi(a_{p-1})})}
{\partial \mu^{\pi(a_p)-3}}\bigg|_{\boldsymbol{\mu_0}}
=\frac{\partial f^{\boldsymbol{\mu}}_{\pi\cdot\sigma\cdot\pi^{-1}}(\mu^{\pi(a_{p-1})-3})}
{\partial \mu^{\pi(a_p)-3}}\bigg|_{\boldsymbol{\mu_0}}\\
=&\bigg(\frac{\partial}{\partial \mu^{\pi(a_p)-3}}\bigg(
\frac{\mu^{\pi(a_{p-1})-3}-\mu^{\pi(a_p)-3}}{\mu^{\pi(a_p)-3}(\mu^{\pi(a_{p-1})-3}-1)}
\bigg)\bigg)\bigg|_{\boldsymbol{\mu_0}}\\
=&\frac{-\mu^{\pi(a_{p-1})-3}}{(\mu^{\pi(a_p)-3})^2(\mu^{\pi(a_{p-1})-3}-1)}
\bigg|_{\boldsymbol{\mu_0}}\\
=&-(1+2\cos{\frac{2q\pi}{p}}).
\end{align*}
\end{enumerate}
\end{proof}

\subsection{Representations of the Icosahedral Group $I$}
\label{A_5prep}

For any $\boldsymbol{\lambda}\in K_n$ such that $G_{\boldsymbol{\lambda}}$ is isomorphic to the icosahedral group $I$, recall from Section \ref{sing} that
$$
\Phi_{\boldsymbol{\lambda}}:\:G_{\boldsymbol{\lambda}}\to\mathcal{A}_{[\boldsymbol{\lambda}]},\:g_\sigma\mapsto f^{\boldsymbol{\lambda}}_\sigma
$$
is a group isomorphism. There are five conjugacy classes of $G_{\boldsymbol{\lambda}}$:
\begin{itemize}
\item $K^{(1)}_{\boldsymbol{\lambda}}$: the identity element;
\item $K^{(2)}_{\boldsymbol{\lambda}}$: elements whose images under $\Phi_{\boldsymbol{\lambda}}$ are conjugate to a rotation of $\frac{2\pi}{5}$;
\item $K^{(3)}_{\boldsymbol{\lambda}}$: elements whose images under $\Phi_{\boldsymbol{\lambda}}$ are conjugate to a rotation of $\frac{4\pi}{5}$;
\item $K^{(4)}_{\boldsymbol{\lambda}}$: elements whose images under $\Phi_{\boldsymbol{\lambda}}$ are conjugate to a rotation of $\frac{2\pi}{3}$;
\item $K^{(5)}_{\boldsymbol{\lambda}}$: elements whose images under $\Phi_{\boldsymbol{\lambda}}$ are conjugate to a rotation of $\pi$.
\end{itemize}
Table \ref{chaA_5} is the character table of $G_{\boldsymbol{\lambda}}$, with $X^{(1)}_{\boldsymbol{\lambda}},\: X^{(2)}_{\boldsymbol{\lambda}},\:\cdots,\:X^{(5)}_{\boldsymbol{\lambda}}$ representing the five different irreducible representations of $G_{\boldsymbol{\lambda}}$ and $\chi^{(1)}_{\boldsymbol{\lambda}},\: \chi^{(2)}_{\boldsymbol{\lambda}},\:\cdots,\:\chi^{(5)}_{\boldsymbol{\lambda}}$ their characters.
\begin{table}[h]
\centering
$$
\begin{array}{c|ccccc}
& K^{(1)}_{\boldsymbol{\lambda}} & K^{(2)}_{\boldsymbol{\lambda}} & K^{(3)}_{\boldsymbol{\lambda}} & K^{(4)}_{\boldsymbol{\lambda}} & K^{(5)}_{\boldsymbol{\lambda}} \\
\hline
\chi^{(1)}_{\boldsymbol{\lambda}}& 1 & 1 & 1 & 1 & 1 \\
\chi^{(2)}_{\boldsymbol{\lambda}}& 4 & -1 & -1 & 1 & 0 \\\chi^{(3)}_{\boldsymbol{\lambda}}& 5 & 0 & 0 & -1 & 1 \\
\chi^{(4)}_{\boldsymbol{\lambda}}& 3 & \frac{1+\sqrt5}{2} & \frac{1-\sqrt5}{2} & 0 & -1 \\
\chi^{(5)}_{\boldsymbol{\lambda}}& 3 & \frac{1-\sqrt5}{2} & \frac{1+\sqrt5}{2} & 0 & -1 \\
\end{array}
$$
\caption{Character table of $G_{\boldsymbol{\lambda}}$, with $G_{\boldsymbol{\lambda}}$ isomorphic to $A_5$.}
\label{chaA_5}
\end{table}

\begin{defn}
For any $\boldsymbol{\lambda}\in K_n$ such that $G_{\boldsymbol{\lambda}}$ is isomorphic to the icosahedral group $I$, assume that  $X_{\boldsymbol{\lambda}}=p_1X^{(1)}_{\boldsymbol{\lambda}}\oplus\cdots\oplus p_5X^{(5)}_{\boldsymbol{\lambda}}$. Then call $(p_1,\:p_2,\:\cdots,\:p_5)$ the \textbf{multiplicity vector} of $\boldsymbol{\lambda}$.
\end{defn}

Now all we have to do is to find the multiplicity vector for any $\boldsymbol{\lambda}\in K_n$ such that $G_{\boldsymbol{\lambda}}$ is isomorphic to the icosahedral group $I$. From Section \ref{A_5} we know that $[\boldsymbol{\lambda}]$ is equivalent to one of the following sets (which are categorized into eight types):
\begin{enumerate}
\item \textbf{F+mB}: $F_I\cup B_I(a_1)\cup B_I(a_2)\cup\cdots\cup B_I(a_m)$, where $B_I(a_1),\:B_I(a_2),\:\cdots,\:B_I(a_m)$ are different orbits of order 60, $m\in\mathbb {N}$;
\item \textbf{V+mB}: $V_I\cup B_I(a_1)\cup B_I(a_2)\cup\cdots\cup B_I(a_m)$, where $B_I(a_1),\:B_I(a_2),\:\cdots,\:B_I(a_m)$ are different orbits of order 60, $m\in\mathbb {N}$;
\item \textbf{E+mB}: $E_I\cup B_I(a_1)\cup B_I(a_2)\cup\cdots\cup B_I(a_m)$, where $B_I(a_1),\:B_I(a_2),\:\cdots,\:B_I(a_m)$ are different orbits of order 60, $m\in\mathbb {N}$;
\item \textbf{FV+mB}: $F_I\cup V_I\cup B_I(a_1)\cup B_I(a_2)\cup\cdots\cup B_I(a_m)$, where $B_I(a_1),\:B_I(a_2),\:\cdots,\:B_I(a_m)$ are different orbits of order 60, $m\in\mathbb {N}$;
\item \textbf{VE+mB}: $V_I\cup E_I\cup B_I(a_1)\cup B_I(a_2)\cup\cdots\cup B_I(a)_m)$, where $B_I(a_1),\:B_I(a_2),\:\cdots,\:B_I(a_m)$ are different orbits of order 60, $m\in\mathbb {N}$;
\item \textbf{EF+mB}: $E_I\cup F_I\cup B_I(a_1)\cup B_I(a_2)\cup\cdots\cup B_I(a_m)$, where $B_I(a_1),\:B_I(a_2),\:\cdots,\:B_I(a_m)$ are different orbits of order 60, $m\in\mathbb {N}$;
\item \textbf{FVE+mB}: $F_I\cup V_I\cup E_I\cup B_I(a_1)\cup B_I(a_2)\cup\cdots\cup B_I(a_m)$, where $B_I(a_1),\:B_I(a_2),\:\cdots,\:B_I(a_m)$ are different orbits of order 60, $m\in\mathbb {N}$;
\item \textbf{(1+m)B}: $B_I(a_0)\cup B_I(a_1)\cup\cdots\cup B_I(a_m)$, where $B_I(a_0),\:B_I(a_1),\:\cdots,\:B_I(a_m)$ are different orbits of order 60, $m\in\mathbb {N}$.
\end{enumerate}

From the theorems in Section \ref{cha} we have already known the character of $X_{\boldsymbol{\lambda}}$. The results are shown in Table \ref{chaA5}.
\begin{table}[h]
\centering
$$
\begin{array}{c|ccccc}
& K^{(1)}_{\boldsymbol{\lambda}} & K^{(2)}_{\boldsymbol{\lambda}} & K^{(3)}_{\boldsymbol{\lambda}} & K^{(4)}_{\boldsymbol{\lambda}} & K^{(5)}_{\boldsymbol{\lambda}} \\
\hline
F+mB
& 9+60m  & -1 & -1 & 0 & 1 \\
V+mB
& 17+60m & \frac{-\sqrt5-1}{2} & \frac{\sqrt5-1}{2} & -1 & 1 \\
E+mB
& 27+60m & \frac{-\sqrt5-1}{2} & \frac{\sqrt5-1}{2} & 0 & -1 \\
FV+mB
& 29+60m & -1 & -1 & -1 & 1 \\
VE+mB
& 47+60m & \frac{-\sqrt5-1}{2} & \frac{\sqrt5-1}{2} & -1 & -1 \\
EF+mB
& 39+60m & -1 & -1 & 0 & -1 \\
FVE+mB
& 59+60m & -1 & -1 & -1 & -1 \\
(1+m)B
& 57+60m & \frac{-\sqrt5-1}{2} & \frac{\sqrt5-1}{2} & 0 & 1 \\
\end{array}
$$
\caption{Character of $X_{\boldsymbol{\lambda}}$, with $G_{\boldsymbol{\lambda}}$ isomorphic to $A_5$.}
\label{chaA5}
\end{table}

Thus we have the conclusion
\begin{thm}
For any $\boldsymbol{\lambda}\in K_n$ such that $G_{\boldsymbol{\lambda}}$ is isomorphic to the icosahedral group $I$, we have found its multiplicity vector
\begin{enumerate}
\item \textbf{F+mB}: $(m,\:1+4m,\:1+5m,\:3m,\:3m)$;
\item \textbf{V+mB}: $(m,\:1+4m,\:2+5m,\:3m,\:1+3m)$;
\item \textbf{E+mB}: $(m,\:2+4m,\:2+5m,\:1+3m,\:2+3m)$;
\item \textbf{FV+mB}: $(m,\:2+4m,\:3+5m,\:1+3m,\:1+3m)$;
\item \textbf{VE+mB}: $(m,\:3+4m,\:4+5m,\:2+3m,\:3+3m)$;
\item \textbf{EF+mB}: $(m,\:3+4m,\:3+5m,\:2+3m,\:2+3m)$;
\item \textbf{FVE+mB}: $(m,\:4+4m,\:5+5m,\:3+3m,\:3+3m)$;
\item \textbf{(1+m)B}: $(1+m,\:4+4m,\:5+5m,\:2+3m,\:3+3m)$.
\end{enumerate}
\end{thm}

There is an interesting pattern of the result.
\begin{cor}
For $\boldsymbol {\lambda_1}\in K_{n_1},\:\boldsymbol {\lambda_2}\in K_{n_2}$ with $(a_1,\:a_2,\:\cdots,\:a_5)$ and $(b_1,\:b_2,\:\cdots,\:b_5)$ their multiplicity vectors respectively, if there exist some linear fractional transformations $\phi_1$ and $\phi_2$ such that
$$
\mathcal{A}_{\phi_1([\boldsymbol {\lambda_1}])}=\mathcal{A}_{\phi_2([\boldsymbol {\lambda_2}])}\simeq A_5
$$
with
$\phi_1([\boldsymbol{\lambda_1}])\ne\phi_2([\boldsymbol {\lambda_2}])$, then choose $\boldsymbol {\mu}\in K_{n_1+n_2+3}$ such that there exists some linear fractional transformation $\phi$ satisfying
$$
\phi([\boldsymbol {\mu}])=\phi_1([\boldsymbol{\lambda_1}])\cup\phi_2([\boldsymbol {\lambda_2}]),
$$
then $G_{\boldsymbol {\mu}}\simeq A_5$ and the multiplicity vector of $\boldsymbol {\mu}$ is
$$(a_1+b_1,\:a_2+b_2,\:a_3+b_3,\:a_4+b_4+1,\:a_5+b_5).$$
\end{cor}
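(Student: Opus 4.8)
The plan is to reduce to the case in which $\phi_1,\phi_2,\phi$ are the identity, pin down the isomorphism type of $G_{\boldsymbol\mu}$, and then compare the characters $\chi_{\boldsymbol\mu}$, $\chi_{\boldsymbol{\lambda_1}}$, $\chi_{\boldsymbol{\lambda_2}}$ one conjugacy class at a time. First I would invoke Lemma \ref{relation}: it implies that the multiplicity vector of $\boldsymbol{\lambda_i}$ equals that of any $\boldsymbol{\nu_i}\in K_{n_i}$ with $[\boldsymbol{\nu_i}]=\phi_i([\boldsymbol{\lambda_i}])$, and likewise for $\boldsymbol\mu$; so we may assume that $[\boldsymbol{\lambda_1}]$ and $[\boldsymbol{\lambda_2}]$ are the two given sets, that $\mathcal A_{[\boldsymbol{\lambda_1}]}=\mathcal A_{[\boldsymbol{\lambda_2}]}=:H$ is one fixed copy of the icosahedral group, and that $[\boldsymbol\mu]=[\boldsymbol{\lambda_1}]\cup[\boldsymbol{\lambda_2}]$, this union being disjoint (as recorded by the choice of $\boldsymbol\mu$). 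By Theorem \ref{tA_5} each of $[\boldsymbol{\lambda_1}],[\boldsymbol{\lambda_2}]$ is a union of orbits of this same $H$, hence so is $[\boldsymbol\mu]$, and by Theorem \ref{tA_5} again $\mathcal A_{[\boldsymbol\mu]}=H\simeq A_5$; the isomorphism $\Phi_{\boldsymbol\mu}\colon G_{\boldsymbol\mu}\to\mathcal A_{[\boldsymbol\mu]}$ then gives $G_{\boldsymbol\mu}\simeq A_5$.

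For the characters, I would identify the four nontrivial conjugacy classes $K^{(2)},\dots,K^{(5)}$ of $A_5$, through the maps $\Phi_{\boldsymbol{\lambda_1}},\Phi_{\boldsymbol{\lambda_2}},\Phi_{\boldsymbol\mu}$, with the $H$-rotations of angles $2\pi/5,4\pi/5,2\pi/3,\pi$; since $H$ is one and the same group these identifications are mutually compatible. Fix one such rotation $h\in H$, of angle $\theta$. Its two poles constitute a single $H$-orbit $\Omega\in\{V_I,F_I,E_I\}$ (rotations of order $3,5,2$ have poles in $V_I,F_I,E_I$ respectively), and no generic orbit meets $\Omega$. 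Hence the permutation $\sigma_i$ realizing $h$ inside $G_{\boldsymbol{\lambda_i}}$ has two fixed points exactly when $\Omega\subseteq[\boldsymbol{\lambda_i}]$ and none otherwise, so by the theorems of Section \ref{cha} one has $\chi_{\boldsymbol{\lambda_i}}(g_{\sigma_i})=-1$ in the former case and $\chi_{\boldsymbol{\lambda_i}}(g_{\sigma_i})=-1-2\cos\theta$ in the latter; the same dichotomy holds for $\boldsymbol\mu$, with $\Omega\subseteq[\boldsymbol\mu]$ iff $\Omega\subseteq[\boldsymbol{\lambda_1}]$ or $\Omega\subseteq[\boldsymbol{\lambda_2}]$ (an $H$-orbit lies entirely in or entirely out of each invariant set). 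By disjointness $\Omega$ lies in at most one of $[\boldsymbol{\lambda_1}],[\boldsymbol{\lambda_2}]$, which leaves only two cases, and in each a one-line check using $\chi^{(4)}_{\boldsymbol\mu}(h)=1+2\cos\theta$ (read off Table \ref{chaA_5}) yields
\[
\chi_{\boldsymbol\mu}(g_\tau)=\chi_{\boldsymbol{\lambda_1}}(g_{\sigma_1})+\chi_{\boldsymbol{\lambda_2}}(g_{\sigma_2})+\chi^{(4)}_{\boldsymbol\mu}(h),
\]
where $g_\tau=\Phi_{\boldsymbol\mu}^{-1}(h)$. On the identity class this is the trivial count $n_1+n_2-3=(n_1-3)+(n_2-3)+3$. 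Therefore $\chi_{\boldsymbol\mu}=\chi_{\boldsymbol{\lambda_1}}+\chi_{\boldsymbol{\lambda_2}}+\chi^{(4)}_{\boldsymbol\mu}$, i.e. $X_{\boldsymbol\mu}\simeq X_{\boldsymbol{\lambda_1}}\oplus X_{\boldsymbol{\lambda_2}}\oplus X^{(4)}_{\boldsymbol\mu}$, which is exactly the asserted multiplicity vector $(a_1+b_1,\:a_2+b_2,\:a_3+b_3,\:a_4+b_4+1,\:a_5+b_5)$.

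The genuine obstacle is not any single calculation but the compatibility bookkeeping — making the three $\Phi$-isomorphisms and the resulting labelings of irreducibles genuinely match up — together with pinning down why disjointness is indispensable: it is precisely what rules out the configuration in which $\Omega$ sits in both $[\boldsymbol{\lambda_1}]$ and $[\boldsymbol{\lambda_2}]$, a case in which the $\pm2\cos\theta$ terms would no longer cancel. Conceptually, the splitting $X_{\boldsymbol\mu}=X_{\boldsymbol{\lambda_1}}\oplus X_{\boldsymbol{\lambda_2}}\oplus X^{(4)}_{\boldsymbol\mu}$ reflects the fact that the orbifold tangent representation at a singularity, augmented by a copy of $\mathfrak{sl}_2\simeq X^{(4)}$ (the infinitesimal Möbius directions, carrying the standard $3$-dimensional representation of $A_5\subset\mathrm{SO}(3)$), equals the permutation-twisted tangent representation $\bigoplus_{z\in[\boldsymbol\lambda]}\mathrm{T}_z\widehat{\mathbb C}$, which is manifestly additive over disjoint unions; the class-by-class computation above is simply a hands-on verification of this, using that a rotation by $\theta$ has tangent eigenvalues $e^{\pm i\theta}$ at its two fixed points.
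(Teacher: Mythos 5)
Your argument is correct, and it supplies a genuine proof where the paper offers only an observation: the corollary appears in the text immediately after the table of multiplicity vectors with the remark that ``there is an interesting pattern of the result,'' so the paper's implicit justification is case-by-case inspection of the eight types (checking that each admissible pair of types combines into the listed type with the stated vector). Your route is a uniform character identity $\chi_{\boldsymbol\mu}=\chi_{\boldsymbol{\lambda_1}}+\chi_{\boldsymbol{\lambda_2}}+\chi^{(4)}_{\boldsymbol\mu}$, verified class by class from the general formulas of Section \ref{cha} ($-1$ for an element with two fixed points in the marked set, $-1-2\cos\theta$ for one with none, the one-fixed-point case being excluded because the two poles of any rotation lie in a single $H$-orbit and the marked sets are unions of orbits). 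The two cancellation checks are right, $\chi^{(4)}(h)=1+2\cos\theta$ is indeed the row of Table \ref{chaA_5} singled out by the $+1$ in the fourth slot, and your identification of disjointness as the hypothesis that kills the ``$\Omega$ in both sets'' case is exactly the point (e.g.\ FV and VE overlap in $V_I$ and the additivity visibly fails there). What your approach buys is that it explains the pattern and transfers verbatim to the octahedral corollary; what the paper's table buys is that the verification is already done numerically.

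Two small points to tighten. First, the reduction ``assume $\phi_1=\phi_2=\mathrm{id}$'' via Lemma \ref{relation} is slightly too glib: an element $\boldsymbol{\nu_i}\in K_{n_i}$ with $[\boldsymbol{\nu_i}]=\phi_i([\boldsymbol{\lambda_i}])$ exists only when $0,1,\infty\in\phi_i([\boldsymbol{\lambda_i}])$, and you cannot normalize both sets this way simultaneously. This is harmless because your computation never actually needs the normalization --- the theorems of Section \ref{cha} show that $\chi_{\boldsymbol{\lambda_i}}(g_\sigma)$ depends only on the rotation angle of $f^{\boldsymbol{\lambda_i}}_\sigma$ and on how many of its fixed points lie in $[\boldsymbol{\lambda_i}]$, and both data are carried faithfully through $\phi_i$ --- but you should say that rather than invoke a normalization that may not exist. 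Second, your identity-class count $n_1+n_2-3=(n_1-3)+(n_2-3)+3$ presupposes $\boldsymbol\mu\in K_{n_1+n_2}$; the corollary as printed says $K_{n_1+n_2+3}$, which is inconsistent with the paper's own convention that $[\boldsymbol\lambda]$ has $n$ points for $\boldsymbol\lambda\in K_n$, so you have silently (and correctly) repaired a slip in the statement. It is worth flagging that explicitly.
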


\subsection{Representations of the Octahedral Group $O$}

For any $\boldsymbol{\lambda}\in K_n$ such that $G_{\boldsymbol{\lambda}}$ is isomorphic to the octahedral group $O$, let $\Psi_{\boldsymbol{\lambda}}$ be the isomorphism which maps  $G_{\boldsymbol{\lambda}}$ to the symmetric group $S_4$. There are five conjugacy classes of $G_{\boldsymbol{\lambda}}$:
\begin{itemize}
\item $K^{(1)}_{\boldsymbol{\lambda}}$: the identity element;
\item $K^{(2)}_{\boldsymbol{\lambda}}$: elements whose images under $\Psi_{\boldsymbol{\lambda}}$ are conjugate to $(1234)$;
\item $K^{(3)}_{\boldsymbol{\lambda}}$: elements whose images under $\Psi_{\boldsymbol{\lambda}}$ are conjugate to $(123)$;
\item $K^{(4)}_{\boldsymbol{\lambda}}$: elements whose images under $\Psi_{\boldsymbol{\lambda}}$ are conjugate to $(12)(34)$;
\item $K^{(5)}_{\boldsymbol{\lambda}}$: elements whose images under $\Psi_{\boldsymbol{\lambda}}$ are conjugate to $(12)$.
\end{itemize}
Table \ref{chaS_4} is the character table of $G_{\boldsymbol{\lambda}}$, with $X^{(1)}_{\boldsymbol{\lambda}},\: X^{(2)}_{\boldsymbol{\lambda}},\:\cdots,\:X^{(5)}_{\boldsymbol{\lambda}}$ representing the five different irreducible representations of $G_{\boldsymbol{\lambda}}$ and $\chi^{(1)}_{\boldsymbol{\lambda}},\: \chi^{(2)}_{\boldsymbol{\lambda}},\:\cdots,\:\chi^{(5)}_{\boldsymbol{\lambda}}$ their characters.
\begin{table}[h!]
\centering
$$
\begin{array}{c|ccccc}
& K^{(1)}_{\boldsymbol{\lambda}} & K^{(2)}_{\boldsymbol{\lambda}} & K^{(3)}_{\boldsymbol{\lambda}} & K^{(4)}_{\boldsymbol{\lambda}} & K^{(5)}_{\boldsymbol{\lambda}} \\
\hline
\chi^{(1)}_{\boldsymbol{\lambda}}& 1 & 1  & 1  & 1  & 1  \\
\chi^{(2)}_{\boldsymbol{\lambda}}& 1 & -1 & 1  & 1  & -1 \\\chi^{(3)}_{\boldsymbol{\lambda}}& 3 & -1 & 0  & -1 & 1  \\
\chi^{(4)}_{\boldsymbol{\lambda}}& 3 & 1  & 0  & -1 & -1 \\
\chi^{(5)}_{\boldsymbol{\lambda}}& 2 & 0  & -1 & 2  & 0  \\
\end{array}
$$
\caption{Character table of $G_{\boldsymbol{\lambda}}$, with $G_{\boldsymbol{\lambda}}$ isomorphic to $S_4$.}
\label{chaS_4}
\end{table}

\begin{defn}
For any $\boldsymbol{\lambda}\in K_n$ such that $G_{\boldsymbol{\lambda}}$ is isomorphic to the octahedral group $O$, assume that  $X_{\boldsymbol{\lambda}}=p_1X^{(1)}_{\boldsymbol{\lambda}}\oplus\cdots\oplus p_5X^{(5)}_{\boldsymbol{\lambda}}$. Then call $(p_1,\:p_2,\:\cdots,\:p_5)$ the \textbf{multiplicity vector} of $\boldsymbol{\lambda}$.
\end{defn}

Now all we have to do is to find the multiplicity vector for any $\boldsymbol{\lambda}\in K_n$ such that $G_{\boldsymbol{\lambda}}$ is isomorphic to the octahedral group $O$. From Section \ref{S_4} we know that $[\boldsymbol{\lambda}]$ is equivalent to one of the following sets (which are categorized into eight types):
\begin{enumerate}
\item \textbf{F+mB}: $F_O\cup B_O(a_1)\cup B_O(a_2)\cup\cdots\cup B_O(a_m)$, where $B_O(a_1),\:B_O(a_2),\:\cdots,\:B_O(a_m)$ are different orbits of order 24, $m\in\mathbb {N}$;
\item \textbf{V+mB}: $V_O\cup B_O(a_1)\cup B_O(a_2)\cup\cdots\cup B_O(a_m)$, where $B_O(a_1),\:B_O(a_2),\:\cdots,\:B_O(a_m)$ are different orbits of order 24, $m\in\mathbb {N}$;
\item \textbf{E+mB}: $E_O\cup B_O(a_1)\cup B_O(a_2)\cup\cdots\cup B_O(a_m)$, where $B_O(a_1),\:B_O(a_2),\:\cdots,\:B_O(a_m)$ are different orbits of order 24, $m\in\mathbb {N}$;
\item \textbf{FV+mB}: $F_O\cup V_O\cup B_O(a_1)\cup B_O(a_2)\cup\cdots\cup B_O(a_m)$, where $B_O(a_1),\:B_O(a_2),\:\cdots,\:B_O(a_m)$ are different orbits of order 24, $m\in\mathbb {N}$;
\item \textbf{VE+mB}: $V_O\cup E_O\cup B_O(a_1)\cup B_O(a_2)\cup\cdots\cup B_O(a_m)$, where $B_O(a_1),\:B_O(a_2),\:\cdots,\:B_O(a_m)$ are different orbits of order 24, $m\in\mathbb {N}$;
\item \textbf{EF+mB}: $E_O\cup F_O\cup B_O(a_1)\cup B_O(a_2)\cup\cdots\cup B_O(a_m)$, where $B_O(a_1),\:B_O(a_2),\:\cdots,\:B_O(a_m)$ are different orbits of order 24, $m\in\mathbb {N}$;
\item \textbf{FVE+mB}: $F_O\cup V_O\cup E_O\cup B_O(a_1)\cup B_O(a_2)\cup\cdots\cup B_O(a_m)$, where $B_O(a_1),\:B_O(a_2),\:\cdots,\:B_O(a_m)$ are different orbits of order 24, $m\in\mathbb {N}$;
\item \textbf{(1+m)B}: $B_O(a_0)\cup B_O(a_1)\cup B_O(a_2)\cup\cdots\cup B_O(a_m)$, where $B_O(a_1),\:B_O(a_2),\:\cdots,\:B_O(a_m)$ are different orbits of order 24, $m\in\mathbb {N}$.
\end{enumerate}

From the theorems in Section \ref{cha}, we have already found the character of $X_{\boldsymbol{\lambda}}$. The results are shown in Table \ref{chaS4}.
\begin{table}[h!]
\centering
$$
\begin{array}{c|ccccc}
& K^{(1)}_{\boldsymbol{\lambda}} & K^{(2)}_{\boldsymbol{\lambda}} & K^{(3)}_{\boldsymbol{\lambda}} & K^{(4)}_{\boldsymbol{\lambda}} & K^{(5)}_{\boldsymbol{\lambda}} \\
\hline
F+mB   &  3+24m & -1 & 0 & -1 & 1 \\
V+mB   &  5+24m & -1 & -1 & 1 & 1 \\
E+mB   &  9+24m & -1 & 0 & 1 & -1 \\
FV+mB  & 11+24m & -1 & -1 & -1 & 1 \\
VE+mB  & 17+24m & -1 & -1 & 1 & -1 \\
EF+mB  & 15+24m & -1 & 0 & -1 & -1 \\
FVE+mB & 23+24m & -1 & -1 & -1 & -1 \\
(1+m)B & 21+24m & -1 & 0 & 1 & 1 \\
\end{array}
$$
\caption{Character of $X_{\boldsymbol{\lambda}}$, with $G_{\boldsymbol{\lambda}}$ isomorphic to $S_4$.}
\label{chaS4}
\end{table}
\begin{thm}
For any $\boldsymbol{\lambda}\in K_n$ such that $G_{\boldsymbol{\lambda}}$ is isomorphic to the octahedral group $O$, we have found its multiplicity vector
\begin{enumerate}
\item \textbf{F+mB}: $(m,\:m,\:1+3m,\:3m,\:2m)$;
\item \textbf{V+mB}: $(m,\:m,\:1+3m,\:3m,\:1+2m)$;
\item \textbf{E+mB}: $(m,\:1+m,\:1+3m,\:1+3m,\:1+2m)$;
\item \textbf{FV+mB}: $(m,\:m,\:2+3m,\:1+3m,\:1+2m)$;
\item \textbf{VE+mB}: $(m,\:1+m,\:2+3m,\:2+3m,\:2+2m)$;
\item \textbf{EF+mB}: $(m,\:1+m,\:2+3m,\:2+3m,\:1+2m)$;
\item \textbf{FVE+mB}: $(m,\:1+m,\:3+3m,\:3+3m,\:2+2m)$;
\item \textbf{(1+m)B}: $(1+m,\:1+m,\:3+3m,\:2+3m,\:2+2m)$.
\end{enumerate}
\end{thm}

There is an interesting pattern of the result.
\begin{cor}
For $\boldsymbol {\lambda_1}\in K_{n_1},\:\boldsymbol {\lambda_2}\in K_{n_2}$ with $(a_1,\:a_2,\:\cdots,\:a_5)$ and $(b_1,\:b_2,\:\cdots,\:b_5)$ their multiplicity vectors respectively, if there exist some linear fractional transformations $\phi_1$ and $\phi_2$ such that
$$
\mathcal{A}_{\phi_1([\boldsymbol {\lambda_1}])}=\mathcal{A}_{\phi_2([\boldsymbol {\lambda_2}])}\simeq S_4
$$
with
$\phi_1([\boldsymbol{\lambda_1}])\ne\phi_2([\boldsymbol {\lambda_2}])$, then choose $\boldsymbol {\mu}\in K_{n_1+n_2+3}$ such that there exists some linear fractional transformation $\phi$ satisfying
$$
\phi([\boldsymbol {\mu}])=\phi_1([\boldsymbol{\lambda_1}])\cup\phi_2([\boldsymbol {\lambda_2}]),
$$
then $G_{\boldsymbol {\mu}}\simeq S_4$ and the multiplicity vector of $\boldsymbol {\mu}$ is
$$(a_1+b_1,\:a_2+b_2,\:a_3+b_3,\:a_4+b_4+1,\:a_5+b_5).$$
\end{cor}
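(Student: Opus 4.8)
The plan is to mimic the proof of the preceding icosahedral corollary, reducing the whole statement to a character comparison built on two facts already in hand: the isomorphism $G_{\boldsymbol{\mu}}\simeq\mathcal A_{[\boldsymbol{\mu}]}$, and the fact (Section~\ref{cha}) that for a non-identity $g_\sigma\in G_{\boldsymbol{\mu}}$ the value $\chi_{\boldsymbol{\mu}}(g_\sigma)$ is determined purely by the rotation angle of $f^{\boldsymbol{\mu}}_\sigma$ and by how many of its two fixed points lie in $[\boldsymbol{\mu}]$. First I would fix representatives: put $\alpha_i=\phi_i([\boldsymbol{\lambda_i}])$, so that by hypothesis both $\alpha_1,\alpha_2$ are invariant under one and the same octahedral group $O$ (the rotation group of a fixed cube with centre the origin) and — as the dimension count forces — disjoint, and take the representative $[\boldsymbol{\mu}]=\alpha:=\alpha_1\cup\alpha_2$, which is legitimate since Lemma~\ref{relation} makes $\chi_{\boldsymbol{\mu}}$ an invariant of the $\mathrm{PSL}(2,\mathbb C)$-equivalence class. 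Because $\alpha$ is a finite union of $O$-orbits, $O\subseteq\mathcal A_\alpha$, and $\mathcal A_\alpha$ is finite ($|\alpha|\ge4$); since none of $A_5,\,A_4,\,D_n,\,\mathbb Z_n$ has a subgroup isomorphic to $S_4$ — the observation underlying Theorem~\ref{tS_4} — we get $\mathcal A_\alpha=O$, hence $G_{\boldsymbol{\mu}}\simeq\mathcal A_{[\boldsymbol{\mu}]}\simeq O\simeq S_4$ via $\Phi_{\boldsymbol{\mu}}$.

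Next comes the character. Take a non-identity $g_\sigma\in G_{\boldsymbol{\mu}}$ and let $h=\Phi_{\boldsymbol{\mu}}(g_\sigma)$, an elliptic transformation of some rotation angle $\theta$. By the theorems of Section~\ref{cha}, $\chi_{\boldsymbol{\mu}}(g_\sigma)=-1$ if both fixed points of $h$ lie in $[\boldsymbol{\mu}]$ and $\chi_{\boldsymbol{\mu}}(g_\sigma)=-1-2\cos\theta$ if neither does; the intermediate "one fixed point" case, which would contribute a value $-1-e^{\pm i\theta}$, never arises here, because the two fixed points of a non-identity element of $O$ are an antipodal pair sitting inside a single orbit among $V_O,F_O,E_O$ (the poles of a vertex-, face-, or edge-axis of the cube), so each $O$-invariant set $\alpha_i$ contains either both of them or neither. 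The identical dichotomy and the identical formulas govern $\chi_{\boldsymbol{\lambda_1}}(g_\sigma)$ and $\chi_{\boldsymbol{\lambda_2}}(g_\sigma)$.

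Then I would assemble the answer. Using the isomorphisms $\Phi_{\boldsymbol{\mu}},\Phi_{\boldsymbol{\lambda_1}},\Phi_{\boldsymbol{\lambda_2}}$ to view all three tangent characters as class functions on $O\simeq S_4$ — compatibly, by Lemma~\ref{relation} — I compare $\chi_{\boldsymbol{\mu}}-\chi_{\boldsymbol{\lambda_1}}-\chi_{\boldsymbol{\lambda_2}}$. On the identity class this equals $(n_1+n_2-3)-(n_1-3)-(n_2-3)=3$; on a non-identity class, disjointness of $\alpha_1,\alpha_2$ means at most one of them contains the fixed-point orbit $W$ of $h$, and a short case split (over "$W\subseteq\alpha_1$", "$W\subseteq\alpha_2$", "$W$ in neither") yields the constant value $1+2\cos\theta$ in each case. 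But $1+2\cos\theta$ is precisely the trace of a rotation of angle $\theta$, i.e. the character $\chi^{(4)}_{\boldsymbol{\mu}}$ of the three-dimensional "geometric" irreducible of $S_4$ (quarter-turns $\mapsto1$, third-turns $\mapsto0$, half-turns $\mapsto-1$, matching Table~\ref{chaS_4}). Hence $X_{\boldsymbol{\mu}}\cong X_{\boldsymbol{\lambda_1}}\oplus X_{\boldsymbol{\lambda_2}}\oplus X^{(4)}_{\boldsymbol{\mu}}$, which is exactly the multiplicity vector $(a_1+b_1,\ a_2+b_2,\ a_3+b_3,\ a_4+b_4+1,\ a_5+b_5)$.

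The main obstacle is organisational rather than computational: $X_{\boldsymbol{\lambda_1}}$ and $X_{\boldsymbol{\lambda_2}}$ are representations of two different stabilizer groups, so one must use Lemma~\ref{relation} carefully to realise all three tangent characters as the \emph{same} class functions on $O$ before adding them, and one must record the geometric fact that every non-trivial cube-rotation fixes an antipodal pair of vertices, face-centres, or edge-midpoints — this is what eliminates the "one fixed point" term and makes the comparison collapse to the clean identity above. Once these are in place, the case analysis on $W$ and the final dimension count are routine.
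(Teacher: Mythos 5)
Your proposal is correct, but it proves the corollary by a genuinely different route than the paper does. The paper obtains this statement as an observed pattern: it has already computed, case by case, the multiplicity vector of every octahedral type (F+mB, V+mB, \dots, (1+m)B) in the preceding theorem, and the corollary follows by checking that the union of two types is again a type in the list and that the tabulated vectors add with an extra $1$ in the fourth slot. You instead give a uniform character-theoretic argument: after using Theorem \ref{tS_4} to see $\mathcal A_{\alpha_1\cup\alpha_2}=O$, you show directly that $\chi_{\boldsymbol{\mu}}-\chi_{\boldsymbol{\lambda_1}}-\chi_{\boldsymbol{\lambda_2}}$ equals $1+2\cos\theta$ on every class (degree $3$ on the identity), which is exactly $\chi^{(4)}_{\boldsymbol{\lambda}}$ in Table \ref{chaS_4}, the trace of the underlying $SO(3)$ rotation. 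The two key observations you need --- that the two fixed points of a nontrivial cube rotation form an antipodal pair inside a single one of $V_O,F_O,E_O$, so the ``one fixed point'' character formula never occurs for an $O$-invariant set, and that disjointness of $\alpha_1,\alpha_2$ makes the three-way case split on the fixed-point pair collapse to the same value $1+2\cos\theta$ --- are both correct, and Lemma \ref{relation} does let you compare the three characters as class functions on the single group $O$. What your approach buys is an explanation of \emph{why} the increment lands in the fourth coordinate (it is the multiplicity of the three-dimensional geometric representation), and it generalizes verbatim to the icosahedral corollary; what the paper's approach buys is that it requires no argument beyond the theorem already proved. The only loose thread is your parenthetical that the dimension count ``forces'' disjointness: two distinct unions of $O$-orbits need not be disjoint in general, so strictly one should say that the cardinality requirement on $\boldsymbol{\mu}$ (equivalently, that $|\phi([\boldsymbol{\mu}])|$ equals $|\phi_1([\boldsymbol{\lambda_1}])|+|\phi_2([\boldsymbol{\lambda_2}])|$) is what rules out a shared orbit; this is a one-line fix and does not affect the correctness of the argument.
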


\subsection{Representations of the Tetrahedral Group $T$}

Things are a little complicated in the tetrahedral case, as all rotations of order three are conjugate in the liner fractional transformation group. Let $F_T$ denote the projections of the four central points of the faces of the tetrahedron on the Riemann sphere. That means
$$
F_T=\{A,\:B,\:C,\:D\}
=\{0,\:\sqrt2\:,\sqrt2e^{\frac{2\pi}{3}i},\:\sqrt2e^{\frac{4\pi}{3}i}\}.
$$
Let $V_T$, $E_T$ denote the four vertices and the projections of the six middle points of the edges of the tetrahedron on the Riemann sphere. That means
$$
V_T=\{E,\:F,\:G,\:H\}
=\{\infty,\:\frac{-1}{\sqrt2}\:,\frac{-1}{\sqrt2}e^{\frac{2\pi}{3}i},\:\frac{-1}{\sqrt2}e^{\frac{4\pi}{3}i}\},
$$
$$
E_T=\{I,\:J,\:K,\:L,\:M,\:N\}
=\{\frac{\sqrt2}{1-\sqrt3},\:\frac{\sqrt2}{1-\sqrt3}e^{\frac{2\pi}{3}i},\:\frac{\sqrt2}{1-\sqrt3}e^{\frac{4\pi}{3}i},\:\frac{\sqrt2}{1+\sqrt3},\:\frac{\sqrt2}{1+\sqrt3}e^{\frac{2\pi}{3}i},\:\frac{\sqrt2}{1+\sqrt3}e^{\frac{4\pi}{3}i}\}.
$$
Set
$$
G_0=\langle z\mapsto e^{\frac{2\pi}{3}i}z,\:
z\mapsto \frac{\sqrt2-z}{\sqrt2z+1}\rangle\simeq A_4.
$$
Thus $G_0$ fixes $F_T$, $V_T$ and $E_T$.
\begin{figure}[h!]
\centering
\includegraphics[width=2.3in]{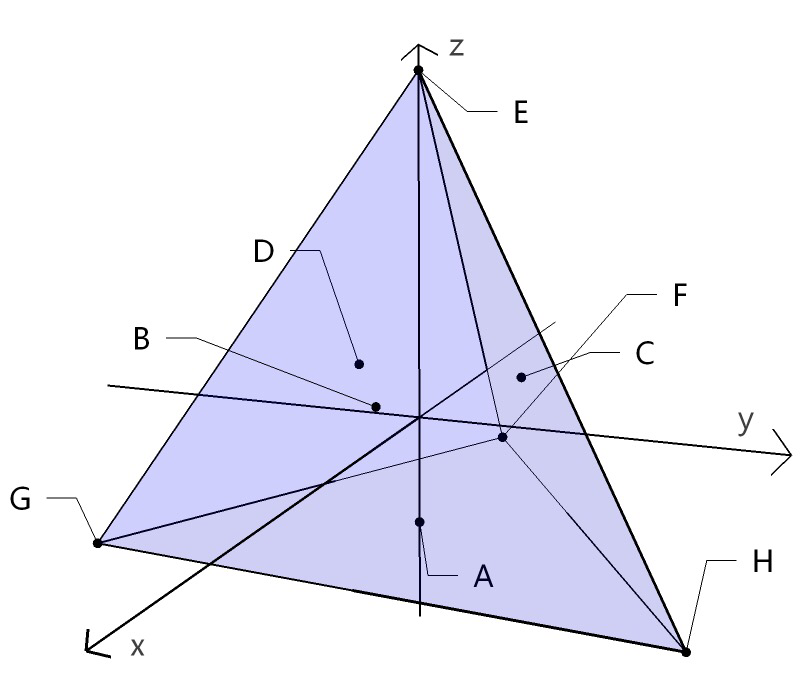}
\includegraphics[width=2.1in]{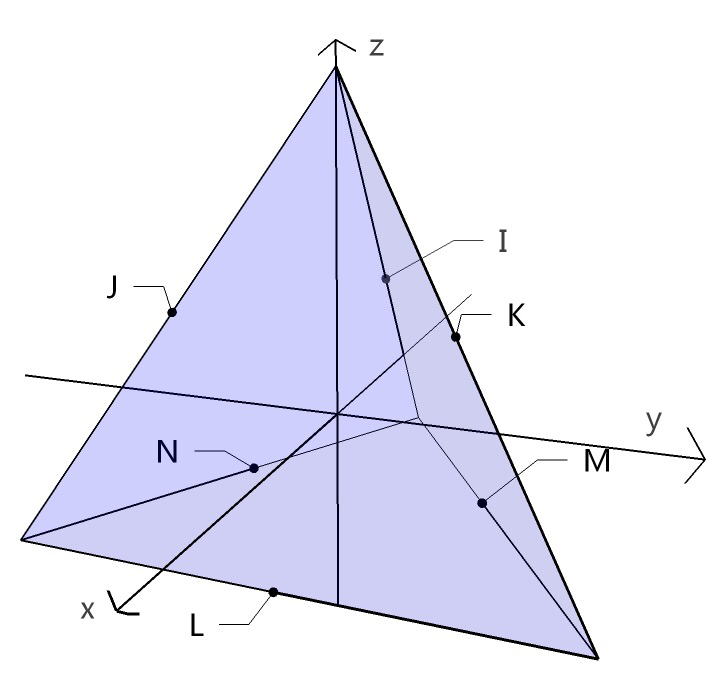}
\caption{Center points on the faces, vertices and middle points on the edges of a tetrahedron.}
\label{A4}
\end{figure}

From Section \ref{A_4} we know that if $G_{\boldsymbol{\lambda}}$ is isomorphic to the Tetrahedral Group $T$, then $[\boldsymbol{\lambda}]$ is equivalent to one of the following sets (which are categorized into six types):
\begin{enumerate}
\item \textbf{F+mB}: $F_T\cup B_T(a_1)\cup B_T(a_2)\cup\cdots\cup B_T(a_m)$, where $B_T(a_1),\:B_T(a_2),\:\cdots,\:B_T(a_m)$ are different orbits of order 12, $m\in\mathbb {N^*}$;
\item \textbf{E+mB}: $E_T\cup B_T(a_1)\cup B_T(a_2)\cup\cdots\cup B_T(a_m)$, where $B_T(a_1),\:B_T(a_2),\:\cdots,\:B_T(a_m)$ are different orbits of order 12, $m\in\mathbb {N^*}$;
\item \textbf{FV+mB}: $F_T\cup V_T\cup B_T(a_1)\cup B_T(a_2)\cup\cdots\cup B_T(a_m)$, where $B_T(a_1),\:B_T(a_2),\:\cdots,\:B_T(a_m)$ are different orbits of order 12, $m\in\mathbb {N^*}$;
\item \textbf{FE+mB}: $F_T\cup E_T\cup B_T(a_1)\cup B_T(a_2)\cup\cdots\cup B_T(a_m)$, where $B_T(a_1),\:B_T(a_2),\:\cdots,\:B_T(a_m)$ are different orbits of order 12, $m\in\mathbb {N}$;
\item \textbf{FVE+mB}: $F_T\cup V_T\cup E_T\cup B_I(a_1)\cup B_I(a_2)\cup\cdots\cup B_I(a_m)$, where $B_I(a_1),\:B_I(a_2),\:\cdots,\:B_I(a_m)$ are different orbits of order 12, $m\in\mathbb {N^*}$;
\item \textbf{(1+m)B}: $B_T(a_1)\cup B_T(a_2)\cup\cdots\cup B_T(a_m)$, where $B_T(a_1),\:B_T(a_2),\:\cdots,\:B_T(a_m)$ are different orbits of order 12, $m\in\mathbb {N}$.
\end{enumerate}
Set $$w=e^{\frac{2\pi}{3}i}.$$

\subsubsection{Representations of $T$ when
$\boldsymbol{\lambda}$ is of type E+mB, FV+mB, EFV+mB or (1+m)B}

There are four conjugacy classes of $G_{\boldsymbol{\lambda}}$:
\begin{itemize}
\item $K^{(1)}_{\boldsymbol{\lambda}}$: the identity element;
\item $K^{(2)}_{\boldsymbol{\lambda}}$: elements of order two;
\item $K^{(3)}_{\boldsymbol{\lambda}}$: one of the two left classes;
\item $K^{(4)}_{\boldsymbol{\lambda}}$: the other of the two left classes.
\end{itemize}

Now fix $K^{(1)}_{\boldsymbol{\lambda}}$, $K^{(2)}_{\boldsymbol{\lambda}}$, $K^{(3)}_{\boldsymbol{\lambda}}$, $K^{(4)}_{\boldsymbol{\lambda}}$. Table \ref{chaA_41} is the character table of $G_{\boldsymbol{\lambda}}$, with $X^{(1)}_{\boldsymbol{\lambda}},\:X^{(2)}_{\boldsymbol{\lambda}},\:X^{(3)}_{\boldsymbol{\lambda}}$ and $X^{(4)}_{\boldsymbol{\lambda}}$ representing the four different irreducible representations of $G_{\boldsymbol{\lambda}}$ and $\chi^{(1)}_{\boldsymbol{\lambda}},\: \chi^{(2)}_{\boldsymbol{\lambda}},\:\chi^{(3)}_{\boldsymbol{\lambda}}$ and $\chi^{(4)}_{\boldsymbol{\lambda}}$ their characters.
\begin{table}[h]
\centering
$$
\begin{array}{c|cccc}
& K^{(1)}_{\boldsymbol{\lambda}} & K^{(2)}_{\boldsymbol{\lambda}} & K^{(3)}_{\boldsymbol{\lambda}} & K^{(4)}_{\boldsymbol{\lambda}}\\
\hline
\chi^{(1)}_{\boldsymbol{\lambda}}& 1 & 1 & 1   & 1   \\
\chi^{(2)}_{\boldsymbol{\lambda}}& 1 & 1 & w   & w^2 \\
\chi^{(3)}_{\boldsymbol{\lambda}}& 1 & 1 & w^2 & w   \\
\chi^{(4)}_{\boldsymbol{\lambda}}& 3 &-1 & 0   & 0   \\
\end{array}
$$
\caption{Character table of $G_{\boldsymbol{\lambda}}$, with $G_{\boldsymbol{\lambda}}$ isomorphic to $A_4$.}
\label{chaA_41}
\end{table}

\begin{defn}
For any $\boldsymbol{\lambda}\in K_n$ such that $G_{\boldsymbol{\lambda}}$ is isomorphic to the tetrahedral group $T$ and $\boldsymbol{\lambda}$ is of type E+mB, FV+mB, EFV+mB or (1+m)B, assume that $X_{\boldsymbol{\lambda}}=p_1X^{(1)}_{\boldsymbol{\lambda}}\oplus\cdots\oplus p_4X^{(4)}_{\boldsymbol{\lambda}}$. Then call $(p_1,\:p_2,\:p_3,\:p_4)$ the \textbf{multiplicity vector} of $\boldsymbol{\lambda}$.
\end{defn}

\begin{rem}
The definition seems ambiguous as $K^{(3)}_{\boldsymbol{\lambda}}$ and $K^{(4)}_{\boldsymbol{\lambda}}$ are chosen randomly. But we shall prove that $(p_1,\:p_2,\:p_3,\:p_4)$ remains the same however we choose $K^{(3)}_{\boldsymbol{\lambda}}$ and $K^{(4)}_{\boldsymbol{\lambda}}$, which makes the notion multiplicity vector well-defined.
\end{rem}

From the theorems in Section \ref{cha}, we have already found the character of $X_{\boldsymbol{\lambda}}$. The results are shown in Table \ref{chaA41}.

\begin{table}[h]
\centering
$$
\begin{array}{c|cccc}
& K^{(1)}_{\boldsymbol{\lambda}} & K^{(2)}_{\boldsymbol{\lambda}} & K^{(3)}_{\boldsymbol{\lambda}} & K^{(4)}_{\boldsymbol{\lambda}} \\
\hline
E+mB   & 3+12m &-1 & 0 & 0 \\
FV+mB  & 5+12m & 1 & -1 & -1 \\
FVE+mB & 11+12m&-1 & -1 & -1 \\
(1+m)B & 9+12m & 1 & 0 & 0 \\
\end{array}
$$
\caption{Character of $X_{\boldsymbol{\lambda}}$, with $G_{\boldsymbol{\lambda}}$ isomorphic to $A_4$.}
\label{chaA41}
\end{table}

Thus we have the conclusion that the multiplicity vector of $\boldsymbol{\lambda}$ is
\begin{enumerate}
\item \textbf{E+mB}: $(m,\:m,\:m,\:1+3m)$;
\item \textbf{FV+mB}: $(m,\:1+m,\:1+m,\:1+3m)$;
\item \textbf{FVE+mB}: $(m,\:1+m,\:1+m,\:3+3m)$;
\item \textbf{(1+m)B}: $(m,\:1+m,\:1+m,\:2+3m)$.
\end{enumerate}

\subsubsection{Representations of $T$ when
$\boldsymbol{\lambda}$ is of type F+mB or FE+mB}

If $\boldsymbol{\lambda}$ is of type F+mB or FE+mB, let $\psi$ be a linear fractional transformation such that
$$
\psi([\boldsymbol{\lambda}])=F_T\cup B_T(a_1)\cup B_T(a_2)\cup\cdots\cup B_T(a_m)
$$
or
$$
\psi([\boldsymbol{\lambda}])=F_T\cup E_T\cup B_T(a_1)\cup B_T(a_2)\cup\cdots\cup B_T(a_m).
$$
Let $\rho$ denote the element in $G_{\boldsymbol{\lambda}}$ such that
$$
\psi\circ\Phi_{\boldsymbol{\lambda}}(\rho)\circ\psi^{-1}(z)=wz.
$$
There are four conjugacy classes of $G_{\boldsymbol{\lambda}}$:
\begin{itemize}
\item $K^{(1)}_{\boldsymbol{\lambda}}$: the identity element;
\item $K^{(2)}_{\boldsymbol{\lambda}}$: elements of order two;
\item $K^{(3)}_{\boldsymbol{\lambda}}$: the conjugate class of $\rho$;
\item $K^{(4)}_{\boldsymbol{\lambda}}$: the conjugate class of $\rho^2$.
\end{itemize}

Now fix $K^{(1)}_{\boldsymbol{\lambda}}$, $K^{(2)}_{\boldsymbol{\lambda}}$, $K^{(3)}_{\boldsymbol{\lambda}}$, $K^{(4)}_{\boldsymbol{\lambda}}$. Table \ref{chaA_42} is the character table of $G_{\boldsymbol{\lambda}}$, with $X^{(1)}_{\boldsymbol{\lambda}},\:X^{(2)}_{\boldsymbol{\lambda}},\:X^{(3)}_{\boldsymbol{\lambda}}$ and $X^{(4)}_{\boldsymbol{\lambda}}$ representing the four different irreducible representations of $G_{\boldsymbol{\lambda}}$ and $\chi^{(1)}_{\boldsymbol{\lambda}},\: \chi^{(2)}_{\boldsymbol{\lambda}},\:\chi^{(3)}_{\boldsymbol{\lambda}}$ and $\chi^{(4)}_{\boldsymbol{\lambda}}$ their characters.
\begin{table}[h]
\centering
$$
\begin{array}{c|cccc}
& K^{(1)}_{\boldsymbol{\lambda}} & K^{(2)}_{\boldsymbol{\lambda}} & K^{(3)}_{\boldsymbol{\lambda}} & K^{(4)}_{\boldsymbol{\lambda}}\\
\hline
\chi^{(1)}_{\boldsymbol{\lambda}}& 1 & 1 & 1   & 1   \\
\chi^{(2)}_{\boldsymbol{\lambda}}& 1 & 1 & w   & w^2 \\
\chi^{(3)}_{\boldsymbol{\lambda}}& 1 & 1 & w^2 & w   \\
\chi^{(4)}_{\boldsymbol{\lambda}}& 3 &-1 & 0   & 0   \\
\end{array}
$$
\caption{Character table of $G_{\boldsymbol{\lambda}}$, with $G_{\boldsymbol{\lambda}}$ isomorphic to $A_4$.}
\label{chaA_42}
\end{table}

\begin{defn}
For any $\boldsymbol{\lambda}\in K_n$ such that $G_{\boldsymbol{\lambda}}$ is isomorphic to the tetrahedral group $T$ and $\boldsymbol{\lambda}$ is of type F+mB or FE+mB, assume that $X_{\boldsymbol{\lambda}}=p_1X^{(1)}_{\boldsymbol{\lambda}}\oplus\cdots\oplus p_4X^{(4)}_{\boldsymbol{\lambda}}$. Then call $(p_1,\:p_2,\:p_3,\:p_4)$ the \textbf{multiplicity vector} of $\boldsymbol{\lambda}$.
\end{defn}

\begin{rem}
The definition seems ambiguous as $\psi$ is chosen randomly. But we shall prove that $(p_1,\:p_2,\:p_3,\:p_4)$ remains the same however we choose $\psi$, which makes the notion multiplicity vector well-defined.
\end{rem}

From the theorems in Section \ref{cha}, we have already found the character of $X_{\boldsymbol{\lambda}}$. The results are shown in Table \ref{chaA42}.

\begin{table}[h]
\centering
$$
\begin{array}{c|cccc}
& K^{(1)}_{\boldsymbol{\lambda}} & K^{(2)}_{\boldsymbol{\lambda}} & K^{(3)}_{\boldsymbol{\lambda}} & K^{(4)}_{\boldsymbol{\lambda}} \\
\hline
F+mB  & 1+12m & 1 & w & w^2 \\
FE+mB & 7+12m &-1 & w & w^2 \\
\end{array}
$$
\caption{Character of $X_{\boldsymbol{\lambda}}$, with $G_{\boldsymbol{\lambda}}$ isomorphic to $A_4$.}
\label{chaA42}
\end{table}

Thus we have the conclusion that the multiplicity vector of $\boldsymbol{\lambda}$ is
\begin{enumerate}
\item \textbf{F+mB}: $(m,\:1+m,\:m,\:3m)$;
\item \textbf{FE+mB}: $(m,\:1+m,\:m,\:2+3m)$.
\end{enumerate}

\begin{thm}
For each singularity $[\boldsymbol{\lambda}]$ such that $G_{\boldsymbol{\lambda}}$ is isomorphic to the tetrahedral group $T$, we have found its multiplicity vector
\begin{enumerate}
\item \textbf{F+mB}: $(m,\:1+m,\:m,\:3m)$;
\item \textbf{E+mB}: $(m,\:m,\:m,\:1+3m)$;
\item \textbf{FV+mB}: $(m,\:1+m,\:1+m,\:1+3m)$;
\item \textbf{FE+mB}: $(m,\:1+m,\:m,\:2+3m)$;
\item \textbf{FVE+mB}: $(m,\:1+m,\:1+m,\:3+3m)$;
\item \textbf{(1+m)B}: $(m,\:1+m,\:1+m,\:2+3m)$.
\end{enumerate}
\end{thm}

\subsection{Representations of the Dihedral Group $D_p$}
\label{D_pprep}

For any $\boldsymbol{\lambda}\in K_n$ such that $G_{\boldsymbol{\lambda}}$ is isomorphic to the dihedral group $D_p$, recall from Section \ref{sing} that
$$
\Phi_{\boldsymbol{\lambda}}:\:G_{\boldsymbol{\lambda}}\to\mathcal{A}_{[\boldsymbol{\lambda}]},\:g_\sigma\mapsto f^{\boldsymbol{\lambda}}_\sigma
$$
is a group isomorphism. From Section \ref{D_n} we know that $[\boldsymbol{\lambda}]$ is equivalent to one of the following sets (which are categorized into six types):
\begin{enumerate}
\item \textbf{mC}: $C_p(a_1)\cup C_p(a_2)\cup\cdots\cup C_p(a_m)$, where $C_p(a_1),\:C_p(a_2),\:\cdots,\:C_p(a_m)$ are different orbits of order $2p$, $m\in\mathbb {N^*}$;
\item \textbf{A+mC}: $A_p\cup C_p(a_1)\cup C_p(a_2)\cup\cdots\cup C_p(a_m)$, where $C_p(a_1),\:C_p(a_2),\:\cdots,\:C_p(a_m)$ are different orbits of order $2p$, $m\in\mathbb {N}$;
\item \textbf{AB+mC}: $A_p\cup B_p\cup C_p(a_1)\cup C_p(a_2)\cup\cdots\cup C_p(a_m)$, where $C_p(a_1),\:C_p(a_2),\:\cdots,\:C_p(a_m)$ are different orbits of order $2p$, $m\in\mathbb {N^*}$;
\item \textbf{2+mC}: $\{0,\:\infty\}\cup C_p(a_1)\cup C_p(a_2)\cup\cdots\cup C_p(a_m)$, where $C_p(a_1),\:C_p(a_2),\:\cdots,\:C_p(a_m)$ are different orbits of order $2p$, $m\in\mathbb {N^*}$;
\item \textbf{A+2+mC}: $A_p\cup\{0,\:\infty\}\cup C_p(a_1)\cup C_p(a_2)\cup\cdots\cup C_p(a_m)$, where $C_p(a_1),\:C_p(a_2),\:\cdots,\:C_p(a_m)$ are different orbits of order $2p$, $m\in\mathbb {N}$;
\item \textbf{AB+2+mC}: $A_p\cup B_p\cup\{0,\:\infty\}\cup C_p(a_1)\cup C_p(a_2)\cup\cdots\cup C_p(a_m)$, where $C_p(a_1),\:C_p(a_2),\:\cdots,\:C_p(a_m)$ are different orbits of order $2p$, $m\in\mathbb {N^*}$.
\end{enumerate}

\subsubsection{Representations of $D_p$ when $p$ is odd}
\label{D_poddprep}

When $G_{\boldsymbol{\lambda}}$ is isomorphic to the dihedral group $D_p$ ($p$ is odd), there are $\frac{p+3}{2}$ conjugacy classes of $G_{\boldsymbol{\lambda}}$:
\begin{itemize}
\item $K^{(l)}_{\boldsymbol{\lambda}}$: elements whose images under $\Phi_{\boldsymbol{\lambda}}$ are conjugate to a rotation of $\frac{2l\pi}{p}$, where $l=1,\:2,:\cdots,\:\frac{p-1}{2}$;
\item $K^{(\frac{p+1}{2})}_{\boldsymbol{\lambda}}$: the identity element;
\item $K^{(\frac{p+3}{2})}_{\boldsymbol{\lambda}}$: elements of order two.
\end{itemize}
Table \ref{chaD_podd} is the character table of $G_{\boldsymbol{\lambda}}$, with $X^{(1)}_{\boldsymbol{\lambda}},\: X^{(2)}_{\boldsymbol{\lambda}},\:\cdots,\:X^{(\frac{p+3}{2})}_{\boldsymbol{\lambda}}$ representing the $\frac{p+3}{2}$ different irreducible representations of $G_{\boldsymbol{\lambda}}$ and $\chi^{(1)}_{\boldsymbol{\lambda}},\: \chi^{(2)}_{\boldsymbol{\lambda}},\:\cdots,\:\chi^{(\frac{p+3}{2})}_{\boldsymbol{\lambda}}$ their characters.
\begin{table}[h]
\centering
$$
\begin{array}{c|ccccccc}
& K^{(1)}_{\boldsymbol{\lambda}} & \cdots
& K^{(k)}_{\boldsymbol{\lambda}} & \cdots
& K^{(\frac{p-1}{2})}_{\boldsymbol{\lambda}}
& K^{(\frac{p+1}{2})}_{\boldsymbol{\lambda}}
& K^{(\frac{p+3}{2})}_{\boldsymbol{\lambda}} \\
\hline
\chi^{(1)}_{\boldsymbol{\lambda}}
& 2\cos{\frac{2\pi}{p}} & \cdots
& 2\cos{\frac{2k\pi}{p}} & \cdots
& 2\cos{\frac{(p-1)\pi}{p}} & 2 & 0 \\
\vdots & \vdots & & \vdots & & \vdots & \vdots & \vdots \\
\chi^{(l)}_{\boldsymbol{\lambda}}
& 2\cos{\frac{2l\pi}{p}} & \cdots
& 2\cos{\frac{2kl\pi}{p}} & \cdots
& 2\cos{\frac{(p-1)l\pi}{p}} & 2 & 0 \\
\vdots & \vdots & & \vdots & & \vdots & \vdots & \vdots \\
\chi^{(\frac{p-1}{2})}_{\boldsymbol{\lambda}}
& 2\cos{\frac{(p-1)\pi}{p}} & \cdots
& 2\cos{\frac{(p-1)k\pi}{p}} & \cdots
& 2\cos{\frac{(p-1)^2\pi}{2p}} & 2 & 0 \\
\chi^{(\frac{p+1}{2})}_{\boldsymbol{\lambda}}
& 1 & \cdots & 1 & \cdots & 1 & 1 & 1 \\
\chi^{(\frac{p+3}{2})}_{\boldsymbol{\lambda}}
& 1 & \cdots & 1 & \cdots & 1 & 1 & -1 \\
\end{array}
$$
\caption{Character table of $G_{\boldsymbol{\lambda}}$, with $G_{\boldsymbol{\lambda}}$ isomorphic to $D_p$ and $p$ odd.}
\label{chaD_podd}
\end{table}

\begin{defn}
For any $\boldsymbol{\lambda}\in K_n$ such that $G_{\boldsymbol{\lambda}}$ is isomorphic to the dihedral group $D_p$ ($p$ is odd), assume that  $X_{\boldsymbol{\lambda}}=a_1X^{(1)}_{\boldsymbol{\lambda}}\oplus\cdots\oplus a_{\frac{p+3}{2}}X^{(\frac{p+3}{2})}_{\boldsymbol{\lambda}}$. Then call $(a_1,\:a_2,\:\cdots,\:a_{\frac{p+3}{2}})$ the \textbf{multiplicity vector} of $\boldsymbol{\lambda}$.
\end{defn}

From the theorems in Section \ref{cha} we have already known the character of $X_{\boldsymbol{\lambda}}$. The results are shown in Table \ref{chaDpodd}.
\begin{table}[h]
\centering
$$
\begin{array}{c|ccccccc}
& K^{(1)}_{\boldsymbol{\lambda}} & \cdots
& K^{(k)}_{\boldsymbol{\lambda}} & \cdots
& K^{(\frac{p-1}{2})}_{\boldsymbol{\lambda}}
& K^{(\frac{p+1}{2})}_{\boldsymbol{\lambda}}
& K^{(\frac{p+3}{2})}_{\boldsymbol{\lambda}}
\\\hline
mC
& -1-2\cos{\frac{2\pi}{p}} & \cdots
& -1-2\cos{\frac{2k\pi}{p}} & \cdots
& -1-2\cos{\frac{(p-1)\pi}{p}} & 2mp-3 & 1
\\
A+mC
& -1-2\cos{\frac{2\pi}{p}} & \cdots
& -1-2\cos{\frac{2k\pi}{p}} & \cdots
& -1-2\cos{\frac{(p-1)\pi}{p}} & (2m+1)p-3 & 0
\\
AB+mC
& -1-2\cos{\frac{2\pi}{p}} & \cdots
& -1-2\cos{\frac{2k\pi}{p}} & \cdots
& -1-2\cos{\frac{(p-1)\pi}{p}} & (2m+2)p-3 & -1
\\
2+mC
& -1 & \cdots & -1 & \cdots & -1 & 2mp-1 & 1
\\
A+2+mC
& -1 & \cdots & -1 & \cdots & -1 & (2m+1)p-1 & 0
\\
AB+2+mC
& -1 & \cdots & -1 & \cdots & -1 & (2m+2)p-1 & -1
\\
\end{array}
$$
\caption{Character of $X_{\boldsymbol{\lambda}}$, with $G_{\boldsymbol{\lambda}}$ isomorphic to $D_p$ and $p$ odd.}
\label{chaDpodd}
\end{table}

Thus we have the conclusion
\begin{thm}
For any $\boldsymbol{\lambda}\in K_n$ such that $G_{\boldsymbol{\lambda}}$ is isomorphic to the dihedral group $D_p$ ($p$ is odd), we have found its multiplicity vector
\begin{enumerate}
  \item \textbf{mC}:
    \begin{itemize}
      \item $(2m-1,\:m,\:m-1)$ for $p=3$;
      \item $(2m-1,\:2m,\:\cdots,\:2m,\:m,\:m-1)$ for $p\ge5$;
    \end{itemize}
  \item \textbf{A+mC}:
    \begin{itemize}
      \item $(2m,\:m,\:m)$ for $p=3$;
      \item $(2m,\:2m+1,\:\cdots,\:2m+1,\:m,\:m)$ for $p\ge5$;
    \end{itemize}
  \item \textbf{AB+mC}:
    \begin{itemize}
      \item $(2m+1,\:m,\:m+1)$ for $p=3$;
      \item $(2m+1,\:2m+2,\:\cdots,\:2m+2,\:m,\:m+1)$ for $p\ge5$;
    \end{itemize}
  \item \textbf{2+mC}: $(2m,\:\cdots,\:2m,\:m,\:m-1)$;
  \item \textbf{A+2+mC}: $(2m+1,\:\cdots,\:2m+1,\:m,\:m)$;
  \item \textbf{AB+2+mC}: $(2m+2,\:\cdots,\:2m+2,\:m,\:m+1)$.
\end{enumerate}
\end{thm}

\subsubsection{Representations of $D_p$ when $p$ is even}
\label{D_pevenprep}

When $G_{\boldsymbol{\lambda}}$ is isomorphic to the dihedral group $D_p$ ($p$ is even), there are $\frac{p+6}{2}$ conjugacy classes of $G_{\boldsymbol{\lambda}}$:
\begin{itemize}
\item $K^{(l)}_{\boldsymbol{\lambda}}$: elements whose images under $\Phi_{\boldsymbol{\lambda}}$ are conjugate to a rotation of $\frac{2l\pi}{p}$, where $l=1,\:2,:\cdots,\:\frac{p}{2}$;
\item $K^{(\frac{p+2}{2})}_{\boldsymbol{\lambda}}$: the identity element.
\end{itemize}
There are only two conjugacy classes left. If $\boldsymbol{\lambda}$ is of type A+mC or A+2+mC, set
\begin{itemize}
\item $K^{(\frac{p+4}{2})}_{\boldsymbol{\lambda}}$: elements which have two fixed points in $[\boldsymbol{\lambda}]$ and are not in $K^{(l)}_{\boldsymbol{\lambda}}$, $l=1,\:2,:\cdots,\:\frac{p+2}{2}$;
\item $K^{(\frac{p+6}{2})}_{\boldsymbol{\lambda}}$: elements which have no fixed point in $[\boldsymbol{\lambda}]$ and are not in $K^{(l)}_{\boldsymbol{\lambda}}$, $l=1,\:2,:\cdots,\:\frac{p+2}{2}$.
\end{itemize}
If $\boldsymbol{\lambda}$ is of type mC, AB+mC, 2+mC or AB+2+mC, just set $K^{(\frac{p+4}{2})}_{\boldsymbol{\lambda}}$ to be any of the left two conjugacy classes and $K^{(\frac{p+6}{2})}_{\boldsymbol{\lambda}}$ the other one.

Now fix $K^{(1)}_{\boldsymbol{\lambda}},\:K^{(2)}_{\boldsymbol{\lambda}},\cdots, K^{(\frac{p+6}{2})}_{\boldsymbol{\lambda}}$. Table \ref{chaD_peven} is the character table of $G_{\boldsymbol{\lambda}}$, with $X^{(1)}_{\boldsymbol{\lambda}},\: X^{(2)}_{\boldsymbol{\lambda}},\:\cdots,\:X^{(\frac{p+6}{2})}_{\boldsymbol{\lambda}}$ representing the $\frac{p+6}{2}$ different irreducible representations of $G_{\boldsymbol{\lambda}}$ and $\chi^{(1)}_{\boldsymbol{\lambda}},\: \chi^{(2)}_{\boldsymbol{\lambda}},\:\cdots,\:\chi^{(\frac{p+6}{2})}_{\boldsymbol{\lambda}}$ their characters.
\begin{table}[h]
\centering
$$
\begin{array}{c|cccccccc}
& K^{(1)}_{\boldsymbol{\lambda}} & \cdots
& K^{(k)}_{\boldsymbol{\lambda}} & \cdots
& K^{(\frac{p}{2})}_{\boldsymbol{\lambda}}
& K^{(\frac{p+2}{2})}_{\boldsymbol{\lambda}}
& K^{(\frac{p+4}{2})}_{\boldsymbol{\lambda}}
& K^{(\frac{p+6}{2})}_{\boldsymbol{\lambda}}  \\
\hline
\chi^{(1)}_{\boldsymbol{\lambda}}
& 2\cos{\frac{2\pi}{p}} & \cdots
& 2\cos{\frac{2k\pi}{p}} & \cdots
& -2 & 2 & 0 & 0 \\
\vdots & \vdots & & \vdots & & \vdots & \vdots & \vdots &\vdots\\
\chi^{(l)}_{\boldsymbol{\lambda}}
& 2\cos{\frac{2l\pi}{p}} & \cdots
& 2\cos{\frac{2kl\pi}{p}} & \cdots
& 2(-1)^l & 2 & 0 & 0 \\
\vdots & \vdots & & \vdots & & \vdots & \vdots & \vdots &\vdots\\
\chi^{(\frac{p-2}{2})}_{\boldsymbol{\lambda}}
& 2\cos{\frac{(p-2)\pi}{p}} & \cdots
& 2\cos{\frac{(p-2)k\pi}{p}} & \cdots
& 2(-1)^{\frac{p-2}{2}} & 2 & 0 & 0\\
\chi^{(\frac{p}{2})}_{\boldsymbol{\lambda}}
& 1 & \cdots & 1 & \cdots & 1 & 1 & 1 & 1 \\
\chi^{(\frac{p+2}{2})}_{\boldsymbol{\lambda}}
& 1 & \cdots & 1 & \cdots & 1 & 1 & -1 & -1\\
\chi^{(\frac{p+4}{2})}_{\boldsymbol{\lambda}}
& -1 & \cdots & (-1)^k & \cdots & (-1)^{\frac{p}{2}} & 1 & 1&-1\\
\chi^{(\frac{p+6}{2})}_{\boldsymbol{\lambda}}
& -1 & \cdots & (-1)^k & \cdots & (-1)^{\frac{p}{2}} & 1 & -1&1\\
\end{array}
$$
\caption{Character table of $G_{\boldsymbol{\lambda}}$, with $G_{\boldsymbol{\lambda}}$ isomorphic to $D_p$ and $p$ even.}
\label{chaD_peven}
\end{table}

\begin{defn}
For any $\boldsymbol{\lambda}\in K_n$ such that $G_{\boldsymbol{\lambda}}$ is isomorphic to the dihedral group $D_p$ ($p$ is even), assume that  $X_{\boldsymbol{\lambda}}=a_1X^{(1)}_{\boldsymbol{\lambda}}\oplus\cdots\oplus a_{\frac{p+6}{2}}X^{(\frac{p+6}{2})}_{\boldsymbol{\lambda}}$. Then call $(a_1,\:a_2,\:\cdots,\:a_{\frac{p+6}{2}})$ the \textbf{multiplicity vector} of $\boldsymbol{\lambda}$.
\end{defn}

\begin{rem}
The definition seems ambiguous when $\boldsymbol{\lambda}$ is of type mC, AB+mC, 2+mC or AB+2+mC as $K^{(\frac{p+4}{2})}_{\boldsymbol{\lambda}}$ and $K^{(\frac{p+6}{2})}_{\boldsymbol{\lambda}}$ are chosen randomly. But we shall prove that $(a_1,\:a_2,\:\cdots,\:a_{\frac{p+6}{2}})$ remains the same however we choose $K^{(\frac{p+4}{2})}_{\boldsymbol{\lambda}}$ and $K^{(\frac{p+6}{2})}_{\boldsymbol{\lambda}}$, which makes the notion multiplicity vector well-defined.
\end{rem}

From the theorems in Section \ref{cha} we have already known the character of $X_{\boldsymbol{\lambda}}$. The results are shown in Table \ref{chaDpeven}.
\begin{table}[h]
\centering
$$
\begin{array}{c|cccccccc}
\centering
& K^{(1)}_{\boldsymbol{\lambda}} & \cdots
& K^{(k)}_{\boldsymbol{\lambda}} & \cdots
& K^{(\frac{p}{2})}_{\boldsymbol{\lambda}}
& K^{(\frac{p+2}{2})}_{\boldsymbol{\lambda}}
& K^{(\frac{p+4}{2})}_{\boldsymbol{\lambda}}
& K^{(\frac{p+6}{2})}_{\boldsymbol{\lambda}}  \\
\\\hline
\mathrm{mC}
& -1-2\cos{\frac{2\pi}{p}} & \cdots
& -1-2\cos{\frac{2k\pi}{p}} & \cdots
& -1-2\cos{\frac{(p-1)\pi}{p}} & 2mp-3 & 1 & 1
\\
\mathrm{A+mC}
& -1-2\cos{\frac{2\pi}{p}} & \cdots
& -1-2\cos{\frac{2k\pi}{p}} & \cdots
& -1-2\cos{\frac{(p-1)\pi}{p}} & (2m+1)p-3 & -1 & 1
\\
\mathrm{AB+mC}
& -1-2\cos{\frac{2\pi}{p}} & \cdots
& -1-2\cos{\frac{2k\pi}{p}} & \cdots
& -1-2\cos{\frac{(p-1)\pi}{p}} & (2m+2)p-3 & -1 & -1
\\
\mathrm{2+mC}
& -1 & \cdots & -1 & \cdots & -1 & 2mp-1 & 1 & 1
\\
\mathrm{A+2+mC}
& -1 & \cdots & -1 & \cdots & -1 & (2m+1)p-1 & -1 & 1
\\
\mathrm{AB+2+mC}
& -1 & \cdots & -1 & \cdots & -1 & (2m+2)p-1 & -1 & -1
\\
\end{array}
$$
\caption{Character of $X_{\boldsymbol{\lambda}}$, with $G_{\boldsymbol{\lambda}}$ isomorphic to $D_p$ and $p$ even.}
\label{chaDpeven}
\end{table}

Thus we have the conclusion
\begin{thm}
For any $\boldsymbol{\lambda}\in K_n$ such that $G_{\boldsymbol{\lambda}}$ is isomorphic to the dihedral group $D_p$ ($p$ is even), we have found its multiplicity vector
\begin{enumerate}
  \item \textbf{mC}:
    \begin{itemize}
      \item $(m,\:m-1,\:m-1,\:m-1)$ for $p=2$;
      \item $(2m-1,\:m,\:m-1,\:m,\:m)$ for $p=4$;
      \item $(2m-1,\:2m,\:\cdots,\:2m,\:m,\:m-1,\:m,\:m)$ for $p\ge6$;
    \end{itemize}
  \item \textbf{A+mC}:
    \begin{itemize}
      \item $(m,\:m,\:m-1,\:m)$ for $p=2$;
      \item $(2m,\:m,\:m,\:m,\:m+1)$ for $p=4$;
      \item $(2m,\:2m+1,\:\cdots,\:2m+1,\:m,\:m,\:m,\:m+1)$ for $p\ge6$;
    \end{itemize}
  \item \textbf{AB+mC}:
    \begin{itemize}
      \item $(m,\:m+1,\:m,\:m)$ for $p=2$;
      \item $(2m+1,\:m,\:m+1,\:m+1,\:m+1)$ for $p=4$;
      \item $(2m+1,\:2m+2,\:\cdots,\:2m+2,\:m,\:m+1,\:m+1,\:m+1)$ for $p\ge6$;
    \end{itemize}
  \item \textbf{2+mC}:
    \begin{itemize}
      \item $(m,\:m-1,\:m,\:m)$ for $p=2$;
      \item $(2m,\:\cdots,\:2m,\:m,\:m-1,\:m,\:m)$ for $p\ge4$;
    \end{itemize}
  \item \textbf{A+2+mC}:
    \begin{itemize}
      \item $(m,\:m,\:m,\:m+1)$ for $p=2$;
      \item $(2m+1,\:\cdots,\:2m+1,\:m,\:m,\:m,\:m+1)$ for $p\ge4$;
    \end{itemize}
  \item \textbf{AB+2+mC}:
    \begin{itemize}
      \item $(m,\:m+1,\:m+1,\:m+1)$ for $p=2$;
      \item $(2m+2,\:\cdots,\:2m+2,\:m,\:m+1,\:m+1,\:m+1)$ for $p\ge4$.
    \end{itemize}
\end{enumerate}
\end{thm}

\begin{rem}
When $G_{\boldsymbol{\lambda}}$ is isomorphic to $D_2$, $\boldsymbol{\lambda}$ is of type 2+mC if and only if $\boldsymbol{\lambda}$ is of type A+mC, and
$\boldsymbol{\lambda}$ is of type A+2+mC if and only if $\boldsymbol{\lambda}$ is of type AB+mC. Notice that the multiplicity vectors are different when $\boldsymbol{\lambda}$ is viewed as an element of different types.
\end{rem}

\subsection{Representations of the Cyclic Group $\mathbb{Z}_p$}
\label{Z_pprep}

For any $\boldsymbol{\lambda}\in K_n$ such that $G_{\boldsymbol{\lambda}}$ is isomorphic to the cyclic group $\mathbb{Z}_p$, recall from Section \ref{sing} that
$$
\Phi_{\boldsymbol{\lambda}}:\:G_{\boldsymbol{\lambda}}\to\mathcal{A}_{[\boldsymbol{\lambda}]},\:g_\sigma\mapsto f^{\boldsymbol{\lambda}}_\sigma
$$
is a group isomorphism. From Section \ref{Z_n} we know that $[\boldsymbol{\lambda}]$ is equivalent to one of the following sets (which are categorized into three types):
\begin{enumerate}
\item \textbf{mC}: $C_p(a_1)\cup C_p(a_2)\cup\cdots\cup C_p(a_m)$, where $C_p(a_1),\:C_p(a_2),\:\cdots,\:C_p(a_m)$ are different orbits of order $p$, $m\in\mathbb {N^*}$;
\item \textbf{1+mC}: $\{0\}\cup C_p(a_1)\cup C_p(a_2)\cup\cdots\cup C_p(a_m)$, where $C_p(a_1),\:C_p(a_2),\:\cdots,\:C_p(a_m)$ are different orbits of order $p$, $m\in\mathbb {N^*}$;
\item \textbf{2+mC}: $\{0,\:\infty\}\cup C_p(a_1)\cup C_p(a_2)\cup\cdots\cup C_p(a_m)$, where $C_p(a_1),\:C_p(a_2),\:\cdots,\:C_p(a_m)$ are different orbits of order $p$, $m\in\mathbb {N^*}$.
\end{enumerate}
Set $$w=e^{\frac{2\pi}{p}i}.$$

\subsubsection{Representations of $\mathbb{Z}_p$ when
$\boldsymbol{\lambda}$ is of type mC or 2+mC}

When $\boldsymbol{\lambda}$ is of type mC or 2+mC, let $\rho$ denote an element in $G_{\boldsymbol{\lambda}}$ such that $\Phi_{\boldsymbol{\lambda}}(\rho)$ is a rotation of $\frac{2\pi}{p}$. Set
$$
K^{(k)}_{\boldsymbol{\lambda}}=\{\rho^k\}
$$
for $k=1,\:2,\:\cdots,\:p$, and we have got the $p$ conjugate classes of $G_{\boldsymbol{\lambda}}$.

Now fix $K^{(1)}_{\boldsymbol{\lambda}},\:K^{(2)}_{\boldsymbol{\lambda}},\cdots, K^{(p)}_{\boldsymbol{\lambda}}$. Table \ref{chaZ_p1} is the character table of $G_{\boldsymbol{\lambda}}$, with $X^{(1)}_{\boldsymbol{\lambda}},\: X^{(2)}_{\boldsymbol{\lambda}},\:\cdots,\:X^{(p)}_{\boldsymbol{\lambda}}$ representing the $p$ different irreducible representations of $G_{\boldsymbol{\lambda}}$ and $\chi^{(1)}_{\boldsymbol{\lambda}},\: \chi^{(2)}_{\boldsymbol{\lambda}},\:\cdots,\:\chi^{(p)}_{\boldsymbol{\lambda}}$ their characters.
\begin{table}[h]
\centering
$$
\begin{array}{c|ccccccc}
& K^{(1)}_{\boldsymbol{\lambda}}
& K^{(2)}_{\boldsymbol{\lambda}} & \cdots
& K^{(k)}_{\boldsymbol{\lambda}} & \cdots
& K^{(p)}_{\boldsymbol{\lambda}} \\
\hline
\chi^{(1)}_{\boldsymbol{\lambda}}
       & w      & w^2    & \cdots & w^k    & \cdots & 1 \\
\chi^{(2)}_{\boldsymbol{\lambda}}
       & w^2    & w^4    & \cdots & w^{2k} & \cdots & 1 \\
\vdots & \vdots & \vdots &        & \vdots &        & \vdots \\
\chi^{(l)}_{\boldsymbol{\lambda}}
       & w^l    & w^{2l} & \cdots & w^{kl} & \cdots & 1 \\
\vdots & \vdots & \vdots &        & \vdots &        & \vdots \\
\chi^{(p)}_{\boldsymbol{\lambda}}
       & 1      & 1      & \cdots & 1      & \cdots & 1 \\
\end{array}
$$
\caption{Character table of $G_{\boldsymbol{\lambda}}$, with $G_{\boldsymbol{\lambda}}$ isomorphic to $\mathbb{Z}_p$.}
\label{chaZ_p1}
\end{table}

\begin{defn}
For any $\boldsymbol{\lambda}\in K_n$ such that $G_{\boldsymbol{\lambda}}$ is isomorphic to the cyclic group $\mathbb{Z}_p$ and $\boldsymbol{\lambda}$ is of type mC or 2+mC, assume that $X_{\boldsymbol{\lambda}}=a_1X^{(1)}_{\boldsymbol{\lambda}}\oplus\cdots\oplus a_{p}X^{(p)}_{\boldsymbol{\lambda}}$. Then call $(a_1,\:a_2,\:\cdots,\:a_p)$ the \textbf{multiplicity vector} of $\boldsymbol{\lambda}$.
\end{defn}

\begin{rem}
The definition seems ambiguous as $\rho$ is chosen randomly. But we shall prove that $(a_1,\:a_2,\:\cdots,\:a_p)$ remains the same however we choose $\rho$, which makes the notion multiplicity vector well-defined.
\end{rem}

From the theorems in Section \ref{cha} we have already known the character of $X_{\boldsymbol{\lambda}}$. The results are shown in Table \ref{chaZp1}.
\begin{table}[h]
\centering
$$
\begin{array}{c|ccccccc}
& K^{(1)}_{\boldsymbol{\lambda}}
& K^{(2)}_{\boldsymbol{\lambda}} & \cdots
& K^{(k)}_{\boldsymbol{\lambda}} & \cdots
& K^{(p-1)}_{\boldsymbol{\lambda}}
& K^{(p)}_{\boldsymbol{\lambda}} \\
\hline
mC
& -1-2\cos{\frac{2\pi}{p}}  & -1-2\cos{\frac{4\pi}{p}} & \cdots
& -1-2\cos{\frac{2k\pi}{p}} & \cdots
& -1-2\cos{\frac{2(p-1)\pi}{p}} & mp-3 \\
2+mC & -1 & -1 & \cdots & -1 & \cdots & -1 & mp-1 \\
\end{array}
$$
\caption{Character of $X_{\boldsymbol{\lambda}}$, with $G_{\boldsymbol{\lambda}}$ isomorphic to $\mathbb{Z}_p$.}
\label{chaZp1}
\end{table}

Thus we have the conclusion that the multiplicity vector of $\boldsymbol{\lambda}$ is
\begin{enumerate}
\item \textbf{mC}:
\begin{itemize}
\item $(m-2,\:m-1)$ for $p=2$;
\item $(m-1,\:m-1,\:m-1)$ for $p=3$;
\item $(m-1,\:m,\:\cdots,\:m,\:m-1,\:m-1)$ for $p\ge4$;
\end{itemize}
\item \textbf{2+mC}: $(m,\:\cdots,\:m,\:m-1)$.
\end{enumerate}

\subsubsection{Representations of $\mathbb{Z}_p$ when
$\boldsymbol{\lambda}$ is of type 1+mC}

When $\boldsymbol{\lambda}$ is of type 1+mC, suppose that $\sigma(a)=a$ for each $g_\sigma\in G_{\boldsymbol{\lambda}}$. Let $\psi$ be any linear fractional transformation such that
$$
\psi(z^{\boldsymbol{\lambda}}_a)=0
$$
and
$$
\psi\circ\Phi_{\boldsymbol{\lambda}}(G_{\boldsymbol{\lambda}})\circ\psi^{-1}
=\langle z\mapsto wz\rangle\simeq \mathbb{Z}_p.
$$

Let $\rho$ denote the element in $G_{\boldsymbol{\lambda}}$ such that
$$
\psi\circ \Phi_{\boldsymbol{\lambda}}(\rho)\circ\psi^{-1}(z)=wz.
$$
Set
$$
K^{(k)}_{\boldsymbol{\lambda}}=\{\rho^k\}
$$
for $k=1,\:2,\:\cdots,\:p$, and we have got the $p$ conjugate classes of $G_{\boldsymbol{\lambda}}$.

Now fix $K^{(1)}_{\boldsymbol{\lambda}},\:K^{(2)}_{\boldsymbol{\lambda}},\cdots, K^{(p)}_{\boldsymbol{\lambda}}$. Table \ref{chaZ_p2} is the character table of $G_{\boldsymbol{\lambda}}$, with $X^{(1)}_{\boldsymbol{\lambda}},\: X^{(2)}_{\boldsymbol{\lambda}},\:\cdots,\:X^{(p)}_{\boldsymbol{\lambda}}$ representing the $p$ different irreducible representations of $G_{\boldsymbol{\lambda}}$ and $\chi^{(1)}_{\boldsymbol{\lambda}},\: \chi^{(2)}_{\boldsymbol{\lambda}},\:\cdots,\:\chi^{(p)}_{\boldsymbol{\lambda}}$ their characters.
\begin{table}[h]
\centering
$$
\begin{array}{c|ccccccc}
& K^{(1)}_{\boldsymbol{\lambda}}
& K^{(2)}_{\boldsymbol{\lambda}} & \cdots
& K^{(k)}_{\boldsymbol{\lambda}} & \cdots
& K^{(p)}_{\boldsymbol{\lambda}} \\
\hline
\chi^{(1)}_{\boldsymbol{\lambda}}
       & w      & w^2    & \cdots & w^k    & \cdots & 1 \\
\chi^{(2)}_{\boldsymbol{\lambda}}
       & w^2    & w^4    & \cdots & w^{2k} & \cdots & 1 \\
\vdots & \vdots & \vdots &        & \vdots &        & \vdots \\
\chi^{(l)}_{\boldsymbol{\lambda}}
       & w^l    & w^{2l} & \cdots & w^{kl} & \cdots & 1 \\
\vdots & \vdots & \vdots &        & \vdots &        & \vdots \\
\chi^{(p)}_{\boldsymbol{\lambda}}
       & 1      & 1      & \cdots & 1      & \cdots & 1 \\
\end{array}
$$
\caption{Character table of $G_{\boldsymbol{\lambda}}$, with $G_{\boldsymbol{\lambda}}$ isomorphic to $\mathbb{Z}_p$.}
\label{chaZ_p2}
\end{table}

\begin{defn}
For any $\boldsymbol{\lambda}\in K_n$ such that $G_{\boldsymbol{\lambda}}$ is isomorphic to the cyclic group $\mathbb{Z}_p$ and $\boldsymbol{\lambda}$ is of type 1+mC, assume that $X_{\boldsymbol{\lambda}}=a_1X^{(1)}_{\boldsymbol{\lambda}}\oplus\cdots\oplus a_{p}X^{(p)}_{\boldsymbol{\lambda}}$. Then call $(a_1,\:a_2,\:\cdots,\:a_p)$ the \textbf{multiplicity vector} of $\boldsymbol{\lambda}$.
\end{defn}

\begin{rem}
The definition seems ambiguous as $\psi$ is chosen randomly. But we shall prove that $(a_1,\:a_2,\:\cdots,\:a_p)$ remains the same however we choose $\psi$, which makes the notion multiplicity vector well-defined.
\end{rem}

From the theorems in Section \ref{cha} we have already known the character of $X_{\boldsymbol{\lambda}}$. The results are shown in Table \ref{chaZp2}.
\begin{table}[h]
\centering
$$
\begin{array}{c|ccccccc}
& K^{(1)}_{\boldsymbol{\lambda}}
& K^{(2)}_{\boldsymbol{\lambda}} & \cdots
& K^{(k)}_{\boldsymbol{\lambda}} & \cdots
& K^{(p-1)}_{\boldsymbol{\lambda}}
& K^{(p)}_{\boldsymbol{\lambda}} \\
\hline
1+mC
& -1-w^{-1}  & -1-w^{-2} & \cdots & -1-w^{-k} & \cdots
& -1-w^{-(p-1)} & mp-2 \\
\end{array}
$$
\caption{Character of $X_{\boldsymbol{\lambda}}$, with $G_{\boldsymbol{\lambda}}$ isomorphic to $\mathbb{Z}_p$.}
\label{chaZp2}
\end{table}

Thus we have the conclusion that the multiplicity vector of $\boldsymbol{\lambda}$ is
\begin{enumerate}
\item \textbf{1+mC}:
\begin{itemize}
\item $(m-1,\:m-1)$ for $p=2$;
\item $(m,\:\cdots,\:m,\:m-1,\:m-1)$ for $p\ge3$.
\end{itemize}
\end{enumerate}

\begin{thm}
For any $\boldsymbol{\lambda}\in K_n$ such that $G_{\boldsymbol{\lambda}}$ is isomorphic to the cyclic group $\mathbb{Z}_p$, we have found its multiplicity vector
\begin{enumerate}
\item \textbf{mC}:
\begin{itemize}
\item $(m-2,\:m-1)$ for $p=2$;
\item $(m-1,\:m-1,\:m-1)$ for $p=3$;
\item $(m-1,\:m,\:\cdots,\:m,\:m-1,\:m-1)$ for $p\ge4$;
\end{itemize}
\item \textbf{1+mC}:
\begin{itemize}
\item $(m-1,\:m-1)$ for $p=2$;
\item $(m,\:\cdots,\:m,\:m-1,\:m-1)$ for $p\ge3$.
\end{itemize}
\item \textbf{2+mC}: $(m,\:\cdots,\:m,\:m-1)$.
\end{enumerate}
\end{thm}

\section{The Group that Fixes Four Points}
\label{n=4}

Let $\alpha=\{z_1, z_2, z_3, z_4\} \subseteq \widehat {\mathbb C}$, we define $\lambda$ to be the cross-ratio $[{z}_1, {z}_2, {z}_3, {z}_4]$.

Define a function $\pi: {S}_4 \to \mathbb C \;\; \sigma \mapsto [{z}_{\sigma(1)}, {z}_{\sigma(2)}, {z}_{\sigma(3)}, {z}_{\sigma(4)}$].

Then we have

\begin{displaymath}
\pi((1,2))=[{z}_2, {z}_1, {z}_3, {z}_4]=\frac{({z}_2-{z}_3)({z}_1-{z}_4)}{({z}_2-{z}_1)({z}_3-{z}_4)}=1-\lambda;
\end{displaymath}

\begin{displaymath}
\pi((1,3))=[{z}_3, {z}_2, {z}_1, {z}_4]=\frac{({z}_3-{z}_1)({z}_2-{z}_4)}{({z}_3-{z}_2)({z}_1-{z}_4)}=\frac{\lambda}{\lambda-1};
\end{displaymath}

\begin{displaymath}
\pi((1,4))=[{z}_4, {z}_2, {z}_3, {z}_1]=\frac{({z}_4-{z}_3)({z}_2-{z}_1)}{({z}_4-{z}_2)({z}_3-{z}_1)}=\frac{1}{\lambda}.
\end{displaymath}

By calculation we have

\begin{displaymath}
\pi((1,2))=\pi((3,4))=\pi((1,3,2,4))=\pi((1,4,2,3))=1-\lambda;
\end{displaymath}

\begin{displaymath}
\pi((1,3))=\pi((2,4))=\pi((1,2,3,4))=\pi((1,4,3,2))=\frac{\lambda}{\lambda-1};
\end{displaymath}

\begin{displaymath}
\pi((1,4))=\pi((2,3))=\pi((1,2,4,3))=\pi((1,3,4,2))=\frac{1}{\lambda};
\end{displaymath}

\begin{displaymath}
\pi((1,2,3))=\pi((2,4,3))=\pi((3,4,1))=\pi((4,2,1))=\frac{\lambda-1}{\lambda};
\end{displaymath}

\begin{displaymath}
\pi((1,3,2))=\pi((2,3,4))=\pi((3,1,4))=\pi((4,1,2))=\frac{1}{1-\lambda};
\end{displaymath}

\begin{displaymath}
\pi((1,2)(3,4))=\pi((1,3)(2,4))=\pi((1,4)(3,2))=\lambda.
\end{displaymath}

On one hand, for any $f\in\mathcal{A_\alpha}$, we have
\begin{displaymath}
\pi(\sigma_f)=[z_{\sigma_f(1)}, z_{\sigma_f(2)}, z_{\sigma_f(3)}, z_{\sigma_f(4)}]=[f(z_1), f(z_2), f(z_3), f(z_4)]=[z_1, z_2, z_3, z_4]=\lambda.
\end{displaymath}
On the other hand, if there exists some $\sigma \in {S}_4$ s.t.~
\begin{displaymath}
\pi(\sigma)=[z_{\sigma(1)}, z_{\sigma(2)}, z_{\sigma(3)}, z_{\sigma(4)}]=[z_1, z_2, z_3, z_4]=\lambda,
\end{displaymath}
there must be some $f\in\mathcal{A_\alpha}$ s.t.~$f(z_i)=z_{{\sigma}(i)}, i=1, 2, 3, 4$.

Now, all we have to do is to find $\sigma \in {S}_4 \textrm{ s.t. } \lambda = \pi (\sigma)$.

If $\lambda=1/2$, we have $1-\lambda=\lambda, \frac{\lambda}{\lambda-1}\ne\lambda, \frac{1}{\lambda}\ne\lambda, \frac{\lambda-1}{\lambda}\ne\lambda, \frac{1}{1-\lambda}\ne\lambda$.

\begin{displaymath}
\{\sigma_f\in S_4|f\in\mathcal{A_\alpha}\}=\{ e,(1,2),(3,4),(1,2)(3,4),(1,3)(2,4),(1,4)(2,3),(1,3,2,4),(1,4,2,3)\}\cong{D}_4.
\end{displaymath}

If $\lambda=2$, we have $\frac{\lambda}{\lambda-1}=\lambda, 1-\lambda\ne\lambda, \frac{1}{\lambda}\ne\lambda, \frac{\lambda-1}{\lambda}\ne\lambda, \frac{1}{1-\lambda}\ne\lambda$.

\begin{displaymath}
\{\sigma_f\in S_4|f\in\mathcal{A_\alpha}\}=\{e,(1,3),(2,4),(1,2)(3,4),(1,3)(2,4),(1,4)(2,3),(1,2,3,4),(1,4,3,2)\}\cong {D}_4
\end{displaymath}

If $\lambda=-1$, we have $\frac{1}{\lambda}=\lambda, 1-\lambda\ne\lambda, \frac{\lambda}{\lambda-1}\ne\lambda, \frac{\lambda-1}{\lambda}\ne\lambda, \frac{1}{1-\lambda}\ne\lambda$.

\begin{displaymath}
\{\sigma_f\in S_4|f\in\mathcal{A_\alpha}\}=\{e,(1,4),(2,3),(1,2)(3,4),(1,3)(2,4),(1,4)(2,3),(1,2,4,3),(1,3,4,2)\}\cong {D}_4
\end{displaymath}

If $\lambda = \frac{1\pm \sqrt 3 i}{2}$, we have $ \frac{\lambda-1}{\lambda}=\frac{1}{1-\lambda}=\lambda, 1-\lambda\ne\lambda, \frac{\lambda}{\lambda-1}\ne\lambda, \frac{1}{\lambda}\ne\lambda$.

\begin{eqnarray*}
\{\sigma_f\in S_4|f\in\mathcal{A_\alpha}\} &=&
\{e,(1,2)(3,4),(1,3)(2,4),(1,4)(2,3),(1,2,3),(1,3,2),(2,3,4),(2,4,3),\\
&& \:\:\:(3,4,1),(3,1,4),(4,1,2),(4,2,1)\}={A}_4.
\end{eqnarray*}

If $\lambda \ne 1/2, 2, -1, \frac{1\pm \sqrt 3 i}{2}$, now $1-\lambda\ne\lambda, \frac{\lambda}{\lambda-1}\ne\lambda, \frac{1}{\lambda}\ne\lambda, \frac{\lambda-1}{\lambda}\ne\lambda, \frac{1}{1-\lambda}\ne\lambda$.

\begin{displaymath}
\{\sigma_f\in S_4|f\in\mathcal{A_\alpha}\}=\{e,(1,2)(3,4),(1,3)(2,4),(1,4)(2,3)\}\cong K_4.
\end{displaymath}

\section{The Group that Fixes Five Points}
\label{n=5}

Let ${ z }_{ 1 },{ z }_{ 2 },{ z }_{ 3 },{ z }_{ 4 }$ be $0, 1, \lambda, \infty $, respectively, $\lambda \in \mathbb{C} \backslash \{0, 1\}$. For $\sigma \in S_4$, let $f_\sigma$ denote the linear fractional transformation which maps $z_i$ to $z_{\sigma (i)}, i=1, 2, 3, 4$ (if $f_\sigma$ exists). From the previous section we know that ${ f }_{ (1,2)(3,4) },{ f }_{ (1,3)(2,4) },{ f }_{ (1,4)(2,3) }$ always exist regardless of the choice of $\lambda$.

By calculation we know that
\begin{displaymath}
f_{(1,2)(3,4)}(z)=\frac{\lambda z - \lambda}{z-\lambda},{ f }_{ (1,3)(2,4) }(z)=\frac{z-\lambda}{z-1},{ f }_{ (1,4)(2,3)}(z)=\frac{\lambda}{z},
\end{displaymath}
which fix $\lambda \pm \sqrt { { \lambda  }^{ 2 }-\lambda  }, 1\pm \sqrt { 1-\lambda  } ,\pm \sqrt { \lambda  } $ respectively.

When $n=5$, $\alpha=\{z_1, z_2, z_3, z_4, z_5\}\subseteq\widehat{\mathbb{C}}$. Since any non-trivial $f\in\mathcal{A_\alpha}$ is elliptic, $\sigma_f$ must be of Type $(5),(4,1),(3,1,1)$ or $(2,2,1)$. There are five possibilities:
\begin{enumerate}
\item There exists some $f\in\mathcal{A_\alpha}$ s.t.~$\sigma_f$ is of Type $(5)$;
\item For any $f\in\mathcal{A_\alpha}, \sigma_f$ is not of Type $(5)$, but there exists some $f\in\mathcal{A_\alpha}$ s.t.~$\sigma_f$ is of Type $(4, 1)$;
\item For any $f\in\mathcal{A_\alpha}, \sigma_f$ is not of Type $(5)$ or $(4,1)$, but there exists some $f\in\mathcal{A_\alpha}$ s.t.~$\sigma_f$ is of Type $(3,1,1)$;
\item For any $f\in\mathcal{A_\alpha}, \sigma_f$ is not of Type $(5),(4,1)$ or $(3,1,1)$, but there exists some $f\in\mathcal{A_\alpha}$ s.t.~$\sigma_f$ is of Type $(2,2,1)$;
\item $\mathcal{A_\alpha}$ is the trivial group.
\end{enumerate}

In the rest of this section we shall discuss the five possibilities one by one, and prove the following theorem
\begin{thm}
For $\alpha=\{z_1,\: z_2, \:z_3,\: z_4, \:z_5\}\subseteq\widehat{\mathbb{C}}$, $\mathcal{A_\alpha}$ is isomorphic to $D_5$, $\mathbb{Z}_4$, $D_3$, $\mathbb{Z}_2$ or the trivial group $\{\mathrm{Id}\}$.
\end{thm}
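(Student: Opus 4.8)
The plan is to treat the five mutually exclusive and exhaustive possibilities displayed above one at a time, in each of the four non-trivial cases pinning $\alpha$ down to an explicit configuration up to a linear fractional change of coordinates and then reading off $\mathcal{A}_\alpha$ from the classification already obtained in Section \ref{orbit}. The first step is to see that these five possibilities really do exhaust everything, which amounts to controlling the permutation $\sigma_f$ induced on $\alpha$ by each $f\in\mathcal{A}_\alpha$. Since $\mathcal{A}_\alpha$ is finite, every non-trivial $f$ is elliptic, hence conjugate to a rotation and fixes exactly two points of $\widehat{\mathbb C}$; so $\sigma_f$ fixes at most two of the five points, and $f\mapsto\sigma_f$ embeds $\mathcal{A}_\alpha$ into $S_5$ (a linear fractional transformation with three fixed points is the identity, which also shows $f$ and $\sigma_f$ have the same order). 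A permutation of five letters fixing at most two of them has cycle type $(5),(4,1),(3,1,1),(2,2,1)$ or $(3,2)$; but $(3,2)$ cannot occur, because $\sigma_f=(a\,b\,c)(d\,e)$ would give $f^3\ne\mathrm{Id}$ fixing the three distinct points $z_a,z_b,z_c$. Together with $\sigma_e=\mathrm{Id}$ this yields the five listed cases, and it shows that an element of $\mathcal{A}_\alpha$ has order $5$, $4$ or $3$ exactly when $\sigma_f$ has type $(5)$, $(4,1)$ or $(3,1,1)$.

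In Case 1 a generator $f$ of order $5$ acts on $\alpha$ as a $5$-cycle, so $\alpha$ is a single $\langle f\rangle$-orbit; sending the two fixed points of $f$ to $0,\infty$ and rescaling makes $\alpha$ the set of fifth roots of unity, and Corollary \ref{D_n1} (with $n=5$) gives $\mathcal{A}_\alpha\simeq D_5$. In Case 2 an element $f$ of order $4$ has one fixed point in $\alpha$ and one outside; normalizing these to $0,\infty$ makes the other four points a single $\langle f\rangle$-orbit, so after rescaling $\alpha=\{0\}\cup\{\text{fourth roots of }1\}$, and Corollary \ref{cZ_n} (with $n=4$) gives $\mathcal{A}_\alpha\simeq\mathbb Z_4$. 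In Case 3 an element $f$ of order $3$ fixes exactly two points of $\alpha$; normalizing them to $0,\infty$ makes the other three a single $\langle f\rangle$-orbit, so $\alpha=\{0,\infty\}\cup\{1,w,w^2\}$ with $w=e^{2\pi i/3}$, which is the orbit-union $V\cup A_3$ of Section \ref{D_n} for $n=3$; the remaining hypotheses of Theorem \ref{tD_n} then hold for trivial cardinality reasons ($5$ is not a sum of the sizes $2$, $3p$ $(p\ge2)$, $6p$ of $D_{3p}$-orbits, nor of the sizes of tetrahedral, octahedral or icosahedral orbits, and the case $n=2$ does not arise), so $\mathcal{A}_\alpha\simeq D_3$.

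In Case 4 the only non-trivial elements have order $2$: none has order $3,4$ or $5$ by hypothesis, none has order $6$ (that forces cycle type $(3,2)$), and $S_5$ has no element of larger order. Picking $f$ of type $(2,2,1)$, normalizing its fixed points to $0,\infty$ (conjugating by $z\mapsto1/z$ if needed so that $0\in\alpha$), and rescaling, we get $\alpha=\{0,1,-1,a,-a\}$ with $a\ne0,\pm1$, a configuration of type \textbf{1+mC} with $m=2$ for $\mathbb Z_2$. The case-$4$ hypothesis forces $a$ to avoid the finitely many special values for which this set becomes equivalent to the configurations of Cases 1--3 (these are exactly sub-cases 4a--4d of the refined $n=5$ theorem), and $\mathcal{A}_\alpha$ cannot be $\mathbb Z_2\times\mathbb Z_2$, since a $D_2$ has only even orbits on $\widehat{\mathbb C}$ and so cannot stabilize a five-element set. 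Hence Theorem \ref{tZ_n} (with $n=2$) yields $\mathcal{A}_\alpha\simeq\mathbb Z_2$, and Case 5 is $\mathcal{A}_\alpha=\{\mathrm{Id}\}$ by definition.

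The normalizations themselves are routine; the real work — and the main obstacle — is hidden in Case 4, where one must verify that the values of $a$ excluded by "$\alpha$ has no symmetry of order $3,4,5$" are precisely those making $\{0,1,-1,a,-a\}$ linear-fractionally equivalent to the regular pentagon, to $\{0,1,i,-1,-i\}$, or to $\{0,\infty,1,w,w^2\}$ (the origin of the explicit values $\pm(\sqrt5\pm2)$, $\pm i$, $\pm\sqrt3\,i$, $\pm i/\sqrt3$ in that theorem, obtained by solving cross-ratio equations), and that for every other $a$ no further linear fractional transformation preserves $\alpha$. Everything else reduces to bookkeeping against Theorems \ref{tA_5}, \ref{tS_4}, \ref{tA_4}, \ref{tD_n} and \ref{tZ_n}.
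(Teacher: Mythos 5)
Your proof is correct, and while it shares the paper's skeleton it resolves the cases by a genuinely different mechanism. Like the paper, you observe that every non-trivial $f\in\mathcal{A}_\alpha$ is elliptic, so $\sigma_f$ fixes at most two of the five points and must have cycle type $(5)$, $(4,1)$, $(3,1,1)$ or $(2,2,1)$, and you split into the corresponding five cases and normalize $\alpha$ to a standard configuration in each; your explicit elimination of type $(3,2)$ (a non-identity $f^3$ would fix three points) is a detail the paper passes over in silence. From there the arguments diverge. The paper stays inside Sections \ref{n=4}--\ref{n=5}: for the pentagon, for $\{0,1,i,-1,-i\}$ and for $\{0,\infty,1,w,w^2\}$ it takes an arbitrary $k\in\mathcal{A}_\alpha$, composes with a known symmetry so that the result fixes a marked point, and identifies that composite inside the explicitly computed four-point stabilizer $\mathcal{A}_{\{0,1,\lambda,\infty\}}$; in the $(2,2,1)$ case it runs a hands-on permutation computation to show every element of $\mathcal{A}_\alpha$ fixes $0$ and that $z\mapsto a/z$ is the only remaining candidate. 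You instead quote the orbit-theoretic classification of Section \ref{orbit} (Corollaries \ref{D_n1} and \ref{cZ_n}, Theorems \ref{tD_n} and \ref{tZ_n}), whose remaining hypotheses you correctly reduce to cardinality checks plus the case hypotheses, and you close the $(2,2,1)$ case with the classification of finite M\"obius groups together with the parity observation that $D_2$ has only even orbits on $\widehat{\mathbb C}$. Your route is shorter and, in the last case, cleaner than the paper's, at the price of resting on the statements of Section \ref{orbit} (in particular the unproved Corollaries \ref{D_n1} and \ref{cZ_n} and the informally phrased condition ``not in the dihedral case''), whereas the paper's version is self-contained modulo the four-point computation. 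You are also right that the explicit special values of $a$ are only needed for the refined classification with multiplicity vectors, not for the present statement, so flagging them as ``hidden work'' is a remark about the next theorem rather than a gap in this one.
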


\subsection{$\mathcal{A_\alpha}$ is Isomorphic to $D_5$}
\label{n=5, D_5}

In the first case assume that there exists some $f\in\mathcal{A_\alpha}$ s.t.~$\sigma_f$ is of Type $(5)$. This assumption amounts to the existence of some linear fractional transformation $\psi$ s.t.
\begin{displaymath}
\psi(\alpha)=\{1, w, w^2, w^3, w^4\},\: w=e^{\frac{2\pi}{5}i}.
\end{displaymath}

Without lose of generality we assume that
\begin{displaymath}
\alpha=\left\{ 1,w,{ w }^{ 2 },{ w }^{ 3 },{ w }^{ 4 } \right\},\: f(z)=e^{\frac{2\pi}{5}i}z.
\end{displaymath}

Define another linear fractional transformation $g$
\begin{displaymath}
g(z)=\frac{1}{z}.
\end{displaymath}
It is obvious that $f,g \in \mathcal{A_\alpha}$. Note that $\langle f,g\rangle\simeq D_5$ acts transitively on $\alpha$.

For any $k \in \mathcal{A_\alpha}$, there exists some $l\in  \langle f,g\rangle$ s.t.~$h=l\circ k$ fixes $w^4$. We have
\begin{displaymath}
h(\{1, w, w^2, w^3\})=\{1, w, w^2, w^3\},\: h(w^4)=w^4.
\end{displaymath}

Let $\phi$ be the linear fractional transformation which maps $1, w, w^3$ to $0, 1, \infty$ respectively. Define $\lambda$ to be the image of $w^2$, so we have
\begin{displaymath}
\lambda=\phi(w^2)=[0,1,\phi(w^2),\infty]=[1, w, w^2, w^3]=w^4+w+2,
\end{displaymath}
and
\begin{displaymath}
\phi(w^4)=[0,1,\phi(w^4),\infty]=[1, w, w^4, w^3]=w^3+w^2.
\end{displaymath}
Thus
\begin{displaymath}
\phi \circ h \circ \phi^{-1}(\{0,1,\lambda,\infty\})=\{0,1,\lambda,\infty\},\: \phi \circ h \circ \phi^{-1}(w^3+w^2)=w^3+w^2.
\end{displaymath}
As a result, $\phi \circ h \circ \phi^{-1} \in \mathcal{A}_{\{0, 1, \lambda, \infty\}}$.

As $[0, 1, \lambda, \infty]=\lambda=w^4+w+2=\frac{\sqrt 5 +3}{2} \ne 2, \frac{1}{2}, -1, \frac{1\pm \sqrt3 i}{2}$, we know from \ref{n=4} that $\mathcal{A}_{\{0, 1, \lambda, \infty\}}=\{I, { f }_{ (1,2)(3,4) },{ f }_{ (1,3)(2,4) },{ f }_{ (1,4)(2,3) }\}.$

However $f_{(1,2)(3,4)}, f_{(1,3)(2,4)}, f_{(1,4)(2,3)}$ fix $\lambda \pm \sqrt { { \lambda  }^{ 2 }-\lambda  }, 1\pm \sqrt { 1-\lambda  } ,\pm \sqrt { \lambda  }$ respectively, and
\begin{displaymath}
w^3+w^2 \ne \lambda \pm \sqrt { { \lambda  }^{ 2 }-\lambda  }, 1\pm \sqrt { 1-\lambda  } ,\pm \sqrt { \lambda  }.
\end{displaymath}
So we have $\phi \circ h \circ \phi^{-1} = I$, and thus $k = l^{-1}\in \langle f,g\rangle$.

In conclusion we have
\begin{displaymath}
\mathcal{A_\alpha} = \langle z \mapsto e^{\frac{2\pi}{5}i}z,z \mapsto \frac{1}{z}\rangle \simeq D_5.
\end{displaymath}

\subsection{$\mathcal{A_\alpha}$ is Isomorphic to $\mathbb{Z}_4$}

In this case assume that for any $h\in\mathcal{A_\alpha}$, $\sigma_h$ is not of Type $(5)$, but there exists some $f\in\mathcal{A_\alpha}$ s.t.~$\sigma_f$ is of Type $(4, 1)$. Under this assumption, there exists some linear fractional transformation $\psi$ s.t.
\begin{displaymath}
\psi(\alpha) =\{0, 1, i, -1, -i\}.
\end{displaymath}

On the other hand, for any $\alpha=\{z_1, z_2, z_3, z_4, z_5\}\subseteq \widehat{\mathbb C}$, if there exists some linear fractional transformation $\psi$ s.t.~$\psi(\alpha)=\{0, 1, i, -1, -i\}$, we see that ${\psi}^{-1} \circ f \circ \psi$ fixes $\alpha$ and is of Type $(4,1)$, where $f(z)=iz$. However, since the five points in $\alpha$ are not concyclic, then for any $h\in\mathcal{A_\alpha}$, $\sigma_h$ is not of Type $(5)$. Thus we see that the assumption amounts to the existence of some linear fractional transformation $\psi$ s.t.
\begin{displaymath}
\alpha=\{0, 1, i, -1, -i\},\: f(z)=iz.
\end{displaymath}
It is obvious that $f \in\mathcal{A_\alpha}$.

For any $g \in \mathcal{A_\alpha}$, it is easy to see that
\begin{displaymath}
|g(\{\pm 1, \pm i\})\cap\{\pm 1, \pm i\}|\ge3.
\end{displaymath}
So $g$ fixes the unit circle, and thus $g(0)=0$.

The linear fractional transformation
\begin{displaymath}
\phi(z)=\frac{(1-i)z+i-1}{z+i}
\end{displaymath}
takes $1, i, -i$ and $0$ to $0, 1, \infty$ and $1+i$ respectively. Define $\lambda$ to be the image of $-1$, so we have
\begin{displaymath}
\lambda=\phi(-1)=[0,1,\phi(-1),\infty]=[1, i, -1, -i]=2.
\end{displaymath}

As $g$ fixes $0$, we see that
\begin{displaymath}
\phi \circ g \circ \phi^{-1}(\{0,1,\lambda,\infty\})=\{0,1,\lambda,\infty\}, \phi \circ g \circ \phi^{-1}(1+i)=1+i.
\end{displaymath}

As $[0, 1, \lambda, \infty]=\lambda=2$, from \ref{n=4} we know that
\begin{displaymath}
\mathcal{A}_{\{0, 1, \lambda, \infty\}}=\{I, f_{(1,2,3,4)}, f_{(1,3)(2,4)}, f_{(1,4,3,2)}, f_{(1,3)}, f_{(2,4)}, f_{(1,2)(3,4)}, f_{(1,4)(2,3)}\}.
\end{displaymath}
But $f_{(1,2)(3,4)}, f_{(1,4)(2,3)}$ fix $\lambda\pm\sqrt{{\lambda}^2-\lambda}=2\pm\sqrt2$ and $ \pm\sqrt{\lambda}=\pm\sqrt2$ respectively, and $f_{(1,3)}, f_{(2,4)}$ fixes $\pm i$ and $\pm 1$ respectively. So none of them fixes $1+i$. Thus we have
\begin{displaymath}
\phi \circ g \circ \phi^{-1}=f_{(1,2,3,4)},
\end{displaymath}
or
\begin{displaymath}
\phi \circ g \circ \phi^{-1}=f_{(1,3)(2,4)},
\end{displaymath}
or
\begin{displaymath}
\phi \circ g \circ \phi^{-1}=f_{(1,4,3,2)},
\end{displaymath}
or
\begin{displaymath}
\phi \circ g \circ \phi^{-1}=I.
\end{displaymath}
However
\begin{displaymath}
\phi^{-1} \circ f_{(1,2,3,4)}\circ\phi=f,2: \phi^{-1} \circ f_{(1,3)(2,4)}\circ\phi=f^2,\: \phi^{-1} \circ f_{(1,4,3,2)}\circ\phi=f^3,\: \phi^{-1} \circ I\circ\phi=I.
\end{displaymath}
So we have $g\in \langle f \rangle$.

In conclusion we have
\begin{displaymath}
\mathcal{A_\alpha} = \langle z \mapsto iz\rangle \simeq \mathbb Z _4.
\end{displaymath}

\subsection{$\mathcal{A_\alpha}$ is Isomorphic to $D_3$}

In this case assume that for any $h\in\mathcal{A_\alpha}$, $\sigma_h$ is not of Type $(5)$ or $(4, 1)$, but there exists some $f\in\mathcal{A_\alpha}$ s.t.~$\sigma_f$ is of Type $(3, 1, 1)$. Under this assumption, there exists some linear fractional transformation $\psi$ s.t.
\begin{displaymath}
\psi(\alpha)=\{0, \infty, 1, w, w^2\},\: w=e^{\frac{2\pi}{3}i}.
\end{displaymath}

On the other hand, for any $\alpha=\{z_1, z_2, z_3, z_4, z_5\}\subseteq \widehat{\mathbb C}$, if there exists some linear fractional transformation $\psi$ s.t.~$\psi(\alpha)=\{0, \infty, 1, w, w^2\}$, we see that ${\psi}^{-1} \circ f \circ \psi$ fixes $\alpha$ and is of Type $(3, 1, 1)$, where $f(z)=wz$. However, since no four points are concyclic, then for any $h\in\mathcal{A_\alpha}$, $\sigma_h$ is not of Type $(5)$ or $(4, 1)$. Thus we see that the assumption amounts to the existence of some linear fractional transformation $\psi$ s.t.
\begin{displaymath}
\psi(\alpha)=\{0, \infty, 1, w, w^2\},\: w=e^{\frac{2\pi}{3}i}.
\end{displaymath}

Without lose of generality assume that
\begin{displaymath}
\alpha=\{0, \infty, 1, w, w^2\},\: f(z)=wz.
\end{displaymath}

Define another linear fractional transformation $g$
\begin{displaymath}
g(z)=\frac{1}{z}.
\end{displaymath}
It is obvious that $f,g \in \mathcal{A_\alpha}$. Note that there are two orbits of $\langle f,g\rangle$: $\{0, \infty\}$, and $\{1, w, w^2\}$.

For any $k\in\mathcal{A_\alpha}$, it is easy to see that
\begin{displaymath}
|k(\{1, w, w^2\})\cap\{1, w, w^2\}|\ge1.
\end{displaymath}
So there exists some $i, j\in \mathbb{Z}$ s.t.~$k(w^i)=w^j$. Let $h=f^{1-j}\circ k\circ f^{i-1}$, we see that $h(w)=w$. Thus
\begin{displaymath}
 h (\{0,1,w^2,\infty\})=\{0,1,w^2,\infty\},\: h(w)=w.
\end{displaymath}
As a result, $h \in \mathcal{A}_{\{0, 1, w^2, \infty\}}$.

Set $\lambda=w^2$. As $[0, 1, \lambda, \infty]=\lambda=w^2\ne 2, \frac{1}{2}, -1, \frac{1\pm \sqrt3 i}{2}$, we know from \ref{n=4} that $\mathcal{A}_{\{0, 1, \lambda, \infty\}}=\{I, { f }_{ (1,2)(3,4) },{ f }_{ (1,3)(2,4) },{ f }_{ (1,4)(2,3) }\}.$

But $f_{(1,2)(3,4)}, f_{(1,3)(2,4)}$ fix $\lambda \pm \sqrt { { \lambda  }^{ 2 }-\lambda  }, 1\pm \sqrt{1-\lambda}$ respectively, and
\begin{displaymath}
w \ne \lambda \pm \sqrt { { \lambda  }^{ 2 }-\lambda  },\: 1\pm \sqrt { 1-\lambda  }.
\end{displaymath}
So we have $h=f_{(1,4)(2,3)}$ or $h=I$.

However
\begin{displaymath}
f_{(1,4)(2,3)}=f\circ g\in\langle f,g\rangle,\: \textrm{I}\in\langle f,g\rangle,
\end{displaymath}
thus we conclude that $k=f^{1-i}\circ h\circ f^{j-1}\in\langle f,g\rangle$

In conclusion we have
\begin{displaymath}
\mathcal{A_\alpha} = \langle z \mapsto e^{\frac{2\pi}{3}i}z,z \mapsto \frac{1}{z}\rangle \simeq D_3.
\end{displaymath}

\subsection{$\mathcal{A_\alpha}$ is Isomorphic to $\mathbb{Z}_2$}

In this case assume that for any $h\in\mathcal{A_\alpha}$, $\sigma_h$ is not of Type $(5),(4,1)$ or $(3,1,1)$, but there exists some $f\in\mathcal{A_\alpha}$ s.t.~$\sigma_f$ is of Type $(2, 2, 1)$. Under this assumption, there exists some linear fractional transformation $\psi$ s.t.
\begin{displaymath}
\psi(\alpha)=\{0, 1, -1, a, -a\},\: a\ne0, \pm 1.
\end{displaymath}

On the other hand, for any $\{0, 1, -1, a, -a\}, a \in \mathbb{C} \backslash \{ 0, \pm 1 \}$, the linear fractional transformation $z\mapsto -z$ is in $\mathcal{A}_{\{0, 1, -1, a, -a\}}$ and of Type $(2, 2, 1)$. Now we aim to find out the specific value $a$ takes when for each element $h\in \mathcal{A}_{\{0, 1, -1, a, -a\}}$, $\sigma_h$ is not of Type $(5), (4, 1)$ or $(3, 1, 1)$.

\emph{Case 1}: There exists some $h\in\mathcal{A}_{\{0, 1, -1, a, -a\}}$ s.t.~$\sigma_h$ is of Type (5).

This amounts to the existence of some linear fractional transformation $\psi$ s.t.
\begin{displaymath}
\psi(\{0, 1, -1, a, -a\})=\{1, w, w^2, w^3, w^4\}, \:w=e^{\frac{2\pi}{5}i}.
\end{displaymath}
We conclude that $0, 1, -1, a, -a$ lie on the same circle: the real axis, and $a\in\mathbb{R}$. Without lose of generality assume that $r=a>0$.

If $r>1$, we may assume that $ \psi(-r)=w^4, \psi(-1)=1, \psi(0)=w, \psi(1)=w^2, \psi(r) =w^3$. We have
\begin{displaymath}
[1,w,w^2,w^3]=[-1,0,1,r],
\end{displaymath}
and thus
\begin{displaymath}
r=2w^4+2w+3=\sqrt{5}+2.
\end{displaymath}

If $0<r<1$, we assume that $\psi(-1)=w^4, \psi(-r)=1, \psi(0)=w, \psi(r)=w^2, \psi(1)=w^3$. By exactly the same means we conclude that
\begin{displaymath}
r=2w^4+2w-1=\sqrt{5}-2.
\end{displaymath}

On the other hand, the linear fractional transformation
\begin{displaymath}
\psi_1(z)=\frac{(w^2-w)z+w^2+w}{(1-w)z+w+1}
\end{displaymath}
maps $-\sqrt 5 -2, -1, 0, 1$ and $\sqrt 5+2$ to $w^4, 1, w, w^2$ and $w^3$ respectively, and
\begin{displaymath}
\psi_2(z)=\frac{(w^2+3w+1)z+w^4-w^3}{(w^3+w^2-2)z+w^3-w^2}
\end{displaymath}
maps $-1,-\sqrt 5 +2, 0, \sqrt 5-2$ and $1$ to $w^4, 1, w, w^2$ and $w^3$ respectively.

And we conclude that there exists some $h\in\mathcal{A}_{\{0, 1, -1, a, -a\}}$ s.t.~$\sigma_h$ is of Type (6) if and only if $z=\pm\sqrt{5}\pm2$.

\emph{Case 2}: There exists some $h\in\mathcal{A}_{\{0, 1, -1, a, -a\}}$ s.t.~$\sigma_h$ is of Type (4, 1).

This amounts to the existence of some linear fractional transformation $\psi$ s.t.
\begin{displaymath}
\psi(\{0, 1, -1, a, -a\})=\{0, 1, i, -1, -i\}.
\end{displaymath}

It is obvious to see that such a $\psi$ exists if and only if $a=\pm i$.

\emph{Case 3}: There exists some $h\in\mathcal{A}_{\{0, 1, -1, a, -a\}}$ s.t.~$\sigma_h$ is of Type (3, 1, 1).

This amounts to the existence of some linear fractional transformation $\psi$ s.t.
\begin{displaymath}
\psi(\{0, 1, -1, a, -a\})=\{0, \infty, 1, w, w^2\}, \:w=e^{\frac{2\pi}{3}i}.
\end{displaymath}

The situation now is a little complex. We can see that the linear fractional transformation $f(z)=-z$ leaves $\{0, 1, -1, a, -a\}$ invariant, is of order 2, and fixes the point $0$. However, from the above section we know that $\mathcal{A}_{\psi(\{0, 1, -1, a, -a\})}=\langle z\mapsto wz, z\mapsto\frac{1}{z}\rangle \simeq D_3 $. The only three linear fractional transformations of order 2 in $\mathcal{A}_{\psi(\{0, 1, -1, a, -a\})}$ are
\begin{displaymath}
z\mapsto\frac{1}{z}, z\mapsto\frac{w}{z}, z\mapsto\frac{w^2}{z},
\end{displaymath}
which fixes $1, w^2, w$ respectively. So $\psi(0)\ne0,\infty$.

Without lose of generality we assume that $\psi(0)=1$. The only element of order 2 fixing $1$
in $\mathcal{A}_{\psi(\{0, 1, -1, a, -a\})}$ is $h(z)=\frac{1}{z}$. However, the element $f(z)=-z$ is of order 2, fixes $0$, and is in $\mathcal{A}_{\{0, 1, -1, a, -a\}}$. So we have $h=\psi \circ f \circ {\psi}^{-1}$.

As $h$ fixes $\{0, \infty \}$ and $\{1, -1\}$, and $f$ fixes $\{1,-1\}$ and $\{a, -a\}$, we see that there are only two possibilities. The first is $\psi(\{1, -1\})=\{0, \infty \}$, $\psi(\{a, -a\})=\{w, w^2\}$, and the second is $\psi(\{1, -1\})=\{w, w^2\}$, $\psi(\{a, -a\})=\{0, \infty \}$.

In the former situation, assume that $\psi(0)=1, \psi(1)=\infty, \psi(-1)=0, \psi(a)=w, \psi(-a)=w^2$, and we have\begin{displaymath}
[0, 1, w, \infty]=[-1, 0, a, 1]
\end{displaymath}
and
\begin{displaymath}
a=\sqrt{3}i.
\end{displaymath}

In the latter situation assume that
$\psi(0)=1, \psi(1)=w, \psi(-1)=w^2, \psi(a)=0, \psi(-a)=\infty$, and we have
\begin{displaymath}
[0, 1, w, \infty]=[a, 0, 1, -a]
\end{displaymath}
and
\begin{displaymath}
a=\frac{i}{\sqrt{3}}.
\end{displaymath}

On the other hand, the linear fractional transformation
\begin{displaymath}
\psi_1(z)=\frac{1+z}{1-z}
\end{displaymath}
maps $0, 1, -1, \sqrt3 i, -\sqrt3 i$ to $1, \infty, 0, w, w^2$ respectively, and
\begin{displaymath}
\psi_2(z)=\frac{i-\sqrt{3}z}{i+\sqrt{3}z}
\end{displaymath}
maps $0, 1, -1, \frac{1}{\sqrt{3}}i, -\frac{1}{\sqrt{3}}i$ to $1, w, w^2, \infty, 0$  respectively.

And we conclude that there exists some $h\in\mathcal{A}_{\{0, 1, -1, a, -a\}}$ s.t.~$\sigma_h$ is of Type (3, 1, 1) if and only if $a=\pm\sqrt{3}i$ or $\pm\frac{1}{\sqrt{3}}i$.

From the above discussion we see that the assumption amounts to the existence of some linear fractional transformation $\psi$ s.t.
\begin{displaymath}
\psi(\alpha)=\{0, 1, -1, a, -a\},\: a \ne0, \pm 1, \pm\sqrt{5}\pm2, \pm i, \pm\sqrt{3}i\mathrm{~or}\pm\frac{i}{\sqrt{3}}.
\end{displaymath}

Without lose of generality assume
\begin{displaymath}
\alpha=\{0, 1, -1, a, -a\},\: a \ne0, \pm 1, \pm\sqrt{5}\pm2, \pm i, \pm\sqrt{3}i\mathrm{~or}\pm\frac{i}{\sqrt3},\: f(z)=-z.
\end{displaymath}

So $\sigma_f =(1)(2,3)(4,5)$.

If $\exists g\in\mathcal{A_\alpha}$ s.t.~$\sigma_g(1)\ne1$, without lose of generality assume that $\sigma_g(1)=2$. We have
\begin{displaymath}
\sigma_g=(1,2)(3,4)(5)=:\pi_1
\end{displaymath}
or
\begin{displaymath}
\sigma_g=(1,2)(4,5)(3)=:\pi_2
\end{displaymath}
or
\begin{displaymath}
\sigma_g=(1,2)(5,3)(4)=:\pi_3.
\end{displaymath}

But
\begin{displaymath}
\pi_1\sigma_f=(1,2,4,5,3)
\end{displaymath}
and
\begin{displaymath}
\pi_2\sigma_f=(1,2,3)
\end{displaymath}
and
\begin{displaymath}
\pi_3\sigma_f=(1,2,5,4,3).
\end{displaymath}
thus such a $g$ does not exist, and every element in $\mathcal{A_\alpha}$ fixes $0$.

If $\exists g \in {\mathcal{A_\alpha}}$ s.t.~$g \ne f, g \ne I$, assume that $g(1)=a, g(-1)=-a, g(a)=1, g(-a)=-1$. So
\begin{displaymath}
g(z)=\frac{a}{z}.
\end{displaymath}
However, $g(0)\ne0$. So such a $g$ does not exist, too.

In conclusion we have
\begin{displaymath}
\mathcal{A_\alpha} = \langle z \mapsto -z\rangle \simeq \mathbb Z _2.
\end{displaymath}

\subsection{Conclusion}
\label{n=5, con}

From the above discussion we shall drive the following conclusion

\begin{thm}

Set $\alpha=\{z_1,\: z_2,\: z_3,\: z_4,\: z_5\}\subseteq\widehat {\mathbb{C}}$.

\begin{enumerate}

\item If there exists some linear fractional transformation $\psi$ such that
$$
\psi(\alpha)=\{1,\: w,\: w^2,\: w^3\:, w^4\},\:w=e^{\frac{2\pi}{5}i},
$$
then
$$
\mathcal{A_\alpha}={\psi}^{-1}\langle z \mapsto e^{\frac{2\pi}{5}i}z,\:z \mapsto \frac{1}{z}\rangle {\psi}\simeq D_5,
$$
and its multiplicity vector is $(0,\:1,\:0,\:0)$;

\item if there exists some linear fractional transformation $\psi$ such that
$$
\psi(\alpha) =\{0,\: 1,\: i,\: -1,\: -i\},
$$
then
$$
\mathcal{A_\alpha}={\psi}^{-1}\langle z \mapsto iz\rangle {\psi}\simeq \mathbb Z _4,
$$
and its multiplicity vector is $(1,\:1,\:0,\:0)$;

\item if there exists some linear fractional transformation $\psi$ such that
$$
\psi(\alpha)=\{0,\: \infty,\: 1, \:w,\: w^2\},\:w=e^{\frac{2\pi}{3}i},
$$
then
$$
\mathcal{A_\alpha}={\psi}^{-1}\langle z \mapsto e^{\frac{2\pi}{3}i}z,\:z \mapsto \frac{1}{z}\rangle\psi \simeq D_3,
$$
and its multiplicity vector is $(1,\:0,\:0)$;

\item if there exists some linear fractional transformation $\psi$ such that
$$
\psi(\alpha)=\{0,\: 1, \:-1,\: a,\: -a\},\:a\ne0,\:\pm 1,
$$
then

\begin{enumerate}

\item if $a=\pm(\sqrt{5}+2)\text{ or }\pm(\sqrt{5}-2)$, then there exists some linear fractional transformation $\phi$ such that
$$
\phi(\alpha)=\{1,\: w,\: w^2,\: w^3\:, w^4\},\:w=e^{\frac{2\pi}{5}i}
$$
and this is case 1;
\item if $a=\pm i$, then
$$
\alpha =\{0,\: 1,\: i,\: -1,\: -i\}
$$
and this is case 2;

\item if $a=\pm\sqrt{3}i\text{ or }\pm\frac{1}{\sqrt{3}}i$, then there exists some linear fractional transformation $\phi$ such that
$$
\phi(\alpha)=\{0,\: \infty,\: 1, \:w,\: w^2\},\:w=e^{\frac{2\pi}{3}i}
$$
and this is case 3;

\item otherwise,
$$
\mathcal{A_\alpha}={\psi}^{-1}\langle z \mapsto -z\rangle {\psi}\simeq \mathbb Z _2,
$$
and its multiplicity vector is $(1,\:1)$;

\end{enumerate}

\item otherwise,
$$\mathcal{A_\alpha}=\{\mathrm{Id}\}.$$

\end{enumerate}

\end{thm}

\section{The Group that Fixes Six Points}
\label{n=6}

There is no much difference here from the previous section. When $n=6$, $\alpha=\{z_1, z_2, z_3, z_4, z_5, z_6\}\subseteq \widehat{\mathbb{C}}$. Since any non-trivial $f\in\mathcal{A}_\alpha$ is a rotation of finite order, $\sigma_f$ must be of Type (6), (5, 1), (4, 1, 1), (3, 3), (2, 2, 2), (2, 2, 1, 1) or identity. There are seven possibilities:
\begin{enumerate}
\item There exists some $f\in\mathcal{A_\alpha}$ s.t.~$\sigma_f$ is of Type (6);
\item For any $f\in\mathcal{A_\alpha}$, $\sigma_f$ is not of Type (6), but there exists some $f\in\mathcal{A_\alpha}$ s.t.~$\sigma_f$ is of Type (5, 1);
\item For any $f\in\mathcal{A_\alpha}$, $\sigma_f$ is not of Type (6) or (5, 1), but there exists some $f\in\mathcal{A_\alpha}$ s.t.~$\sigma_f$ is of Type (4, 1, 1);
\item For any $f\in\mathcal{A_\alpha}$, $\sigma_f$ is not of Type (6), (5, 1) or (4, 1, 1), but there exists some $f\in\mathcal{A_\alpha}$ s.t.~$\sigma_f$ is of Type (3, 3);
\item For any $f\in\mathcal{A_\alpha}$, $\sigma_f$ is not of Type (6), (5, 1), (4, 1, 1) or (3, 3), but there exists some $f\in\mathcal{A_\alpha}$ s.t.~$\sigma_f$ is of Type (2, 2, 2);
\item For any $f\in\mathcal{A_\alpha}$, $\sigma_f$ is not of Type (6), (5, 1), (4, 1, 1), (3, 3) or (2, 2, 2), but there exists some $f\in\mathcal{A_\alpha}$ s.t.~$\sigma_f$ is of Type (2, 2, 1, 1);
\item $\mathcal{A_\alpha}$ is the trivial group.
\end{enumerate}

In the rest of this section we shall discuss the five possibilities one by one, and prove the following theorem
\begin{thm}
For $\alpha=\{z_1, z_2, z_3, z_4, z_5, z_6\}\subseteq\widehat{\mathbb{C}}$, $\mathcal{A_\alpha}$ is isomorphic to $D_5$, $\mathbb{Z}_5$, $S_4$, $D_3$, $K_4$, $\mathbb{Z}_2$ or the trivial group $\{\mathrm{Id}\}$.
\end{thm}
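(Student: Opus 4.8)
The plan is to carry the method of Section~\ref{n=5} over to six points, stratifying by the cycle type of the permutation $\sigma_f$ induced on $\alpha$ by a non-identity $f\in\mathcal A_\alpha$. Since $\mathcal A_\alpha$ is finite, every such $f$ is an elliptic linear fractional transformation, hence conjugate to a rotation $z\mapsto e^{2\pi i k/m}z$; such a map fixes exactly two points of $\widehat{\mathbb C}$ and all its remaining orbits have length $m$. Counting how many of the six points of $\alpha$ can be among the two fixed points of $f$ leaves only the cycle types $(6)$, $(5,1)$, $(4,1,1)$, $(3,3)$, $(2,2,2)$ and $(2,2,1,1)$, which give the seven exhaustive, mutually exclusive cases above. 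In each case I conjugate by a suitable $\psi$ so that the largest-order rotation present becomes a standard one; this forces $\psi(\alpha)$ into a concrete (possibly parametrized) configuration: the vertices of a regular hexagon for $(6)$; $\{0\}$ together with the fifth roots of unity for $(5,1)$; the octahedron $\{0,\infty,1,i,-1,-i\}$ for $(4,1,1)$; $\{1,w,w^2\}\cup a\{1,w,w^2\}$ with $w=e^{2\pi i/3}$ for $(3,3)$; and $\{1,-1,a,-a,b,-b\}$ for the $(2,2,2)$ and $(2,2,1,1)$ strata.

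For each normal form I then compute $\mathcal A_\alpha$ exactly, reducing to the already-settled classifications for $|\alpha|=4$ (Section~\ref{n=4}) and $|\alpha|=5$ (Section~\ref{n=5}). The mechanism, exactly as in Section~\ref{n=5}, is: display an explicit subgroup $G_0\le\mathcal A_\alpha$ that acts with enough transitivity on $\alpha$ (e.g.\ $\langle z\mapsto wz,\ z\mapsto 1/z\rangle$, or $\langle z\mapsto iz,\ z\mapsto\frac{iz+1}{z+i}\rangle$ in the octahedral case); given $g\in\mathcal A_\alpha$, precompose with an element of $G_0$ so that the result fixes a chosen point $p\in\alpha$; that composite then stabilizes the five-point set $\alpha\setminus\{p\}$, so it is one of the finitely many transformations allowed by the $|\alpha|=5$ (hence $|\alpha|=4$) analysis, and the extra requirement that it also fix $p$ — checked against the explicit fixed points of those finitely many candidates — forces it into $G_0$. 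This yields $\mathcal A_\alpha=G_0$, isomorphic to $D_6$, $\mathbb Z_5$, $S_4$ or $D_3$ in the four rigid cases, to $K_4$ or $\mathbb Z_2$ in the generic sub-cases of the last two strata, and to the trivial group when $\sigma_f$ is never of one of the listed types. With the isomorphism type in hand, the multiplicity vectors are read off from the character formulas of Section~\ref{cha} and the relevant character tables.

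The genuinely delicate part is the sub-case analysis for the strata $(3,3)$ and $(2,2,2)/(2,2,1,1)$, where the normal form carries free parameters and one must detect a hidden larger symmetry: whether some linear fractional transformation carries the parametrized configuration onto one of the earlier, more symmetric models. By the cross-ratio argument there are only finitely many transformations to test, but identifying them converts the question into polynomial conditions isolating exceptional parameter values — for $\{1,w,w^2\}\cup a\{1,w,w^2\}$ the values $a\in\{\sqrt w,\ w\sqrt w,\ w^2\sqrt w\}$ (a hexagon in disguise) and $a=-(2+\sqrt3)w^{j}$ (an octahedron in disguise); for $\{1,-1,a,-a,b,-b\}$ the relation $a^2b+ab^2+a^2-6ab+b^2+a+b=0$ (a $(3,3)$-element hides), various quadratic-surd relations in $a,b$ (a $(6)$- or $(4,1,1)$-element hides), and finally $ab=\pm1$, under which the extra involution $z\mapsto a/z$ appears and $\mathbb Z_2$ is promoted to $K_4$. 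After excising these loci the residual configuration has $\mathcal A_\alpha\cong D_3$, $\mathbb Z_2$ or $K_4$ as appropriate, and one checks the excised loci coincide with configurations already accounted for in the higher strata, keeping the seven cases disjoint.

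The main obstacle I expect is precisely this bookkeeping in the parametrized strata: enumerating all linear fractional transformations that could conjugate a six-point configuration to a higher-symmetry model, extracting and solving the resulting parameter equations, and verifying that the stratification is both exhaustive and non-redundant — in particular that the superficially separate $(2,2,2)$ and $(2,2,1,1)$ cases are correctly amalgamated into the single normal form $\{1,-1,a,-a,b,-b\}$. By contrast the reduction to the $|\alpha|=4,5$ classifications and the representation-theoretic computations are routine once Sections~\ref{n=4}, \ref{n=5} and \ref{cha} are available.
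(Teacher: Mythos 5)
Your proposal follows essentially the same route as the paper: stratify by the cycle type of $\sigma_f$ into the seven cases, normalize each to the same concrete configurations, reduce to the $|\alpha|=4,5$ classifications via an explicit transitive subgroup, and isolate the exceptional parameter loci (including $a^2b+ab^2+a^2-6ab+b^2+a+b=0$ and $ab=\pm1$) exactly as in Section~\ref{n=6}. The only cosmetic difference is that the paper disposes of the pure $(2,2,1,1)$ stratum by observing it is empty (the map $z\mapsto a/z$ always supplies a $(2,2,2)$-element), rather than amalgamating it with the $(2,2,2)$ normal form, but this amounts to the same thing.
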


\subsection{$\mathcal{A_\alpha}$ is Isomorphic to $D_6$}
\label{n=6, D_6}

In the first case assume that there exists some $f\in\mathcal{A_\alpha}$ s.t.~$\sigma_f$ is of Type (6). This amounts to the existence of some linear fractional transformation $\psi$ s.t.
\begin{displaymath}
\psi(\alpha)=\{1, w, w^2, w^3, w^4, w^5\}, w=e^{\frac{\pi}{3}i}.
\end{displaymath}

Without lose of generality assume that
\begin{displaymath}
\alpha=\{1, w, w^2, w^3, w^4, w^5\},\: f(z)=wz.
\end{displaymath}

Define another linear fractional transformation $g$
\begin{displaymath}
g(z)=\frac{1}{z}.
\end{displaymath}
It is obvious that $f,g \in \mathcal{A_\alpha}$. Note that $\langle f,g\rangle\simeq D_6$ acts transitively on $\alpha$.

For any $k \in \mathcal{A_\alpha}$, there exists some $l\in  \langle f,g\rangle$ s.t.~$h=l\circ k$ fixes $-1$. We have
\begin{displaymath}
h(\{w^4, w^5, 1, w, w^2\})=\{w^4, w^5, 1, w, w^2\},\: h(-1)=-1.
\end{displaymath}
Thus $h\in\mathcal{A}_{\{w^4, w^5, 1, w, w^2\}}$.

Define another linear fractional transformation $\phi$
\begin{displaymath}
\phi(z)=\frac{(-w^2-w)z+w^2+w}{z+1}.
\end{displaymath}
Notice that
\begin{displaymath}
\phi(w^4)=-3,\: \phi(w^5)=-1,\:  \phi(1)=0,\:  \phi(w)=1, \: \phi(w^2)=3,\:  \phi(-1)=\infty.
\end{displaymath}
Thus we have
\begin{displaymath}
\phi\circ h\circ\phi^{-1}\in\mathcal{A}_{\{0, \pm 1, \pm 3\}}, \:\phi\circ h\circ\phi^{-1}(\infty)=\infty.
\end{displaymath}

From Section \ref{n=5, con} we know that
\begin{displaymath}
\phi\circ h\circ\phi^{-1}(z)=-z \mathrm{~or~} \phi\circ h\circ\phi^{-1}=\mathrm{I},
\end{displaymath}
which amounts to
\begin{displaymath}
h=g \mathrm{~or~} h=\mathrm{I}.
\end{displaymath}
So $k=l^{-1}\circ h\in\langle f,g\rangle$.

In conclusion we have
\begin{displaymath}
\mathcal{A_\alpha} = \langle z \mapsto e^{\frac{\pi}{3}i}z,z \mapsto \frac{1}{z}\rangle \simeq D_6.
\end{displaymath}

\subsection{$\mathcal{A_\alpha}$ is Isomorphic to $\mathbb{Z}_5$}
\label{n=6, Z_5}

In this case assume that for any $h\in\mathcal{A_\alpha}$, $\sigma_h$ is not of Type (6), but there exists some $f\in\mathcal{A_\alpha}$ s.t.~$\sigma_f$ is of Type (5, 1). Under this assumption, there exists some linear fractional transformation $\psi$ s.t.
\begin{displaymath}
\psi(\alpha) =\{0, 1, w, w^2, w^3, w^4\},\: w=e^{\frac{2\pi}{5}i}.
\end{displaymath}

On the other hand, for any $\alpha=\{z_1, z_2, z_3, z_4, z_5, z_6\}\subseteq \widehat{\mathbb C}$, if there exists some linear fractional transformation $\psi$ s.t.~$\psi(\alpha)=\{0, 1, w, w^2, w^3, w^4\}$, we see that ${\psi}^{-1} \circ f \circ \psi$ fixes $\alpha$ and is of Type (5, 1), where $f(z)=wz$. However, since the six points in $\alpha$ are not concyclic, for any $h\in\mathcal{A_\alpha}$, $\sigma_h$ is not of Type (6). Thus we see that the assumption amounts to the existence of the linear fractional transformation $\psi$ s.t.
\begin{displaymath}
\psi(\alpha)=\{0, 1, w, w^2, w^3, w^4\}.
\end{displaymath}

Without lose of generality we assume that
\begin{displaymath}
\alpha=\{0, 1, w, w^2, w^3, w^4\},\: f(z)=wz.
\end{displaymath}
It is obvious that $f \in\mathcal{A_\alpha}$.

For any $g \in \mathcal{A_\alpha}$, it is easy to see that
\begin{displaymath}
|g(\{1, w, w^2, w^3, w^4\})\cap\{1, w, w^2, w^3, w^4\}|\ge4.
\end{displaymath}
So $g$ fixes the unit circle, and thus $g(0)=0$.

As $g(0)=0$, $g\in\mathcal{A}_{\{1, w, w^2, w^3, w^4\}}$. From Section \ref{n=5, D_5} we know that
\begin{displaymath}
\mathcal{A}_{\{1, w, w^2, w^3, w^4\}}=\langle z \mapsto e^{\frac{2\pi}{5}i}z,z \mapsto \frac{1}{z}\rangle.
\end{displaymath}
As $g$ fixes $0$, we see that $g\in\langle f\rangle$.

In conclusion we have
\begin{displaymath}
\mathcal{A_\alpha} = \langle z \mapsto e^{\frac{2\pi}{5}i}\rangle \simeq \mathbb{Z}_5.
\end{displaymath}

\subsection{$\mathcal{A_\alpha}$ is Isomorphic to $S_4$}
\label{n=6, S_4}

In this case assume that for any $h\in\mathcal{A_\alpha}$, $\sigma_h$ is not of Type (6) or (5, 1), but there exists some $f\in\mathcal{A_\alpha}$ s.t.~$\sigma_f$ is of Type (4, 1, 1). Under this assumption, there exists some linear fractional transformation $\psi$ s.t.
\begin{displaymath}
\psi(\alpha) =\{0,\infty, 1, i, -1, -i\}.
\end{displaymath}

On the other hand, for any $\alpha=\{z_1, z_2, z_3, z_4, z_5, z_6\}\subseteq \widehat{\mathbb C}$, if there exists some linear fractional transformation $\psi$ s.t.~$\psi(\alpha)=\{0, \infty, 1, i, -1, -i\}$, we see that ${\psi}^{-1} \circ f \circ \psi$ fixes $\alpha$ and is of Type (4, 1, 1), where $f(z)=iz$. However, since no five points in $\alpha$ are concyclic, we see that for any $h\in\mathcal{A_\alpha}$, $\sigma_h$ is not of Type (6) or (5, 1). Thus the assumption amounts to the existence of the linear fractional transformation $\psi$ s.t.
\begin{displaymath}
\psi(\alpha)=\{0, \infty, 1, i, -1, -i\}.
\end{displaymath}

Without lose of generality we assume that
\begin{displaymath}
\alpha=\{0, \infty, 1, i, -1, -i\}.
\end{displaymath}

Let $\varphi$ be the stereographic projection from $\widehat{\mathbb{C}}$ to $S^2$. We have
\begin{displaymath}
\varphi(\alpha)=\{(1,0,0),(-1,0,0),(0,1,0),(0,-1,0),(0,0,1),(0,0,-1)\}.
\end{displaymath}

So $\varphi$ maps $\alpha$ to the six vertices of a regular octahedron with its center at the origin. In Section \ref{S_4} we shall prove that
\begin{displaymath}
\mathcal{A_\alpha}=\langle z\mapsto iz, z\mapsto \frac{iz+1}{z+i} \rangle \simeq S_4.
\end{displaymath}

\subsection{$\mathcal{A_\alpha}$ is Isomorphic to $D_3$}
\label{n=6, D_3}

In this case assume that for any $h\in\mathcal{A_\alpha}$, $\sigma_h$ is not of Type (6), (5, 1) or (4, 1, 1), but there exists some $f\in\mathcal{A_\alpha}$ s.t.~$\sigma_f$ is of Type (3, 3). Under this assumption, there exists some linear fractional transformation $\psi$ s.t.
\begin{displaymath}
\psi(\alpha)=\{1, w, w^2, a, aw, aw^2\},\: |a|\ge 1,\: a\ne 1, w, w^2,\: w=e^{\frac{2\pi}{3}i}.
\end{displaymath}

On the other hand, for any $\{1, w, w^2, a, aw, aw^2\}, |a|\ge 1, a\ne 1, w, w^2$, the linear fractional transformation $z\mapsto wz$ is in $\mathcal{A}_{\{1, w, w^2, a, aw, aw^2\}}$ and of Type (3, 3). Now we aim to find out the specific value $a$ takes when for each element $h\in \mathcal{A}_{\{1, w, w^2, a, aw, aw^2\}}$, $\sigma_h$ is not of Type (6), (5, 1) or (4, 1, 1).

\emph{Case 1: $|a|=1$.}

Suppose $h\in \mathcal{A}_{\{1, w, w^2, a, aw, aw^2\}}$, and $\sigma_h$ is of Type (6), (5, 1) or (4, 1, 1). As $|a|=1$, the six points in $\{1, w, w^2, a, aw, aw^2\}$ are concyclic. Thus no element in $\mathcal{A}_{\{1, w, w^2, a, aw, aw^2\}}$ is of Type (5, 1) or (4, 1, 1), and $\sigma_h$ has to be of Type (6).

From Section \ref{n=6, D_6} we know that there exists some linear fractional transformation $\psi$ s.t.
\begin{displaymath}
\psi(\{1, w, w^2, a, aw, aw^2\})=\{1, \sqrt w, w, w\sqrt w, w^2, w^2\sqrt w\},\: \sqrt w=e^{\frac{\pi}{3}i}.
\end{displaymath}
It is obvious that this amounts to $a=\sqrt w, w\sqrt w$ or $w^2\sqrt w$.

\emph{Case 2: $|a|>1$.}

Suppose $h\in \mathcal{A}_{\{1, w, w^2, a, aw, aw^2\}}$, and $\sigma_h$ is of Type (6), (5, 1) or (4, 1, 1). As $|a|\ge 1$, no five points in $\{1, w, w^2, a, aw, aw^2\}$ are concyclic. Thus no element in $\mathcal{A}_{\{1, w, w^2, a, aw, aw^2\}}$ is of Type $(6)$ or $(5, 1)$, and $\sigma_h$ has to be of Type $(4, 1, 1)$.

From Section \ref{n=6, S_4} we know that there exists some linear fractional transformation $\psi$ s.t.
\begin{displaymath}
\psi(\{1, w, w^2, a, aw, aw^2\})=\{0, \infty, \pm 1, \pm i\}.
\end{displaymath}
Thus there exists a subset $\Sigma \subseteq \alpha$, $|\Sigma|=4$ s.t.~all four points in $\Sigma$ lie on the same circle $C$, and the two points in $\alpha\backslash\Sigma$ lie on different sides of $C$. As $|a|>1$, it is easy to see that
\begin{displaymath}
|\Sigma\cap\{1, w, w^2\}|=|\Sigma\cap\{a, aw, aw^2\}|=2.
\end{displaymath}
Without lose of generality we assume that
\begin{displaymath}
\Sigma=\{w, aw, w^2, aw^2\}.
\end{displaymath}
Thus we have
\begin{displaymath}
\frac{-3a}{(1-a)^2}=[w, aw, w^2, aw^2]\in \mathbb{R}.
\end{displaymath}
As $|a|>1$, the above condition equals to $a\in\mathbb{R}$.
\begin{figure}[!h]
\centering
\includegraphics[width=2.3in]{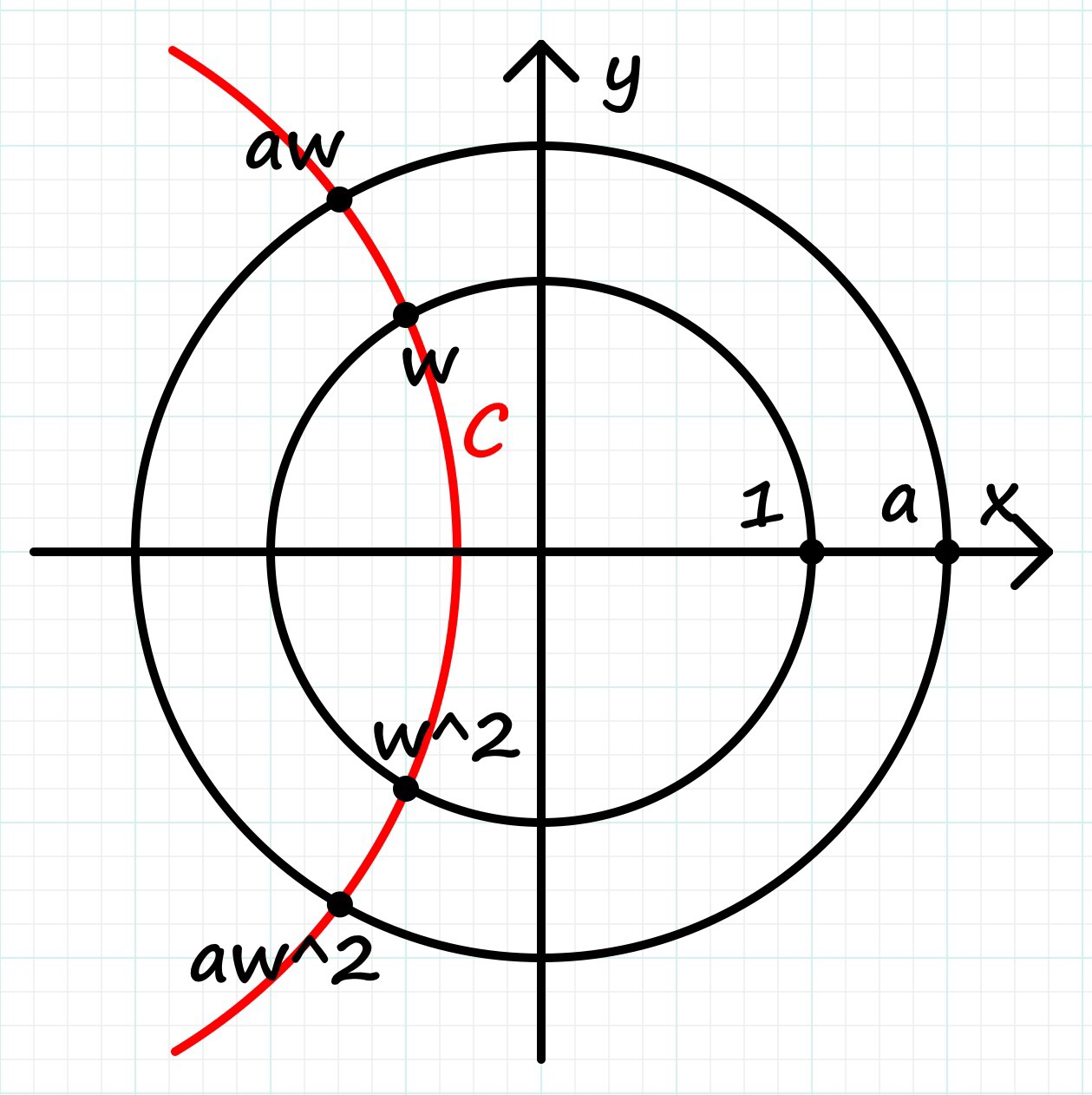}
\includegraphics[width=2.3in]{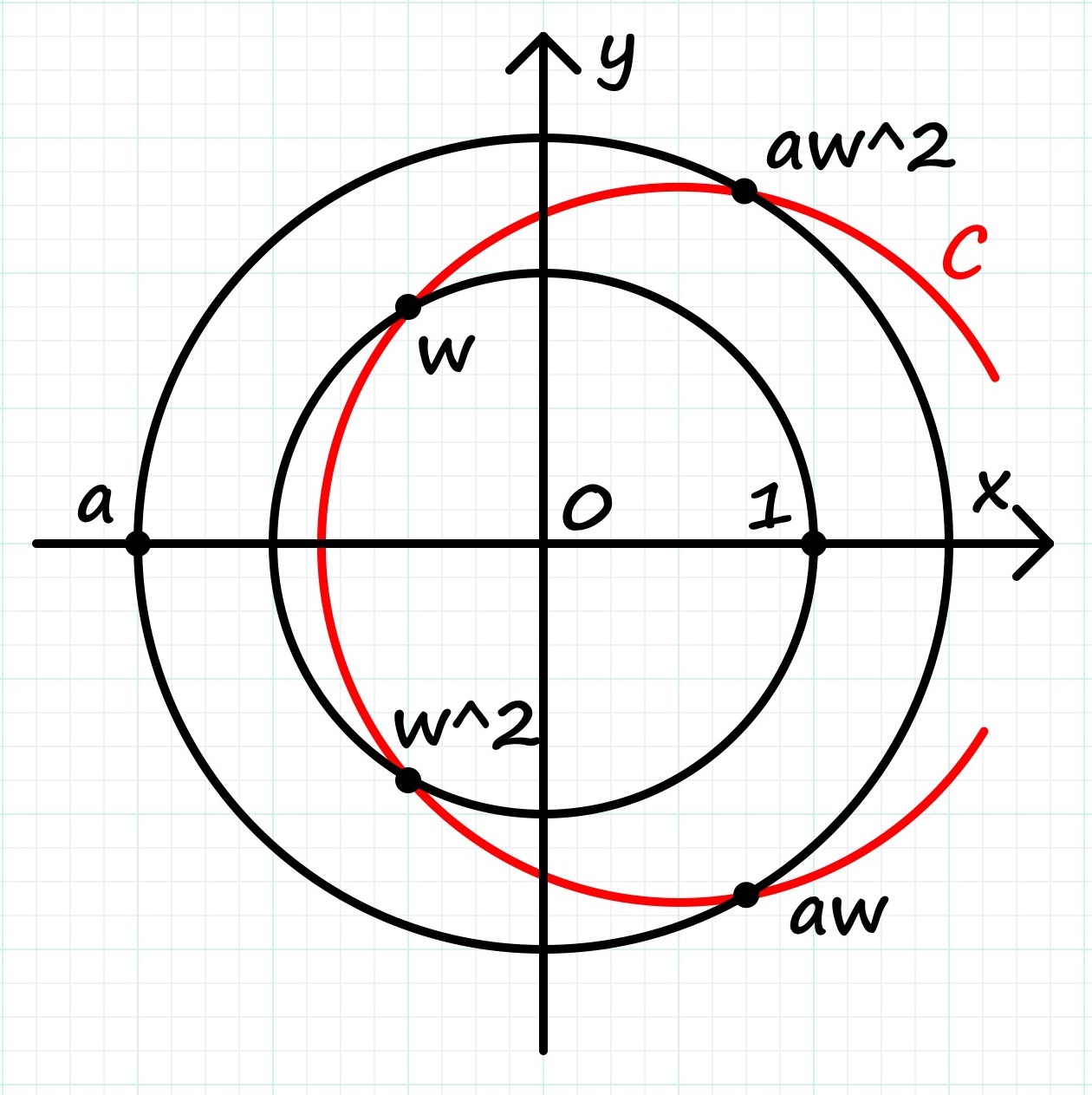}
\caption{$a\in \mathbb{R}$.}
\end{figure}

If $a>1$, $1$ and $a$ lie on the same side of $C$. It con not be the case.

As $a<-1$, we have
\begin{displaymath}
[w, w^2, aw, aw^2]=2,
\end{displaymath}
which equals to that
\begin{displaymath}
a=-2-\sqrt3.
\end{displaymath}

Define the linear fractional transformation $\varphi$
\begin{displaymath}
\varphi(z)=\frac{-(1+\sqrt3)(1+i)}{2}\frac{z-1}{z+2+\sqrt3},
\end{displaymath}
then we have
\begin{displaymath}
\varphi(1)=0, \varphi(-2-\sqrt3)=\infty, \varphi(w)=1, \varphi(w^2)=i, \varphi((-2-\sqrt3)w)=-1, \varphi((-2-\sqrt3)w^2)=-i.
\end{displaymath}

From the above discussion we see that the assumption amounts to the existence of some linear fractional transformation $\psi$ s.t.
\begin{displaymath}
\psi(\alpha)=\{1, w, w^2, a, aw, aw^2\},
\end{displaymath}
where
\begin{displaymath}
w=e^{\frac{2\pi}{3}i},\: |a|\ge 1,\: a\ne(\sqrt w)^j \mathrm{~or~} (-2-\sqrt3)w^j \mathrm{~for~any~} j \in \mathbb{Z}.
\end{displaymath}

Without lose of generality assume that
\begin{displaymath}
\alpha=\{1, w, w^2, a, aw, aw^2\}, \: f(z)=wz, \: g(z)=\frac{a}{z}.
\end{displaymath}
It is obvious that $f,g \in \mathcal{A_\alpha}$. Note that $\langle f,g\rangle\simeq D_3$ acts transitively on $\alpha$.

For any $k \in \mathcal{A_\alpha}$, there exists some $l\in  \langle f,g\rangle$ s.t.~$h=l\circ k$ fixes $1$. We have
\begin{displaymath}
h(\{w, w^2, a, aw, aw^2\})=\{w, w^2, a, aw, aw^2\},\: h(1)=1.
\end{displaymath}
Thus $h\in\mathcal{A}_{\{w, w^2, a, aw, aw^2\}}$. Since no element in $\mathcal{A_\alpha}$ is of Type (6), (5, 1) or (4, 1, 1), and $h(1)=1$, $\sigma_h$ is of Type (2, 2, 1, 1) or identity. Next we shall prove that $h$ has to be the identity.

Suppose that $\sigma_h$ is of Type (2, 2, 1, 1). Thus there exists some linear fractional transformation $\varphi$ s.t.
\begin{displaymath}
\varphi(\alpha)=\{0, \infty, 1, -1, z_0, -z_0\},\: |z_0|\ge1.
\end{displaymath}
Since $h$ fixes $1$, assume
\begin{displaymath}
\varphi(1)=0.
\end{displaymath}
As $0, \infty, 1, -1$ are concyclic, there exists some subset $\Sigma\subseteq\alpha, |\Sigma|=4, 1\in\Sigma$ s.t.~all elements in $\Sigma$ lie on the same circle $C$.

\emph{Case 1: $|a|=1$.}

Without lose of generality assume that $\arg a \in (0,\pi/3)\cup(\pi/3, 2\pi/3)$.
\begin{figure}[!h]
\centering
\includegraphics[width=3in]{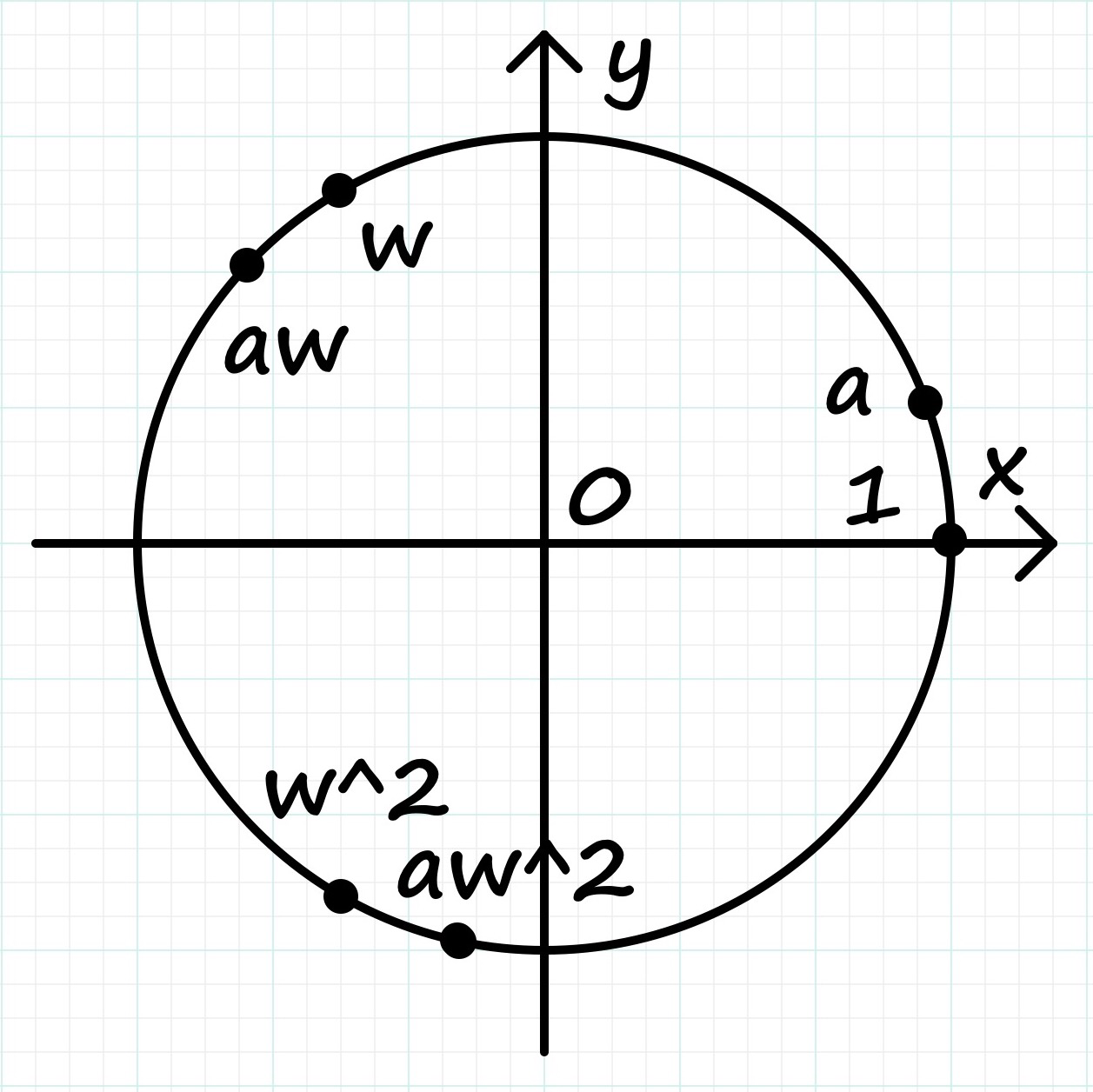}
\caption{$|a|=1$.}
\end{figure}

By observation we have
\begin{displaymath}
\varphi(1)=0,\: \varphi(aw)=\infty,\: \varphi(a)=1, \:\varphi(aw^2)=-1,
\end{displaymath}
or
\begin{displaymath}
\varphi(1)=0, \:\varphi(aw)=\infty, \:\varphi(a)=-1, \:\varphi(aw^2)=1.
\end{displaymath}

So we have
\begin{displaymath}
[1,aw,a,aw^2]=[0,\infty,1,-1],
\end{displaymath}
or
\begin{displaymath}
[1,aw,a,aw^2]=[0,\infty,-1,1],
\end{displaymath}
which equals to
\begin{displaymath}
a=\sqrt w.
\end{displaymath}
This contradicts the assumption that $a\ne \sqrt w$, so $h$ can not be of Type (2, 2, 1, 1).

\emph{Case 2: $|a|>1$.}

As $|a|>1$, it is easy to see that
\begin{displaymath}
|\Sigma\cap\{1,w,w^2\}|=|\Sigma\cap\{a,aw,aw^2\}|=2.
\end{displaymath}
Without lose of generality assume that
\begin{displaymath}
\Sigma=\{1,w,a,aw\}.
\end{displaymath}
Then we have
\begin{displaymath}
\frac{-3a}{(1-a)^2}=[1,w,a,aw]\in\mathbb R,
\end{displaymath}
which amounts to
\begin{displaymath}
a\in\mathbb R.
\end{displaymath}
\begin{figure}[!h]
\centering
\includegraphics[width=2.3in]{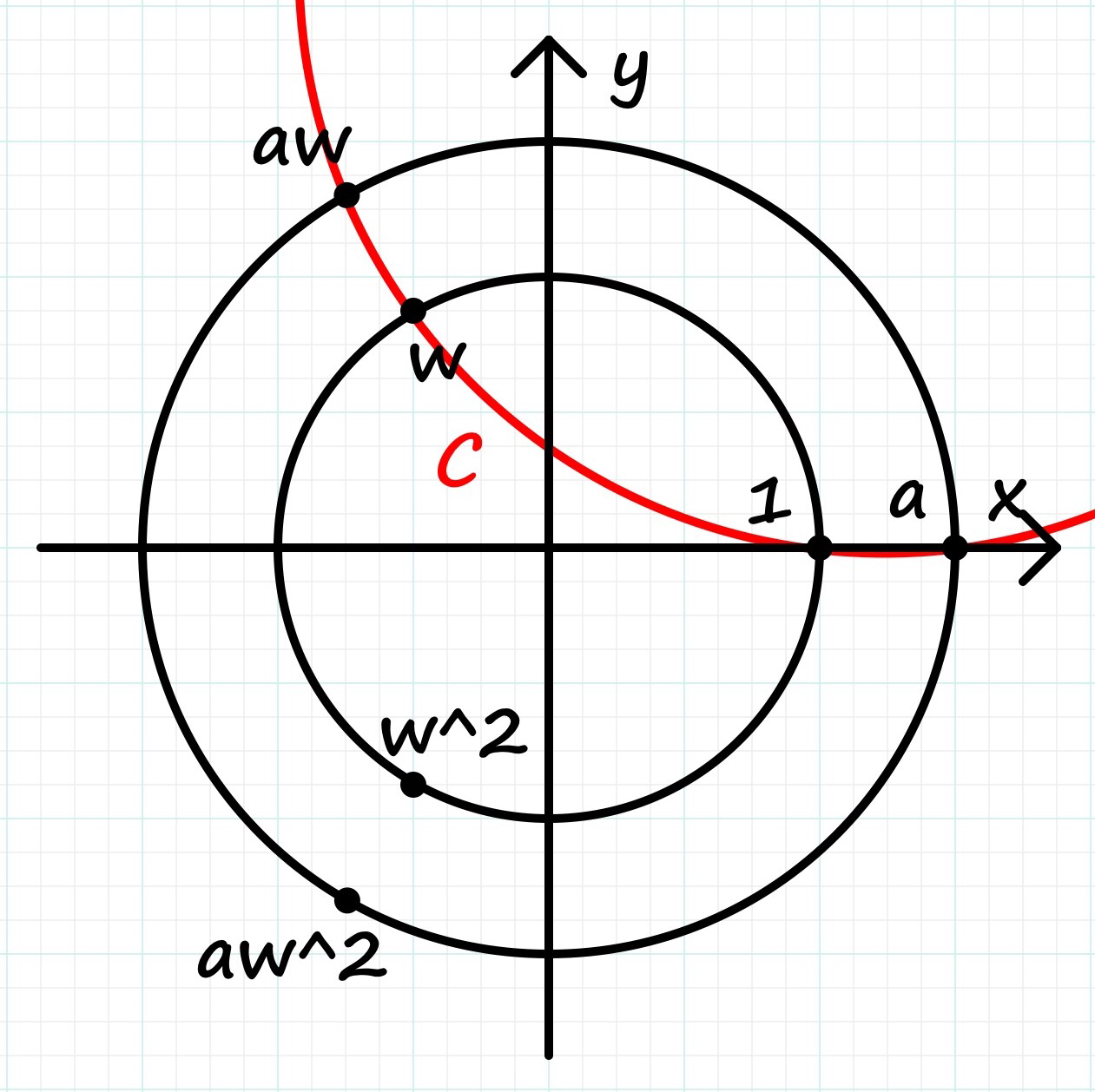}
\includegraphics[width=2.3in]{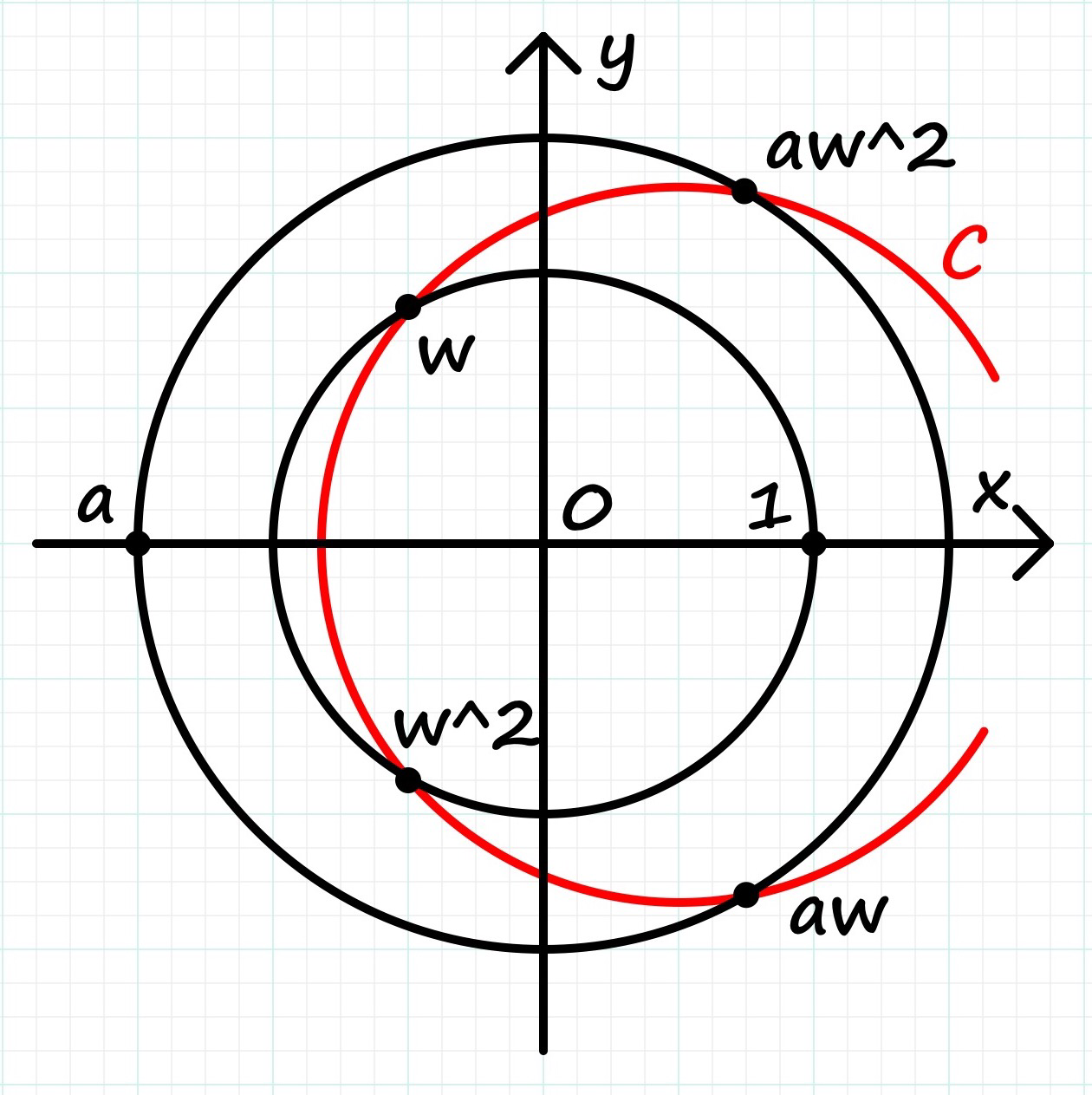}
\caption{$|a|>1$.}
\end{figure}

If $a>1$, $w^2$ and $aw^2$ lie on the same side of $C$, which contradicts the assumption that $\varphi(\alpha)=\{0, \infty, 1, -1, z_0, -z_0\}$.

If $a<-1$, by observation we have
\begin{displaymath}
\varphi(1)=0,\:\varphi(a)=\infty,\:\varphi(w)=1,\:\varphi(aw)=-1,
\end{displaymath}
or
\begin{displaymath}
\varphi(1)=0,\:\varphi(a)=\infty,\:\varphi(w)=-1,\:\varphi(aw)=1.
\end{displaymath}
So we have
\begin{displaymath}
[1, a, w, aw]=[0, \infty, 1, -1],
\end{displaymath}
or
\begin{displaymath}
[1, a, w, aw]=[0, \infty, -1, 1],
\end{displaymath}
which amounts to
\begin{displaymath}
a=-2-\sqrt3.
\end{displaymath}
This also contradicts our assumption that $a\ne-2-\sqrt3$.

So $h$ has to be the identity, and $k=l^{-1}\in\langle f,g\rangle$. In conclusion
\begin{displaymath}
\mathcal A_\alpha=\langle z\mapsto wz, z\mapsto \frac{a}{z}  \rangle \simeq D_3.
\end{displaymath}

\subsection{$\mathcal{A_\alpha}$ is Isomorphic to $K_4$ or $\mathbb{Z}_2$}

In this case assume that for any $h\in\mathcal{A_\alpha}$, $\sigma_h$ is not of Type (6), (5, 1), (4, 1, 1) or (3, 3), but there exists some $f\in\mathcal{A_\alpha}$ s.t.~$\sigma_f$ is of Type (2, 2, 2). Under this assumption, there exists some linear fractional transformation $\psi$ s.t.
\begin{displaymath}
\psi(\alpha)=\{1, -1, a, -a, b, -b\},\: a\ne\pm b, \:a,b \in \mathbb{C} \backslash \{ 0, \pm 1 \}.
\end{displaymath}

On the other hand, for any $\{1, -1, a, -a, b, -b\},a\ne\pm b, a,b \in \mathbb{C} \backslash \{ 0, \pm 1 \}$, the linear fractional transformation $z\mapsto -z$ is in $\mathcal{A}_{\{\pm 1, \pm a, \pm b\}}$ and of Type (2, 2, 2). Now we aim to find out the specific value $a$ and $b$ takes when for each element $h\in \mathcal{A}_{\{\pm 1, \pm a, \pm b\}}$, $\sigma_h$ is not of Type Type (6), (5, 1), (4, 1, 1) or (3, 3).

\emph{Case 1}: There exists some $h\in\mathcal{A}_{\{\pm 1, \pm a, \pm b\}}$ s.t.~$\sigma_h$ is of Type (6).

This amounts to the existence of some linear fractional transformation $\psi$ s.t.
\begin{displaymath}
\psi(\{\pm 1, \pm a, \pm b\})=\{1, w, w^2, w^3, w^4, w^5\}, \:w=e^{\frac{\pi}{3}i}.
\end{displaymath}
We conclude that $\pm 1, \pm a, \pm b$ lie on the same circle: the real axis or the unit circle.

If $\pm 1, \pm a, \pm b$ lie on the unit circle, we have
\begin{displaymath}
\{\pm 1, \pm a, \pm b\}=\{1, w, w^2, w^3, w^4, w^5\}.
\end{displaymath}

If $\pm 1, \pm a, \pm b$ lie on the real axis, without lose of generality assume that
\begin{displaymath}
0<a<b.
\end{displaymath}

If $a<b<1$, we have
\begin{displaymath}
[a,1,-1,-a]=[1,w^2,w^3,w^5],\:[b,1,-1,-b]=[1,w,w^2,w^3],
\end{displaymath}
which amounts to
\begin{displaymath}
a=7-4\sqrt3, \:b=2-\sqrt3.
\end{displaymath}

If $a<1<b$, we have
\begin{displaymath}
[a,1,-1,-a]=[1,w,w^4,w^5],\:[b,1,-1,-b]=[1,w^5,w^2,w],
\end{displaymath}
which amounts to
\begin{displaymath}
a=2-\sqrt3, \:b=2+\sqrt3.
\end{displaymath}

If $1<a<b$, we have
\begin{displaymath}
[a,1,-1,-a]=[1,w^5,w^4,w^3],\:[b,1,-1,-b]=[1,w^4,w^3,w],
\end{displaymath}
which amounts to
\begin{displaymath}
a=2+\sqrt3, \:b=7+4\sqrt3.
\end{displaymath}

On the other hand, the linear fractional transformation
\begin{displaymath}
f_1(z)=\frac{\sqrt3 z-(2-\sqrt3)}{(2+\sqrt3)z+\sqrt3}
\end{displaymath}
is of Type (6) and in $\mathcal{A}_{\{\pm(7-4\sqrt3), \pm(2-\sqrt3),\pm 1\}}$ and
\begin{displaymath}
f_2(z)=\frac{\sqrt3 z-1}{z+\sqrt3}
\end{displaymath}
is of Type (6) and in $\mathcal{A}_{\{\pm(2-\sqrt3),\pm 1,\pm(2+\sqrt3)\}}$ and
\begin{displaymath}
f_1(z)=\frac{\sqrt3 z-(2+\sqrt3)}{(2-\sqrt3)z+\sqrt3}
\end{displaymath}
is of Type (6) and in $\mathcal{A}_{\{\pm 1, \pm(2+\sqrt3),\pm(7+4\sqrt3)\}}$.

Thus there exists some $h\in\mathcal{A}_{\{\pm 1, \pm a, \pm b\}}$ s.t.~$\sigma_h$ is of Type (6) if and only if
\begin{displaymath}
\{\pm 1, \pm a, \pm b\}=\{\pm(7-4\sqrt3), \pm(2-\sqrt3),\pm 1\},
\end{displaymath}
or
\begin{displaymath}
\{\pm 1, \pm a, \pm b\}=\{\pm(2-\sqrt3),\pm 1,\pm(2+\sqrt3)\},
\end{displaymath}
or
\begin{displaymath}
\{\pm 1, \pm a, \pm b\}=\{\pm 1, \pm(2+\sqrt3),\pm(7+4\sqrt3)\}.
\end{displaymath}

\emph{Case 2}: There exists some $h\in\mathcal{A}_{\{\pm 1, \pm a, \pm b\}}$ s.t.~$\sigma_h$ is of Type (5, 1).

From \ref{n=6, Z_5} we know that $\mathcal{A}_{\{\pm 1, \pm a, \pm b\}}\simeq\mathbb{Z}_5$, and does not contain elements of order two. This contradicts the assumption that $\sigma_h$ is of Type (2, 2, 2).

\emph{Case 3}: There exists some $h\in\mathcal{A}_{\{\pm 1, \pm a, \pm b\}}$ s.t.~$\sigma_h$ is of Type (4, 1, 1).

This amounts to the existence of some linear fractional transformation $\psi$ s.t.
\begin{displaymath}
\psi(\{\pm 1, \pm a, \pm b\})=\{0, \infty, \pm 1, \pm i\}.
\end{displaymath}

Set $\Sigma=\psi^{-1}(\{1, i, -1, -i\})$. So all four points in $\Sigma$ lie on the circle $C$, and the two points in $\{\pm 1, \pm a, \pm b\}\backslash\Sigma$ are inverse points with respect to $C$.

If $|\Sigma\cap\{1,-1\}|=0$, we have $\Sigma=\{\pm a, \pm b\})$, and thus $C$ is a Euclidean circle with its center at the origin, or a line through the origin. As $\pm 1$ are inverse points with respect to $C$, $C$ has to be the imaginary line. Assume that \begin{displaymath}
a=si,\:b=ti.
\end{displaymath}
Without lose of generality assume that
\begin{displaymath}
0<s<t.
\end{displaymath}
So we have
\begin{displaymath}
[ti,si,-si,-ti]=[1,ti,-si,-1]=[0,1,\infty,-1]=2,
\end{displaymath}
which amounts to
\begin{displaymath}
s=\sqrt2-1,\: t=\sqrt2+1.
\end{displaymath}

If $|\Sigma\cap\{1,-1\}|=1$, without lose of generality assume that
\begin{displaymath}
\Sigma=\{1, \pm a, b\}.
\end{displaymath}
So we have
\begin{displaymath}
[-1,1,-b,b]=[-1,a,-b,-a]=[0,1,\infty,-1]=2,
\end{displaymath}
or
\begin{displaymath}
[-1,1,-b,-a]=[-1,b,-b,a]=[0,1,\infty,-1]=2,
\end{displaymath}
or
\begin{displaymath}
[-1,1,-b,a]=[-1,b,-b,-a]=[0,1,\infty,-1]=2,
\end{displaymath}
which amounts to
\begin{displaymath}
a=\pm(\sqrt2+1)i,\:b=-3-2\sqrt2\mathrm{~or~}a=\pm(\sqrt2-1)i, \:b=-3+2\sqrt2,
\end{displaymath}
or
\begin{displaymath}
a=2+\sqrt3,\: b=-7-4\sqrt3\mathrm{~or~}a=2-\sqrt3,\: b=-7+4\sqrt3\
\end{displaymath}
or
\begin{displaymath}
a=-2-\sqrt3,\: b=-7-4\sqrt3\mathrm{~or~}a=-2+\sqrt3,\: b=-7+4\sqrt3\
\end{displaymath}
respectively. As the six points in $\{\pm 1, \pm a, \pm b\}$ are not concyclic, $a, b$ can not both be real.

If $|\Sigma\cap\{1,-1\}|=2$, let $\{\pm 1, \pm a, \pm b\}\backslash\Sigma=\{x, y\}$ and there are two possibilities. The first is that $x+y\ne0$, and the second is that $x+y=0$.

When $x+y\ne0$, without lose of generality assume that
\begin{displaymath}
\Sigma=\{\pm 1, a, b\}.
\end{displaymath}
As the six points in $\{\pm 1, \pm a, \pm b\}$ are not concyclic, it is easy to see that
\begin{displaymath}
a, b\notin\mathbb{R}.
\end{displaymath}
As $-a, -b$ are inverse points with respect to $C$, we see that
\begin{displaymath}
\mathrm{Im}(a)\mathrm{Im}(b)<0.
\end{displaymath}
\begin{figure}[!h]
\centering
\includegraphics[width=3in]{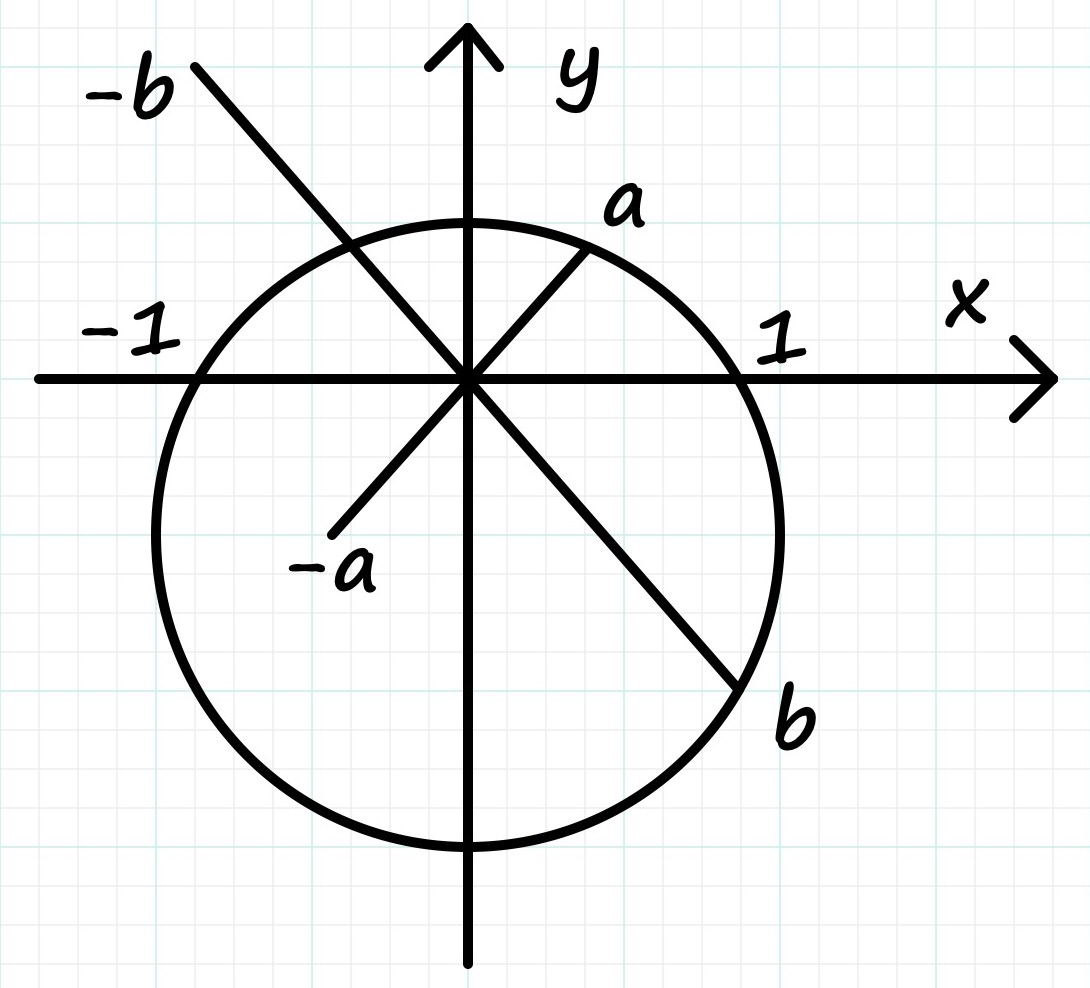}
\caption{$x+y\ne0$.}
\end{figure}

So we have
\begin{displaymath}
[-a,1,-b,-1]=[-a,a,-b,b]=[0,1,\infty,-1]=2
\end{displaymath}
which amounts to
\begin{displaymath}
a=(\pm\sqrt2\pm1)i,\:b=\frac{1}{a}.
\end{displaymath}

When $x+y=0$, without lose of generality assume that
\begin{displaymath}
\Sigma=\{\pm 1, \pm a\}.
\end{displaymath}
So $C$ is a Euclidean circle with its center at the origin or is th real axis. As $\pm b$ are inverse points with respect to $C$, $C$ has to be the real axis. Without lose of generality assume that
\begin{displaymath}
a>0.
\end{displaymath}

So we have
\begin{displaymath}
[b,1,-b,-a]=[1,a,-a,-1]=[0,1,\infty,-1]=2
\end{displaymath}
which amounts to
\begin{displaymath}
a=3+2\sqrt2,\:b=\pm(\sqrt2+1)i\mathrm{~or~}a=3-2\sqrt2,\:b=\pm(\sqrt2-1)i.
\end{displaymath}

On the other hand, the linear fractional transformation
\begin{displaymath}
f_1(z)=\frac{iz+1}{z+i}
\end{displaymath}
is of Type (4, 1, 1) and in $\mathcal{A}_{\{\pm (\sqrt2-1)i,\pm(\sqrt2+1)i,\pm 1\}}$ and
\begin{displaymath}
f_2(z)=\frac{z+\sqrt2-1}{-(\sqrt2+1)z+1}
\end{displaymath}
is of Type (4, 1, 1) and in $\mathcal{A}_{\{\pm(3-2\sqrt2), \pm 1,\pm(\sqrt2-1)i\}}$ and
\begin{displaymath}
f_3(z)=\frac{z+\sqrt2+1}{-(\sqrt2-1)z+1}
\end{displaymath}
is of Type (4, 1, 1) and in $\mathcal{A}_{\{\pm 1, \pm(3+2\sqrt2),\pm(\sqrt2+1)i\}}$.

Thus there exists some $h\in\mathcal{A}_{\{\pm 1, \pm a, \pm b\}}$ s.t.~$\sigma_h$ is of Type (4, 1, 1) if and only if
\begin{displaymath}
\{\pm 1, \pm a, \pm b\}=\{\pm (\sqrt2-1)i,\pm(\sqrt2+1)i,\pm 1\},
\end{displaymath}
or
\begin{displaymath}
\{\pm 1, \pm a, \pm b\}=\{\pm(3-2\sqrt2), \pm 1,\pm(\sqrt2-1)i\},
\end{displaymath}
or
\begin{displaymath}
\{\pm 1, \pm a, \pm b\}=\{\pm 1, \pm(3+2\sqrt2),\pm(\sqrt2+1)i\}.
\end{displaymath}

\emph{Case 4}: There exists some $h\in\mathcal{A}_{\{\pm 1, \pm a, \pm b\}}$ s.t.~$\sigma_h$ is of Type (3, 3).

This amounts to the existence of some linear fractional transformation $\psi$ s.t.
\begin{displaymath}
\psi(\{\pm 1, \pm a, \pm b\})=\{1, w, w^2, z_0, z_0w, z_0w^2\},
\end{displaymath}
where
\begin{displaymath}
z_0\ne(\sqrt w)^j \mathrm{~or~} (-2-\sqrt3)w^j \mathrm{~for~any~} j \in \mathbb{Z}, \: w=e^{\frac{2\pi}{3}i}.
\end{displaymath}

From Section \ref{n=6, D_3} we know that
\begin{displaymath}
\mathcal A_{\psi(\{\pm 1, \pm a, \pm b\})}=\langle z\mapsto wz, z\mapsto \frac{z_0}{z}  \rangle \simeq D_3.
\end{displaymath}
The only three elements of order two in $\mathcal A_{\psi(\{\pm 1, \pm a, \pm b\})}$ are
\begin{displaymath}
f_1(z)=\frac{z_0}{z},\:f_2(z)=\frac{wz_0}{z},\:f_3(z)=\frac{w^2z_0}{z}.
\end{displaymath}
So we have
\begin{displaymath}
f_i(\{1, w, w^2\})=\{z_0, z_0w, z_0w^2\},\:f_i(\{z_0, z_0w, z_0w^2\})=\{1, w, w^2\},\:i=1,2,3.
\end{displaymath}

Define the linear fractional transformation $g$
\begin{displaymath}
g(z)=-z.
\end{displaymath}
Then $g\in\mathcal{A}_{\{\pm 1, \pm a, \pm b\}}$ and is of order two. The we have
\begin{displaymath}
g=\psi^{-1}\circ f_i \circ \psi
\end{displaymath}
for some $i=1, 2,$ or $3$.
So we have
\begin{displaymath}
|\{1, w, w^2\}\cap\psi(\{\pm1\})|=|\{1, w, w^2\}\cap\psi(\{\pm a\})|=|\{1, w, w^2\}\cap\psi(\{\pm b\})|=1.
\end{displaymath}

Without lose of generality assume that
\begin{displaymath}
\psi(\{1, a, b\})=\{1, w, w^2\}
\end{displaymath}
and
\begin{displaymath}
\psi(1)=1,\:\psi(a)=w,\:\psi(b)=w^2.
\end{displaymath}

Now there are two possibilities. The first is
\begin{displaymath}
\psi(-a)=w\psi(-1),\:\psi(-b)=w\psi(-a).
\end{displaymath}
and the second is
\begin{displaymath}
\psi(-1)=w\psi(-a),\:\psi(-a)=w\psi(-b).
\end{displaymath}

Define the linear fractional transformation $\varphi$
\begin{displaymath}
\varphi(z)=\frac{((a-2b+1)w+2a-b-1)z+(ab-2a+b)w-ab-a+2b}{((-2a+b+1)w+-a+2b-1)z+(ab+a-2b)w-ab+2a-b}
\end{displaymath}
satisfies
\begin{displaymath}
\varphi(1)=1,\:\varphi(a)=w,\:\varphi(b)=w^2.
\end{displaymath}

The equations
$$
\left\{\begin{array}{rl}
        \varphi(-a)=w\varphi(-1) \\
        \varphi(-b)=w\varphi(-a) \\
       \end{array}
\right.
$$
reduces to
$$
\left\{\begin{array}{rl}
        a^2b^2+a^3-2a^2b-2ab+b^2+a=0 \\
        a^2b^2+b^3-2ab^2-2ab+a^2+b=0 \\
       \end{array}
\right.
$$
which amounts to
\begin{displaymath}
a=w, b=w^2\mathrm{~or~}a=w^2, b=w
\end{displaymath}
which can not be the case.

The equations
$$
\left\{\begin{array}{rl}
        \varphi(-1)=w\varphi(-a) \\
        \varphi(-a)=w\varphi(-b) \\
       \end{array}
\right.
$$
reduces to
$$
\left\{\begin{array}{rl}
       a^3b+ a^2b^2+a^3-5a^2b+2ab^2+2a^2-5ab+b^2+a+b=0 \\
       ab^3+ a^2b^2+b^3-5ab^2+2a^2b+2b^2-5ab+a^2+a+b=0. \\
       \end{array}
\right.
$$

As
\begin{displaymath}
a^3b+ a^2b^2+a^3-5a^2b+2ab^2+2a^2-5ab+b^2+a+b=(a+1)(a^2b+ab^2+a^2-6ab+b^2+a+b)
\end{displaymath}
and
\begin{displaymath}
ab^3+ a^2b^2+b^3-5ab^2+2a^2b+2b^2-5ab+a^2+a+b=(b+1)(a^2b+ab^2+a^2-6ab+b^2+a+b),
\end{displaymath}
the above equations amounts to
\begin{displaymath}
a^2b+ab^2+a^2-6ab+b^2+a+b=0.
\end{displaymath}

Thus there exists some $h\in\mathcal{A}_{\{\pm 1, \pm a, \pm b\}}$ s.t.~$\sigma_h$ is of Type (3, 3) if and only if
\begin{displaymath}
a^2b+ab^2+a^2-6ab+b^2+a+b=0.
\end{displaymath}

From the above discussion we see that the assumption amounts to the existence of some linear fractional transformation $\psi$ s.t.
\begin{displaymath}
\psi(\alpha)=\{1, -1, a, -a, b, -b\},\: a\ne\pm b, \:a,b \in \mathbb{C} \backslash \{ 0, \pm 1 \},
\end{displaymath}
where $\{\pm1,\pm a,\pm b\}$ is not the sets we discussed above.

Without lose of generality assume
\begin{displaymath}
\alpha=\{\pm1,\pm a,\pm b\},\: f(z)=-z,
\end{displaymath}
where $\{\pm1,\pm a,\pm b\}$ is not the sets we discussed above.

So $f\in\mathcal A_\alpha$ and $\sigma_f =(1,2)(3,4)(5,6)$.

If $\exists g\in\mathcal{A_\alpha}$ s.t.~$g\ne f, g\ne \mathrm{I}$, without lose of generality assume that $\sigma_g(1)\ne\sigma_f(1)=2$.

\emph{Case 1:} $\sigma_g(1)=1$.

We see that $g$ is of Type (2, 2, 1, 1), and it has another fixed point besides $1$. There are two possibilities.

If $\sigma_g(2)=2$, it could be that
\begin{displaymath}
\sigma_g=(3,4)(5,6)=:\pi_1
\end{displaymath}
or
\begin{displaymath}
\sigma_g=(3,5)(4,6)=:\pi_2
\end{displaymath}
or
\begin{displaymath}
\sigma_g=(3,6)(4,5)=:\pi_3.
\end{displaymath}

If $\sigma_g(2)\ne2$, as $3,4,5,6$ are congruent, we may assume without lose of generality that
\begin{displaymath}
\sigma_g(3)=3.
\end{displaymath}
So it could be that
\begin{displaymath}
\sigma_g=(2,4)(5,6)=:\pi_4
\end{displaymath}
or
\begin{displaymath}
\sigma_g=(2,5)(4,6)=:\pi_5
\end{displaymath}
or
\begin{displaymath}
\sigma_g=(2,6)(4,5)=:\pi_6.
\end{displaymath}

However
\begin{displaymath}
\pi_1\sigma_f=(1,2),\:\pi_2\sigma_f=(1,2)(3,6)(4,5),\:\pi_3\sigma_f=(1,2)(3,5)(4,6),
\end{displaymath}
\begin{displaymath}
\pi_4\sigma_f=(1,4,3,2),\:\pi_1\sigma_f=(1,5,4,3,6,2),\:\pi_1\sigma_f=(1,6,4,3,5,2).
\end{displaymath}
As a result
\begin{displaymath}
\sigma_g=\pi_2\mathrm{~or~}\pi_3.
\end{displaymath}

\emph{Case 2:} $\sigma_g(1)\ne1$.

As $3,4,5,6$ are congruent, we may assume without lose of generality that
\begin{displaymath}
\sigma_g(1)=3.
\end{displaymath}
So it could be that
\begin{displaymath}
\sigma_g=(1,3)(2,4)(5,6)=:\rho_1
\end{displaymath}
or
\begin{displaymath}
\sigma_g=(1,3)(2,5)(4,6)=:\rho_2
\end{displaymath}
or
\begin{displaymath}
\sigma_g=(1,3)(2,6)(4,5)=:\rho_3
\end{displaymath}
or
\begin{displaymath}
\sigma_g=(1,3)(2,4)=:\rho_4
\end{displaymath}
or
\begin{displaymath}
\sigma_g=(1,3)(2,5)=:\rho_5
\end{displaymath}
or
\begin{displaymath}
\sigma_g=(1,3)(2,6)=:\rho_6
\end{displaymath}
So it could be that
\begin{displaymath}
\sigma_g=(1,3)(4,5)=:\rho_7
\end{displaymath}
or
\begin{displaymath}
\sigma_g=(1,3)(4,6)=:\rho_8
\end{displaymath}
or
\begin{displaymath}
\sigma_g=(1,3)(5,6)=:\rho_9.
\end{displaymath}

However
\begin{displaymath}
\rho_1\sigma_f=(1,4)(2,3),\:\rho_2\sigma_f=(1,5,4)(2,3,6),\:\rho_3\sigma_f=(1,6,4)(2,3,5)\:
\end{displaymath}
\begin{displaymath}
\rho_4\sigma_f=(1,4)(2,3)(5,6),\:\rho_5\sigma_f=(1,5,6,2,3,4),\:\rho_6\sigma_f=(1,6,5,2,3,4),\:
\end{displaymath}
\begin{displaymath}
\rho_7\sigma_f=(1,2,3,5,6,4),\:\rho_8\sigma_f=(1,2,3,6,5,4),\:\rho_9\sigma_f=(1,2,3,4).
\end{displaymath}

As a result
\begin{displaymath}
\sigma_g=\rho_1\mathrm{~or~}\rho_4.
\end{displaymath}

Notice that
\begin{displaymath}
\langle \sigma_f, \pi_2\rangle=\{\mathrm{I},(1,2)(3,4)(5,6),(3,5)(4,6),(1,2)(3,6)(4,5)\},
\end{displaymath}
\begin{displaymath}
\langle \sigma_f, \pi_3\rangle=\{\mathrm{I},(1,2)(3,4)(5,6),(3,6)(4,5),(1,2)(3,5)(4,6)\},
\end{displaymath}
\begin{displaymath}
\langle \sigma_f, \rho_1\rangle=\{\mathrm{I},(1,2)(3,4)(5,6),(1,4)(2,3),(1,3)(2,4)(5,6)\},
\end{displaymath}
\begin{displaymath}
\langle \sigma_f, \rho_4\rangle=\{\mathrm{I},(1,2)(3,4)(5,6),(1,3)(2,4),(1,4)(2,3)(5,6)\}.
\end{displaymath}

If $\mathcal A_\alpha\ne\{\mathrm{I}, f\}$, then there exists some $h\in\mathcal A_\alpha$ s.t.~$h$ is of Type (2, 2, 1, 1), and the two fixed points of $h$ are on the same orbit of $f$. Without lose of generality assume that $h$ fixes $\pm1$. Thus
\begin{displaymath}
h(z)=\frac{pz+q}{qz+p},\: p,q \in\mathbb C,\:a^2\ne b^2,\:b\ne0.
\end{displaymath}
We have
\begin{displaymath}
h(a)=b,\:h(-a)=-b
\end{displaymath}
or
\begin{displaymath}
h(a)=-b,\:h(-a)=b.
\end{displaymath}
Anyway
\begin{displaymath}
h(a)+h(-a)=0
\end{displaymath}
which means that
\begin{displaymath}
p=0.
\end{displaymath}
So we have
\begin{displaymath}
ab=1 (\mathrm{or~}-1).
\end{displaymath}

In conclusion, if $ab=\pm1$
\begin{displaymath}
\mathcal{A_\alpha} = \langle z \mapsto -z, z \mapsto \frac{1}{z} \rangle \simeq K_4.
\end{displaymath}
Otherwise
\begin{displaymath}
\mathcal{A_\alpha} = \langle z \mapsto -z\rangle \simeq \mathbb Z _2.
\end{displaymath}

\subsection{Another Possibility}

In this case assume that for any $h\in\mathcal{A_\alpha}$, $\sigma_h$ is not of Type (6), (5, 1), (4, 1, 1), (3, 3) or (2, 2, 2) but there exists some $f\in\mathcal{A_\alpha}$ s.t.~$\sigma_f$ is of Type (2, 2, 1, 1). Under this assumption, there exists some linear fractional transformation $\psi$ s.t.
\begin{displaymath}
\psi(\alpha)=\{0, \infty, \pm1, \pm a\},\: a\ne 0, \pm1.
\end{displaymath}

Define the linear fractional transformation $g$
\begin{displaymath}
g(z)=\frac{a}{z}.
\end{displaymath}
We see that $g\in \mathcal{A_\alpha}$ and is of Type (2, 2, 2), which contradicts our assumption. This case is not possible.

\subsection{Conclusion}
\label{n=6, con}

From the above discussion we shall drive the following theorem
\begin{thm}

Set $\alpha=\{z_1,\: z_2,\: z_3,\: z_4,\: z_5,\: z_6\}\subseteq \widehat{\mathbb{C}}$.

\begin{enumerate}

\item If there exists some linear fractional transformation $\psi$ such that
$$
\psi(\alpha)=\{1,\: w,\: w^2,\: w^3,\: w^4, \:w^5\},\:w=e^{\frac{\pi}{3}i},
$$
then
$$
\mathcal{A_\alpha}={\psi}^{-1}\langle z \mapsto e^{\frac{\pi}{3}i}z,\:z \mapsto \frac{1}{z}\rangle {\psi}\simeq D_6,
$$
and its multiplicity vector is $(0,\:1,\:0,\:0,\:0,\:1)$;

\item if there exists some linear fractional transformation $\psi$ such that
$$
\psi(\alpha) =\{0,\: 1,\: w, \:w^2, \:w^3,\: w^4\},\:w=e^{\frac{2\pi}{5}i},
$$
then
$$
\mathcal{A_\alpha}={\psi}^{-1}\langle z \mapsto e^{\frac{2\pi}{5}i}z\rangle {\psi}\simeq \mathbb Z _5,
$$
and its multiplicity vector is $(1,\:1,\:1,\:0,\:0)$;

\item if there exists some linear fractional transformation $\psi$ such that
$$\psi(\alpha) =\{0,\: \infty, \:1, \:i, \:-1,\: -i\},$$
then
$$\mathcal{A_\alpha}={\psi}^{-1}\langle z\mapsto iz,\: z\mapsto \frac{iz+1}{z+i}\rangle {\psi}\simeq S_4,
$$
and its multiplicity vector is $(0,\:0,\:1,\:0,\:0)$;

\item if there exists some linear fractional transformation $\psi$ such that
$$
\psi(\alpha)=\{1,\:w,\:w^2,\:a,\:aw,\:aw^2\},\:|a|\ge1,\:a\ne1,\:w,\:w^2,\:w=e^{\frac{2\pi}{3}i},
$$
\begin{enumerate}

\item if $a=\sqrt w,\:w\sqrt w\text{ or }w^2\sqrt w$,
$$\alpha=\{1,\:\sqrt w,\: w,\:w\sqrt w,\: w^2,\:w^2\sqrt w\}$$
and this is case 1 ;

\item if $a=-(2+\sqrt3),\:-(2+\sqrt3)w\text{ or }-(2+\sqrt3)w^2$, then there exists some linear fractional transformation $\phi$ such that
$$\phi(\alpha) =\{0,\: \infty, \:1, \:i, \:-1,\: -i\}$$
and this is case 3;

\item otherwise
$$\mathcal{A_\alpha}={\psi}^{-1}\langle z \mapsto e^{\frac{2\pi}{3}i}z,\:z \mapsto \frac{a}{z}\rangle\psi \simeq D_3,
$$
and its multiplicity vector is $(1,\:1,\:0)$; \label{a}

\end{enumerate}

\item if there exists some linear fractional transformation $\psi$ such that
$$\psi(\alpha)=\{1,\: -1, \:a,\: -a,\: b, \:-b\},\:a\ne\pm b, \:a,\:b \in \mathbb{C} \backslash \{ 0, \pm 1 \},$$

\begin{enumerate}

\item if $\{\pm1,\:\pm a,\: \pm b\}=$
$$\{\pm(7-4\sqrt3), \:\pm(2-\sqrt3),\:\pm 1\},\:\{\pm(2-\sqrt3),\:\pm 1,\:\pm(2+\sqrt3)\},\:\{\pm 1,\: \pm(2+\sqrt3),\:\pm(7+4\sqrt3)\}$$
then there exists some linear fractional transformation $\phi$ such that
$$\phi(\alpha)=\{1,\: w,\: w^2,\: w^3,\: w^4, \:w^5\},\:w=e^{\frac{\pi}{3}i}$$
and this is case 1;

\item if $\{\pm1,\:\pm a,\: \pm b\}=$
$$\{\pm (\sqrt2-1)i,\:\pm(\sqrt2+1)i,\:\pm 1\},\:\{\pm(3-2\sqrt2), \:\pm 1,\pm(\sqrt2-1)i\},\:\{\pm 1, \:\pm(3+2\sqrt2),\:\pm(\sqrt2+1)i\},$$
then there exists some linear fractional transformation $\phi$ such that
$$\phi(\alpha) =\{0,\: \infty, \:1, \:i, \:-1,\: -i\}$$
and this is case 3;

\item if $a^2b+ab^2+a^2-6ab+b^2+a+b=0,$ then there exists some linear fractional transformation $\phi$ such that
$$\phi(\alpha)=\{1,\:w,\:w^2,\:c,\:cw,\:cw^2\},\:c\ne0,\:1,\:w,\:w^2,\:w=e^{\frac{2\pi}{3}i}$$
and this is case 4c;

\item otherwise,

\begin{enumerate}
 \item if $ab=\pm1$,
$$
\mathcal{A_\alpha}={\psi}^{-1}\langle z \mapsto -z,\:z \mapsto  \frac{1}{z}\rangle\psi \simeq K_4,
$$
and its multiplicity vector is $(1,\:0,\:1,\:1)$ (viewed as type 2+mC);
 \item if $ab\ne\pm1$,
$$
\mathcal{A_\alpha}={\psi}^{-1}\langle z \mapsto -z\rangle\psi \simeq \mathbb{Z}_2,
$$
and its multiplicity vector is $(1,\:2)$;
\end{enumerate}

\end{enumerate}

\item otherwise,
$$\mathcal{A_\alpha}=\{\mathrm{Id}\}.$$

\end{enumerate}

\end{thm}

\section{Each Finite Subgroup of $\text{PSL}(2,\:\mathbb C)$ is a Stabilizer of Certain Finite Subset of $\widehat{\mathbb C}$}
\label{all}

It is already known that for any finite subset $\alpha=\{z_1, z_2, \cdots, z_n\}\subseteq \widehat{\mathbb{C}}$, $n\ge4$, $\mathcal{A}_{\alpha}$ is finite. In this section we aim to prove that
\begin{thm}
For any finite non-trivial group $G$ of linear fractional transformations, there exists a finite subset $\alpha=\{z_1, z_2, \cdots, z_n\}\subseteq \widehat{\mathbb{C}}$ such that $\mathcal{A}_{\alpha}\simeq G$.
\end{thm}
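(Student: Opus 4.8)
The plan is to reduce the statement to the classification of finite subgroups of $\mathrm{PSL}(2,\mathbb C)$ and then exhibit, for each of the five types, an explicit finite set $\alpha$. Since $\mathcal{A}_{\psi(\alpha)}=\psi\,\mathcal{A}_\alpha\,\psi^{-1}$ for every linear fractional transformation $\psi$, it suffices to realize one representative in each conjugacy class. Thus I may assume $G$ is one of the standard groups $I$, $O$, $T$ (fixing a dodecahedron, a cube, a regular tetrahedron centered at the origin), $D_n=\langle z\mapsto e^{2\pi i/n}z,\ z\mapsto 1/z\rangle$, or $\mathbb Z_n=\langle z\mapsto e^{2\pi i/n}z\rangle$, and I must produce $\alpha$ with $\mathcal{A}_\alpha\simeq G$.

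For the three maximal polyhedral groups the witnesses are immediate from the classification theorems of Section \ref{orbit}. If $G=I$, take $\alpha=V_I$, the $20$ vertices of the dodecahedron; since $V_I$ is one of the admissible orbit-unions, Theorem \ref{tA_5} gives $\mathcal{A}_{V_I}=I\simeq A_5$. If $G=O$, take $\alpha=V_O$, the $8$ vertices of the cube; by Theorem \ref{tS_4}, $\mathcal{A}_{V_O}=O\simeq S_4$. If $G=T$, I take a \emph{generic} orbit $\alpha=B_T(X_0)$ of order $12$. By Theorem \ref{tA_4} I must additionally guarantee that $\alpha$ is neither a union of $I$-orbits nor a union of $O$-orbits. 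Comparing cardinalities, a $12$-point set can be a union of $I$-orbits only if it equals $F_I$, and a union of $O$-orbits only if it equals $E_O$; invoking the explicit identities $F_I=B_T(N)$ and $E_O=B_T(P)$ from Section \ref{A_4}, it suffices to pick $X_0\in S^2\setminus\bigl(V_T\cup F_T\cup E_T\cup B_T(N)\cup B_T(P)\bigr)$. This excludes only finitely many orbits, so such an $X_0$ exists, and then $\mathcal{A}_{B_T(X_0)}\simeq A_4$.

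For the cyclic and dihedral groups of large index the earlier corollaries already provide witnesses: if $G=\mathbb Z_n$ with $n\ge4$, Corollary \ref{cZ_n} gives $\mathcal{A}_\alpha\simeq\mathbb Z_n$ with $\alpha=\{0\}\cup\{e^{2\pi i k/n}:k\in\mathbb Z\}$; if $G=D_n$ with $n\ge5$, Corollary \ref{D_n1} gives $\mathcal{A}_\alpha\simeq D_n$ with $\alpha=\{e^{2\pi i k/n}:k\in\mathbb Z\}$. The finitely many remaining small groups $\mathbb Z_2$, $\mathbb Z_3$, $D_2\ (\simeq K_4)$, $D_3$, $D_4$ are precisely those that embed in strictly larger M\"obius groups, and for these I fall back on the explicit analyses of Sections \ref{n=4}, \ref{n=5} and \ref{n=6}. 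For instance $\{0,1,\lambda,\infty\}$ realizes $K_4\simeq D_2$ for generic $\lambda$ (avoiding $\tfrac12,2,-1,\tfrac{1\pm\sqrt3\,i}{2}$) and $D_4$ for $\lambda=\tfrac12$; $\{0,\infty,1,w,w^2\}$ with $w=e^{2\pi i/3}$ realizes $D_3$; $\{0,1,-1,a,-a\}$ realizes $\mathbb Z_2$ for generic $a$; and a generic union $\{0\}\cup\{1,w,w^2\}\cup\{c,cw,cw^2\}$ realizes $\mathbb Z_3$, with genericity justified exactly as in the tetrahedral case because Theorem \ref{tZ_n} leaves only finitely many larger possibilities to rule out.

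The main obstacle is the genericity bookkeeping in the non-maximal cases ($T$, and the small cyclic and dihedral groups): one must verify, using the relevant theorem of Section \ref{orbit}, that the list of strictly larger finite groups which could accidentally equal $\mathcal{A}_\alpha$ is finite, and that each such coincidence is forced only on finitely many orbits (or finitely many cross-ratio values), so that an admissible generic choice genuinely survives. For the polyhedral groups this is transparent from the displayed orbit identities; the only slightly delicate point is confirming that the finitely many exclusions are not jointly exhaustive, which a dimension count (the excluded orbits form a finite set while the admissible orbits form a positive-dimensional family) settles at once.
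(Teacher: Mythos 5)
Your proposal is correct and follows essentially the same route as the paper: reduce to the five conjugacy classes of finite subgroups of $\mathrm{PSL}(2,\mathbb C)$, quote the orbit-union classification theorems for the polyhedral and large cyclic/dihedral groups, fall back on the explicit $n=4,5,6$ analyses for the small groups, and realize the one leftover case $\mathbb Z_3$ by a $7$-point set of the form $\{0\}\cup\{1,w,w^2\}\cup\{c,cw,cw^2\}$ with $w=e^{2\pi i/3}$. The only difference is that the paper fixes $c=2$ and rules out larger stabilizers directly via the ``no five points concyclic'' observation, whereas you argue for a generic $c$ by finiteness of the exclusions; both work.
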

\begin{proof}
There are only five kinds of finite non-trivial linear fractional transformation groups: the icosahedral group $A_5$, the octahedral group $S_4$, the tetrahedral group $A_4$, the dihedral group $D_k$ and the finite cyclic group $\mathbb Z_k$, $k\ge2$.

In Section \ref{A_5}, \ref{S_4} and \ref{A_4} we can find $\alpha$ such that $\mathcal{A}_{\alpha}\simeq A_5,\:S_4$ and $A_4$. In Corollary \ref{D_n1}, \ref{D_n2} and \ref{cZ_n} we can find $\alpha$ such that $\mathcal{A}_{\alpha}\simeq D_m$ and $\mathbb Z_n$, $m\ge5$, $n\ge4$. In Section \ref{n=4} we can find $\alpha$ such that $\mathcal{A}_{\alpha}\simeq D_4$ and $D_2$. And in Section \ref{n=5, con} we can find $\alpha$ such that $\mathcal{A}_{\alpha}\simeq D_3$ and $\mathbb Z_2$. So all we need to do is to find an $\alpha$ such that $\mathcal{A}_{\alpha}\simeq \mathbb Z_3$.

Set
$$
\alpha=\{0,\:1,\:w,\:w^2,\:2,\:2w,\:2w^2\},\:w=e^{\frac{2\pi}{3}i},
$$
and
$$
G=\langle z\mapsto wz\rangle\simeq \mathbb Z_3.
$$
We have $G\subseteq\mathcal{A}_{\alpha}$.

As $|\alpha|=7$, from Theorem \ref{tA_5}, \ref{tS_4} and \ref{tA_4}, we see that $\mathcal{A}_{\alpha}$ is not $A_5$, $S_4$ or $A_4$. Thus $G$ is isomorphic to $D_k$ or $\mathbb Z_{k}$.

From Theorem \ref{tD_n} we see that if $\mathcal A_\alpha\simeq D_k$, $|\alpha|=nk$ or $nk+2$, $n\in\mathbb N^*$. As $|\alpha|=7$, $\mathcal A_\alpha\simeq D_7$ or $D_5$.

From Theorem \ref{tZ_n} we see that if $\mathcal A_\alpha\simeq \mathbb Z_k$, $|\alpha|=nk$, $nk+1$ or $nk+2$, $n\in\mathbb N^*$. As $|\alpha|=7$, $\mathcal A_\alpha\simeq \mathbb Z_7$, $\mathbb Z_6$, $\mathbb Z_5$, $\mathbb Z_3$ or $\mathbb Z_2$.

It is easy to see that no five points in $\alpha$ are concyclic. Thus $\mathcal{A}_{\alpha}$ is not $D_7$, $D_5$, $\mathbb Z_7$, $\mathbb Z_6$ or $\mathbb Z_5$. As $G\subseteq\mathcal{A}_{\alpha}$ and $G\simeq\mathbb Z_3$, $\mathcal{A}_{\alpha}$ is not $\mathbb Z_2$.

Thus $\mathcal{A}_{\alpha}$ is isomorphic to $\mathbb Z_3$.
\end{proof}


\bibliographystyle{plain}

\small{\noindent YUE WU\\
SCHOOL OF MATHEMATICAL SCIENCES\\
UNIVERSITY OF SCIENCE AND TECHNOLOGY OF CHINA\\
HEFEI 230026 CHINA\\
wuyuee15@mail.ustc.edu.cn}\\

\small{\noindent BIN XU\\
WU WEN-TSUN KEY LABORATORY OF MATH, USTC, CHINESE ACADEMY OF SCIENCE\\
SCHOOL OF MATHEMATICAL SCIENCES\\
UNIVERSITY OF SCIENCE AND TECHNOLOGY OF CHINA\\
HEFEI 230026 CHINA\\
bxu@ustc.edu.cn}

\end{document}